\documentclass{article}
\usepackage{graphicx} 
\usepackage{amsmath}
\usepackage{amssymb}
\usepackage{amsfonts}
\usepackage{amsthm}
\usepackage{xcolor}
\usepackage{stmaryrd}
\usepackage[all,cmtip]{xy}
\usepackage{tikz}
\usetikzlibrary{positioning, calc, cd}
\usepackage[hidelinks]{hyperref}

\theoremstyle{plain}
\newtheorem{thm}{Theorem}[section]
\newtheorem{prop}[thm]{Proposition}
\newtheorem{cor}[thm]{Corollary}

\newtheorem{lem}[thm]{Lemma}

\theoremstyle{definition}
\newtheorem{defn}[thm]{Definition}
\newtheorem{ex}[thm]{Example}
\newtheorem{rem}[thm]{Remark}
\newtheorem{con}[thm]{Construction}

\newcommand{\Z}{\mathbb{Z}}

\newcommand{\Q}{\mathbb{Q}}
\newcommand{\Ss}{\mathbb{S}}

\newcommand{\Oc}{\mathcal{O}}
\newcommand{\Id}{\text{Id}}

\DeclareMathOperator{\Sp}{Sp}
\DeclareMathOperator{\Or}{Or}
\DeclareMathOperator{\op}{op}
\DeclareMathOperator{\res}{res}
\DeclareMathOperator{\tr}{tr}
\DeclareMathOperator{\colim}{colim}
\DeclareMathOperator{\Ind}{Ind}
\DeclareMathOperator{\Cat}{Cat}
\DeclareMathOperator{\rex}{rex}

\DeclareMathOperator{\incl}{incl}

\DeclareMathOperator{\Fun}{Fun}
\DeclareMathOperator{\orb}{orb}

\DeclareMathOperator{\Cl}{Cl}
\DeclareMathOperator{\SL}{SL}
\DeclareMathOperator{\GL}{GL}
\DeclareMathOperator{\Gen}{Gen}

\title{Chromatic Euler characteristics and duality for infinite groups}
\author{Gijs Heuts and Irakli Patchkoria}

\date{September 2026}

\begin{document}

\maketitle

\begin{abstract}
    We study a family of generalizations of the notion of Euler characteristic of discrete groups (or of orbifolds, depending on one's perspective) indexed on the natural numbers. For $n=0$, this is the classical orbifold Euler characteristic as studied by Wall and Serre, whereas for $n \geq 1$ and finite groups, this is the chromatic cardinality as studied by Ben-Moshe--Carmeli--Schlank--Yanovski. For general $n$, we show that our generalized Euler characteristic admits a natural interpretation in terms of the Morava $E$-theories. Our work involves showing that the generalized cohomology of infinite groups $G$ with finite universal space for proper actions $\underline{E}G$ has a good theory of duality, as expressed by a new duality functor on the category of proper $G$-equivariant spectra. In particular, for such groups we prove the vanishing of Klein's generalized Farrell--Tate cohomology with $T(n)$-local coefficients. We compute our generalized orbifold Euler characteristics in a large number of examples. This includes many mapping class groups, where the classical calculation is a result of Harer--Zagier, and many arithmetic groups, whose classical orbifold Euler characteristics were computed by Harder.
\end{abstract}

\setcounter{tocdepth}{1}
\tableofcontents

\section{Introduction}

The aim of this paper is to study generalizations of the notion of \emph{Euler characteristic} of a group (or of an orbifold) arising from the cohomology theories used in chromatic homotopy theory (e.g., $p$-completed complex $K$-theory) and compute them in a number of interesting examples. Our primary interest is in groups $G$ for which there exists a finite \emph{classifying space for proper actions} $\underline{E}G$. In more detail, this $\underline{E}G$ should be a finite $G$-CW complex where each cell has finite stabilizer and for each finite subgroup $H \leq G$, the fixed points $\underline{E}G^H$ are contractible. Important examples include arithmetic groups and mapping class groups. The cohomology of such a group has strong finiteness properties (see e.g., \cite[Theorem VIII.11.1]{Brown}). 

If $G$ admits a torsion-free subgroup $\Gamma \leq G$ of finite index, then following Wall and Serre \cite{Wall, Serre1} one can define its \emph{orbifold Euler characteristic} by 
\[
\chi_{\mathrm{orb}}(G) := \frac{\chi^{\mathbb{Q}}(\underline{E}G/\Gamma)}{[G:\Gamma]},
\]
where $\chi^{\mathbb{Q}}(\underline{E}G/\Gamma)$ denotes the usual (rational homology) Euler characteristic of the finite CW complex $\underline{E}G/\Gamma \simeq B\Gamma$. By \cite[Proposition IX.7.3]{Brown82} it can alternatively be computed by Quillen's formula
\[
\chi_{\mathrm{orb}}(G) = \sum_{G\sigma} (-1)^{d_\sigma} \frac{1}{|H_\sigma|},
\]
where the sum is over the $G$-orbits of cells, $d_\sigma$ is the dimension of such a cell $\sigma$, and $H_{\sigma}$ denotes the stabilizer of $\sigma$ in $X=\underline{E}G$. This formula is the origin of the term orbifold Euler characteristic: if $X$ is a manifold, then $X\sslash G$ is an orbifold and the formula above computes its Euler characteristic. Note that this is a rational number and is usually different from the classical Euler characteristic of $X/G$, which is always an integer. 


In this paper we will demonstrate how to associate to each Morava $E$-theory $E_n$ the  \emph{$E$-theory orbifold Euler characteristic} $\chi_{\mathrm{orb}}^{E_n}(G)$ (see Section \ref{sec: cardinalities}). This notion is a common generalization of the classical orbifold Euler characteristic of groups (being the case $n=0$) and the chromatic cardinalities of finite groups studied by Ben-Moshe--Carmeli--Schlank--Yanovski \cite{BenMosche, CSYambidexterity, CSYambheight, Yan}. The paper consists of two parts. In the first part we develop the theory of duality between generalized homology and cohomology for groups $G$ with finite $\underline{E}G$. This can be seen as an extension of Bieri--Eckmann duality to extraordinary cohomology theories; it also extends Klein's work on the dualizing complex of groups (at least in the case of discrete groups). In the second part we apply this general theory to construct our generalized orbifold Euler characteristics and compute them in many examples. We will now describe these two parts in some more detail and in opposite order.

\subsubsection*{Part II: Chromatic Euler characteristics}

Let $E_n$ be a Morava $E$-theory at height $n$ and a prime $p$. (For $n=1$, an example would be $p$-completed complex $K$-theory.) Suppose $G$ is a discrete group that admits a finite $\underline{E}G$. Then the $E$-theory orbifold Euler characteristic $\chi_{\mathrm{orb}}^{E_n}(G)$ we will define satisfies the following version of Quillen's formula (see Corollary \ref{cor: Quillen}), where, as before, the sum is over $G$-orbits of cells $\sigma$ of $\underline{E}G$ and $|BH|_{E_n}$ denotes the $E_n$-cardinality of $BH$ (with $H$ a finite group):
\[
\chi_{\mathrm{orb}}^{E_n}(G) = \sum_{G\sigma} (-1)^{d_\sigma}|BH_{\sigma}|_{E_n}.
\]
The $E_n$-cardinality $|BH|_{E_n}$ was introduced by Ben-Moshe--Carmeli--Schlank--Yanovski \cite{BenMosche, CSYambidexterity, CSYambheight, Yan} and admits an elementary formula in terms of properties of the group $H$, see Proposition \ref{prop: card for finite}. For now, let us state some of the most important properties of our generalized Euler characteristic:

\begin{thm} 
\label{thm:introEulerchar} Let $G$ be a discrete group that admits a finite $\underline{E}G$ and $p$ a prime. Then:
\begin{itemize}
\item[(1)] For $n=0$, the Euler characteristic $\chi_{\mathrm{orb}}^{E_n}(G)$ recovers the usual orbifold Euler characteristic $\chi_{\mathrm{orb}}(G)$. (Remark \ref{remark: n=0 special case})
\item[(2)] For $n > 0$, the Euler characteristic $\chi_{\mathrm{orb}}^{E_n}(G)$ is a $p$-local integer. (Corollary \ref{cor: Quillen})
\item[(3)] There is an equality
\[
\chi_{\mathrm{orb}}^{E_n}(G) = \sum_{[g_1, \ldots, g_n] \in G_{n,p}/G} \chi_{\mathrm{orb}}(C\langle g_1, \ldots, g_n\rangle),
\]
where $G_{n,p} = \mathrm{Hom}(\mathbb{Z}_p^n, G)$ is the set of $n$-tuples of commuting elements of $G$ all of which are of $p$-power order, on which $G$ acts by conjugation, and $C\langle g_1, \ldots, g_n\rangle$ denotes the centralizer of the (abelian) subgroup $\langle g_1, \ldots, g_n\rangle$ generated by $g_1,\dots,g_n$. (Theorem \ref{thm: centraliser})
\end{itemize}
\end{thm}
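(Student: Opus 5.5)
The plan is to take Quillen's formula (Corollary \ref{cor: Quillen}) as the working definition:
\[
\chi_{\mathrm{orb}}^{E_n}(G)=\sum_{G\sigma}(-1)^{d_\sigma}|BH_\sigma|_{E_n},
\]
and to use the elementary formula for finite groups from Proposition \ref{prop: card for finite}. That formula expresses $|BH|_{E_n}$ as
\[
|BH|_{E_n}=\sum_{[\alpha]\in \mathrm{Hom}(\mathbb{Z}_p^n,H)/H}\frac{1}{|C_H(\alpha)|}.
\]
In the form $|H_{n,p}|/|H|$ it is the usual HKR-type count. For $n=0$ the set $\mathrm{Hom}(\mathbb{Z}_p^0,H)$ is a point, so $|BH|_{E_0}=1/|H|$. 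Substituting into the formula above recovers Quillen's formula for $\chi_{\mathrm{orb}}(G)$, which gives (1).

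For (2), note that $\mathrm{Hom}(\mathbb{Z}_p^n,H)$ carries a $p$-local structure. I would write $|BH|_{E_n}$ as $|H_{n,p}|/|H|$ and show this is a $p$-local integer for $n\ge1$. The argument is the standard one: $H_{n,p}$ fibres over $H_{n-1,p}$, with fibre over $\alpha$ the set of $p$-elements of $C_H(\alpha)$. By Frobenius' theorem the number of solutions of $x^{p^k}=1$ in a finite group is divisible by $\gcd(p^k,|C|)$. Taking $p^k$ to be the full $p$-part, the number of $p$-elements of $C_H(\alpha)$ is divisible by $|C_H(\alpha)|_p$. Summing over $H$-orbits of $\alpha$ then shows that $|H_{n,p}|$ is divisible by $|H|_p$. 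A finite sum of $p$-local integers is $p$-local, so (2) follows.

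For (3), the key step is to reorganize the double sum
\[
\sum_{G\sigma}(-1)^{d_\sigma}\sum_{[\alpha]\in H_{\sigma,n,p}/H_\sigma}\frac{1}{|C_{H_\sigma}(\alpha)|}
\]
as a sum over pairs $(\sigma,\alpha)$ with $\alpha\in\mathrm{Hom}(\mathbb{Z}_p^n,H_\sigma)$, taken modulo $G$. Each such pair is a cell $\sigma$ of $\underline{E}G^{A}$, where $A=\mathrm{im}\,\alpha$. Grouping by the $G$-conjugacy class of $\alpha\in G_{n,p}$, the contribution of a fixed $\alpha$ becomes
\[
\sum_{C(\alpha)\text{-orbits of cells }\sigma\subset \underline{E}G^{A}}(-1)^{d_\sigma}\frac{1}{|\mathrm{Stab}_{C(\alpha)}(\sigma)|},
\]
because $C_{H_\sigma}(\alpha)=H_\sigma\cap C(\alpha)$. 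Now $\underline{E}G^A$ is a finite-type model for $\underline{E}C(\alpha)$: for a finite $K\le C(\alpha)$ the group $KA$ is finite, so the fixed points are contractible. Applying Quillen's formula to $C(\alpha)$ therefore identifies this contribution with $\chi_{\mathrm{orb}}(C\langle g_1,\dots,g_n\rangle)$.

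The main obstacle is finiteness. I need to show that only finitely many classes $[\alpha]$ contribute, and that $\underline{E}G^A/C(\alpha)$ is a finite complex. The plan for this is as follows. Each $\alpha$ lands in some cell stabilizer $H_\sigma$, and there are finitely many orbits of cells, each with a finite stabilizer. Hence there are only finitely many conjugacy classes of $\alpha$. Finiteness of $\underline{E}G^A/C(\alpha)$ then comes from counting the same finite set of pairs $(\sigma,\alpha)$ modulo $G$. I also have to be careful with the bookkeeping of the orbit–stabilizer correspondence between $G$-orbits of pairs and $C(\alpha)$-orbits of cells in $\underline{E}G^A$, since the two indexing sets must match exactly for the sums to agree.
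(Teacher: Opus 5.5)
Your proposal is correct and follows essentially the paper's route. Parts (1) and (2) come from the Quillen-type formula of Corollary~\ref{cor: Quillen}, together with $|BH|_{E_0}=1/|H|$ and Frobenius' theorem. Part (3) is the same orbit--stabilizer regrouping of pairs $(\sigma,\alpha)$ that the paper carries out through the bijection $\coprod_{\alpha}(G/H)^{\langle\alpha\rangle}/C\langle\alpha\rangle\cong H_{n,p}/H$. The only difference is that the paper first uses additivity of both sides to reduce to single orbits $X=G/H$, whereas you sum over all cells at once and make explicit the finiteness and the fact that $\underline{E}G^{A}$ is a finite model for $\underline{E}C\langle\alpha\rangle$, which the paper leaves implicit.
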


Combining items (2) and (3) yields $p$-integrality statements involving the rational numbers $\chi_{\mathrm{orb}}(C\langle g_1, \ldots, g_n\rangle)$ that reproduce interesting congruences from number theory. This was already observed by elementary methods in \cite{P24}, where it is shown amongst other things that these congruences recover Kummer's regularity criterion \cite{Kumregular}, Kummer's congruences for Bernoulli numbers, and further congruences due to Carlitz and Cohen \cite{Carl, Cohen}.

As we will see below, the invariant $\chi_{\mathrm{orb}}^{E_n}(G)$ is not in general an integer and hence differs from the Morava $K$-theory Euler characteristic $\chi^{K(n-1)}(BG) \in \Z$ studied in \cite{HKR} for finite groups and in \cite{LPS2024} for infinite groups. However, for groups which do not contain any elements of finite order other than a power of $p$, these invariants agree. This is explained in Remark \ref{rem: compare}. 

Theorem \ref{thm:introEulerchar} shows that one can compute $\chi_{\mathrm{orb}}^{E_n}(G)$ from the classical orbifold Euler characteristics if one has sufficient control over the set $G_{n,p}$ and the associated centralizers. We will use this strategy to compute the $E$-theory orbifold Euler characteristic in many examples. Harder \cite{Hard} computed the orbifold Euler characteristic of arithmetic groups in terms of special values of $\zeta$-functions. A basic example is $\chi_{\mathrm{orb}}(\mathrm{SL}_2(\mathbb{Z})) =-\frac{1}{12} = \zeta(-1)$ or, more generally, for $\mathcal{O}_K$ the ring of integers in a totally real field $K$, one has  $\chi_{\mathrm{orb}}(\mathrm{SL}_2(\mathcal{O}_K)) = \zeta_K(-1)$. For the symplectic groups Harder established the formula
\[
\chi_{\mathrm{orb}}(\mathrm{Sp}_{2g}(\mathbb{Z})) = \zeta(-1)\zeta(-3) \cdots \zeta(1-2g).
\]
Examples \ref{ex: symplectic}, \ref{ex: general linear}, \ref{ex: totally real} of this paper combine to give the following generalization of some of these formulas:

\begin{thm} \label{thm:chiarithmetic} Let $n \geq 0$ and let $E_n$ be a Morava $E$-theory at height $n$ and a prime $p$.
\begin{itemize}
\item[(1)] For $K$ a totally real field, $\Oc_K$ its ring of integers, and $p \geq 3$, we have  
\[\chi_{\orb}^{E_n} (\SL_2(\Oc_K))= \zeta_K(-1)+\sum_{(H)} \frac{{(\vert H_{(p)} \vert}^n-1)}{\vert H \vert},
\]
where $(H)$ runs over the conjugacy classes of the maximal finite subgroups. 

\item[(2)] For $p \geq 3$, we have
\[\chi_{\orb}^{E_n}(\Sp_{p-1}(\Z))=\zeta(-1) \zeta(-3) \cdots \zeta(2-p)+\frac{p^n-1}{p(p-1)} \cdot 2^{\frac{p-3}{2}}h_p^{-},\]
where $h_p^{-}$ is the class number of the cyclotomic field $\Q(\zeta_p)$ divided by the class number of the maximal real subfield.
 
\item[(3)] For $p \geq 5$, we have
\[\chi_{\orb}^{E_n}(\GL_{p-1}(\Z))=\chi_{\orb}^{E_n}(\SL_{p-1}(\Z))=0.\]
\end{itemize}
\end{thm}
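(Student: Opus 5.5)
The plan is to deduce all three items from the centralizer formula of Theorem \ref{thm:introEulerchar}(3),
\[
\chi_{\orb}^{E_n}(G)=\sum_{[g_1,\dots,g_n]\in G_{n,p}/G}\chi_{\orb}(C\langle g_1,\dots,g_n\rangle),
\]
combined with Harder's classical values. The term indexed by the trivial tuple is $\chi_{\orb}(G)$ itself, which produces the zeta-value summand in each formula. It then remains to show that every other term contributes exactly the stated correction.

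For a nontrivial tuple, the subgroup $A=\langle g_1,\dots,g_n\rangle$ is a finite abelian $p$-group. I would regroup the sum by the $G$-conjugacy class of $A$ together with its set of generating $n$-tuples modulo the action of $N_G(A)/C_G(A)$. This uses the relation $\chi_{\orb}(C(A))=[N(A):C(A)]\,\chi_{\orb}(N(A))$, valid because $C(A)$ has finite index in $N(A)$.

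\textbf{(1)} For $p\ge 3$, a nontrivial $p$-subgroup of $\SL_2(\Oc_K)$ is cyclic, since finite subgroups of $\SL_2$ over a field of characteristic $0$ have cyclic $p$-Sylows for odd $p$. Such an $A$ acts on $K^2$ through a field $L=K(\zeta)$, a CM extension of degree $2$. Its centralizer is then the unit group of an order in $L$ of norm one. By Dirichlet's theorem this group has rank $\mathrm{rk}\,\Oc_L^\times-\mathrm{rk}\,\Oc_K^\times=0$, so it is finite: it is the maximal finite subgroup $H$ containing $A$, and here $H$ is cyclic. Hence $\chi_{\orb}(C(A))=1/|H|$. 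Each maximal finite $H$ contains a unique Sylow $H_{(p)}$. The $n$-tuples with nontrivial entries in $H_{(p)}$, counted modulo $G$-conjugacy, number $|H_{(p)}|^n-1$. Two such tuples in the same $H$ are $G$-conjugate only through the normalizer, and the normalizer's contribution is absorbed by the factor $1/|H|$ in the orbit count. I would check this bookkeeping with the orbit--stabilizer formula applied to $\mathrm{Hom}(\Z_p^n,H)\setminus\{0\}$ under $N_G(H)$. The result is $(|H_{(p)}|^n-1)/|H|$, summed over the classes $(H)$.

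\textbf{(2)} In $\Sp_{p-1}(\Z)$, any element of order $p$ makes $\Z^{p-1}$ into a fractional ideal of $\Z[\zeta_p]$, and $\Z/p^2$ cannot embed. So $A$ is cyclic of order $p$, and its centralizer is the norm-one unit group of $\Q(\zeta_p)$ relative to $\Q(\zeta_p)^+$. By Dirichlet again this is finite, namely $\mu_{2p}$, giving $\chi_{\orb}(C)=1/(2p)$. The number of conjugacy classes of order-$p$ subgroups, taken with their generating tuples, is $(p^n-1)/(p-1)$ times the number of classes of pairs (ideal class, polarization) up to the Galois action. This count is the classical $2^{(p-1)/2}h_p^-$ count of principally polarized abelian varieties with CM by $\Z[\zeta_p]$, modulo units. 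Reconciling $1/(2p)$ with the stated $\frac{2^{(p-3)/2}h_p^-}{p(p-1)}$ is the main obstacle. It requires carefully counting unit classes $\Oc^{\times}/N(\Oc^\times)$ and the action of $(\Z/p)^\times$ on generators, and I would follow \cite{P24} for this count.

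\textbf{(3)} For $\GL_{p-1}(\Z)$ and $\SL_{p-1}(\Z)$, Harder gives $\chi_{\orb}=0$ once the rank is large, since the relevant $\zeta(-2k)$ vanish. The order-$p$ centralizers are unit groups $\Z[\zeta_p]^\times$ (or their determinant-one subgroups), which are infinite of rank $(p-3)/2\ge 1$ for $p\ge5$. These are virtually $\Z^r$ with $r\ge1$, so their orbifold Euler characteristics vanish, and hence every term of the sum is zero.
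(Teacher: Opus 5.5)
Your route is the paper's: the centralizer formula of Theorem~\ref{thm: centraliser}, Harder's values, and the known $p$-torsion and centralizers. Your (3) is exactly the paper's argument.

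The genuine gap is in (2). The step you call the main obstacle is left undone, and it is aimed in the wrong direction. In fact there is nothing to reconcile. The number $2^{\frac{p-1}{2}}h_p^-$ from Sjerve--Yang counts conjugacy classes of \emph{elements} of order $p$ in $\Sp_{p-1}(\Z)$; their pairs $(I,a)$ parametrize elements, not subgroups, and no Galois quotient is taken. Every nontrivial abelian $p$-subgroup is $\Z/p$, so Lemma~\ref{lem: number of tuples} says the nontrivial classes in $G_{n,p}/G$ number $\frac{p^n-1}{p-1}$ times this. Each such class has centralizer $\Z/p\times\Z/2$, so the correction term is
\[
\frac{p^n-1}{p-1}\cdot 2^{\frac{p-1}{2}}h_p^-\cdot\frac{1}{2p}=\frac{p^n-1}{p(p-1)}\cdot 2^{\frac{p-3}{2}}h_p^-
\]
on the nose. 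No unit-index computation is needed.

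By contrast, multiplying $\frac{p^n-1}{p-1}$ by a count of pairs ``up to the Galois action'' gives the wrong answer. Quotienting by $\zeta_p\mapsto\zeta_p^k$ (i.e.\ $A\mapsto A^k$) passes to conjugacy classes of \emph{subgroups}. A subgroup class $(H)$ carries $|\Gen_n(H)|/|W(H)|=(p^n-1)/|W(H)|$ tuple classes, with $W(H)=N(H)/C(H)\le(\Z/p)^\times$. The factor $\frac{p^n-1}{p-1}$ only emerges after converting to element classes via $\sum_{(H)}(p-1)/|W(H)|$. Already for $p=3$, $\Sp_2(\Z)=\SL_2(\Z)$ has one class of subgroups of order $3$ but two classes of elements of order $3$, because the normalizer $C_6$ of $C_3$ centralizes it. Only the element count reproduces $\frac{3^n-1}{6}-\frac{1}{12}$ from Remark~\ref{rem: SL2}.

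In (1) the conclusion is right, but one justification is too weak. Knowing that odd Sylow subgroups are cyclic does not show that $C(A)$ is \emph{the} maximal finite subgroup containing $A$. Nor does it show that the maximal finite subgroup containing a nontrivial $p$-subgroup is unique. For that you need that every finite subgroup of $\SL_2(\Oc_K)$ is cyclic. This is Brown's lemma, as used in Example~\ref{ex: totally real}; alternatively, via a real embedding of $K$, a finite subgroup of $\SL_2(\mathbb{R})$ is conjugate into $SO(2)$. Then $N(A)$ is finite, since $N(A)/C(A)$ embeds in $\mathrm{Aut}(A)$. Hence $N(A)$ is cyclic and contained in $C(A)$, so $N(H)\subseteq N(H_{(p)})\subseteq C(H_{(p)})=H$ acts trivially on $H_{(p)}^n$. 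That, rather than an ``absorption'', is why each $(H)$ contributes exactly $(|H_{(p)}|^n-1)/|H|$. If $N(H)$ were larger than $H$, your orbit--stabilizer count would put $|N(H)|$ in the denominator instead.
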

\begin{rem}
In the context of item (1), we note that \cite[Theorem, p.198]{Hirz} and \cite{Prestel} provide explicit formulas for computing the numbers and types of conjugacy classes of maximal finite subgroups of $SL_2(\Oc_K)$ in the case where $K=\mathbb{Q}(\sqrt{d})$ and $d$ is a square-free positive integer.
\end{rem}

We refer the reader to Section \ref{sec: examples} for more examples, such as right-angled Coxeter groups. Still, our work leaves open the calculation of the $E$-theory orbifold Euler characteristics of $\mathrm{SL}_n(\mathbb{Z})$ and $\mathrm{Sp}_{2n}(\mathbb{Z})$ in general. It would be interesting to investigate these invariants further. 

Another famous calculation is the determination of the orbifold Euler characteristic of mapping class groups by Harer--Zagier \cite{HZ}, who give formulas for $\chi_{\orb}(\Gamma_g^s)$, where $\Gamma_g^s$ is the mapping class group of a genus $g$ surface with $s$ marked points. Equivalently, $\chi_{\orb}(\Gamma_g^s)$ can be thought of as the orbifold Euler characteristic of the moduli space $\mathcal{M}_g^s$ of genus $g$ surfaces with $s$ marked points.  We recall precise formulas in Section \ref{section: MCG and Harer-Zagier}. For now let us state the well-known case $s=1$:
\[\chi_{\orb}(\mathcal{M}_g^1)=\chi_{\orb}(\Gamma_g^1)=\zeta(1-2g)=-\frac{B_{2g}}{2g},\]
where $B_{2g}$ is the $2g$-th Bernoulli number. In Section \ref{section: MCG and Harer-Zagier} we extend this result to our generalized $E$-theory orbifold Euler characteristics. The following is our `chromatic' version of the Harer--Zagier theorem (see Theorem \ref{thm height n HZ}):

\begin{thm}
\label{thm:introHZ}
Let $\Sigma_g$ be a closed Riemann surface of genus $g$, equipped with a marked point $x_0$, and let $\Gamma^1_g$ be its mapping class group. Then the $E$-theory orbifold Euler characteristic of $\Gamma^1_g$ is given by
\[
\chi^{E_n}_{\orb}(\Gamma_g^1) = \chi_{\mathrm{orb}}(\Gamma_g^1) + \sum_{m \geq 1, h \geq 0} \frac{|\mathrm{Sur}(\mathbb{Z}^n, \mathbb{Z}/p^m)|}{p^m} \chi_{\mathrm{orb}}(\mathcal{H}_{h}^{1}(\mathbb{Z}/p^m,g)),
\]
where $\mathrm{Sur}(-,-)$ denotes the set of surjective homomorphisms and $\mathcal{H}_{h}^{1}(\mathbb{Z}/p^m,g)$ denotes the Hurwitz space of connected branched covers $\Sigma_g \to B$, with $B$ of genus $h$ and deck group cyclic of order $p^m$, so that the distinguished point $x_0$ is fixed by all deck transformations. (All of the numbers appearing in the formula can be made completely explicit and we will do so in Section \ref{section: MCG and Harer-Zagier}.)
\end{thm}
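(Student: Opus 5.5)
We apply the centraliser formula of Theorem \ref{thm:introEulerchar}(3) to $G=\Gamma_g^1$ and then identify each term geometrically. First we record that $\Gamma_g^1$ has a finite $\underline{E}G$, namely a cocompact equivariant deformation retract of Teichmüller space, so Theorem \ref{thm: centraliser} applies. It gives
\[
\chi^{E_n}_{\orb}(\Gamma_g^1)=\sum_{[g_1,\dots,g_n]\in G_{n,p}/G}\chi_{\orb}(C\langle g_1,\dots,g_n\rangle).
\]

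\textbf{Step 1: sort the tuples by the subgroup they generate.} The key input is that a finite subgroup of $\Gamma_g^1$ fixes the marked point $x_0$. By Nielsen realization it acts by holomorphic automorphisms of some Riemann surface fixing $x_0$. The linearization at $x_0$ is then a faithful representation into $S^1$, so every finite subgroup is cyclic. Hence a tuple $(g_1,\dots,g_n)$ of commuting $p$-power elements generates a cyclic group $A\cong\Z/p^m$. The trivial tuple $m=0$ contributes $\chi_{\orb}(\Gamma_g^1)$.

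For $m\ge1$, we regroup the sum over conjugacy classes of pairs $(A,\phi)$ with $\phi\colon \Z^n\twoheadrightarrow A$ surjective. For a fixed conjugacy class of subgroup $A$, the tuples generating $A$ are $\mathrm{Sur}(\Z^n,A)$. Conjugacy classes of such tuples correspond to $N(A)$-orbits on this set, and the stabilizer of a tuple is $C(A)$. Using multiplicativity of $\chi_{\orb}$ in finite index, $\chi_{\orb}(H)=[G':H]\chi_{\orb}(G')$, the orbit sum becomes
\[
|\mathrm{Sur}(\Z^n,A)|\cdot\frac{\chi_{\orb}(C(A))}{[N(A):C(A)]}=|\mathrm{Sur}(\Z^n,A)|\,\chi_{\orb}(N(A)).
\]

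\textbf{Step 2: identify normalizers with Hurwitz spaces.} The conjugacy classes of cyclic subgroups $A\cong\Z/p^m$ of $\Gamma_g^1$ correspond to topological types of $A$-actions on $(\Sigma_g,x_0)$. Equivalently, they are branched covers $\Sigma_g\to B=\Sigma_g/A$ with deck group $A$ and $x_0$ totally ramified; the genus $h$ of $B$ is one part of this data. The normalizer $N(A)$ is the orbifold fundamental group of the corresponding component of the Hurwitz space. That is, the fixed locus $(\mathcal{M}_g^1)^A$ maps to $\mathcal{M}_g^1$ with image $\mathcal{H}$, and the stack quotient is $\mathcal{H}$ with generic automorphism group $A$ acting trivially.

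This is where the factor $1/p^m$ appears. We will define $\mathcal{H}_h^1(\Z/p^m,g)$ as the coarse or rigidified Hurwitz space, without remembering the deck group as automorphisms. Then
\[
\chi_{\orb}(N(A))=\frac{1}{p^m}\,\chi_{\orb}(\mathcal{H}\text{-component}).
\]
Summing over components with fixed $h$ gives the term $\frac{|\mathrm{Sur}(\Z^n,\Z/p^m)|}{p^m}\chi_{\orb}(\mathcal{H}_h^1(\Z/p^m,g))$, which is the claimed formula.

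\textbf{Main obstacle.} We expect the hardest part to be the precise identification $\chi_{\orb}(N(A))=p^{-m}\chi_{\orb}(\mathcal{H})$. This requires showing that the centralizer and normalizer of $A$ act properly cocompactly on the fixed Teichmüller space $\mathcal{T}^A$. That space is contractible, being a Teichmüller space of the quotient orbifold, so it serves as a model for $\underline{E}N(A)$. We also need to check that the Hurwitz space conventions match, in particular whether a cover is counted with or without an identification of its deck group with $\Z/p^m$. Any discrepancy here changes the counting factor between $\mathrm{Sur}(\Z^n,A)$ and $N(A)$-orbits, and we would fix the convention so that the formula above holds exactly. Explicit evaluation of these $\chi_{\orb}$, via Riemann–Hurwitz and Harer–Zagier for the base orbifold, is deferred as the statement indicates.
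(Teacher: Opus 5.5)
Your argument is the paper's proof. Like the paper, you start from the regrouped form of Theorem~\ref{thm: centraliser} in Remark~\ref{rem: main thm normalizer}, $\chi^{E_n}_{\orb}(\Gamma_g^1)=\sum_{(H)}|\Gen_n(H)|\,\chi_{\orb}(N(H))$, use that finite subgroups of $\Gamma_g^1$ are cyclic, and obtain the factor $1/p^m$ from $\chi_{\orb}(N(H))=\chi_{\orb}(N(H)/H)/|H|$. Here $N(H)/H\cong\Gamma^1(\pi_H)$ by Harer--Zagier, and the paper simply \emph{defines} $\chi_{\orb}(\mathcal{H}^1_h(\Z/p^m,g))$ as the sum of the $\chi_{\orb}(\Gamma^1(\pi_H))$, which is your rigidified (not coarse) convention. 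So the step you flag as the main obstacle is just multiplicativity of $\chi_{\orb}$ under extension by a finite normal subgroup.

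Do drop the plan to show that $N(A)$ acts cocompactly on $\mathcal{T}^A$: this is false in general, since already $\Gamma_g^1$ does not act cocompactly on Teichm\"uller space. The finiteness you need for $\chi_{\orb}(N(A))$ to make sense comes instead from $(\underline{E}\Gamma_g^1)^A$, which is a finite proper $N(A)$-CW model for $\underline{E}N(A)$.
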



The computation uses the fact that any finite subgroup of $\Gamma_g^1$ is cyclic. It is more complicated to compute the $E$-theory orbifold Euler characteristic of the full mapping class group $\Gamma_g$. This is because the finite subgroup structure of $\Gamma_g$ is much more complicated than that of $\Gamma_g^1$. In fact any finite group occurs as a subgroup of $\Gamma_g$ for a large enough genus $g$ (see e.g., \cite[Theorem 7.12]{FarbMarg}). Harer and Zagier \cite{HZ} use the group extension
\[1 \to \pi_1(\Sigma_g) \to \Gamma^1_g \to \Gamma_g \to 1\]
to deduce that $\chi_{\orb}(\Gamma_g)=\frac{\zeta(1-2g)}{2-2g}$. There is no analogous argument to compute $\chi^{E_n}_{\orb}(\Gamma_g)$ for heights $n \geq 1$, because already for finite groups $\chi^{E_n}_{\orb}(G)=\vert BG \vert_{E_n}$ does not behave well with respect to group extensions (see \cite[Section 1.2]{CSYambheight}). Yet we do manage to obtain a general formula for the height one case $\chi^{E_1}_{\orb}(\Gamma_g)$ in Example \ref{ex: height one full mcg}. This uses that at $n=1$ we only need the classification of cyclic subgroups of $\Gamma_g$, which are well understood using \cite{HZ} and the Nielsen realization \cite{Nielsen}. Finally, when the prime $p$ is not too far away from the genus $g$, we obtain formulas for an arbitrary height $n$. For example, we will work out $\chi_{\orb}^{E_n}(\Gamma_{\frac{p-1}{2}})$ and $\chi_{\orb}^{E_n}(\Gamma_{\frac{(p-1)(p-2)}{2}})$ in Section \ref{section: MCG and Harer-Zagier}.


\subsubsection*{Part I: Duality and norm maps for infinite groups}

The key ingredient in our construction of the $E$-theory orbifold Euler characteristic of a group will be a good understanding of \emph{duality} for the generalized homology and cohomology of a group $G$ with finite $\underline{E}G$. Let $\mathcal{C}$ be a presentably symmetric monoidal stable $\infty$-category. (For us, this will typically be some Bousfield localization of the $\infty$-category of spectra.) If $X$ is an object of $\mathcal{C}$ with $G$-action, then homology and cohomology of $G$ with coefficients in $X$ are related through the \emph{norm map}
\[
\mathrm{Nm}_{BG}\colon (X \otimes \mathbf{D}_{BG})_{hG} \to X^{hG},
\]
where $\mathbf{D}_{BG}$ denotes an object of $\mathrm{Fun}(BG,\mathcal{C})$ called the \emph{dualizing complex}. Such norm maps feature in the classical Bieri--Eckmann theory of duality groups \cite{BieEck} and have been generalized, in increasing levels of abstraction, by Klein \cite{Klein,Klein02}, Nikolaus--Scholze \cite{NS}, and Cnossen \cite{Cno23}. We will give a review of the norm and the dualizing complex in Section \ref{sec: norm and Tate}, in which we also include some material on the naturality of the norm that we were unable to find in the literature.

A question of particular interest is when the norm map is invertible. Equivalently, this is the question of when Farrell--Tate cohomology of $G$ (defined to be the cofiber of the norm) with coefficients in $\mathcal{C}$ vanishes. A simple example is where $G$ is finite and $\mathcal{C}$ is the derived category $\mathcal{D}(\mathbb{Q})$ of the rational numbers, or any commutative ring in which the order of $G$ is invertible. Many more examples are given by the rational duality groups of Bieri--Eckmann (cf. Example \ref{ex:rationaldualitygroup}). However, there exist more sophisticated settings where Farrell--Tate cohomology vanishes. The ambidexterity results of Hopkins--Lurie \cite{HL13} and Carmeli--Schlank--Yanovski \cite{CSYambidexterity} imply that the norm map is always an equivalence if the group $G$ is finite (in which case the dualizing complex $\mathbf{D}_{BG}$ turns out to be just the unit) and the $\infty$-category $\mathcal{C}$ is that of $K(n)$-local or $T(n)$-local spectra. We extend this result to a larger class of groups as follows (see Theorem \ref{thm:Tatevanishing} and Proposition \ref{prop:generalizednorm}):

\begin{thm}
\label{thm:Tatevanishingintro}
Let $G$ be a group with finite stable $\underline{E}G$ and take $\mathcal{C}$ to be the $\infty$-category of $T(n)$- or $K(n)$-local spectra. Then for any $X \in \mathrm{Fun}(BG,\mathcal{C})$, the norm map $(X \otimes \mathbf{D}_{BG})_{hG} \to X^{hG}$ is an equivalence in $\mathcal{C}$.
\end{thm}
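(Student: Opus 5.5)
Our strategy is to reduce the statement to the finite-group ambidexterity results of Hopkins--Lurie and Carmeli--Schlank--Yanovski, using the finiteness of $\underline{E}G$ to write the relevant functors as finite colimits of functors attached to finite subgroups. The key reformulation is that the norm map $(X \otimes \mathbf{D}_{BG})_{hG} \to X^{hG}$ compares a colimit-preserving functor of $X$ (the left side) with the right adjoint $(-)^{hG}$. The map is an equivalence for all $X$ exactly when $(-)^{hG}$ is given by tensoring with a suitable object, i.e.\ when the dualizing complex "corepresents" homotopy fixed points. We therefore want to show that the $G$-spectrum $\Sigma^\infty_+ \underline{E}G$ behaves like a dualizable object after $T(n)$-localization.

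\textbf{Step 1: reduce fixed points to finite stabilizers.} Since $\underline{E}G \to \ast$ is an equivalence on $H$-fixed points for finite $H$, and $X$ is a Borel object, we have
\[X^{hG} \simeq \mathrm{map}_G(\underline{E}G_+, X)\]
computed Borel-equivariantly. Indeed $EG \times \underline{E}G \simeq EG$. Filtering $\underline{E}G$ by its finitely many equivariant cells $G/H_\sigma \times D^{d_\sigma}$ expresses $X^{hG}$ as a finite limit of terms $\Sigma^{-d_\sigma} X^{hH_\sigma}$. The same filtration expresses $(X \otimes \mathbf{D}_{BG})_{hG}$, or more precisely the source of the generalized norm of Proposition \ref{prop:generalizednorm}, as a finite colimit of terms of the form $\Sigma^{-d_\sigma} X_{hH_\sigma}$. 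The dualizing complex is then identified, as in that proposition, with the equivariant Spanier--Whitehead dual of $\underline{E}G_+$ restricted to $BG$, i.e.\ $\mathbf{D}_{BG} \simeq \mathrm{map}(\underline{E}G_+, \Sigma^\infty_+ G)^{\text{Borel}}$, using finiteness of $\underline{E}G$ (the "finite stable" hypothesis is exactly what allows this).

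\textbf{Step 2: naturality of the norm.} We check that under these identifications the norm map is the finite (co)limit, over cells, of the norm maps
\[\mathrm{Nm}_{BH_\sigma}\colon X_{hH_\sigma} \to X^{hH_\sigma}\]
for the finite stabilizers, suitably shifted. This uses the naturality material on norms from Section \ref{sec: norm and Tate}, in particular compatibility of norms with induction $G/H \times -$ and with the identification $\mathbf{D}_{BH} \simeq \mathbb{1}$ for finite $H$. Since $\mathcal{C}$ is stable, finite colimits agree with finite limits, so a finite diagram of equivalences yields an equivalence. Each $\mathrm{Nm}_{BH_\sigma}$ is an equivalence in $T(n)$- or $K(n)$-local spectra by ambidexterity, which gives the result.

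\textbf{Main obstacle.} The hard part is Step 2: producing a coherent natural transformation, indexed over the cell structure (or over the orbit category restricted to finite subgroups), between the "homology" functor and the "cohomology" functor whose components are the finite-group norms. The global norm for $G$ must be shown to be the assembled map, not merely abstractly related to it. To handle this, we would work with the functor $\Or_{\mathrm{fin}}(G)^{\op} \to \mathcal{C}$, $G/H \mapsto X^{hH}$, and the analogous covariant functor $G/H \mapsto X_{hH}$. Ambidexterity for finite groups makes these canonically equivalent via norms, compatibly with restriction and transfer. Tensoring with, or mapping out of, the finite cellular chains of $\underline{E}G$ viewed as a presheaf on the orbit category, and comparing with $\mathrm{Nm}_{BG}$ via the universal property of the norm as the assembly map of $(-)^{hG}$, then finishes the proof.
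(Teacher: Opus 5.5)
There is a genuine gap, and it is the step you flag yourself. Step 2 asks you to identify the globally defined $\mathrm{Nm}_{BG}$ with a coherent assembly of the finite-group norms $\mathrm{Nm}_{BH_\sigma}$. That assembly would run over the cells of $\underline{E}G$ with their attaching maps, or over $\mathrm{Or}^G_{\mathrm{fin}}$ with restrictions matched to transfers. The naturality established in Section \ref{sec: norm and Tate} only gives individual commuting squares for a single map $BH \to BG$, not a map of diagrams. Remark \ref{rem: norm limit} says explicitly that factoring $\mathrm{Nm}_{BG}$ through a limit of finite-group norms needs more coherence than the paper has, and leaves it to future work.

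The missing idea is that you never need to compare maps. By Proposition \ref{prop:f*Clinear}, $\mathrm{Nm}_{BG}$ is an equivalence for all $X$ if and only if $(-)^{hG}$ is $\mathcal{C}$-linear. For $\Sp_{T(n)}$ or $\Sp_{K(n)}$, which are generated under colimits and desuspensions by the unit, this just means that $(-)^{hG}$ preserves colimits. That is a property of the functor alone. Your opening reformulation and your closing appeal to the universal property of the assembly map are exactly this tool, but it removes Step 2 rather than resolving it. For a finite unstable model, Step 1 already finishes the proof: the skeletal filtration builds $(-)^{hG}$ by finitely many extensions (and a retract) from the functors $X \mapsto X^{hH_\sigma}$. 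Each of these preserves colimits by ambidexterity, and finite limits commute with colimits in a stable category. You need neither the description of the left-hand side as a finite colimit of $X_{hH_\sigma}$ nor any coherence. This is the paper's proof of Theorem \ref{thm:Tatevanishing}, via Lemma \ref{lem:finiteEGandBG} and Proposition \ref{prop:Cdualityfinitecolim}.

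Second, Step 1 uses a finite $G$-CW structure on $\underline{E}G$, i.e.\ a finite \emph{unstable} model. The statement only assumes that $\mathbb{S}_G=\Sigma^\infty_+\underline{E}G$ is compact in $\Sp^G_{\mathrm{prop}}$, and these differ: the paper's Bestvina--Brady-type examples have finite stable but no finite $\underline{E}G$. The identification $\mathbf{D}_{BG}\simeq \mathbb{S}[G]^{hG}$ that you attribute to the stable hypothesis holds for every $G$, so that is not where the hypothesis enters.

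To reach the stated generality, replace cells by Lemma \ref{lem:SpGcompactgen}: $\mathbb{S}_G$ lies in the thick subcategory generated by the orbits $\Sigma^\infty_+G/H$ with $H$ finite. Consider the exact contravariant functor $Y\mapsto \mathrm{map}_{\mathrm{Fun}(BG,\Sp)}(Y^e,X)$. It sends $\Sigma^\infty_+G/H$ to $X^{hH}$ and sends $\mathbb{S}_G$ to $X^{hG}$. The compact $Y$ for which this value is colimit-preserving in $X$ form a thick subcategory, and it contains the orbits. Hence $(-)^{hG}$ preserves colimits, and Proposition \ref{prop:f*Clinear} applies. The paper runs the same argument through the generalized norm $N_{X,Y}$ (Proposition \ref{prop:generalizednorm} and Corollary \ref{cor: stable e underbar duality}). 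It too uses only that some natural equivalence exhibits $(-)^{hG}$ as colimit-preserving, never that this equivalence is $\mathrm{Nm}_{BG}$.
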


Our next step is a closer look at the dualizing complex $\mathbf{D}_{BG}$. In its usual definition, this is an object of $\mathrm{Fun}(BG,\Sp)$, i.e., a $G$-spectrum in the naivest sense of the word. However, we observe that it arises naturally as the underlying object of a \emph{genuine proper $G$-spectrum} that we construct in Section \ref{sec: genuine dualizing complex}. In fact, we construct a novel \emph{duality functor}
\[
D\colon \Sp_{\mathrm{prop}}^G \to (\Sp_{\mathrm{prop}}^G)^{\mathrm{op}}
\]
on the $\infty$-category of proper $G$-spectra. (The reader should be warned that this is \emph{not} duality with respect to the symmetric monoidal structure.) Then $\mathbf{D}_{BG}$ turns out to be the object underlying the dual of the unit $D(\mathbb{S}_G)$. One important fact about this duality is that any finite proper $G$-CW complex $X$ defines a \emph{fundamental class} 
\[
{[X]}\colon D(\mathbb{S}_G) \to \mathbb{S}_G.
\]
Putting the ingredients together, we can now define the $E$-theory $G$-equivariant Euler characteristic of $X$ to be the following composite (see Definition \ref{def: E-theory euler char}):
\[
E \xrightarrow{\Delta} E^{hG} \xrightarrow{\mathrm{Nm}_{BG}^{-1}} (E \otimes D(\mathbb{S}_G))_{hG} \xrightarrow{(E \otimes {[X]})_{hG}} E_{hG} \xrightarrow{\nabla} E.
\]
When $G$ is a finite group and $X$ is the unit, this simplifies to the composite
\[
E \xrightarrow{\Delta} E^{hG} \xrightarrow{\mathrm{Nm}_{BG}^{-1}} E_{hG} \xrightarrow{\nabla} E
\]
and we retrieve the chromatic cardinality of $BG$ in the sense of Ben-Moshe--Carmeli--Schlank--Yanovski \cite{BMCSY}.
 

\subsubsection*{Acknowledgments}
GH acknowledges the support of the ERC through a Starting Grant 
(no. 950048) and the NWO through a Vidi grant (no. 233.093) and the XL grant \emph{Symmetry on the interface of topology and higher algebra}. IP acknowledges the support of the EPSRC through a NI grant EP/X038424/1 \emph{Classifying spaces, proper actions and stable homotopy theory}. Both authors thank the Isaac Newton Institute for Mathematical Sciences, where part of this work was carried out, for its hospitality and excellent working conditions. This work was also supported by the EPSRC grant no EP/R014604/1.

We thank Shachar Carmeli, Sil Linskens, Akhil Mathew and Oscar Randal-Williams for helpful conversations. We also thank Christian Kremer for pointing out that our duality functor $D$ should be related to \cite{HKK}, see Appendix \ref{app:parametrized}.



\part{Duality and norm maps for infinite groups}

\section{The classifying space for proper actions} 

In this section we review and collect well-known material on the \emph{universal space for proper $G$-actions} $\underline{E}G$, which we will need in later sections. Along the way we take the opportunity to point out some important examples. 

For a discrete group $G$, there are many flavors of $G$-equivariant homotopy theory. The most basic is the $\infty$-category $\mathrm{Fun}(BG,\mathcal{S})$ of spaces with a $G$-action. On the other hand, there is the $\infty$-category of \emph{genuine} $G$-spaces 
\[
\mathcal{S}^G := \mathrm{Fun}((\mathrm{Or}^G)^{\mathrm{op}}, \mathcal{S}),
\]
where $\mathrm{Or}^G$ denotes the orbit category of $G$. By Elmendorf's theorem \cite{Elm}, this $\infty$-category is equivalent to the localization of the category of $G$-CW complexes with respect to maps inducing homotopy equivalences on $H$-fixed points for every subgroup $H \leq G$. 

In this paper we are mostly concerned with the intermediate notion of \emph{proper} $G$-equivariant homotopy theory, which is only sensitive to the data of fixed points with respect to \emph{finite} subgroups $H \leq G$. Write $\mathrm{Or}^G_{\mathrm{fin}}$ for the full subcategory of $\mathrm{Or}^G$ on orbits with finite isotropy, i.e., on the objects $G/H$ with $H$ a finite subgroup of $G$. We will write $\mathcal{S}^G_{\mathrm{prop}}$ for the $\infty$-category of proper $G$-spaces, which can be described in the following three equivalent ways:

\begin{prop}
\label{prop:properGspaces}
    Let $G$ be a discrete group. The following three $\infty$-categories are equivalent:
    \begin{itemize}
        \item[(1)] The localization of the category of $G$-CW complexes where each cell has a finite isotropy group with respect to the maps inducing homotopy equivalences on $H$-fixed points for all subgroups $H \leq G$.
        \item[(2)] The localization of the category of $G$-CW complexes with respect to the maps inducing homotopy equivalences on $H$-fixed points for all finite subgroups $H \leq G$.
        \item[(3)] The presheaf $\infty$-category $\mathrm{PSh}(\mathrm{Or}^G_{\mathrm{fin}}) = \mathrm{Fun}((\mathrm{Or}^G_{\mathrm{fin}})^{\mathrm{op}},\mathcal{S})$.
    \end{itemize}
\end{prop}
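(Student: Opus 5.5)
We will prove the equivalences $(1)\simeq(3)$ and $(2)\simeq(3)$ by comparing everything with the presheaf model, using Elmendorf's theorem as the template. Write $\mathcal{C}_1$ and $\mathcal{C}_2$ for the localizations in (1) and (2). In both cases the comparison functor is the fixed point functor
\[
\Phi\colon X \mapsto \big(G/H \mapsto X^H\big)_{H \text{ finite}},
\]
viewed as a presheaf on $\mathrm{Or}^G_{\mathrm{fin}}$. By definition $\Phi$ inverts the weak equivalences in (2). It also inverts those in (1), since these are a stronger condition. So it descends to functors $\mathcal{C}_1 \to \mathrm{PSh}(\mathrm{Or}^G_{\mathrm{fin}})$ and $\mathcal{C}_2 \to \mathrm{PSh}(\mathrm{Or}^G_{\mathrm{fin}})$. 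Moreover, on $G$-CW complexes with finite isotropy, $H$-fixed points for infinite $H$ are empty. Hence the weak equivalences in (1) are exactly the maps that are equivalences on $H$-fixed points for finite $H$, and (1) is the full subcategory of (2) on complexes with finite isotropy cells.

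The first step is to put model structures on $G$-spaces, following the standard approach for a family of subgroups (here the family $\mathcal{F}$ of finite subgroups). We use the transferred model structure on $G$-spaces in which a map is a weak equivalence or fibration if it is so on $H$-fixed points for all $H\in\mathcal{F}$. Its generating cofibrations are $G/H\times S^{n-1}\to G/H\times D^n$ with $H\in\mathcal{F}$. Transfer applies because fixed points $(-)^H$ commute with the relevant pushouts and filtered colimits along closed inclusions. In this model structure the cofibrant objects are retracts of $G$-CW complexes with cells of finite isotropy. The underlying $\infty$-category is therefore both $\mathcal{C}_2$ and $\mathcal{C}_1$: every $G$-CW complex admits an $\mathcal{F}$-cofibrant replacement, for example $X\times \underline{E}G \to X$. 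This map is an $\mathcal{F}$-equivalence because $\underline{E}G^H\simeq *$ for finite $H$, and $X\times\underline{E}G$ is a $G$-CW complex with finite isotropy. Thus the inclusion in (1) $\subseteq$ (2) induces an equivalence of localizations.

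The second step is the analogue of Elmendorf's theorem: the fixed point functor $\Phi$ is the right adjoint of a Quillen equivalence with the projective model structure on $\mathrm{Fun}((\mathrm{Or}^G_{\mathrm{fin}})^{\op},\mathrm{Top})$. The left adjoint is the coend $F\mapsto \int^{G/H} G/H\times F(G/H)$. We argue exactly as in Elmendorf's (or Piacenza's) proof.
\begin{itemize}
\item $\Phi$ preserves and reflects weak equivalences and fibrations by definition.
\item The left adjoint sends the representable $\mathrm{Map}(-,G/K)$ to $G/K$.
\item $\Phi$ applied to a cell attachment $G/K\times D^n$ gives a pushout of representables times disks, because $(G/K)^H=\mathrm{Map}_G(G/H,G/K)$.
\end{itemize}
Together these show that the unit on cofibrant presheaves (cell complexes built from representables) is an isomorphism, and this yields the Quillen equivalence. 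Finally, the projective model category of space-valued presheaves on $\mathrm{Or}^G_{\mathrm{fin}}$ presents the $\infty$-category $\mathrm{PSh}(\mathrm{Or}^G_{\mathrm{fin}})$ (Lurie, HTT 4.2.4.4), since $\mathrm{Or}^G_{\mathrm{fin}}$ is a discrete category.

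The main obstacle is the point-set verification in the second step: that $\Phi$ commutes with the pushouts along cell inclusions and with the transfinite composites appearing in cell complexes. This is the heart of Elmendorf's theorem. We expect it to go through verbatim with the family of finite subgroups in place of all subgroups, working in compactly generated weak Hausdorff spaces and using closed inclusions. Alternatively, and more cleanly, we can note that the statement is the restriction of the known equivalence $\mathcal{S}^G\simeq \mathrm{PSh}(\mathrm{Or}^G)$ to the full subcategories generated under colimits by orbits $G/H$ with $H$ finite. In this approach the presheaf side becomes $\mathrm{PSh}(\mathrm{Or}^G_{\mathrm{fin}})$, the left Kan extension along the fully faithful inclusion $\mathrm{Or}^G_{\mathrm{fin}}\subseteq\mathrm{Or}^G$. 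The comparison with (1) and (2) then goes through the cofibrant replacement $X\times\underline{E}G$ from the first step.
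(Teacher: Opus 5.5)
Your proposal is correct and follows essentially the same route as the paper. The paper's proof is only a citation of the Elmendorf-type theorem for the family of finite subgroups in \cite[Section 7]{DL98} (see also \cite[Remark 3.1.12]{DHLPS}), and your Quillen equivalence with the projective model structure on presheaves over $\mathrm{Or}^G_{\mathrm{fin}}$, together with the natural equivalence $X \times \underline{E}G \to X$ identifying the localizations in (1) and (2), is exactly the argument those references carry out.
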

\begin{proof} This is essentially contained in \cite[Section 7]{DL98}. See also \cite[Remark 3.1.12]{DHLPS}. 
\end{proof}

For a presheaf $X \in \mathrm{PSh}(\mathrm{Or}^G_{\mathrm{fin}})$ we will, as is customary, use the notation $X^H$ for the evaluation of $X$ at the orbit $G/H$ and refer to it as the $H$-fixed points of $X$. Note that the object $G/e \in \mathrm{Or}^G_{\mathrm{fin}}$ has automorphism group $G$. In particular, evaluation at $G/e$ determines a functor
\[
\mathcal{S}^G_{\mathrm{prop}} \to \mathrm{Fun}(BG,\mathcal{S})\colon X \mapsto X^e.
\]
We will sometimes refer to $X^e$ as the \emph{underlying space} of $X$.

By definition, the space $\underline{E}G$ is a $G$-CW complex such that the fixed points $\underline{E}G^H$ are contractible when $H$ is finite and empty otherwise. It is naturally thought of as an object of $\mathcal{S}^G_{\mathrm{prop}}$ through perspective (1) of Proposition \ref{prop:properGspaces}. In terms of perspectives (2) and (3) it corresponds simply to the one-point space. Said more invariantly, $\underline{E}G$ is the terminal object of $\mathcal{S}^G_{\mathrm{prop}}$. More details and examples of $\underline{E}G$ can be found for example in \cite{LuckSurvey}. We record the following description of it (see e.g., \cite[Lemma 2.2]{LO01}):

\begin{lem}
\label{lem:underbarEGcolim}
The unique map of proper $G$-spaces
\[
\colim_{G/H \in \mathrm{Or}^G_{\mathrm{fin}}} G/H \to \underline{E}G
\]
is an equivalence.
\end{lem}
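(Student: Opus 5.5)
The natural approach is to work in model (3) of Proposition \ref{prop:properGspaces}, where $\mathcal{S}^G_{\mathrm{prop}} = \mathrm{PSh}(\mathrm{Or}^G_{\mathrm{fin}})$. In this model each orbit $G/H$ with $H$ finite is the representable presheaf $\mathrm{Map}_{\mathrm{Or}^G_{\mathrm{fin}}}(-,G/H)$, i.e.\ the Yoneda image of $G/H$. The space $\underline{E}G$ is the terminal object, the constant presheaf at a point. So the claim becomes the following general fact: for any small $\infty$-category $\mathcal{A}$, the colimit in $\mathrm{PSh}(\mathcal{A})$ of the Yoneda embedding $y\colon \mathcal{A} \to \mathrm{PSh}(\mathcal{A})$ is the terminal presheaf.

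To prove this I would compute colimits objectwise. Since colimits in presheaf categories are computed pointwise, for each finite $K \le G$ we get
\[
\Big(\colim_{G/H \in \mathrm{Or}^G_{\mathrm{fin}}} G/H\Big)^K \simeq \colim_{G/H \in \mathrm{Or}^G_{\mathrm{fin}}} \mathrm{Map}(G/K, G/H).
\]
This is the colimit of the corepresentable functor $\mathrm{Map}(G/K,-)\colon \mathrm{Or}^G_{\mathrm{fin}} \to \mathcal{S}$. The colimit of a corepresentable functor $\mathcal{A}\to\mathcal{S}$ is the classifying space of its category of elements, the under-category $\mathcal{A}_{G/K/}$. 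That category has an initial object, namely $\mathrm{id}_{G/K}$, so its classifying space is contractible.

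Equivalently, one can apply the co-Yoneda lemma: a presheaf $F$ is the colimit of representables indexed over its category of elements $\mathcal{A}_{/F}$. Taking $F = *$, the category $\mathcal{A}_{/*}$ is just $\mathcal{A}$, which gives $\colim_{\mathcal{A}} y \simeq *$ directly. Either way, every fixed-point space $(\colim G/H)^K$ is contractible. The map to $\underline{E}G$ is therefore an equivalence on all $K$-fixed points with $K$ finite, hence an equivalence of presheaves.

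The only point needing care is the translation between models. One must confirm that under the equivalence of Proposition \ref{prop:properGspaces}:
\begin{itemize}
\item the $G$-CW complex $G/H$ corresponds to the representable presheaf, because $(G/H)^K = \mathrm{Map}_G(G/K, G/H)$ is discrete and Yoneda gives the same set;
\item $\underline{E}G$ corresponds to the terminal presheaf, as already noted in the text.
\end{itemize}
Since the equivalence preserves colimits, the $\infty$-categorical colimit taken in model (3) is the one intended in the statement. I expect no genuine obstacle; the main thing to get right is being clear that the colimit is the homotopy colimit, taken in $\mathcal{S}^G_{\mathrm{prop}}$, rather than a point-set colimit of $G$-sets.
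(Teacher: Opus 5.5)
Your proposal is correct and follows the same approach as the paper. The paper's proof simply cites the standard fact that in a presheaf category the terminal object is the colimit of the Yoneda embedding. You reduce the lemma to that same fact and also supply its proof: pointwise, the colimit of a corepresentable functor is the classifying space of an under-category, which has an initial object and is therefore contractible.
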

\begin{proof}
It is standard (and easy to show) that in any presheaf category, the terminal object can be written as the colimit of the Yoneda embedding.
\end{proof}

\begin{cor}
\label{cor:BGcolim}
The canonical map
\[
\colim_{G/H \in \mathrm{Or}^G_{\mathrm{fin}}} BH \to BG
\]
is an equivalence of spaces.
\end{cor}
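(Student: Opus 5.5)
The plan is to deduce the corollary from Lemma \ref{lem:underbarEGcolim} by applying a colimit-preserving functor $\mathcal{S}^G_{\mathrm{prop}} \to \mathcal{S}$ that sends $G/H$ to $BH$ and $\underline{E}G$ to $BG$. The natural candidate is the homotopy orbits of the underlying space, $X \mapsto (X^e)_{hG}$. This is the composite of evaluation at $G/e$, namely $\mathcal{S}^G_{\mathrm{prop}} \to \mathrm{Fun}(BG,\mathcal{S})$, with the colimit functor $\mathrm{Fun}(BG,\mathcal{S}) \to \mathcal{S}$.

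First, I check that this composite preserves colimits. Evaluation at an object of a presheaf $\infty$-category preserves all colimits, since colimits are computed pointwise. Homotopy orbits is a left adjoint (to the constant functor), so it preserves colimits as well. Applying the composite to the equivalence of Lemma \ref{lem:underbarEGcolim} therefore gives an equivalence
\[
\colim_{G/H \in \mathrm{Or}^G_{\mathrm{fin}}} ((G/H)^e)_{hG} \xrightarrow{\ \sim\ } (\underline{E}G^e)_{hG}.
\]

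Next, I identify both sides. The underlying space of the representable $G/H$ is $\mathrm{Map}_{\mathrm{Or}^G_{\mathrm{fin}}}(G/e, G/H) = G/H$, viewed as a discrete $G$-set with its evident action. For a transitive $G$-set, the homotopy orbits are $(G/H)_{hG} \simeq BH$. On the other side, $\underline{E}G^e$ is contractible, because $e$ is a finite subgroup. Hence $(\underline{E}G^e)_{hG} \simeq (\mathrm{pt})_{hG} = BG$.

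The only point needing care is naturality. I have to make sure that the resulting map $\colim BH \to BG$ is the canonical one, i.e.\ that it is induced by the maps $BH \to BG$ coming from the inclusions $H \leq G$, functorially in $\mathrm{Or}^G_{\mathrm{fin}}$. This holds because $(G/H)_{hG} \to (\mathrm{pt})_{hG}$ is induced by the map to the terminal object, and under the identification $(G/H)_{hG} \simeq BH$ this is exactly $BH \to BG$. So the equivalence above is the canonical map, which proves the corollary.
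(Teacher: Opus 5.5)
Your proposal is correct and matches the paper's argument: pass to underlying spaces in Lemma~\ref{lem:underbarEGcolim}, apply the colimit-preserving functor $(-)_{hG}$, and use $(G/H)_{hG}\simeq BH$ together with contractibility of $\underline{E}G^e$. Your explicit checks that the composite preserves colimits and that the resulting map is the canonical one are details the paper leaves implicit.
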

\begin{proof}
This follows by passing to underlying spaces of the map of Lemma \ref{lem:underbarEGcolim} and applying the homotopy orbits functor $(-)_{hG}$. We use that the underlying space of $\underline{E}G$ is contractible and that $(G/H)_{hG} \simeq BH$.
\end{proof}

The colimit formula for $\underline{E}G$ of Lemma \ref{lem:underbarEGcolim} will be useful to us in the paper, but because of its universal nature it is also inefficient. Often there exist much smaller cell decompositions of $\underline{E}G$; having such small decompositions is a crucial tool in studying the (co)homology of $G$. We will say that $\underline{E}G$ is finite if as a $G$-CW complex it can be built from finitely many cells of the form $G/H \times D^n$, with $H$ ranging over finite subgroups of $G$. To be more precise, one says $G$ has \emph{finite $\underline{E}G$} if any of the equivalent conditions of the following lemma is satisfied:


\begin{lem} \label{lem: finite underline}
Let $G$ be a discrete group. Then the following are equivalent:
\begin{itemize}
\item[(1)] In terms of perspective (1) of Proposition \ref{prop:properGspaces}, $\underline{E}G$ is a retract of a finite $G$-CW complex with finite isotropy groups.
\item[(2)] The terminal object of $\mathrm{PSh}(\mathrm{Or}^G_{\mathrm{fin}})$ can be written as a retract of a finite colimit of representables $G/H$.
\item[(3)] The terminal object of $\mathcal{S}^{G}_{\mathrm{prop}}$ is compact.
\end{itemize}
\end{lem}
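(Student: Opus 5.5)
The plan is to prove the cycle (1) $\Rightarrow$ (2) $\Rightarrow$ (3) $\Rightarrow$ (1), using the equivalence of perspectives (1) and (3) of Proposition \ref{prop:properGspaces} throughout.

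For (1) $\Rightarrow$ (2): a finite $G$-CW complex $Y$ with finite isotropy is built by finitely many cell attachments. Each attachment is a pushout of $G/H \times S^{k-1} \to G/H \times D^k$. Under the equivalence of Proposition \ref{prop:properGspaces}, this pushout becomes a homotopy pushout in $\mathrm{PSh}(\mathrm{Or}^G_{\mathrm{fin}})$. The presheaf $G/H \times D^k$ is equivalent to the representable $G/H$. The presheaf $G/H \times S^{k-1}$ is a finite colimit of copies of $G/H$, namely the colimit of the constant diagram on $G/H$ indexed by a finite simplicial model of $S^{k-1}$. By induction on the number of cells, $Y$ is a finite colimit of representables. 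A retraction $\underline{E}G \to Y \to \underline{E}G$ of $G$-CW complexes maps to a retraction in the localization. Since $\underline{E}G$ corresponds to the terminal presheaf, (2) follows.

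For (2) $\Rightarrow$ (3): representables are compact in a presheaf $\infty$-category, because $\mathrm{Map}(G/H,-)$ is evaluation at $G/H$, which preserves all colimits. Finite colimits of compact objects are compact, and so are retracts. Hence the terminal object of $\mathcal{S}^G_{\mathrm{prop}}$ is compact.

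For (3) $\Rightarrow$ (1), which is the main step: by Lemma \ref{lem:underbarEGcolim}, every presheaf is a filtered colimit of finite colimits of representables. Concretely, $\mathrm{PSh}(\mathrm{Or}^G_{\mathrm{fin}}) \simeq \mathrm{Ind}$ of the closure of the representables under finite colimits. So the terminal object $*$ is a filtered colimit $\colim_i Y_i$ with each $Y_i$ a finite colimit of representables. Compactness makes the identity of $*$ factor through some $Y_i$, so $*$ is a retract of $Y_i$ in the $\infty$-category.

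It remains to realize $Y_i$ and the retraction by actual $G$-CW complexes. First, realize a finite colimit of representables as a finite $G$-CW complex with finite isotropy. For this, choose a finite cell structure by induction: every finite colimit in the closure can be built from iterated pushouts of maps between representables and the initial object, and any map $G/H \to G/K$ is realized by a $G$-map. Taking mapping cylinders keeps everything finite and cellular. Second, the maps $* \to Y_i \to *$ become, via perspective (1), maps $\underline{E}G \to Y_i$ and $Y_i \to \underline{E}G$ whose composite is $G$-homotopic to the identity. Such maps can be realized by cellular $G$-maps, since $\underline{E}G$ is cofibrant and $Y_i$ is fibrant in the model structure. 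A retract up to $G$-homotopy is the intended meaning of "retract" in (1). If a strict retract is required, I would replace $Y_i$ by the mapping cylinder of $Y_i \to \underline{E}G$. The only subtle points are the realization of homotopy retracts and the fact that the resulting complexes stay finite. I expect this last bookkeeping to be the main obstacle, though it is standard.
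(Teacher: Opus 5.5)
The paper states this lemma without proof and treats it as standard, so your argument has to stand on its own. Your cycle is the natural one, and the steps (1)$\Rightarrow$(2) and (2)$\Rightarrow$(3) are fine. In (3)$\Rightarrow$(1), however, there is one miscited fact, one step that fails as written, and one wrong aside.

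\textbf{The filtered presentation.} It does not come from Lemma~\ref{lem:underbarEGcolim}. That lemma writes the terminal object as a colimit over $\mathrm{Or}^G_{\mathrm{fin}}$, which is not filtered. What you actually use is the standard fact that the compact objects of a presheaf $\infty$-category are exactly the retracts of finite colimits of representables (see \cite[Section~5.3.4]{LurieHTT}). This gives (3)$\Rightarrow$(2) directly.

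\textbf{The realization step.} As written, it does not work. Finite colimits of representables are \emph{not} generated by pushouts along maps between representables. Attaching even one cell is a pushout along a map $G/H\times S^{k-1}\to Y$ whose source and target are not representable. So realizing the maps $G/H\to G/K$ by $G$-maps is not enough.

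The fix uses the fact you only invoke at the end. Let $\mathcal{F}$ be the full subcategory of presheaves equivalent to finite $G$-CW complexes with finite isotropy. It contains the representables and $\emptyset$, and it is closed under pushouts, for the following reasons.
\begin{itemize}
\item[(a)] A span $Y\leftarrow X\to Z$ carries no coherence data.
\item[(b)] Each leg is a $G$-homotopy class of $G$-maps between $G$-CW complexes, and by equivariant cellular approximation it can be chosen cellular.
\item[(c)] After replacing one leg by its mapping cylinder, the strict pushout is a finite $G$-CW complex computing the $\infty$-categorical pushout.
\end{itemize}
Since finite colimits are built from pushouts and the initial object, $\mathcal{F}$ contains every finite colimit of representables.

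\textbf{A shorter route.} You can skip the realization bookkeeping entirely. Fix a $G$-CW model of $\underline{E}G$. It is the filtered union of its finite $G$-subcomplexes, all of which have finite isotropy, and this union is a filtered colimit in $\mathcal{S}^G_{\mathrm{prop}}$. Compactness of the terminal object then factors the identity, up to $G$-homotopy, through a single finite subcomplex. That is exactly (1).

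\textbf{The strict-retract aside.} Drop the mapping-cylinder remark. The cylinder of $Y_i\to\underline{E}G$ contains $\underline{E}G$, so it is not finite unless $\underline{E}G$ already is. Moreover, a $G$-CW complex that is a strict retract of a finite $G$-CW complex is itself finite, since its orbit space is compact. Getting a strict retract would therefore produce a finite model, which is the open problem recalled in Remark~\ref{rem:finitevsfinitelydom}. Condition (1) means a retract in the $\infty$-category, i.e.\ finite domination, and that is what your argument (or the shorter route) gives.
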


\begin{rem} \label{rem:finitevsfinitelydom} Classically, the condition (1) in Lemma \ref{lem: finite underline} is referred to by saying that $G$ admits a \emph{finitely dominated} $\underline{E}G$. This is potentially slightly weaker than having a finite $\underline{E}G$; however, we are not aware of any example of a group which admits a finitely dominated $\underline{E}G$ but does not admit a finite $G$-CW complex model for $\underline{E}G$ (see e.g., \cite[Problem 7.13]{Luckfinite}). For all of our theoretical results, the equivalent conditions of Lemma \ref{lem: finite underline} will suffice. When we do actually need a finite model of $\underline{E}G$ for calculation, we will announce this explicitly.
\end{rem}

\begin{ex} Most interesting examples of groups in geometric group theory and number theory admit a finite $\underline{E}G$. We list here examples that are relevant in this paper. For any finite group, the one-point space with the trivial action is a model for a finite $\underline{E}G$. For an amalgamated product of finite groups, the Bass-Serre tree gives a finite $\underline{E}G$ \cite{Serre} (Example \ref{ex: amalgamated1}). In particular, since $SL_2(\Z)$ is isomorphic to $C_4 \ast_{C_2} C_6$, one sees that $SL_2(\Z)$ admits a finite $\underline{E}G$. Right-angled Coxeter groups admit a finite $\underline{E}G$ given by the Davis complex \cite[Chapter 7]{Dav}. The Borel--Serre compactification is a model of a finite $\underline{E}G$ for arithmetic groups \cite{Ji2007}. Finally, Harer's spine of Teichm\"uller space is a model for a finite $\underline{E}G$ for mapping class groups \cite{Har, Broughton90, JiWolpert2010, Mislin2010}.  
\end{ex}

Let us record the following simple observation explicitly for later use:

\begin{lem}
\label{lem:finiteEGandBG}
If $G$ has finite $\underline{E}G$, then the classifying space $BG$ is a retract of a finite colimit of classifying spaces $BH$ of finite groups $H$.
\end{lem}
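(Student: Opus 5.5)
We will derive this from condition (2) of Lemma \ref{lem: finite underline}, in the same way Corollary \ref{cor:BGcolim} was deduced from Lemma \ref{lem:underbarEGcolim}. The key tool is the composite functor
\[
\Phi\colon \mathcal{S}^G_{\mathrm{prop}} \to \mathrm{Fun}(BG,\mathcal{S}) \to \mathcal{S}, \qquad X \mapsto (X^e)_{hG},
\]
that is, evaluation at $G/e$ followed by homotopy orbits. We will use three properties of $\Phi$.

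First, $\Phi$ preserves all colimits. Evaluation at an object of a presheaf $\infty$-category preserves colimits, since colimits are computed pointwise. The functor $(-)_{hG}$ is a left adjoint, so it preserves colimits too. Second, $\Phi$ preserves retracts, as any functor does. Third, on representables we have $\Phi(G/H) \simeq (G/H)_{hG} \simeq BH$. This holds because the underlying $G$-space of the representable $G/H$ is the discrete $G$-set $G/H$, exactly as used in the proof of Corollary \ref{cor:BGcolim}.

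Now suppose $G$ has finite $\underline{E}G$. By Lemma \ref{lem: finite underline}(2), the terminal object $\underline{E}G$ is a retract of some finite colimit $\colim_{i \in I} G/H_i$. Here $I$ is a finite diagram and each $H_i$ is a finite subgroup. Applying $\Phi$ shows that $\Phi(\underline{E}G)$ is a retract of $\colim_{i\in I} BH_i$, which is a finite colimit of classifying spaces of finite groups.

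It remains to identify $\Phi(\underline{E}G)$. The underlying space of $\underline{E}G$ is contractible: it is the terminal presheaf evaluated at $G/e$, carrying the trivial $G$-action. Hence $\Phi(\underline{E}G) \simeq (\ast)_{hG} \simeq BG$. There is no real obstacle in this argument. The only point needing care is the identification $(G/H)_{hG} \simeq BH$ in the $\infty$-categorical setting, which the paper has already used. One can also note that the maps $BH_i \to BG$ appearing in the colimit are the ones induced by the inclusions $H_i \leq G$, by naturality of $\Phi$.
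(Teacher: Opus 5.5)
Your proof is correct and follows the paper's route: the paper also applies $(-)_{hG}$ (to the underlying $G$-space) to a retract of a finite decomposition of $\underline{E}G$ with finite stabilizers, using $(G/H)_{hG}\simeq BH$ and the contractibility of the underlying space of $\underline{E}G$. The only difference is cosmetic: you phrase the finite decomposition via Lemma~\ref{lem: finite underline}(2) and spell out that evaluation at $G/e$ and $(-)_{hG}$ preserve colimits, whereas the paper refers to a finite $G$-CW cell structure.
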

\begin{proof}
Apply $(-)_{hG}$ to a (retract of a) finite decomposition of $\underline{E}G$ into cells with finite stabilizers $H$.
\end{proof}

\begin{ex} \label{ex: amalgamated1}
Let $H \leq K,L$ be finite groups and  $G=K \ast_H L$ the amalgamated product. By Bass--Serre theory \cite{Serre}, a model of $\underline{E}G$ is given by the pushout square of $G$-spaces
\[\xymatrix{G/H \times S^0 \ar[r] \ar[d] & G/K \coprod G/L \ar[d] \\ G/H \times D^1 \ar[r] &\underline{E}G,}\]
where the top horizontal map is the disjoint union of restrictions. In particular, the classifying space $BG$ is given by the pushout:
\[\xymatrix{BH \times S^0 \ar[r] \ar[d] & BK \coprod BL \ar[d] \\ BH \times D^1 \ar[r] &BG.}\]

\end{ex}

To conclude this section we will give yet another description of the $\infty$-category $\mathcal{S}^G_{\mathrm{prop}}$ of proper $G$-spaces and introduce the $\infty$-category $\mathrm{Sp}^G_{\mathrm{prop}}$ of proper $G$-spectra. Recall that for a group $H$ we write $\mathcal{S}^H$ for the $\infty$-category of genuine $H$-spaces.

\begin{prop}
\label{prop:S^Gaslimit}
    The restriction functors induce an equivalence of $\infty$-categories
    \[
    \mathcal{S}^{G}_{\mathrm{prop}} \xrightarrow{\simeq} \lim_{G/H \in (\mathrm{Or}^G_{\mathrm{fin}})^{\mathrm{op}}} \mathcal{S}^H.
    \]
\end{prop}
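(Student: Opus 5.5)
Our plan is to reduce the statement to a general fact about presheaf $\infty$-categories: for a small $\infty$-category $I$, $\mathrm{PSh}(I) \simeq \lim_{i \in I^{\mathrm{op}}} \mathrm{PSh}(I_{/i})$, with transition maps given by restriction along the forgetful functors $I_{/j} \to I_{/i}$ for $j \to i$. This follows from descent in $\mathrm{PSh}(I)$ (colimits are universal in an $\infty$-topos). By Lemma \ref{lem:underbarEGcolim} the terminal object is $\colim_{i \in I} y(i)$, so
\[
\mathrm{PSh}(I) \simeq \mathrm{PSh}(I)_{/\colim_i y(i)} \simeq \lim_{i \in I^{\mathrm{op}}} \mathrm{PSh}(I)_{/y(i)} \simeq \lim_{i\in I^{\mathrm{op}}} \mathrm{PSh}(I_{/i}),
\]
using $\mathrm{PSh}(I)_{/y(i)} \simeq \mathrm{PSh}(I_{/i})$, where the transition functors are pullback along $y(j)\to y(i)$. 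We will apply this with $I = \mathrm{Or}^G_{\mathrm{fin}}$, whose terminal presheaf is $\underline{E}G$ via perspective (3) of Proposition \ref{prop:properGspaces}.

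The second step identifies the slices. For $H \leq G$ finite, we claim $(\mathrm{Or}^G_{\mathrm{fin}})_{/(G/H)} \simeq \mathrm{Or}^H$. Indeed, the functor $\mathrm{Or}^H \to \mathrm{Or}^G_{/(G/H)}$ sending $H/K \mapsto (G\times_H H/K = G/K \to G/H)$ is an equivalence of ordinary categories: objects of the slice are $G$-maps $T \to G/H$ with $T$ transitive, which correspond to transitive $H$-sets via taking the fiber over $eH$; and maps over $G/H$ correspond to $H$-maps of fibers. Since $H$ is finite, every subgroup of $H$ is finite, so the slice of $\mathrm{Or}^G_{\mathrm{fin}}$ agrees with that of $\mathrm{Or}^G$. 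Hence $\mathrm{PSh}((\mathrm{Or}^G_{\mathrm{fin}})_{/(G/H)}) \simeq \mathrm{PSh}(\mathrm{Or}^H) = \mathcal{S}^H$.

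Finally, we check that the resulting functor is the restriction functor. Pullback along $y(G/K) \to y(G/H)$ is, under the identifications above, precomposition with the induced functor $\mathrm{Or}^K \to \mathrm{Or}^H$, $K/L \mapsto H/L$ (for an inclusion, and composed with conjugation in general); this is exactly genuine restriction $\mathcal{S}^H \to \mathcal{S}^K$. Likewise, the component $\mathcal{S}^G_{\mathrm{prop}} \to \mathcal{S}^H$ is $X \mapsto X \times G/H$ viewed over $G/H$, whose $K$-fixed points for $K \leq H$ are $X^K$; this is the restriction functor. The main obstacle is coherence: the limit must be assembled functorially in $(\mathrm{Or}^G_{\mathrm{fin}})^{\mathrm{op}}$, not just objectwise. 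We would handle this by working throughout with the straightened functor $i \mapsto \mathrm{PSh}(I)_{/y(i)}$ (the codomain-fibration / descent functor), for which coherence is automatic, and only afterwards comparing it to $G/H \mapsto \mathcal{S}^H$ via the natural equivalence of slices $I_{/i}\simeq \mathrm{Or}^H$ and the natural equivalence $\mathrm{PSh}(I)_{/y(i)}\simeq \mathrm{PSh}(I_{/i})$ (e.g. as in Lurie's HTT, Cor. 5.1.6.12).
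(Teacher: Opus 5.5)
Your argument is correct, but it takes a different route from the paper.

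The paper never passes through the topos $\mathrm{PSh}(\mathrm{Or}^G_{\mathrm{fin}})$. Instead it decomposes the indexing category itself: it proves $\mathcal{C} \simeq \colim_{x \in \mathcal{C}} \mathcal{C}_{/x}$ in $\mathrm{Cat}_\infty$. The unstraightening of $x \mapsto \mathcal{C}_{/x}$ is $\mathrm{Ar}(\mathcal{C})$, and the source projection, which has the fully faithful left adjoint $x \mapsto \mathrm{id}_x$, exhibits $\mathcal{C}$ as the localization at the coCartesian arrows. The paper then identifies $(\mathrm{Or}^G_{\mathrm{fin}})_{/(G/H)} \simeq \mathrm{Or}^H$ exactly as you do, and applies $\mathrm{Fun}((-)^{\mathrm{op}},\mathcal{S})$, which turns colimits in $\mathrm{Cat}_\infty$ into limits along precomposition.

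The payoff of that route is that the coherence problem you flag essentially disappears. The functoriality in $G/H$ lives at the level of ordinary categories of slices, and the transition functors are visibly precomposition with $\mathrm{Or}^K \to \mathrm{Or}^H$, $K/L \mapsto H/L$, i.e.\ restriction. No equivalence $\mathrm{PSh}(I)_{/y(i)} \simeq \mathrm{PSh}(I_{/i})$ natural in $i$ is needed.

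Your route needs two heavier inputs:
\begin{itemize}
\item Full descent in $\mathrm{PSh}(I)$. Universality of colimits alone only shows that $\mathrm{PSh}(I)_{/\colim_i y(i)} \to \lim_i \mathrm{PSh}(I)_{/y(i)}$ is fully faithful; essential surjectivity is the effectivity half of descent, e.g.\ \cite[Theorem 6.1.3.9]{LurieHTT}, which holds because $\mathrm{PSh}(I)$ is an $\infty$-topos.
\item A version of \cite[Corollary 5.1.6.12]{LurieHTT} that is coherent in $i$.
\end{itemize}
In exchange, your argument generalizes verbatim to $(\mathcal{S}^G_{\mathrm{prop}})_{/X} \simeq \lim_{G/H \to X} \mathcal{S}^H$ for any proper $G$-space $X$. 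That is precisely the mechanism behind the formula $\Sp^G_X \simeq \lim_{G/H \to X} \Sp^H$ used in the appendix on parametrized homotopy theory.
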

\begin{proof}
Generally, for an $\infty$-category $\mathcal{C}$, there is a canonical equivalence
\[
\mathcal{C} \simeq \mathrm{colim}_{x \in \mathcal{C}} \mathcal{C}_{/x}
\]
in $\mathrm{Cat}_\infty$. Indeed, this colimit can be computed by forming the Grothendieck construction of $x \mapsto \mathcal{C}_{/x}$ and localizing it at the coCartesian arrows. This Grothendieck construction is the arrow category $\mathrm{Ar}(\mathcal{C})$ of $\mathcal{C}$ and the coCartesian arrows are the maps going to equivalences under the source projection $\mathrm{Ar}(\mathcal{C}) \to \mathcal{C}$. After performing this localization, the source projection becomes an equivalence: indeed, it has a fully faithful left adjoint by assigning to an object its identity arrow and the counit becomes an equivalence in the stated localization. We now apply this abstract colimit decomposition to the case $\mathcal{C} = \mathrm{Or}^G_{\mathrm{fin}}$ and observe that the slice over an orbit $G/H$ is canonically equivalent to the orbit category $\mathrm{Or}^H$. Taking presheaves on these orbit categories gives the result.
\end{proof}

The category of proper $G$-spectra $\Sp^G_{\mathrm{prop}}$ was defined in \cite{DHLPS}. In this paper we use the $\infty$-categorical description of this category by Linskens--Nardin--Pol. By \cite[Theorem 12.11]{LNP}, the restriction functors induce an equivalence
\[\Sp^G_{\mathrm{prop}} \xrightarrow{\simeq} \lim_{G/H \in (\Or^G_{\mathrm{fin}})^{\op}} \Sp^H,\]
where $\Sp^H$ is the category of genuine $H$-spectra. Since this is a limit of presentable $\infty$-categories along right adjoint functors, \cite[Corollary 5.5.3.4 and Theorem 5.5.3.18]{LurieHTT} show that we may also describe $\Sp_{\mathrm{prop}}^G$ as the colimit in $\Pr^L$ along the induction functors (being left adjoint to restriction) as follows:
\[\colim_{\Or^G_{\mathrm{fin}}} \Sp^H \xrightarrow{\simeq} \Sp^G_{\mathrm{prop}}. \]
In particular, we have for each finite subgroup $H \leq G$ a left adjoint functor $\Sp^H \to \Sp^G_{\mathrm{prop}}$ that we denote by $\mathrm{ind}_H^G$ and refer to as induction.

The first description in terms of restrictions has the advantage that it equips $\Sp^G_{\mathrm{prop}}$ with a symmetric monoidal structure so that for any finite subgroup $H \leq G$, the restriction $\Sp^G_{\mathrm{prop}} \to \Sp^H$ has a symmetric monoidal structure. As in the case of proper $G$-spaces, restricting to the free orbit $G/e$ defines a `forgetful functor'
\[
\mathrm{Sp}^G_{\mathrm{prop}} \to \mathrm{Fun}(BG,\Sp)\colon X \mapsto X^e.
\]
We record the following description of the underlying spectrum for later use, where we write $\mathbb{S}[G]$ for the induced proper $G$-spectrum $\mathrm{ind}_{e}^G \mathbb{S}$:

\begin{lem}
\label{lem:underlyingspectrum}
For $X \in \mathrm{Sp}^G_{\mathrm{prop}}$, there is a natural equivalence
\[
X^e \simeq \mathrm{map}^G(\mathbb{S}[G], X)
\]
in $\mathrm{Fun}(BG,\Sp)$, where the $\mathrm{map}^G$ on the right-hand side denotes the mapping spectrum associated with the stable $\infty$-category $\mathrm{Sp}^G_{\mathrm{prop}}$ and the (left) $G$-action arises from the evident (right) $G$-action on $\mathbb{S}[G]$.
\end{lem}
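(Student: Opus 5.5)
The argument is a direct adjunction computation using the description of $\mathrm{Sp}^G_{\mathrm{prop}}$ as a colimit in $\Pr^L$ along inductions, equivalently a limit along restrictions. By construction, $\mathrm{ind}_e^G\colon \Sp = \Sp^e \to \Sp^G_{\mathrm{prop}}$ is left adjoint to the restriction functor $\mathrm{res}^G_e\colon \Sp^G_{\mathrm{prop}} \to \Sp$, which is evaluation of the limit at the object $G/e$ of $\Or^G_{\mathrm{fin}}$. This evaluation is exactly the underlying spectrum $X \mapsto X^e$.

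First, the adjunction gives natural equivalences of spectra
\[
\mathrm{map}^G(\mathbb{S}[G], X) = \mathrm{map}^G(\mathrm{ind}_e^G \mathbb{S}, X) \simeq \mathrm{map}_{\Sp}(\mathbb{S}, X^e) \simeq X^e.
\]
It remains to make this equivalence $G$-equivariant, which is the only real point.

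Second, I track the $G$-actions through the functoriality of the diagram. The object $G/e \in \Or^G_{\mathrm{fin}}$ has automorphism group $G$, acting by right multiplication. Hence the functor $BG \to \Or^G_{\mathrm{fin}}$ picking out $G/e$ composes with the colimit diagram $G/H \mapsto \Sp^H$ to give a $G$-action on the functor $\mathrm{ind}_e^G$. Evaluated on $\mathbb{S}$, this action is the right $G$-action on $\mathbb{S}[G]$. Dually, the limit description gives the $G$-action on $\mathrm{res}^G_e$, i.e.\ the $G$-action on $X^e$ used to define the forgetful functor to $\Fun(BG,\Sp)$.

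The cleanest way to see that these two actions correspond is to restrict the colimit/limit diagram along $BG \to \Or^G_{\mathrm{fin}}$. The inclusion of $\Sp$ at $G/e$ then becomes a map of diagrams from the constant diagram on $BG$, and its mate is $\mathrm{res}^G_e$. Mates of natural transformations between diagrams of adjunctions are compatible with the unit and counit. So the adjunction equivalence $\mathrm{map}(\mathrm{ind}_e^G \mathbb{S}, X) \simeq \mathrm{map}(\mathbb{S}, \mathrm{res}^G_e X)$ is an equivalence of functors $BG \to \Sp$. The contravariance in the first variable turns the right action on $\mathbb{S}[G]$ into a left action on mapping spectra. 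Naturality in $X$ follows from the naturality of the adjunction equivalence.

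I expect the main obstacle to be this coherence step: identifying the $G$-action on $\mathrm{ind}_e^G$ coming from the colimit with the one coming from the mate of the $G$-action on restriction. I would handle it using the parametrized-adjunction principle (Lurie, relative adjunctions over $BG$). Concretely, $\mathrm{res}^G_e$ is a functor $\Sp^G_{\mathrm{prop}} \to \Fun(BG,\Sp)$ whose left adjoint, computed as left Kan extension along $BG \to \Or^G_{\mathrm{fin}}$ followed by the colimit, sends an object $Y \in \Fun(BG,\Sp)$ to $\mathrm{ind}_e^G(Y)_{hG}$-type expressions. Taking the internal mapping object out of the free object $\mathbb{S}[G]$, with its residual right action, then yields $X^e$ with its left action, which gives the lemma.
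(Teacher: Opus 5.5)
Your proposal is correct and is the same argument as the paper's, whose proof is just the adjunction chain $X^e \simeq \mathrm{map}(\mathbb{S},\mathrm{res}^G_e X) \simeq \mathrm{map}^G(\mathbb{S}[G],X)$ coming from $\mathrm{ind}_e^G \dashv \mathrm{res}^G_e$. Your extra discussion of why this equivalence is $G$-equivariant spells out a point the paper leaves implicit, and it is sound: the automorphisms of $G/e$ act compatibly on both adjoints, with the right action on $\mathbb{S}[G]$ becoming the left action on $X^e$.
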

\begin{proof}
We have natural equivalences
\[
X^e \simeq \mathrm{map}(\mathbb{S},\mathrm{res}^G_e(X)) \simeq \mathrm{map}^G(\mathbb{S}[G],X).
\]
\end{proof}

Recall that for any finite group $H$ there is an equivariant suspension spectrum functor
\[
\Sigma^\infty_+\colon \mathcal{S}^H \to \Sp^H,
\]
which admits a symmetric monoidal structure. These functors are compatible with restriction along subgroup inclusions $K \leq H$. Therefore we can assemble them into a symmetric monoidal functor
\[
 \lim_{G/H \in (\mathrm{Or}^G_{\mathrm{fin}})^{\mathrm{op}}} \mathcal{S}^H \to  \lim_{G/H \in (\mathrm{Or}^G_{\mathrm{fin}})^{\mathrm{op}}} \Sp^H
\]
or equivalently a symmetric monoidal functor
\[
\Sigma^\infty_+\colon \mathcal{S}^G_{\mathrm{prop}} \to \mathrm{Sp}^G_{\mathrm{prop}}.
\]
Thus the tensor unit $\mathbb{S}_G \in \mathrm{Sp}^G_{\mathrm{prop}}$ is equivalent to $\Sigma^\infty_+ \underline{E}G$. In particular, if $G$ has finite $\underline{E}G$, then the unit of $\mathrm{Sp}^G_{\mathrm{prop}}$ is compact. (The latter condition is often expressed as $G$ having finite \emph{stable} $\underline{E}G$ \cite{BDP}.) We record the following consequence of Lemma \ref{lem:underbarEGcolim} for future reference:

\begin{lem}
\label{lem:SGcolim}
There is an equivalence of proper $G$-spectra
\[
\colim_{G/H \in \mathrm{Or}^G_{\mathrm{fin}}} \Sigma^\infty_+ G/H \xrightarrow{\simeq} \mathbb{S}_G.
\]
\end{lem}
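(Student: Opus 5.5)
The plan is to apply the symmetric monoidal functor $\Sigma^\infty_+\colon \mathcal{S}^G_{\mathrm{prop}} \to \mathrm{Sp}^G_{\mathrm{prop}}$ to the equivalence of Lemma \ref{lem:underbarEGcolim}, and then use two facts: this functor preserves colimits, and it sends the terminal object $\underline{E}G$ to the unit $\mathbb{S}_G$.

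\textbf{Step 1 (colimits).} I will show that $\Sigma^\infty_+\colon \mathcal{S}^G_{\mathrm{prop}} \to \mathrm{Sp}^G_{\mathrm{prop}}$ preserves small colimits. It was built as the limit over $(\mathrm{Or}^G_{\mathrm{fin}})^{\mathrm{op}}$ of the functors $\Sigma^\infty_+\colon \mathcal{S}^H \to \Sp^H$. Each of these is a left adjoint and commutes with the restriction functors. So it suffices to know that colimits in $\lim_H \mathcal{S}^H$ and $\lim_H \Sp^H$ are computed levelwise. This holds because the transition functors are restrictions along subgroup inclusions $K \leq H$ between finite groups, and these preserve all colimits (they have both adjoints, namely induction and coinduction). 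Hence the limit functor preserves colimits. Alternatively, one can identify the limit functor with the left adjoint of $\Omega^\infty$, and colimit preservation follows directly.

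\textbf{Step 2 (terminal object).} As already noted in the text, $\Sigma^\infty_+$ is symmetric monoidal. The terminal object $\underline{E}G$ is the unit for the cartesian monoidal structure on $\mathcal{S}^G_{\mathrm{prop}}$, so it is sent to the unit $\mathbb{S}_G$.

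\textbf{Step 3 (conclusion).} Applying $\Sigma^\infty_+$ to the equivalence of Lemma \ref{lem:underbarEGcolim} and using Step 1 gives
\[
\colim_{G/H \in \mathrm{Or}^G_{\mathrm{fin}}} \Sigma^\infty_+ G/H \simeq \Sigma^\infty_+ \colim_{G/H \in \mathrm{Or}^G_{\mathrm{fin}}} G/H \xrightarrow{\simeq} \Sigma^\infty_+ \underline{E}G \simeq \mathbb{S}_G .
\]
The composite is the canonical map, namely the one induced by the maps $\Sigma^\infty_+ G/H \to \Sigma^\infty_+ \underline{E}G$.

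\textbf{Main obstacle.} The only point requiring care is Step 1: making sure that colimits in the limit $\infty$-categories are computed levelwise. This relies on the restriction functors preserving colimits, which is standard for genuine equivariant spaces and spectra of finite groups.
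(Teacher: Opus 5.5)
Your proposal is correct and is essentially the paper's argument: the paper records this lemma as a direct consequence of Lemma \ref{lem:underbarEGcolim}, obtained by applying the colimit-preserving symmetric monoidal functor $\Sigma^\infty_+$ and using $\Sigma^\infty_+\underline{E}G \simeq \mathbb{S}_G$. Your added justification that colimits in the limit categories are computed levelwise is fine. The transition functors also involve conjugations, not just restrictions along subgroup inclusions, but conjugations are equivalences, so this changes nothing.
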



\section{The norm map and Tate vanishing for infinite groups} \label{sec: norm and Tate}

Let $X$ be a space and $\mathcal{C}$ a presentably symmetric monoidal $\infty$-category. Consider a $\mathcal{C}$-valued local system $F \in \mathrm{Fun}(X,\mathcal{C})$. In this section we will review the construction of the \emph{norm map}
\[
\mathrm{Nm}_X\colon \mathrm{colim}_X (F \otimes \mathbf{D}_X) \to \mathrm{lim}_X F.
\]
Here $\mathbf{D}_X$ denotes the \emph{dualizing complex} of $X$ (with coefficients in $\mathcal{C}$), which we will describe below. For most of this paper we will be interested in the case $X = BG$ for some discrete group $G$. In this case $F$ encodes an object of $\mathcal{C}$ with $G$-action; the domain of the norm map may be interpreted as the homology of $G$ with coefficients in $F \otimes \mathbf{D}_X$, whereas the codomain can be thought of as the cohomology of $G$ with coefficients in $F$. 

When $\mathcal{C}$ is the $\infty$-category of spectra and $X = BG$, the dualizing complex and the associated norm map were constructed and studied by Klein \cite{Klein}; the case of general $X$ is treated by Nikolaus--Scholze \cite{NS}. The case of general $\mathcal{C}$ is developed by Cnossen \cite{Cno23} under the name \emph{twisted ambidexterity}. The first part of this section is a review of those constructions. After that we discuss a certain naturality of the norm map, which appears to be new.

Recall that an $\infty$-category $\mathcal{C}$ is called \emph{1-semiadditive} if it is pointed, finite products exist and are equivalent to finite coproducts, and for any finite group $G$ the homotopy orbits and fixed point functors $(-)_{hG}$ and $(-)^{hG}$ exist and are equivalent. (This last condition can equivalently be phrased as saying that  $\mathcal{C}$ has \emph{Tate vanishing} for finite groups.) The main examples of 1-semiadditive $\infty$-categories for us will be the derived $\infty$-category of the rationals $\mathcal{D}(\mathbb{Q})$ and the $\infty$-categories $\mathrm{Sp}_{K(n)}$ and $\mathrm{Sp}_{T(n)}$ of $K(n)$- and $T(n)$-local spectra, respectively. Over the rational numbers Tate vanishing is simple to prove, for $\mathrm{Sp}_{K(n)}$ it is a result of Greenlees--Sadofsky \cite{GS96} and Hovey--Sadofsky \cite{HS96}, and for $\mathrm{Sp}_{T(n)}$ a result of Kuhn \cite{kuhntate}.

The main purpose of Section \ref{sec:dualitymaps} is to show the following:

\begin{thm}
\label{thm:Tatevanishing}
Let $G$ be a group with finite $\underline{E}G$ and suppose that $\mathcal{C}$ is stable and 1-semiadditive. Then the norm map 
\[
\mathrm{Nm}_{BG}\colon (F \otimes \mathbf{D}_{BG})_{hG} \to F^{hG}
\]
is an equivalence for any $F \in \mathrm{Fun}(BG,\mathcal{C})$.
\end{thm}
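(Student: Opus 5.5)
We plan to reduce to the case of finite groups by means of the colimit decomposition of $BG$ from Corollary~\ref{cor:BGcolim}, refined through Lemma~\ref{lem:finiteEGandBG}. The basic input is the theory of twisted ambidexterity in the sense of Cnossen. A space $X$ is called \emph{$\mathcal{C}$-twisted ambidextrous} if the norm map $\mathrm{Nm}_X$ is an equivalence for every local system $F \in \mathrm{Fun}(X,\mathcal{C})$. Since $\mathcal{C}$ is $1$-semiadditive, every $BH$ with $H$ finite has this property; in that case the dualizing complex is the unit, and the norm is the usual ambidexterity norm. The plan is to prove that the class of twisted ambidextrous spaces is closed under finite colimits and retracts, and then to apply Lemma~\ref{lem:finiteEGandBG}. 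Stability is needed for the closure under finite colimits.

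\textbf{Step 1: formal description of the norm.} The dualizing complex is $\mathbf{D}_X = p^* \lim_X$ applied to a suitable object; concretely, $\mathbf{D}_X(x) = \lim_X \mathbb{1}[\Omega\text{-paths to }x]$, i.e.\ $\mathbf{D}_X = \lim_{y\in X} \mathbb{1}[\mathrm{Map}(y,-)]$. We will use the general criterion, due to Nikolaus--Scholze and Cnossen, that $\mathrm{Nm}_X$ is an equivalence for all $F$ if and only if $\lim_X\colon \mathrm{Fun}(X,\mathcal{C}) \to \mathcal{C}$ preserves colimits. Since $\mathcal{C}$ is stable, this reduces to preserving filtered colimits, or arbitrary direct sums. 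The reason is that both source and target are exact in $F$. The source is colimit-preserving, and $F \mapsto F\otimes\mathbf{D}_X$ is a ``linear'' approximation to $\lim_X$ that agrees with it on representables $\mathbb{1}[\mathrm{Map}(x,-)]\otimes c$. Hence the norm is an equivalence precisely when $\lim_X$ is colimit-preserving, because representables generate under colimits.

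\textbf{Step 2: closure properties.} For a finite colimit $X=\colim_{i\in I} X_i$ of spaces, with $I$ a finite category, we have $\mathrm{Fun}(X,\mathcal{C}) \simeq \lim_I \mathrm{Fun}(X_i,\mathcal{C})$. This gives
\[
\lim_X F \simeq \lim_{i \in I^{\op}} \lim_{X_i} F|_{X_i}.
\]
If each $\lim_{X_i}$ commutes with colimits, then so does the finite limit over $I^{\op}$, because $\mathcal{C}$ is stable and finite limits there commute with all colimits. Restriction also commutes with colimits. Therefore $\lim_X$ commutes with colimits. For retracts, if $Y \to X \to Y$ composes to the identity, then $\lim_Y F$ is a retract of $\lim_X r^*F$, naturally in $F$. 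A retract of a colimit-preserving functor preserves colimits, since a retract of an equivalence is an equivalence.

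\textbf{Step 3: the finite pieces and conclusion.} For finite $H$, $1$-semiadditivity gives $\lim_{BH} = (-)^{hH} \simeq (-)_{hH}$, which preserves colimits. Lemma~\ref{lem:finiteEGandBG} expresses $BG$ as a retract of a finite colimit of such spaces. Note that in $\mathcal{S}$ a finite colimit is indexed by a finite diagram and not merely a finite CW presentation, so we use the iterated pushout form coming from the cell structure of $\underline{E}G$. The colimits must be taken in spaces, as in Example~\ref{ex: amalgamated1}. It follows that $\lim_{BG}=(-)^{hG}$ preserves colimits, and by Step 1 the norm $\mathrm{Nm}_{BG}$ is an equivalence.

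\textbf{Main obstacle.} The step we expect to be hardest is Step 1: identifying the twisted-ambidexterity criterion with colimit-preservation of $\lim_X$. This requires checking that the norm map constructed in the paper agrees with the canonical comparison between $\lim_X$ and its colimit-preserving approximation, namely the left Kan extension from representables. We would first verify this on representables, where the norm reduces to an identity, and then argue by density.
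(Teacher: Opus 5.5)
Your architecture is the paper's: a criterion for duality spaces in terms of $\lim_X$ (Proposition \ref{prop:f*Clinear}), closure under finite colimits for stable $\mathcal{C}$ (Proposition \ref{prop:Cdualityfinitecolim}), closure under retracts (Lemma \ref{lem:Cdualityclosure}), the pieces $BH$ handled by 1-semiadditivity (Example \ref{ex:1semiadditive}), and Lemma \ref{lem:finiteEGandBG}. The gap is in Step 1. There you claim the norm is invertible for all $F$ if and only if $\lim_X$ preserves colimits. That criterion is weaker than what is needed, and your justification of it is wrong.

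The norm does not tautologically agree with $\lim_X$ on $x_!(c)=\mathbf{1}[\mathrm{Map}(x,-)]\otimes c$ for arbitrary $c\in\mathcal{C}$. By the projection formula and the base change equivalence $x^*\mathbf{D}_X\simeq \lim_X x_!\mathbf{1}$, the source of the norm at $x_!(c)$ is $c\otimes \lim_X x_!\mathbf{1}$. Up to the equivalence $c\otimes\mathrm{Nm}_X(x_!\mathbf{1})$, the norm there is the lax-linearity map
\[
c\otimes \lim_X x_!\mathbf{1}\longrightarrow \lim_X\bigl(c\otimes x_!\mathbf{1}\bigr).
\]
This map is automatically an equivalence only for $c=\mathbf{1}$, or for $c$ with $c\otimes-$ commuting with $\lim_X$, e.g. $c$ dualizable. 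So colimit-preservation of $\lim_X$ reduces you to the generators $x_!(c)$, but not further to $x_!\mathbf{1}$. That last reduction needs $\mathcal{C}$ to be generated under colimits by $\mathbf{1}$ (or by dualizables). This holds for $\mathcal{D}(\mathbb{Q})$, $\mathrm{Sp}_{K(n)}$ and $\mathrm{Sp}_{T(n)}$, but the theorem allows an arbitrary stable 1-semiadditive presentably symmetric monoidal $\mathcal{C}$.

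The correct criterion, Proposition \ref{prop:f*Clinear} (following Cnossen), asks that $\lim_X$ be $\mathcal{C}$-linear. That means it preserves colimits \emph{and} $c\otimes\lim_X F\to\lim_X(c\otimes F)$ is an equivalence for all $c$ and $F$. One can then reduce to $F=x_!\mathbf{1}$, where the norm is just the base change equivalence $x^*\mathbf{D}_X\simeq\lim_X x_!\mathbf{1}$. This also settles what you flagged as the main obstacle.

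The repair is cheap: carry $\mathcal{C}$-linearity instead of mere colimit-preservation through Steps 2 and 3.
\begin{itemize}
\item For finite $H$, 1-semiadditivity gives $\lim_{BH}\simeq\colim_{BH}$ via the norm, and this is $\mathcal{C}$-linear.
\item In stable $\mathcal{C}$, $c\otimes-$ is exact, so it commutes with the finite limit in $\lim_X F\simeq\lim_{i\in I^{\mathrm{op}}}\lim_{X_i}F|_{X_i}$.
\item Your retract argument applies verbatim to the projection maps.
\end{itemize}
With this change your proof coincides with the paper's.
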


\begin{rem} More generally, given the norm map 
\[
\mathrm{Nm}_{BG}\colon (F \otimes \mathbf{D}_{BG})_{hG} \to F^{hG}
\]
in a pointed $\infty$-category $\mathcal{C}$, the cofiber is called \emph{the generalized Farrell--Tate construction} or simply \emph{the Tate construction} and is denoted by $F^{tG}$. If $G$ is finite, and $\mathcal{C}=\mathcal{D}(\mathbb{Z})$, and $F$ is concentrated in degree zero, then the homotopy groups of $F^{tG}$ recover the classical Tate cohomology groups. More generally if $\mathcal{C}=\Sp$, and $G$ is finite, then $F^{tG}$ coincides with the generalized Tate construction of Greenlees--May \cite{GM95} (also featuring prominently in \cite{NS}). For a general $G$, the spectrum $F^{tG}$ was defined by Klein \cite{Klein02, Klein}, who showed that this construction generalizes the classical Farrell--Tate cohomology for virtually torsion-free groups. For a stable and 1-semiadditive $\infty$-category $\mathcal{C}$, Theorem \ref{thm:Tatevanishing} is equivalent to saying that the generalized Farrell--Tate construction $F^{tG}$ vanishes. \end{rem}

To prove the theorem it will be convenient to work in the relative setting of a map $f\colon X \to Y$ between spaces. Such a map induces a symmetric monoidal pullback functor
\[
f^*\colon \mathrm{Fun}(Y,\mathcal{C}) \to \mathrm{Fun}(X,\mathcal{C})
\]
that admits both a left adjoint $f_!$ (taking colimits along fibers of $f$) and a right adjoint $f_*$ (taking limits along fibers of $f$). Consider the following commutative diagram:
\[
\begin{tikzcd}
X \ar{dr}{\Delta}\ar[bend left]{drr}{1_X}\ar[bend right]{ddr}[swap]{1_X} && \\
& X \times_Y X \ar{r}{\pi_1}\ar{d}{\pi_2} & X \ar{d}{f} \\
& X \ar{r}{f} & Y.
\end{tikzcd}
\]

\begin{defn}
\label{def:dualizingcomplex}
We define the \emph{dualizing complex} $\mathbf{D}_f$ of $f$ to be $(\pi_1)_*\Delta_! \mathbf{1}_X$, where $\mathbf{1}_X$ denotes the monoidal unit of $\mathrm{Fun}(X,\mathcal{C})$. When $Y = *$, we denote the dualizing complex $\mathbf{D}_f$ by $\mathbf{D}_X$.
\end{defn}

\begin{ex}
\label{ex:DBG}
Suppose $\mathcal{C} = \Sp$, let $G$ be a discrete group, and $X = BG$. Then $\Delta_! \mathbf{1}_{BG} \simeq \mathbb{S}[G]$, interpreted as an object of $\mathrm{Fun}(BG \times BG,\Sp)$ by letting $G$ act on itself on both sides, and 
\[
\mathbf{D}_{BG} = (\pi_1)_*\Delta_! \mathbf{1}_{BG} \simeq \mathbb{S}[G]^{hG},
\]
with the fixed points being taken with respect to the first copy of $G$. This is Klein's description of the dualizing complex in \cite{Klein}. 

If the discrete group $G$ is finite, then $\mathbf{D}_{BG}$ is equivalent to $\mathbb{S}$ with the trivial action. This follows since the Tate construction for finite groups vanishes on induced spectra from the trivial subgroup \cite[Lemma I.3.8]{NS} and hence the norm map
\[\mathbb{S} \simeq \mathbb{S}[G]_{hG} \xrightarrow{\mathrm{Nm}_{BG}} \mathbb{S}[G]^{hG} \]
is an equivalence in $\Sp$. By the naturality of the norm, this is in fact an equivalence in $\Fun(BG, \Sp)$. 
\end{ex}

\begin{rem}
Consider the case $X = BG$ with $G$ a discrete group. In Section \ref{sec: genuine dualizing complex} we will upgrade the dualizing spectrum $\mathbf{D}_X$ to a genuine proper $G$-spectrum, rather than just an object of $\mathrm{Fun}(BG,\mathrm{Sp})$.
\end{rem}

To construct the norm map, we first recall the following properties of the functors involved:
\begin{itemize}
\item[(1)] The $\infty$-category $\mathrm{Fun}(X,\mathcal{C})$ is presentably symmetric monoidal, so it is in particular \emph{closed}: for a given local system $A$ the tensor product functor $A \otimes -$ admits a right adjoint, which we denote $\mathbf{hom}_{X}(A,-)$.
\item[(2)] For $A \in \mathrm{Fun}(X,\mathcal{C})$ and $B \in \mathrm{Fun}(Y,\mathcal{C})$ the natural map
\[
f_!(A \otimes f^*B) \to f_!A \otimes B
\]
is an equivalence. This is the \emph{projection formula}.
\item[(3)] An equivalent formulation of (2) is that for local systems $B,C \in \mathrm{Fun}(Y,\mathcal{C})$ the natural map
\[
f^*\mathbf{hom}_Y(B,C) \to \mathbf{hom}_X(f^*B,f^*C)
\]
is an equivalence.
\item[(4)] For a pullback square of spaces
\[
\begin{tikzcd}
X' \ar{r}{g}\ar{d}{q} & Y' \ar{d}{p} \\
X \ar{r}{f} & Y,
\end{tikzcd}
\]
the associated pullback/pushforward functors satisfy \emph{base change}, meaning that the evident natural transformation $g_!q^* \to p^*f_!$ is an equivalence. Passing to right  adjoints, the natural transformation $f^*p_* \to q_*g^*$ is an equivalence as well.
\end{itemize}

\begin{rem}
All of these facts are standard. Let us just briefly remark on their proofs. For (2), consider any $y \in Y$ and write $X_y$ for the fiber of $f$ at $y$. After evaluating at $y$ the map under consideration becomes the equivalence
\[
\mathrm{colim}_{X_y}(A \otimes B(y))|_{X_y} \to (\mathrm{colim}_{X_y}A|_{X_y}) \otimes B(y).
\]
To see the equivalence between items (2) and (3), note that for any $A \in \mathrm{Fun}(X,\mathcal{C})$ we have a natural commutative square
\[
\begin{tikzcd}
\mathrm{Map}(A, f^*\mathbf{hom}_Y(B,C)) \ar{r}\ar{d}{\simeq} & \mathrm{Map}(A,\mathbf{hom}_X(f^*B,f^*C)) \ar{d}{\simeq} \\
\mathrm{Map}(f_!A \otimes B, C) \ar{r} & \mathrm{Map}(f_!(A \otimes f^*B), C),
\end{tikzcd}
\]
where the vertical arrows arise from adjunction and the horizontal maps are the ones arising from items (2) and (3). For the base change equivalence, note that for $y \in Y'$ and $A \in \mathrm{Fun}(X,\mathcal{C})$, the evaluation of the map under consideration at $y$ is the map
\[
\mathrm{colim}_{w \in X'_y}A(q(w)) \to \mathrm{colim}_{z \in X_{p(y)}} A(z). 
\]
Since the square is a pullback, the induced map of fibers $X'_y \to X_{p(y)}$ is an equivalence and hence the same is true of the map above.
\end{rem}

\begin{con}
\label{con:norm}
Let $F \in \mathrm{Fun}(X,\mathcal{C})$ be a local system. Consider the composition
\[
\mathbf{1}_X \to \mathbf{hom}_X(F,F) \xrightarrow{\simeq} \Delta^*\mathbf{hom}_{X \times_Y X}(\pi_1^* F, \pi_2^*F) 
\]
where the first is the unit and the second is the equivalence associated with $\Delta$ (item (3) on our list above). Through the adjunctions $(\Delta_!,\Delta^*)$ and tensor-hom we find a morphism
\[
\pi_1^*F \otimes \Delta_!\mathbf{1}_X \to \pi_2^* F.
\]
We now form the commutative diagram
\begin{equation}
\label{diag:defnorm}
\begin{tikzcd}
F \otimes (\pi_1)_*\Delta_!\mathbf{1}_X \ar{d}\ar{r}{\widetilde{\mathrm{Nm}}_f} & f^*f_* F\\
(\pi_1)_*(\pi_1^*F \otimes \Delta_!\mathbf{1}_X) \ar{r} & (\pi_1)_*\pi_2^*F, \ar{u}{\simeq}
\end{tikzcd}
\end{equation}
where the bottom arrow is $(\pi_1)_*$ applied to the map we just constructed, the left vertical arrow composes the unit $F \to (\pi_1)_*\pi_1^*F$ and the lax monoidal structure of $(\pi_1)_*$, and the right vertical arrow is the base change equivalence associated with the pullback square defining $X \times_Y X$. The top horizontal arrow is then defined to be the composite of the other three.

\end{con}

The following general version of the norm was introduced by Cnossen, \cite[Definition 3.3]{Cno23}:

\begin{defn} \label{def: relative norm}
The \emph{norm map}
    \[
\mathrm{Nm}_f\colon f_!(F \otimes \mathbf{D}_f) \to f_* F
\]
is the adjoint of the morphism $\widetilde{\mathrm{Nm}}_f$ constructed above.
\end{defn}


Further on we will need a certain naturality property of the norm map, to which we devote the remainder of this section. Consider a map of spaces $i\colon A \to X$ and let us write $f\colon X \to *$ for the unique map to the point. We also write $g = fi$ for the unique map from $A$ to the point. (There is also a relative version of what we do, replacing the point by a space $Y$. The ideas are identical to the absolute case we treat here, but the notation would be more involved.) For $F \in \mathrm{Fun}(X,\mathcal{C})$ we will construct a natural commutative diagram
\begin{equation}
\label{diag:mixednorm}
\begin{tikzcd}
f_!(F \otimes \mathbf{D}_X) \ar{r}{\mathrm{Nm}_X}\ar{d} & f_* F \ar{d} \\
f_!(F \otimes \mathbf{D}_{(X,A)}) \ar{r}{\mathrm{Nm}_{(X,A)}} & f_*i_*i^*F \ar[equal]{d} \\
g_!(i^*F \otimes \mathbf{D}_A) \ar{u}\ar{r}{\mathrm{Nm}_A} & g_*i^*F.
\end{tikzcd}
\end{equation}
The top map is the norm map for $X$ we have constructed, the bottom map is the norm map of $A$, applied to the restricted local system $i^*F$. The middle map, which we construct below, is a \emph{mixed norm map} and $\mathbf{D}_{(X,A)}$ the \emph{mixed dualizing complex} associated with the map $i\colon A \to X$. Of course it would be better to make the map $i$ part of the notation, but we wish to avoid confusion with the dualizing complex of the map $i$, which is different from this mixed dualizing complex $\mathbf{D}_{(X,A)}$. 

In many cases of practical interest the lower left map in the diagram is an equivalence, thus identifying the mixed norm $\mathrm{Nm}_{(X,A)}$ with the norm map $\mathrm{Nm}_A$ of $A$. (See Lemma \ref{lem:mixednormvsAnorm} for a particular case relevant to us.) In that case, Diagram (\ref{diag:mixednorm}) simplifies to a direct comparison between $\mathrm{Nm}_X$ (applied to $F$) and $\mathrm{Nm}_A$ (applied to $i^*F$).


To construct the mixed norm, we will make use of the following commutative diagrams of spaces:
\[
\begin{tikzcd}
A \ar[equal]{r}\ar{d}{\Delta_A} & A \ar{r}{i}\ar{d}{(i,1)} & X \ar{d}{\Delta_X} \ar{d} && X \times A \ar{r}{1 \times i}\ar{d}{q} & X \times X \ar{d}{\pi_2} \\
A \times A \ar{d}{\pi_1}\ar{r}{i \times 1} & X \times A \ar{d}{p} \ar{r}{1 \times i} & X \times X \ar{d}{\pi_1} && A \ar{r}{i} & X. \\
A \ar{r}{i} & X \ar[equal]{r} & X, &&
\end{tikzcd}
\]
For convenience we have introduced the notations $p$ and $q$ for the projections of $X \times A$ onto the first and second coordinate respectively. Observe that in the left diagram, the bottom left and top right squares are pullbacks in the $\infty$-category of spaces. The square on the right is a pullback as well.

\begin{defn}
The \emph{mixed dualizing complex} of $i\colon A \to X$ is the object
\[
\mathbf{D}_{(X,A)} := p_*(i,1)_!\mathbf{1}_A \in \mathrm{Fun}(X,\mathcal{C}).
\]
\end{defn}

\begin{rem} The mixed dualizing complex $\mathbf{D}_{(X,A)}$ recovers the dualizing complex $\mathbf{D}_{X}$ when $A=X$ and $i=\mathrm{id}_X$. This illustrates the difference between the relative and mixed dualizing complexes, since the relative dualizing complex of the identity of $X$ is the monoidal unit $\mathbf{1}_X$. On the other hand, the relative dualizing complex of the map $X \to *$ recovers $\mathbf{D}_X$.
\end{rem}

\begin{con}
\label{con:mixednorm}
To start constructing Diagram (\ref{diag:mixednorm}), let us first explain how to define the mixed norm map
\[
f_!(F \otimes p_*(i,1)_!\mathbf{1}_A) \xrightarrow{\mathrm{Nm}_{(X,A)}} f_*i_*i^*F.
\]
First of all, we have a map
\[
(i,1)_!\mathbf{1}_A \to \mathbf{hom}_{X \times A}(p^*F, q^*i^*F)  
\]
that is adjoint to the unit map
\[
\mathbf{1}_A \to \mathbf{hom}_A(i^*F,i^*F) \xrightarrow{\simeq} (i,1)^*\mathbf{hom}_{X \times A}(p^*F,q^*i^*F).
\]
Here we have used the identifications $p \circ (i,1) = i$ and $q \circ (i,1) = \mathrm{id}_A$. By further adjunction, we find a map
\[
p^*F \otimes (i,1)_!\mathbf{1}_A \to q^*i^*F.
\]
We now use this map to construct the bottom left vertical arrow in the following commutative diagram, where the top left vertical arrow uses the unit $F \to p_*p^*F$ and the lax monoidal structure of $p_*$:
\[
\begin{tikzcd}
F \otimes p_*(i,1)_!\mathbf{1}_A \ar{d}\ar{r}{\widetilde{\mathrm{Nm}}_{(X,A)}} & f^*f_*i_*i^* F\\
p_*(p^*F \otimes (i,1)_!\mathbf{1}_A) \ar{d} & (\pi_1)_*\pi_2^*i_*i^* F \ar{u}{\simeq} \\
p_*q^*i^* F \ar[equal]{r} & (\pi_1)_*(1 \times i)_*q^*i^* F. \ar{u}{\simeq}
\end{tikzcd}
\]
We now define the mixed norm map to be the adjoint of the top horizontal map in the diagram.
\end{con}

\begin{con}
\label{con:mixednorm1}
Let us now explain the top commutative square in Diagram (\ref{diag:mixednorm}). We have already defined the relevant norm maps and the right vertical map arises from the unit $F \to i_*i^*F$. For the left vertical map we use the morphism
\[
\mathbf{D}_X = (\pi_1)_*(\Delta_X)_!\mathbf{1}_X \to (\pi_1)_*(1 \times i)_*(1 \times i)^*(\Delta_X)_!\mathbf{1}_X \simeq p_*(i,1)_!i^*\mathbf{1}_X \simeq \mathbf{D}_{(X,A)}
\]
arising from the unit $\mathrm{id} \to (1 \times i)_*(1 \times i)^*$ and base change. Arguing that the square commutes is a very formal matter; unraveling the definitions, this boils down to the observation that applying the functor $(1 \times i)^*$ to the map
\[
(\Delta_X)_!\mathbf{1}_X \to \mathrm{hom}_{X \times X}(\pi_1^* F, \pi_2^* F)
\]
used to define $\mathrm{Nm}_X$ yields the map
\[
(i,1)_!\mathbf{1}_A \to \mathrm{hom}_{X \times A}(p^* F, q^*i^* F)
\]
used to define $\mathrm{Nm}_{(X,A)}$. 

\end{con}

\begin{con}
\label{con:mixednorm2}
We will now explain how to construct the bottom square in Diagram (\ref{diag:mixednorm}). To construct the left vertical map in that square, it suffices to specify a map
\[
i_!(i^*F \otimes \mathbf{D}_A) \to F \otimes \mathbf{D}_{(X,A)}.
\]
The projection formula allows us to rewrite the domain as $F \otimes i_!\mathbf{D}_A$, so it will suffice to provide a map $i_!\mathbf{D}_A \to \mathbf{D}_{(X,A)}$. For this we use the adjoint of the morphism
\[
\mathbf{D}_A = (\pi_1)_*(\Delta_A)_!\mathbf{1}_A \to (\pi_1)_*(i\times 1)^*(i \times 1)_!(\Delta_A)_!\mathbf{1}_A \simeq i^*p_*(i,1)_!\mathbf{1}_A = i^*\mathbf{D}_{(X,A)}
\]
combining the unit $\mathrm{id} \to (i\times 1)^*(i \times 1)_!$ and base-change $(\pi_1)_*(i \times 1)^* \simeq i^*p_*$.

Arguing that the bottom square of Diagram (\ref{diag:mixednorm}) commutes is again a formal exercise. The crucial point this time is that there is a commutative square
\[
\begin{tikzcd}
(\Delta_A)_!\mathbf{1}_A \ar{r}\ar{d} &  \mathbf{hom}_{A \times A}(\pi_1^*i^*F, \pi_2^*i^*F) \ar{d}{\simeq} \\
(i\times 1)^*(i,1)_!\mathbf{1}_A \ar{r} & (i\times 1)^*\mathbf{hom}_{X \times A}(p^*F, q^*i^*F)
\end{tikzcd}
\]
in which the top horizontal map is the one used to define $\mathrm{Nm}_A$, the bottom horizontal map is $(i
\times 1)^*$ applied to the one used to define $\mathrm{Nm}_{(X,A)}$, the left-hand vertical map combines the unit $\mathrm{id} \to (i\times 1)^*(i \times 1)_!$ and the identification $(i \times 1)_!(\Delta_A)_! = (i,1)_!$, and the right-hand vertical map is the (adjoint of) the projection formula.
\end{con}

As we alluded to above, in many cases one can identify the mixed norm map $\mathrm{Nm}_{(X,A)}$ with the norm map of $A$. We record the following special instance for later use:

\begin{lem}
\label{lem:mixednormvsAnorm}
Let $\mathcal{C}$ be the $\infty$-category of spectra, $G$ a group, and $H \leq G$ a finite subgroup. Set $X = BG$ and $A = BH$. Then the map
\[
g_!(i^*F \otimes \mathbf{D}_A) \to f_!(F \otimes \mathbf{D}_{(X,A)})
\]
appearing in Diagram (\ref{diag:mixednorm}) is an equivalence. In particular, in this case the mixed norm map $\mathrm{Nm}_{(X,A)}$ for $F$ is equivalent to the norm map $\mathrm{Nm}_A$ for $i^*F$.
\end{lem}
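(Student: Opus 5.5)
The plan is to reduce the statement to the assertion that the comparison map $i_!\mathbf{D}_A \to \mathbf{D}_{(X,A)}$ from Construction \ref{con:mixednorm2} is an equivalence in $\mathrm{Fun}(BG,\Sp)$. By that construction, the map $g_!(i^*F \otimes \mathbf{D}_A) \to f_!(F \otimes \mathbf{D}_{(X,A)})$ is $f_!$ applied to the composite
\[
i_!(i^*F \otimes \mathbf{D}_A) \simeq F \otimes i_!\mathbf{D}_A \to F \otimes \mathbf{D}_{(X,A)},
\]
where the first equivalence is the projection formula. Hence it suffices to show that $i_!\mathbf{D}_A \to \mathbf{D}_{(X,A)}$ is an equivalence. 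Equivalences in $\mathrm{Fun}(BG,\Sp)$ are detected on underlying spectra, so it is enough to check this after forgetting the $G$-action.

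The first step is to compute both sides explicitly. Since $H$ is finite, Example \ref{ex:DBG} gives $\mathbf{D}_A \simeq \mathbb{S}[H]^{hH} \simeq \mathbb{S}$ with trivial action. Therefore $i_!\mathbf{D}_A \simeq \mathbb{S}[G]_{hH} \simeq \mathbb{S}[G/H]$ as a spectrum with $G$-action.

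For the mixed complex, the fiber of $(i,1)\colon BH \to BG \times BH$ is the discrete set $G$. So $(i,1)_!\mathbf{1}_A \simeq \mathbb{S}[G]$, with $G$ acting by left translation and $H$ acting by right translation. Applying $p_*$ takes $H$-homotopy fixed points, giving
\[
\mathbf{D}_{(X,A)} \simeq \mathbb{S}[G]^{hH}
\]
with the residual left $G$-action. Choosing coset representatives, we get $\mathbb{S}[G] \simeq \bigoplus_{G/H}\mathbb{S}[H] \simeq \mathbb{S}[H] \otimes \mathbb{S}[G/H]$ as $H$-spectra, which is induced from the trivial subgroup. By \cite[Lemma I.3.8]{NS}, the Tate construction for the finite group $H$ vanishes on such induced objects; this holds even though the sum is infinite. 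Hence the norm gives
\[
\mathbb{S}[G/H] \simeq \mathbb{S}[G]_{hH} \xrightarrow{\ \simeq\ } \mathbb{S}[G]^{hH} \simeq \mathbf{D}_{(X,A)},
\]
and by naturality of the norm this is an equivalence $G$-equivariantly.

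The main obstacle is to check that the abstractly constructed map $i_!\mathbf{D}_A \to \mathbf{D}_{(X,A)}$ really agrees with this norm equivalence, rather than being some other map between equivalent objects. I would unwind its adjoint $\mathbf{D}_A \to i^*\mathbf{D}_{(X,A)}$. By construction this adjoint is $(\pi_1)_*$ applied to the unit $(\Delta_A)_!\mathbf{1}_A \to (i\times 1)^*(i\times1)_!(\Delta_A)_!\mathbf{1}_A$. Concretely, the unit is the inclusion $\mathbb{S}[H] \to \mathbb{S}[G]$ of $H\times H$-spectra, so the adjoint is the induced map $\mathbb{S}[H]^{hH} \to \mathbb{S}[G]^{hH}$.

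Adjoining along $i$ then sums the $G$-translates of this map. On underlying spectra, the resulting map $\bigoplus_{gH \in G/H}\mathbb{S} \to \mathbb{S}[G]^{hH}$ sends the summand indexed by $gH$ to $g\cdot\mathbb{S}[H]^{hH} = \mathbb{S}[gH]^{hH}$. Identifying $\mathbb{S}[H]^{hH}$ with $\mathbb{S}[H]_{hH}$ via the norm, which is compatible with the inclusion $\mathbb{S}[H]\to\mathbb{S}[G]$ by naturality, this map is exactly the norm map of the previous step, summand by summand. It is therefore an equivalence, and the first claim follows.

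For the second claim, the mixed norm $\mathrm{Nm}_{(X,A)}$ for $F$ is then identified with $\mathrm{Nm}_A$ for $i^*F$ by the commutativity of the bottom square of Diagram (\ref{diag:mixednorm}).
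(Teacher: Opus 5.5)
Your proposal is correct and follows the same route as the paper. You reduce via the projection formula to the map $i_!\mathbf{D}_A \to \mathbf{D}_{(X,A)}$, identify its adjoint with $\mathbb{S}\simeq\mathbb{S}[H]_{hH}\to\mathbb{S}[H]^{hH}\to i^*(\mathbb{S}[G]^{hH})$, and use naturality of the norm to recognize the map as $\mathrm{Nm}_{BH}\colon \mathbb{S}[G]_{hH}\to\mathbb{S}[G]^{hH}$, which is an equivalence because $\mathbb{S}[G]$ is induced as an $H$-spectrum. The only cosmetic difference is that you verify this identification summand by summand on underlying spectra, whereas the paper uses the adjoint square in $\mathrm{Fun}(BG,\Sp)$; your version suffices since equivalences there are detected on underlying spectra.
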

\begin{proof}
It suffices to check that the map $i_!\mathbf{D}_A \to \mathbf{D}_{(X,A)}$ of Construction \ref{con:mixednorm2} is an equivalence. Since $H$ is finite, the associated dualizing complex $\mathbf{D}_{BH} = \mathbf{D}_A$ is the unit $\mathbb{S}$ (see Example \ref{ex:DBG}). Also, we have $(i,1)_!\mathbb{S} \simeq \mathbb{S}[G]$, where $G$ is to be interpreted as a $G \times H$-set in the usual way. For the mixed dualizing complex we then find
\[
\mathbf{D}_{(X,A)} = p_*(i,1)_!\mathbb{S} \simeq \mathbb{S}[G]^{hH}.
\]
By chasing through the definition, the map $\mathbf{D}_A \to i^*\mathbf{D}_{(X,A)}$ from Construction \ref{con:mixednorm2} can be identified with the map
\[\xymatrix{\mathbb{S} \simeq \mathbb{S}[H]_{hH} \ar[rr]^{\mathrm{Nm}_{BH}}_{\simeq} & & \mathbb{S}[H]^{hH} \ar[r]^-{\incl} & i^*(\mathbb{S}[G]^{hH})}\]
in $\Fun(BH, \Sp)$, where the last map is induced from the subgroup inclusion. Since the norm is a natural transformation, we have a commutative diagram in $\Fun(BH, \Sp)$:
\[\xymatrix{\mathbb{S} & \mathbb{S}[H]_{hH} \ar[l]_-{\simeq}\ar[r]^-{\incl} \ar[d]^{\mathrm{Nm_{BH}}} & i^*(\mathbb{S}[G]_{hH}) \ar[d]^{i^*(\mathrm{Nm_{BH}})}  \ar[r]^\simeq & \Sigma^{\infty} G/H_+ & \\  & \mathbb{S}[H]^{hH} \ar[r]^-{\incl} & i^*(\mathbb{S}[G]^{hH}). & }\]
Passing to the adjoint diagram we obtain a commutative square in $\Fun(BG, \Sp)$:
\[\xymatrix{i_!\mathbb{S} \ar[d]^{i_!(\mathrm{Nm_{BH}})} \ar[r]^-{\simeq} & \mathbb{S}[G]_{hH} \ar[d]^{\mathrm{Nm_{BH}}} \\ i_!(\mathbb{S}[H]^{hH}) \ar[r] & \mathbb{S}[G]^{hH}.}\]
\noindent This commutative diagram identifies the map $i_!\mathbf{D}_A \to \mathbf{D}_{(X,A)}$ of Construction \ref{con:mixednorm2} with the norm map
\[
i_!\mathbb{S} \simeq \Sigma^\infty_+ G/H \simeq (\mathbb{S}[G])_{hH} \xrightarrow{\mathrm{Nm}_{BH}} \mathbb{S}[G]^{hH}
\]
associated to $BH$. This map is an equivalence because $\mathbb{S}[G]$ is, as an $H$-object, induced from the trivial group.
\end{proof}

\begin{rem}
Let $\mathcal{C}$ be a presentably symmetric monoidal $\infty$-category. In the proof above we used the following simple general observation: if $E$ is an object of $\mathcal{C}$ with a $G \times H$-action, then the norm map $\mathrm{Nm}_{BH} \colon (E \otimes \mathbf{D}_{BH})_{hH} \to E^{hH}$ lives in $\Fun(BG,\mathcal{C})$. This is because $\mathrm{Nm}_{BH}$ is the underlying non-equivariant map of the relative norm map of the projection $BG \times BH \to BG$. 
\end{rem}

\section{Duality maps and duality spaces}
\label{sec:dualitymaps}

Recall that a group $\Gamma$ satisfying the $FP$-finiteness property is called a \emph{Bieri--Eckmann duality group} if there is a dualizing $\Gamma$-module $D_{\Gamma}$ and a number $n$ such that for any $\mathbb{Z}[\Gamma]$-module $M$, there is an isomorphism
\[H_{n-*}(\Gamma; D_{\Gamma} \otimes_{\mathbb{Z}} M) \cong H^*(\Gamma; M). \]
There are many equivalent characterizations of this condition \cite[Section VIII.10]{Brown}. In particular it implies that the number $n$ is the cohomological dimension of $\Gamma$, the module $D_{\Gamma}$ is isomorphic to $H^n(\Gamma; \Z[\Gamma])$, and 
\[H^i(\Gamma; \Z[\Gamma])=0\; \text{for all}\; i \neq n.\]
In this case it is known that $D_{\Gamma}$ is a torsion-free (possibly infinitely generated) abelian group.

Any duality group is torsion-free. For groups which contain torsion, there is a generalization of the notion of duality. We recall the following definition from \cite[Section VIII.11]{Brown}: \begin{defn} A group $G$ is called a \emph{virtual duality group} if it has a finite index subgroup which is a Bieri--Eckmann duality group \cite{BieEck}. 
\end{defn}

For $G$ a virtual duality group, $H^n(G; \Z[G])$ is torsion-free and $H^i(G; \Z[G])=0$ for all $i \neq n$. The module $H^n(G; \Z[G])$ serves as the dualizing module for the finite index duality subgroup. The number $n$ is the virtual cohomological dimension of $G$ \cite[Section VIII.11]{Brown}. Examples of virtual duality groups include arithmetic groups \cite{BorelSerre} and mapping class groups \cite{Har}.

It turns out that if a group $\Gamma$ is a duality group in the classical sense of Bieri--Eckmann, then the dualizing complex $\mathbf{D}_{B\Gamma}$ in the derived category $\mathcal{C} = \mathcal{D}(\mathbb{Z})$ is concentrated in a single cohomological degree, which equals the cohomological dimension of $\Gamma$. Furthermore, the norm map $\mathrm{Nm}_{B\Gamma}$ is an equivalence and induces the above isomorphism between cohomology and homology with coefficients in the dualizing module.

One of the main themes of this paper is that upon changing the coefficients from $\mathcal{D}(\mathbb{Z})$ to $\infty$-categories such as $\mathrm{Sp}_{T(n)}$ or $\mathrm{Sp}_{K(n)}$, there are many more groups that behave like `duality groups' in the above sense. We extend the classical terminology as follows:


\begin{defn}
A map $f\colon X \to Y$ is a {$\mathcal{C}$-duality map} if the norm map $\mathrm{Nm}_f$ is an equivalence for every local system $F \in \mathrm{Fun}(X,\mathcal{C})$. A space $X$ is a {$\mathcal{C}$-duality space} if $X \to *$ is a $\mathcal{C}$-duality map.
\end{defn}

\begin{rem}
The notion of a $\mathcal{C}$-duality map is referred to as a \emph{twisted $\mathcal{C}$-ambidextrous map} in \cite{Cno23}.
\end{rem}

The purpose of this section is to describe several closure properties of the class of duality spaces (and duality maps), which together will cover all of the examples in this paper. In particular we will prove Theorem \ref{thm:Tatevanishing} at the end of this section.

\begin{ex}
\label{ex:1semiadditive}
The $\infty$-category $\mathcal{C}$ is 1-semiadditive if and only if every 1-truncated $\pi$-finite space is a $\mathcal{C}$-duality space. Note that for a \emph{finite} group $G$, the dualizing complex $\mathbf{D}_{BG}$ is simply the unit, and the norm map reduces to a natural transformation $(-)_{hG} \to (-)^{hG}$, which is the same as the norm considered by Hopkins--Lurie \cite{HL13}.
\end{ex}




We will now review some of the basic properties of duality maps. The following is proved by Cnossen \cite[Proposition 3.13]{Cno23}, but can also be deduced directly from Proposition \ref{prop:f*Clinear} below.

\begin{lem}
\label{lem:Cdualityclosure}
The class of $\mathcal{C}$-duality maps is closed under composition, pullback, and retracts.
\end{lem}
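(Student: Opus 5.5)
The plan is to prove the three closure properties separately, working throughout with the relative norm $\mathrm{Nm}_f\colon f_!(F\otimes \mathbf{D}_f)\to f_*F$ of Definition \ref{def: relative norm}. Base change, item (4), will be the main tool.

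\emph{Pullback.} Let $f\colon X\to Y$ be a $\mathcal{C}$-duality map and let $g\colon X'\to Y'$ be its pullback along $p\colon Y'\to Y$, with $q\colon X'\to X$. First, being an equivalence can be checked pointwise on $Y'$. Second, base change shows that the formation of $\mathbf{D}_f$ and of $\mathrm{Nm}_f$ is compatible with pullback: we have $q^*\mathbf{D}_f\simeq \mathbf{D}_g$, since $X'\times_{Y'}X'\to X\times_Y X$ is again a pullback and $(\pi_1)_*$, $\Delta_!$ commute with the relevant pullbacks. Together these reduce everything to the case $Y'=*$ mapping to a point $y\in Y$. So the claim becomes: $\mathrm{Nm}_f$ evaluated at $y$ agrees with the absolute norm $\mathrm{Nm}_{X_y}$ of the fiber, applied to $F|_{X_y}$. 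This also explains why duality maps are exactly the maps all of whose fibers are duality spaces. The remaining issue is that a local system on $X'$ need not be pulled back from $X$. I will handle this by working fiberwise: for $Y'\to Y$, each fiber of $g$ is a fiber of $f$. Thus the statement reduces to the \emph{characterization} that $f$ is a duality map if and only if every fiber $X_y$ is a duality space. This follows once naturality of $\mathrm{Nm}$ under base change is established, by checking that each of the maps in Construction \ref{con:norm} commutes with $y^*$ via base change.

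\emph{Retracts.} A retract of $f$ in $\mathrm{Ar}(\mathcal{S})$ has fibers that are retracts of fibers of $f$. By the fiberwise characterization, it therefore suffices to treat spaces. Suppose $X'\xrightarrow{i}X\xrightarrow{r}X'$ with $ri=\mathrm{id}$, and let $F'$ be a local system on $X'$. Put $F=r^*F'$. I will use the mixed-norm naturality diagram (\ref{diag:mixednorm}) for $i$, together with the analogous one for $r$, to exhibit $\mathrm{Nm}_{X'}(F')$ as a retract of $\mathrm{Nm}_X(F)$ in the arrow category. A retract of an equivalence is an equivalence. An alternative is to use functoriality of the norm with respect to maps of spaces over $*$ in both directions, since $\mathbf{D}$ is functorial via units and counits. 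I expect this bookkeeping (making the norm genuinely functorial so that retracts are preserved) to be the fiddliest step.

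\emph{Composition.} Let $f\colon X\to Y$ and $g\colon Y\to Z$. I plan to show $\mathbf{D}_{gf}\simeq \mathbf{D}_f\otimes f^*\mathbf{D}_g$ and that $\mathrm{Nm}_{gf}$ factors as
\[
(gf)_!(F\otimes\mathbf{D}_f\otimes f^*\mathbf{D}_g)\simeq g_!(f_!(F\otimes \mathbf{D}_f)\otimes \mathbf{D}_g)\xrightarrow{g_!(\mathrm{Nm}_f\otimes 1)} g_!(f_*F\otimes\mathbf{D}_g)\xrightarrow{\mathrm{Nm}_g} g_*f_*F,
\]
where the first equivalence uses the projection formula. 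The main obstacle is verifying this factorization, and in particular the identification of $\mathbf{D}_{gf}$ only requires the norms to be equivalences. So I would instead argue directly: if $f$ and $g$ are duality maps, then $f_*\simeq f_!(-\otimes\mathbf{D}_f)$ and $g_*\simeq g_!(-\otimes\mathbf{D}_g)$, so $(gf)_*$ is of the form $(gf)_!(-\otimes K)$ for $K=\mathbf{D}_f\otimes f^*\mathbf{D}_g$. Cnossen's characterization \cite{Cno23} of twisted ambidexterity then shows that $\mathrm{Nm}_{gf}$ is an equivalence. Here twisted ambidexterity means that $f_*$ is a twist of $f_!$ compatibly with the unit, which is checked fiberwise.
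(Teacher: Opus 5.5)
Your reduction for all three properties rests on the fibrewise criterion ``$f$ is a duality map iff every fibre $X_y$ is a duality space''. The justification you give covers only one direction.

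\textbf{The gap.} Base change does give $y^*\mathrm{Nm}_f(F)\simeq\mathrm{Nm}_{X_y}(j_y^*F)$ for $j_y\colon X_y\to X$, so duality fibres imply that $f$ is a duality map. The converse requires $\mathrm{Nm}_{X_y}(G)$ to be invertible for \emph{every} $G\in\mathrm{Fun}(X_y,\mathcal{C})$. Such $G$ need not be restricted from $X$: for $X=*$, only constant local systems on $\Omega_yY$ arise. This is exactly the obstruction you flagged for $X'$, since pullback along $y\colon *\to Y$ is a special case. So the reduction is circular.

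\textbf{The missing idea: test on the unit.} Suppose $\mathrm{Nm}_Z(\mathbf{1}_Z)$ is invertible. Lift the unit $\mathbf{1}\to\lim_Z\mathbf{1}_Z$ to $\eta\colon\mathbf{1}\to\colim_Z\mathbf{D}_Z$ and set
\[
s_G\colon \lim_Z G\xrightarrow{1\otimes\eta}(\lim_Z G)\otimes\colim_Z\mathbf{D}_Z\simeq\colim_Z\bigl(\underline{\lim_Z G}\otimes\mathbf{D}_Z\bigr)\longrightarrow\colim_Z(G\otimes\mathbf{D}_Z).
\]
Here the middle step is the projection formula, the underline denotes a constant local system, and the last map is induced by the counit $\underline{\lim_Z G}\to G$. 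Naturality of the norm, its compatibility with tensoring by objects of $\mathcal{C}$, and a triangle identity give $\mathrm{Nm}_Z\circ s=\mathrm{id}$. Hence $\lim_Z$ is a natural retract of the colimit-preserving $\mathcal{C}$-linear functor $\colim_Z(-\otimes\mathbf{D}_Z)$, so it is itself colimit-preserving and $\mathcal{C}$-linear. Proposition \ref{prop:f*Clinear} then makes $Z$ a duality space. Since $\mathbf{1}_{X_y}=j_y^*\mathbf{1}_X$, this supplies the missing direction.

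\textbf{Retracts and composition.} These steps also do not close as written.
\begin{itemize}
\item For retracts, Diagram (\ref{diag:mixednorm}) connects $\mathrm{Nm}_X$ and $\mathrm{Nm}_{X'}$ only through a zigzag via the mixed norm. Its lower left map need not be invertible, so it gives no retraction of arrows.
\item For composition, an abstract equivalence $(gf)_*\simeq(gf)_!(-\otimes K)$ does not force the specific map $\mathrm{Nm}_{gf}$ to be invertible. Take $\mathcal{C}=\mathcal{D}(\mathbb{Q})$. Restriction along $B^2\mathbb{Z}\to B^2\mathbb{Q}$ is an equivalence of local-system categories, since both are modules over $C_*(B\mathbb{Z};\mathbb{Q})\simeq C_*(B\mathbb{Q};\mathbb{Q})$. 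So $*\to B^2\mathbb{Q}$ has the same $f^*,f_!,f_*$ as the duality map $*\to B^2\mathbb{Z}$, whose fibre is $S^1$. Yet its fibre $B\mathbb{Q}$ is not a duality space, because $(-)^{h\mathbb{Q}}$ does not preserve colimits ($\mathbb{Q}$ is not finitely generated). By the unit test, $*\to B^2\mathbb{Q}$ is therefore not a duality map.
\end{itemize}

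\textbf{How to finish.} With the fibrewise criterion in hand, both steps are formal via Proposition \ref{prop:f*Clinear}; this is the deduction the paper mentions alongside its citation of Cnossen.
\begin{itemize}
\item Retracts: for a retract $X'\xrightarrow{i}X\xrightarrow{r}X'$ of spaces with $X$ a duality space, $\lim_{X'}$ is a natural retract of the colimit-preserving $\mathcal{C}$-linear functor $\lim_X\circ r^*$. Indeed, the composite of the units for $r$ and $i$ is the unit for $ri\simeq\mathrm{id}$.
\item Composition: the fibre $X_z$ of $gf$ maps to the duality space $Y_z$ by a pullback $f_z$ of $f$, which is a duality map. So $\lim_{X_z}=\lim_{Y_z}\circ(f_z)_*$ is a composite of colimit-preserving $\mathcal{C}$-linear functors.
\end{itemize}
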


The following is easily deduced from this, see also \cite[Proposition 3.13]{Cno23}.

\begin{lem}
\label{lem:dualityfiberwise}
A map $f$ is a $\mathcal{C}$-duality map if and only if each of its fibers is a $\mathcal{C}$-duality space.
\end{lem}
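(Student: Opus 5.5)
The plan is to derive both directions from Lemma \ref{lem:Cdualityclosure} together with the fact that the norm map is compatible with base change and can be detected pointwise on $Y$.

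For the forward direction, suppose $f\colon X \to Y$ is a $\mathcal{C}$-duality map and $y \in Y$ is a point. The fiber $X_y$ is the pullback of $f$ along $y\colon * \to Y$, so $X_y \to *$ is a pullback of $f$. By the pullback closure in Lemma \ref{lem:Cdualityclosure}, $X_y \to *$ is a $\mathcal{C}$-duality map, which says exactly that $X_y$ is a $\mathcal{C}$-duality space.

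For the converse I will use the compatibility of the norm with base change. Take the pullback square with corners $X_y$, $X$, $*$, $Y$, with $j\colon X_y \to X$, $y\colon * \to Y$ and $g\colon X_y \to *$, and let $F \in \mathrm{Fun}(X,\mathcal{C})$. The argument then proceeds in three steps.

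\begin{itemize}
\item[(i)] Show that base change identifies the dualizing complexes, $j^*\mathbf{D}_f \simeq \mathbf{D}_g$. Here $X \times_Y X$ pulls back to $X_y \times X_y$ with its diagonal and projections, and one uses the base change equivalences for both $\Delta_!$ and $(\pi_1)_*$ (item (4)).
\item[(ii)] Show that, under the base change equivalences $y^*f_! \simeq g_! j^*$ and $y^*f_* \simeq g_* j^*$, the map $y^*\mathrm{Nm}_f(F)$ is identified with $\mathrm{Nm}_g(j^*F)$.
\item[(iii)] Conclude. A morphism in $\mathrm{Fun}(Y,\mathcal{C})$ is an equivalence iff it is so after evaluating at every point $y$. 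So if each fiber is a duality space, then each $\mathrm{Nm}_g(j^*F)$ is an equivalence, and hence $\mathrm{Nm}_f(F)$ is an equivalence.
\end{itemize}

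The main obstacle is step (ii): checking that every map in Construction \ref{con:norm} (the unit, the lax monoidal structure of $(\pi_1)_*$, the map $\pi_1^*F\otimes\Delta_!\mathbf{1}\to\pi_2^*F$, and the base change equivalence) is compatible with pulling back along $y$. Each of these is a canonical transformation built from adjunctions that satisfy Beck--Chevalley in pullback squares, so the verification is formal but tedious. Alternatively, this step is precisely the content of the pullback-stability statement in \cite[Proposition 3.13]{Cno23}, and I would cite that result for the naturality. The same naturality is needed in any case for the pullback part of Lemma \ref{lem:Cdualityclosure}, so step (ii) can be viewed as reusing that lemma's proof rather than its statement.
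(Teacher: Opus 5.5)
Your proposal is correct and follows the paper's intended route. The paper proves this lemma only by the remark that it is easily deduced from Lemma \ref{lem:Cdualityclosure} (see also \cite[Proposition 3.13]{Cno23}), and you supply that deduction. The forward direction is pullback-stability. The converse uses the base-change compatibility $y^*\mathrm{Nm}_f(F)\simeq \mathrm{Nm}_{g}(j^*F)$ together with pointwise detection of equivalences in $\mathrm{Fun}(Y,\mathcal{C})$. You are also right that the converse relies on that base-change compatibility from the lemma's proof, not just its stated closure properties.
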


\begin{cor} \label{cor: extensions}
The class of $\mathcal{C}$-duality spaces is closed under extensions. To be precise, if $f\colon E \to B$ is a map so that $B$ is a $\mathcal{C}$-duality space and every fiber of $f$ is a $\mathcal{C}$-duality space, then $E$ is a $\mathcal{C}$-duality space.
\end{cor}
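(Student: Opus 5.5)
By Lemma \ref{lem:dualityfiberwise}, every fiber of $f$ being a $\mathcal{C}$-duality space means exactly that $f\colon E \to B$ is a $\mathcal{C}$-duality map. Likewise, the hypothesis that $B$ is a $\mathcal{C}$-duality space says that the unique map $B \to *$ is a $\mathcal{C}$-duality map. The map $E \to *$ factors as the composite $E \xrightarrow{f} B \to *$. Lemma \ref{lem:Cdualityclosure} says the class of $\mathcal{C}$-duality maps is closed under composition, so $E \to *$ is a $\mathcal{C}$-duality map. By definition, this means $E$ is a $\mathcal{C}$-duality space.

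The only step with any content is the composition closure, and we take it from Lemma \ref{lem:Cdualityclosure} (Cnossen). If one wanted it directly, the argument would go as follows. For a composite $h = gf$, one identifies $\mathbf{D}_h \simeq \mathbf{D}_f \otimes f^*\mathbf{D}_g$. Under this identification, $\mathrm{Nm}_h$ factors as
\[
g_!\big(f_!(F \otimes \mathbf{D}_f) \otimes \mathbf{D}_g\big) \to g_!(f_*F \otimes \mathbf{D}_g) \to g_*f_*F .
\]
Here the middle identification uses the projection formula, the first map is induced by $\mathrm{Nm}_f$, and the second is $\mathrm{Nm}_g$ applied to $f_*F$. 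Both maps are equivalences when $f$ and $g$ are duality maps. The main obstacle in such a direct proof would be checking the compatibility of the norm with this factorization; since the paper cites this lemma, no new work is needed here.
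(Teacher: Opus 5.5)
Your proof is correct and is the same as the paper's: Lemma \ref{lem:dualityfiberwise} makes $f$ a $\mathcal{C}$-duality map, and closure under composition from Lemma \ref{lem:Cdualityclosure} then makes $E \to B \to *$ a $\mathcal{C}$-duality map. Your optional sketch of composition closure is not needed, since the paper also just cites that lemma.
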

\begin{proof}
By Lemma \ref{lem:dualityfiberwise} the map $f$ is a $\mathcal{C}$-duality map. Lemma \ref{lem:Cdualityclosure} then implies that the composition of maps $E \to B \to *$ is a $\mathcal{C}$-duality map, i.e., that $E$ is a $\mathcal{C}$-duality space.
\end{proof}
 
The following example shows that in the $1$-semiadditive case the distinction between duality groups and virtual duality groups disappears:

\begin{ex}
Suppose that $\mathcal{C}$ is 1-semiadditive. Let $G$ be a group with a finite index normal subgroup $\Gamma$ that is a $\mathcal{C}$-duality group, meaning $B\Gamma$ is a $\mathcal{C}$-duality space. Then $G$ itself is a $\mathcal{C}$-duality group. This follows from Corollary \ref{cor: extensions} by considering the fiber sequence
\[
B\Gamma \to BG \to BG/\Gamma
\]
and noting that the last term is a $\mathcal{C}$-duality space by the assumption that $\mathcal{C}$ is 1-semiadditive (see Example \ref{ex:1semiadditive}).
\end{ex}

A useful way to think of the norm map is as the universal approximation of $f_*$ by a $\mathcal{C}$-linear functor (cf. \cite[Proposition 3.7]{Cno23}). To be precise, let us say that $f_*$ is $\mathcal{C}$-linear if it preserves colimits and for any $M \in \mathcal{C}$ and $F \in \mathrm{Fun}(X,\mathcal{C})$, the natural map
\[
M \otimes f_*F \to f_*(M \otimes F)
\]
is an equivalence. (This map is part of the lax monoidal structure of $f_*$, which it has by virtue of being right adjoint to the monoidal functor $f^*$.)

\begin{prop}
    \label{prop:f*Clinear}
    A space $X$ is a $\mathcal{C}$-duality space if and only if the functor $f_*$ is $\mathcal{C}$-linear (with $f$ the map $X \to *$).
\end{prop}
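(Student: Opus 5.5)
We prove both directions using two structural features of the norm. The first is that $f_!(- \otimes \mathbf{D}_X)$ is always $\mathcal{C}$-linear. It is a composite of a left adjoint with tensoring by a fixed object, so it preserves colimits, and by the projection formula (item (2)) the map $M \otimes f_!(F \otimes \mathbf{D}_X) \to f_!(M \otimes F \otimes \mathbf{D}_X)$ is an equivalence. The second is that the norm $\mathrm{Nm}_X\colon f_!(-\otimes \mathbf{D}_X) \to f_*$ is compatible with these linearity maps. Concretely, for $M \in \mathcal{C}$ and $F \in \mathrm{Fun}(X,\mathcal{C})$ the square
\[
\begin{tikzcd}
M \otimes f_!(F \otimes \mathbf{D}_X) \ar{r}{M \otimes \mathrm{Nm}_X}\ar{d}{\simeq} & M \otimes f_*F \ar{d} \\
f_!(M \otimes F \otimes \mathbf{D}_X) \ar{r}{\mathrm{Nm}_X} & f_*(M \otimes F)
\end{tikzcd}
\]
commutes. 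This follows by unwinding Construction \ref{con:norm}. The map $\pi_1^*F \otimes \Delta_!\mathbf{1}_X \to \pi_2^*F$ is natural in $F$, and the map for $M \otimes F$ is obtained from the one for $F$ by tensoring with the constant object $M$, since $f^*$ and pullbacks are symmetric monoidal. The unit, the lax structure of $(\pi_1)_*$ and the base change equivalence are likewise compatible with tensoring by $M$. I expect this naturality check to be the main bookkeeping obstacle. It is formal, but it requires tracking lax monoidal structures through several adjunctions.

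The forward direction is then immediate. If $\mathrm{Nm}_X$ is an equivalence for all $F$, then $f_* \simeq f_!(-\otimes \mathbf{D}_X)$ preserves colimits. In the square above the horizontal maps and the left vertical map are equivalences, so the right vertical map is an equivalence as well. Hence $f_*$ is $\mathcal{C}$-linear.

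For the converse, assume $f_*$ is $\mathcal{C}$-linear. Both sides of the norm are then colimit-preserving $\mathcal{C}$-linear functors $\mathrm{Fun}(X,\mathcal{C}) \to \mathcal{C}$. Since $\mathrm{Fun}(X,\mathcal{C}) \simeq \mathrm{Fun}(X,\mathcal{S}) \otimes \mathcal{C}$ is generated under colimits and $\mathcal{C}$-tensoring by the objects $x_!\mathbf{1}$ for points $x\colon * \to X$, it suffices to check that $\mathrm{Nm}_X$ is an equivalence on each $x_!\mathbf{1}$. Here the linearity square is used to pass from $x_!\mathbf{1}$ to $M \otimes x_!\mathbf{1} \simeq x_!M$. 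We may assume $X$ is connected.

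On such an object the source of the norm is
\[
f_!(x_!\mathbf{1} \otimes \mathbf{D}_X) \simeq x^*\mathbf{D}_X \simeq f_*\,\mathbf{hom}_X(f^*\mathbf{1}, x_!\mathbf{1}) \simeq f_* x_!\mathbf{1}.
\]
The first equivalence is the projection formula. The second uses base change along the pullback square of $x$ and $\pi_1$, together with the identification $x \times_{X} \Delta$, which turns $(\pi_1)_*\Delta_!\mathbf{1}_X$ evaluated at $x$ into $f_*x_!\mathbf{1}$. The target of the norm is also $f_*x_!\mathbf{1}$. By chasing Construction \ref{con:norm}, the norm map becomes the identity under these identifications. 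This is the same unwinding as in Example \ref{ex:DBG}, where for $X=BG$ one gets $\mathbb{S}[G]^{hG} \to \mathbb{S}[G]^{hG}$.

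So the norm is an equivalence on generators. Since both functors preserve colimits and are $\mathcal{C}$-linear, it is an equivalence for every $F$, and $X$ is a $\mathcal{C}$-duality space.
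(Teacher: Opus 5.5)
Your proposal is correct and follows essentially the same route as the paper. The forward direction uses $\mathcal{C}$-linearity of $f_!(-\otimes\mathbf{D}_X)$, and the converse reduces to the generators $x_!\mathbf{1}$ and then uses the double base change to identify both sides of the norm with $f_*x_!\mathbf{1}$. Your explicit compatibility square between the norm and the linearity maps makes precise a step the paper leaves implicit, while the reduction to connected $X$ is never used and can be dropped.
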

\begin{proof}
Suppose that $X$ is a $\mathcal{C}$-duality space. To see that $f_*$ is $\mathcal{C}$-linear, it suffices to observe that $f_!$ is $\mathcal{C}$-linear. For the converse, suppose that $f_*$ is a $\mathcal{C}$-linear functor. To check that the norm map is an equivalence, it now suffices to do so on `induced' objects of the kind $F = x_!(\mathbf{1})$, where $x\colon * \to X$ is the inclusion of a point of $X$ and $\mathbf{1} \in \mathcal{C}$ is the unit. (Indeed, varying $x$ over the components of $X$, such $F$ generate $\mathrm{Fun}(X,\mathcal{C})$ as a $\mathcal{C}$-linear $\infty$-category.) In this case, the norm map may be identified with a map
\[
\mathrm{Nm}_X\colon x^*\mathbf{D}_X \to f_*x_!\mathbf{1},
\]
where we have used the projection formula to rewrite the domain. To unravel this further, consider the pullback squares
\[
\begin{tikzcd}
* \ar{r}{x} \ar{d}{x} & X \ar{r}{f} \ar{d}{x \times 1_X} & * \ar{d}{x} \\
X \ar{r}{\Delta} & X \times X \ar{r}{\pi_1} & X.
\end{tikzcd}
\]
Applying base change twice we find equivalences
\[
x^*\mathbf{D}_X = x^*(\pi_1)_*\Delta_! \mathbf{1}_X \simeq f_*(x \times 1_X)^*\Delta_! \mathbf{1}_X \simeq f_*x_!x^*\mathbf{1}_X = f_*x_!\mathbf{1},
\]
and unpacking the definitions one sees that this composite may be identified with $\mathrm{Nm}_X$.
\end{proof}

The characterization of duality maps of Proposition \ref{prop:f*Clinear} yields the following very useful result (this result has already appeared in \cite[Lemma 4.21]{CCRY} with a different proof): 

\begin{prop}
\label{prop:Cdualityfinitecolim}
Suppose that $\mathcal{C}$ is stable. Then a finite colimit of $\mathcal{C}$-duality spaces is a $\mathcal{C}$-duality space. In particular, any finite space is a $\mathcal{C}$-duality space.
\end{prop}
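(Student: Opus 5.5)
We will use the characterization of Proposition \ref{prop:f*Clinear}: a space $X$ is a $\mathcal{C}$-duality space if and only if $f_*\colon \mathrm{Fun}(X,\mathcal{C}) \to \mathcal{C}$ is $\mathcal{C}$-linear, meaning it preserves colimits and the lax structure maps $M \otimes f_*F \to f_*(M\otimes F)$ are equivalences.

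Let $X = \colim_{k \in K} X_k$ with $K$ a finite $\infty$-category and each $X_k$ a $\mathcal{C}$-duality space. Write $j_k\colon X_k \to X$ for the structure maps and $f_k = f j_k\colon X_k \to *$. We will show that for every $F \in \mathrm{Fun}(X,\mathcal{C})$ there is a natural equivalence
\[
f_*F \simeq \lim_{k \in K^{\mathrm{op}}} (f_k)_* j_k^* F .
\]
To see this, use that $\mathrm{Fun}(-,\mathcal{C})$ sends colimits of spaces to limits of $\infty$-categories. Then $\mathrm{Fun}(X,\mathcal{C}) \simeq \lim_K \mathrm{Fun}(X_k,\mathcal{C})$, and $F$ is recovered as $\lim_k (j_k)_* j_k^* F$. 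Equivalently, $\lim_X F$ is computed by first taking limits over each $X_k$ and then taking the limit over $K$. This is the decomposition of a limit over a colimit-indexed diagram, obtained by writing $X$ as the localization of the Grothendieck construction of $k \mapsto X_k$.

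With this formula, the properties of $f_*$ follow from those of the $(f_k)_*$.

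\emph{Colimits.} Each $j_k^*$ preserves colimits, and each $(f_k)_*$ preserves colimits by hypothesis. A finite limit commutes with colimits in the stable $\infty$-category $\mathcal{C}$. Hence $f_*$ preserves colimits.

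\emph{Linearity.} The map $M \otimes f_*F \to f_*(M\otimes F)$ is identified with the composite
\[
M \otimes \lim_k (f_k)_*j_k^*F \to \lim_k M\otimes (f_k)_*j_k^*F \to \lim_k (f_k)_*j_k^*(M\otimes F).
\]
The first map is an equivalence because $M\otimes -$ is exact on stable $\mathcal{C}$ and the limit is finite. The second map is an equivalence by $\mathcal{C}$-linearity of each $(f_k)_*$, together with the symmetric monoidality of $j_k^*$.

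\emph{Main obstacle.} The point needing the most care is the naturality check. We must verify that the lax monoidal structure map of $f_*$ really corresponds, under the decomposition, to the limit of the structure maps of the $(f_k)_*$. This should follow because all these maps are built as mates of the symmetric monoidal functors $f^*$ and $f_k^* = j_k^* f^*$, and the equivalence $\mathrm{Fun}(X,\mathcal{C}) \simeq \lim_K \mathrm{Fun}(X_k,\mathcal{C})$ is compatible with the monoidal structures.

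\emph{Finite spaces.} The point $*$ is a $\mathcal{C}$-duality space: there $f_*$ is the identity, which is trivially linear (equivalently, $\mathbf{D}_* = \mathbf{1}$ and the norm is the identity). Every finite space is an iterated finite colimit (pushouts of cells and the empty colimit) of points. Hence every finite space is a $\mathcal{C}$-duality space.
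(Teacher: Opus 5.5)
Your proposal is correct and is essentially the paper's proof. The paper also uses $\mathrm{Fun}(X,\mathcal{C}) \simeq \lim_{K^{\mathrm{op}}} \mathrm{Fun}(X_k,\mathcal{C})$ to write $f_* \simeq \lim_{k \in K^{\mathrm{op}}} f_*(j_k)_*j_k^*$, notes that in the stable setting a finite limit of $\mathcal{C}$-linear functors is $\mathcal{C}$-linear, and concludes via Proposition~\ref{prop:f*Clinear}, with finite spaces handled by the point being a duality space; the compatibility of lax structure maps that you flag as the main obstacle is left implicit there as well.
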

\begin{proof}
The final claim about a finite space follows from the preceding by noting that the one-point space is a $\mathcal{C}$-duality space for any $\mathcal{C}$. For the first claim, suppose that $X = \mathrm{colim}_{i 
\in I} X_i$ for some finite diagram $I$, where each $X_i$ is a $\mathcal{C}$-duality space. Write $g_i$ for the map $X_i \to X$ and $f$ for the map $X \to *$. Then $\mathrm{Fun}(X,\mathcal{C}) \simeq \mathrm{lim}_{i \in I^{\mathrm{op}}} \mathrm{Fun}(X_i,\mathcal{C})$, from which it follows that for $F$ a local system on $X$ the natural map 
\[
F \to \mathrm{lim}_{i \in I^{\mathrm{op}}} (g_i)_*g_i^* F
\]
is an equivalence. Furthermore, the hypothesis that each $X_i$ is a $\mathcal{C}$-duality space means that each of the functors $f_*(g_i)_*$ is $\mathcal{C}$-linear. The pullback functors $g_i^*$ are evidently $\mathcal{C}$-linear. Since $\mathcal{C}$ is stable, finite limits commute with colimits and tensor products, so that any finite limit of $\mathcal{C}$-linear functors is $\mathcal{C}$-linear. We conclude that $f_* \simeq \mathrm{lim}_{i \in I^{\mathrm{op}}} f_*(g_i)_*g_i^*$ is $\mathcal{C}$-linear. Proposition \ref{prop:f*Clinear} implies that $X$ is a $\mathcal{C}$-duality space.
\end{proof}

Our first main result on Tate vanishing now follows straightforwardly:

\begin{proof}[Proof of Theorem \ref{thm:Tatevanishing}]
Suppose $G$ has finite $\underline{E}G$. Then $BG$ is a (retract of a) finite colimit of classifying spaces of finite groups (see Lemma \ref{lem:finiteEGandBG}). By the assumption that $\mathcal{C}$ is 1-semiadditive, those classifying spaces are $\mathcal{C}$-duality spaces (Example \ref{ex:1semiadditive}). Now Proposition \ref{prop:Cdualityfinitecolim} implies that $BG$ is a $\mathcal{C}$-duality space, as desired.
\end{proof}

\begin{ex}
\label{ex:rationaldualitygroup}
Let $G$ be a virtual duality group with finite virtual cohomological dimension $n$. Then in fact $G$ is a rational duality group \cite[Theorem 9.2]{Bieri} (see also \cite[Section X.3]{Brown}), i.e., for any $\Q[G]$-module $V$, there is an isomorphism
\[H^*(G; V) \cong H_{n-*}(G; D_{G}^{\Q} \otimes_{\mathbb{\Q}} V),\]
where $D_{G}^{\Q}\cong H^{n}(G; \Q[G])$. 

This result can be thought of as a special case of Theorem \ref{thm:Tatevanishing} when $G$ admits a finite $\underline{E}G$. Indeed the derived category $\mathcal{D}(\mathbb{Q})$ is $1$-semiadditive and Theorem \ref{thm:Tatevanishing} implies that the norm map
\begin{align}\label{eq: rational norm}\mathrm{Nm}_{BG} \colon (V \otimes \mathbf{D}_{BG})_{hG} \to V^{hG}\end{align}
is an equivalence. To obtain the rational duality statement for (co)homology, one observes that since $G$ is a virtual duality group, we have $D_{G}^{\Q} \simeq \Sigma^{n} \Q \otimes \mathbf{D}_{BG}$ as $\Q[G]$-modules (cf. \cite[Proposition VIII.11.3]{Brown}). In fact Theorem \ref{thm:Tatevanishing} says that for the rational vanishing of the Tate construction it is not necessary to assume that $G$ is a virtual duality group. In other words, the norm map (\ref{eq: rational norm})
is an equivalence for any group $G$ with finite $\underline{E}G$ and any $V$ in $\mathcal{D}(\mathbb{Q}[G])$. We believe this is familiar to experts, since it is well known that the Farrell--Tate cohomology of $VFP$ groups is torsion \cite[Proposition 8]{Far78}. 
\end{ex}

\begin{ex} The previous example can be generalized to higher chromatic heights. As mentioned before it follows from \cite{HS96,GS96} that $\mathrm{Sp}_{K(n)}$ is $1$-semiadditive and \cite{kuhntate} implies that $\mathrm{Sp}_{T(n)}$ is $1$-semiadditive. Theorem \ref{thm:Tatevanishing} specializes to the following: let $G$ be a group with finite $\underline{E}G$ and $X$ a $K(n)$-local (or $T(n)$-local) spectrum with a $G$-action. Then the norm map
\[\mathrm{Nm}_{BG} \colon (X \otimes \mathbf{D}_{BG})_{hG} \to X^{hG}\]
is a $K(n)$-local (resp. $T(n)$-local) equivalence, i.e., we have
\[L_{K(n)} X^{tG}=0 \quad (\text{resp.} \quad L_{T(n)} X^{tG}=0).\]

\end{ex}

\begin{rem} For us, the most relevant
example of a $K(n)$-local spectrum with $G$-action will be $E_n$, meaning Morava $E$-theory at height $n$, equipped with the trivial $G$-action. For $G$ a finite group, Hopkins--Kuhn--Ravenel character theory \cite{HKR} and Stapleton's transchromatic character theory \cite{Stap} can be used to study the Tate construction $E_n^{tG}$. We recall some details from \cite{HKR}. Let $L$ denote the Hopkins--Kuhn--Ravenel character ring. This is a faithfully flat rational graded algebra over 
\[E_n^{*} \cong W(\mathbb{F}_{p^n})\llbracket u_1, \dots, u_{n-1} \rrbracket[u^{\pm 1}],\]
where $ W(\mathbb{F}_{p^n})$ is the ring of Witt vectors of $\mathbb{F}_{p^n}$, the variables $u_1, \dots \dots, u_{n-1}$ have degree $0$ and $u$ has degree $-2$. Further, given a discrete group $G$, let $G_{n,p}$ denote the set of $n$-tuples $(g_1, \dots, g_n)$ of commuting elements ($g_i g_j=g_jg_i$ for all $i$ and $j$), each of which has $p$-power order. The group $G$ acts on $G_{n,p}$ by conjugation in each variable. Hopkins--Kuhn--Ravenel construct a character isomorphism
\[ \chi_{n,p} \ : \ E_n^*(B G) \otimes_{E_n^*} L \xrightarrow{\cong}
    \Fun(G_{n,p}/G, L),\]
generalizing classical character theory at height $n=1$. By definition of $\chi_{n,p}$ (see \cite[Section 6.3]{HKR}) and $1$-semiadditivity of $\mathcal{D}(\mathbb{Q})$, we see that after tensoring with $L$, the norm map
\[N_G \colon (E_n)_{-*}(B G) \otimes_{E_n^*} L \cong L \to E_n^{*}(B G) \otimes_{E_n^*} L\]
becomes split injective. The splitting is given by the map $\Fun(G_{n,p}/G, L) \to L$ which evaluates at the class $[1, \dots 1]$ and divides by the order of $G$. This implies that the character map induces an isomorphism
\[\pi_{-*}(E_n^{tG}) \otimes_{E_n^{*}} L \cong \Fun(G_{n,p}-\{(1, \dots, 1)\}/G, L).\]
The character theory of \cite{HKR} was generalized in \cite{LPS2024} from finite groups to discrete groups with finite $\underline{E}G$. Using this one can deduce that if $G$ is a discrete group with finite $\underline{E}G$, then
\[\pi_{-*}(E_n^{tG}) \otimes_{E_n^{*}} L \cong  \prod_{[g_1,\dots,g_n] \in G_{n,p}-\{(1, \dots, 1)\}/G} H^*( B C\langle g_1, \dots, g_n \rangle; L ),\]
where 
$C\langle g_1, \dots, g_n \rangle$ denotes the centralizer of the (abelian) subgroup generated by the elements $g_1, \dots, g_n $. See \cite{AndrewPatchkoria} for explicit computations of the homotopy groups of the Farrell--Tate construction $E_n^{tG}$ at height $n=1$. 
\end{rem}

\section{The dualizing complex as a proper $G$-spectrum} \label{sec: genuine dualizing complex} 

Let $G$ be a discrete group. Throughout this section we take $\mathcal{C}$ to be the $\infty$-category $\Sp$ of spectra, although one can work at the level of a general presentably symmetric monoidal stable $\mathcal{C}$ with little change. The highlight of this section is the construction of a `duality functor' $D$ on the $\infty$-category $\Sp^G_{\mathrm{prop}}$ of proper $G$-spectra (not to be confused with duality with respect to the symmetric monoidal structure). We will show that the dual of the unit $D(\mathbb{S}_G)$ provides a lift of the dualizing complex $\mathbf{D}_{BG} \in \mathrm{Fun}(BG,\mathrm{Sp})$ of Definition \ref{def:dualizingcomplex} to a genuine proper $G$-spectrum (see Proposition \ref{prop: dualizing module}). In particular, a cell decomposition of $\Sigma^\infty_+\underline{E}G$ gives an explicit description of $\mathbf{D}_{BG}$, see Lemma \ref{lem:DStransfer}. 

To start, recall that there is an equivalence
\[
\colim_{G/H \in \mathrm{Or}^G_{\mathrm{fin}}} \Sp^{H} \xrightarrow{\simeq} \Sp^G_{\mathrm{prop}},
\]
where the colimit is computed along induction functors in the $\infty$-category $\Pr^L$. Each of the $\infty$-categories $\Sp^{H}$ is compactly generated, i.e., it is of the form $\mathrm{Ind}((\Sp^H)^{\omega})$ with $(\Sp^H)^{\omega}$ the full subcategory of compact $H$-spectra. These compact objects are precisely the retracts of finite $H$-spectra or, in other words, $(\Sp^H)^{\omega}$ is the thick subcategory generated by orbits $\Sigma^\infty_+ H/K$ with $K$ ranging over subgroups of $H$.

Let $\mathrm{Cat}_\infty^{\rex}$ denote the $\infty$-category of small $\infty$-categories admitting finite colimits and functors between them preserving finite colimits. Then the functor $\Ind \colon \Cat_{\infty}^{\rex} \to \Pr^L$ preserves colimits (see \cite[Propositions 5.5.7.6, 5.5.7.10]{LurieHTT}). Hence we find (see also \cite[Corollary 1.3.11]{DHLPS} and \cite[Section 3]{BDP} for analogous statements in terms of triangulated categories):

\begin{lem}
\label{lem:SpGcompactgen}
The functor
\[
\mathrm{Ind}\bigl(\colim_{G/H \in \mathrm{Or}^G_{\mathrm{fin}}} (\Sp^{H})^{\omega}\bigr) \rightarrow \Sp^G_{\mathrm{prop}} 
\]
is an equivalence, with the colimit $\colim_{G/H \in \mathrm{Or}^G_{\mathrm{fin}}} (\Sp^{H})^{\omega}$ computed in $\mathrm{Cat}_\infty^{\rex}$. In particular, $\Sp^G_{\mathrm{prop}}$ is compactly generated and the thick subcategory of compact objects is generated by the orbits $\Sigma^\infty_+ G/H$ with $H$ ranging over finite subgroups of $G$.
\end{lem}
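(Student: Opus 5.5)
The plan is to combine three inputs. First, the colimit description $\colim_{\Or^G_{\mathrm{fin}}} \Sp^H \xrightarrow{\simeq} \Sp^G_{\mathrm{prop}}$ in $\Pr^L$, recorded at the end of Section 2. Second, the identification $\Sp^H \simeq \Ind((\Sp^H)^\omega)$ for each finite $H$. Third, the fact that $\Ind\colon \Cat_\infty^{\rex} \to \Pr^L$ preserves colimits (HTT 5.5.7.6, 5.5.7.10).

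First I will check that the diagram $G/H \mapsto \Sp^H$ in $\Pr^L$, with induction functors, lifts to a diagram in $\Cat_\infty^{\rex}$, namely $G/H \mapsto (\Sp^H)^\omega$. For this I need induction to preserve compact objects. That holds because its right adjoint, the restriction functor along a map of orbits $G/K \to G/H$ (restriction to a subgroup composed with conjugation), preserves filtered colimits. Equivalently, induction sends the generators $\Sigma^\infty_+ K/L$ to $\Sigma^\infty_+ H/L$, which are compact. Since the functor $\Ind$ restricted to compactly generated categories and compact-preserving left adjoints is an equivalence onto its image, the diagram in $\Pr^L$ is $\Ind$ applied to the diagram of compact parts, up to natural equivalence. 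Commuting $\Ind$ with the colimit then gives the claimed equivalence.

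For the second assertion, the Ind-category of any small idempotent-complete-up-to-retracts category $\mathcal{D}$ is compactly generated, and its compact objects are the retracts of images of $\mathcal{D}$. The colimit $\mathcal{D}=\colim (\Sp^H)^\omega$ in $\Cat^{\rex}_\infty$ is generated under finite colimits (and desuspensions, since it is stable) by the images of the objects of the $(\Sp^H)^\omega$. Each of those is built from orbits $\Sigma^\infty_+ H/K$, and these map to $\ind_H^G \Sigma^\infty_+ H/K \simeq \Sigma^\infty_+ G/K$ with $K$ finite. Hence the thick subcategory of compact objects of $\Sp^G_{\mathrm{prop}}$ is generated by the $\Sigma^\infty_+ G/H$.

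The main obstacle is the coherence issue in the first step: justifying that the diagram in $\Pr^L$ really is $\Ind$ of a $\Cat^{\rex}_\infty$-valued diagram. I would handle this by invoking the equivalence between $\Pr^L_\omega$ (compactly generated categories with compact-preserving left adjoints) and idempotent-complete small finitely cocomplete categories, together with the pointwise compact preservation of induction. I would also need to confirm that colimits in $\Pr^L_\omega$ agree with those in $\Pr^L$. That compatibility is exactly the content of the cited HTT statements, so I would rely on them for it.
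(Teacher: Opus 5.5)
Your proposal is correct and follows the paper's own argument. You lift the induction diagram to $\Cat_\infty^{\rex}$ via compact objects, use that $\Ind$ preserves colimits (HTT 5.5.7.6, 5.5.7.10), and read off the generators from $\mathrm{ind}_H^G \Sigma^\infty_+ H/K \simeq \Sigma^\infty_+ G/K$. You are in fact slightly more careful than the paper on two points: you check that induction preserves compact objects, so the diagram really lives in $\Cat_\infty^{\rex}$, and you note that compact objects are retracts of \emph{finite colimits} of induced objects, where the paper says retracts of induced objects.
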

\begin{proof}
The first part of the lemma is immediate from the discussion above. It follows that the compact objects of $\Sp^G_{\mathrm{prop}}$ are retracts of induced objects of the form $\mathrm{ind}_{H}^G X$ with $H \leq G$ finite and $X \in \Sp^H$ a compact object. Such $X$ are generated by objects of the form $\Sigma^{\infty}_+ H/K$. Now observe $\mathrm{ind}_{H}^G \Sigma^{\infty}_+ H/K \simeq \Sigma^{\infty}_+ G/K$, so that we find the desired family of generators for the compact objects of $\Sp^G_{\mathrm{prop}}$.
\end{proof}

For a finite group $H$, the compact objects coincide with the dualizable objects in the symmetric monoidal $\infty$-category of genuine $H$-spectra $\Sp^H$. In particular, Spanier--Whitehead duality gives an equivalence of $\infty$-categories
\[
D\colon (\Sp^H)^{\omega} \xrightarrow{\simeq} ((\Sp^H)^{\omega})^{\op}.
\]
This equivalence is natural in $H$, in the following sense. Consider the diagram
\[
\mathrm{res}\colon (\mathrm{Or}^G_{\mathrm{fin}})^{\mathrm{op}} \to \mathrm{Cat}_\infty^{\rex} \colon G/H \mapsto (\Sp^{H})^{\omega}
\]
built from the restriction functors; e.g., an inclusion of subgroups $H \leq K$, giving a map of orbits $G/H \to G/K$, is sent to $\mathrm{res}^K_H\colon (\Sp^{K})^{\omega} \to (\Sp^{H})^{\omega}$. The restriction functors admit a natural symmetric monoidal structure, so that Spanier--Whitehead duality can be upgraded to an equivalence of diagrams $D\colon \mathrm{res} \to \mathrm{res}^{\mathrm{op}}$. The restriction functors admit left adjoints (namely induction) and right adjoints (coinduction), and we will temporarily write
\[
\mathrm{ind}, \, \mathrm{coind}\colon \mathrm{Or}^G_{\mathrm{fin}} \to \mathrm{Cat}_\infty^{\rex} \colon G/H \mapsto (\Sp^{H})^{\omega}
\]
for the resulting two diagrams. Passing to adjoints, the equivalence of diagrams $D$ we just established therefore also gives an equivalence (denoted by the same letter)
\[
D\colon \mathrm{ind} \to \mathrm{coind}^{\mathrm{op}}.
\]
The Wirthm\"uller isomorphism \cite{Wir} shows that induction and coinduction along inclusions of finite groups agree. In fact, this statement can be made natural, in the following sense:

\begin{lem}
\label{lem:Wirthmuller}
The Wirthm\"uller isomorphisms $\mathrm{ind}_H^K \simeq \mathrm{coind}_H^K$ can be assembled to an equivalence of functors
\[
\mathrm{ind} \simeq \mathrm{coind}\colon \mathrm{Or}^G_{\mathrm{fin}} \to \mathrm{Cat}_\infty^{\rex}.
\]
\end{lem}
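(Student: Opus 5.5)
The plan is to realize both diagrams $\mathrm{ind}$ and $\mathrm{coind}$ inside a single parametrized framework, where the Wirthm\"uller map is defined uniformly and is therefore automatically natural; the remaining work is then only the pointwise check. Concretely, I will use that the functor $G/H \mapsto \Sp^H$ on $(\mathrm{Or}^G_{\mathrm{fin}})^{\mathrm{op}}$ (with restrictions) extends to a functor out of the span category of finite proper $G$-sets, i.e.\ proper $G$-spectra form a $G$-$\infty$-category that is $G$-semiadditive relative to finite orbits (e.g.\ via the description of \cite{LNP}, or via global spectra restricted to finite groups, where genuine $H$-spectra are Mackey functors valued in spectra). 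For an inclusion $H \leq K$ of finite subgroups, the covariant functoriality along the span $G/K \leftarrow G/H = G/H$ is a functor $(\Sp^H)^{\omega} \to (\Sp^K)^{\omega}$ which is simultaneously left and right adjoint to restriction. This is exactly the statement of ambidexterity for the finite $K$-set $K/H$ in the genuine (normed) setting.

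The first step is therefore to fix the norm (Wirthm\"uller) map. For each $H\le K$ I define $\mathrm{Nm}_H^K\colon \mathrm{ind}_H^K \to \mathrm{coind}_H^K$ as the mate of the unit/counit data: it is the adjoint of the map $\mathrm{res}^K_H\mathrm{ind}_H^K \to \mathrm{id}$ obtained by projecting onto the summand indexed by the identity double coset in the double coset decomposition
\[
\mathrm{res}^K_H\mathrm{ind}_H^K \simeq \bigoplus_{HkH} \mathrm{ind}^H_{H\cap kHk^{-1}} c_k^*\mathrm{res}.
\]
This is the twisted-ambidexterity norm of Construction \ref{con:norm}, now applied in the $G$-parametrized $\infty$-category $\underline{\Sp}$ over $\mathrm{Or}^G_{\mathrm{fin}}$ with respect to the map of orbits $G/H \to G/K$. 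The second step is to observe that the constructions of Section \ref{sec: norm and Tate}, applied to this parametrized setting (as in \cite{Cno23}), are functorial in the base. Here the key input is that norms compose: for $H \leq K \leq L$ the norm for the composite agrees with the composite of norms (the analogue of Lemma \ref{lem:Cdualityclosure}), and norms are compatible with base change along conjugations. This gives a natural transformation $\mathrm{ind} \Rightarrow \mathrm{coind}$ of functors $\mathrm{Or}^G_{\mathrm{fin}} \to \mathrm{Cat}^{\rex}_\infty$, including all higher coherences. To obtain these coherences cleanly, I would phrase both functors as the covariant (left Kan / right Kan) extensions of the single functor $\mathrm{res}$ along $\mathrm{Or}^{\mathrm{op}} \to \mathrm{Span}$, and the norm as the canonical comparison from the Beck--Chevalley condition.

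The final step is to check that the transformation is an equivalence. Being an equivalence of functors can be tested objectwise, so it suffices that each $\mathrm{Nm}_H^K$ is an equivalence. This is the classical Wirthm\"uller isomorphism \cite{Wir}, equivalently the statement that $\Sigma^\infty_+ K/H$ is self-dual in $\Sp^K$, since the tangent representation of a finite set is zero.

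The main obstacle is the coherence in the second step: the pointwise isomorphisms are classical, but assembling them into a transformation of $\infty$-functors requires them to arise from one universal construction. If the span-category formalism proves cumbersome, my fallback is to note that $\mathrm{ind}$ and $\mathrm{coind}$ are the left and right adjoint diagrams of the same diagram $\mathrm{res}$. It then suffices to show that the diagram $\mathrm{res}\colon (\mathrm{Or}^G_{\mathrm{fin}})^{\mathrm{op}} \to \mathrm{Cat}_\infty$ factors through a category where left and right adjoints coincide coherently. This is exactly $1$-semiadditivity of the parametrized $\infty$-category of genuine spectra (proven, e.g., by Nardin), and it yields the desired equivalence $\mathrm{ind}\simeq\mathrm{coind}$.
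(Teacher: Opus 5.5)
Your fallback has the same architecture as the paper's proof, but as written the proposal has a genuine gap. The step that carries all the content of the lemma is asserted rather than proved, and the input you cite for it does not supply it.

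\emph{The main line.} Step two claims that, because norms are compatible with composition $H\le K\le L$ and with conjugation, you get a transformation $\mathrm{ind}\Rightarrow\mathrm{coind}$ ``including all higher coherences''. Those compatibilities are statements about individual maps up to homotopy, so they control only the lowest layers of coherence. Neither Section~\ref{sec: norm and Tate} nor \cite{Cno23} gives a homotopy-coherent functoriality of norm maps in the base; compare Remark~\ref{rem: norm limit}, where the paper explicitly declines to claim such coherence.

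Your last step also misplaces the Wirthm\"uller maps. The components of a transformation $\mathrm{ind}\Rightarrow\mathrm{coind}$ are functors $(\Sp^H)^{\omega}\to(\Sp^H)^{\omega}$ indexed by objects $G/H$; here they are naturally the identities. The maps $\mathrm{Nm}_H^K$ are instead the 2-cells filling the naturality squares. In the $(\infty,1)$-category $\mathrm{Fun}(\mathrm{Or}^G_{\mathrm{fin}},\mathrm{Cat}_\infty^{\rex})$ those squares must already commute up to equivalence. So there is nothing to ``test objectwise'' until you have built a coherent lax transformation in an $(\infty,2)$-categorical sense, and that is exactly what is missing. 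Left or right Kan extending $\mathrm{res}$ along $j$ does not help either: it produces functors on $\mathrm{Span}$, with no identification of their forward parts with $\mathrm{ind}$ or $\mathrm{coind}$.

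\emph{The fallback.} This is the right idea and it is what the paper does. However, $1$-semiadditivity of genuine spectra is a pointwise property of norm maps and does not by itself produce the coherent factorization. What you need is an $(\infty,2)$-functor $R\colon\mathrm{Span}(\mathrm{Or}^G_{\mathrm{fin}})\to\mathrm{Cat}_\infty$ with $R\circ j\simeq\mathrm{res}$, where the 2-morphisms of the span 2-category are spans of spans. Its existence and essential uniqueness is Ben-Moshe's Conjecture 2.24 \cite{BenMosche}, proved by Cnossen--Lenz--Linskens \cite{CLL6functor}, and this is the key input the paper cites. The pointwise Wirthm\"uller isomorphisms are what make such an extension possible, not what assembles it.

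Given $R$, the argument is formal and no norm map has to be constructed. Because the 2-cells are spans of spans, the single diagram $\mathrm{Or}^G_{\mathrm{fin}}\to\mathrm{Span}(\mathrm{Or}^G_{\mathrm{fin}})$, $(G/H\to G/K)\mapsto(G/H\xleftarrow{=}G/H\to G/K)$, is coherently both a system of left adjoints and a system of right adjoints to $j$. With only maps of spans as 2-cells you would get just one of the two adjunctions. Since functors of $(\infty,2)$-categories preserve adjunctions, applying $R$ identifies both $\mathrm{ind}$ and $\mathrm{coind}$ with the same diagram.
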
 

The proof of this lemma can be found in the appendix. We can now complete the promised construction of the duality functor $D$ for proper $G$-spectra:

\begin{con}
\label{con:duality}
Lemma \ref{lem:Wirthmuller} and the equivalence $D\colon \mathrm{ind} \to \mathrm{coind}^{\mathrm{op}}$ constructed above together give an equivalence of diagrams
\[
\mathrm{ind} \simeq \mathrm{ind}^{\mathrm{op}}\colon \mathrm{Or}^G_{\mathrm{fin}} \to \mathrm{Cat}_\infty^{\rex}.
\]
Passing to the colimit over $\mathrm{Or}^G_{\mathrm{fin}}$ then gives an equivalence of $\infty$-categories
\[
D\colon (\Sp^G_{\mathrm{prop}})^{\omega} \xrightarrow{\simeq} ((\Sp^G_{\mathrm{prop}})^{\omega})^{\op}.
\]
The composite functor $(\Sp^G_{\mathrm{prop}})^{\omega} \xrightarrow{D}((\Sp^G_{\mathrm{prop}})^{\omega})^{\op} \xrightarrow{\incl} (\Sp^G_{\mathrm{prop}})^{\op}$ is exact and therefore induces a colimit-preserving functor 
\[
D \colon \Sp^G_{\mathrm{prop}} \to (\Sp^G_{\mathrm{prop}})^{\op}
\]
by \cite[Proposition 5.3.5.10]{LurieHTT}.
\end{con}

Let us now record some basic properties of this construction.

\begin{prop}
    \label{prop:dualitySpG}
    The duality functor $D$ satisfies the following:
    \begin{itemize}
        \item[(1)] For a finite subgroup $H \leq G$, there is a preferred equivalence $D(\Sigma^\infty_+ G/H) \simeq \Sigma^\infty_+ G/H$.
        \item[(2)] For $X \in \Sp^G_{\mathrm{prop}}$, there is a natural map $\eta\colon X \to D^{\op}(D(X))$ which is an equivalence whenever $X$ is compact.
        \item[(3)] The map of (2) is the unit of an adjunction making $D^{\op}$ right adjoint to $D$:
        \[
        \begin{tikzcd}
        \Sp^G_{\mathrm{prop}} \ar[shift left]{r}{D} & (\Sp^G_{\mathrm{prop}})^{\op}. \ar[shift left]{l}{D^{\op}}
        \end{tikzcd}
        \]
        In particular, for $X, Y \in \Sp^G_{\mathrm{prop}}$ there is a natural equivalence 
        \[
        \mathrm{Map}^G(X, DY) \simeq \mathrm{Map}^G(Y, DX),
        \]
        where $\mathrm{Map}^G$ is shorthand for the mapping space in the $\infty$-category $\Sp^G_{\mathrm{prop}}$.
    \end{itemize}
\end{prop}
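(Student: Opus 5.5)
The plan is to read off all three properties from Construction \ref{con:duality}, using that on compact objects $D$ is an equivalence $(\Sp^G_{\mathrm{prop}})^{\omega} \simeq ((\Sp^G_{\mathrm{prop}})^{\omega})^{\op}$, and that on all of $\Sp^G_{\mathrm{prop}} = \mathrm{Ind}((\Sp^G_{\mathrm{prop}})^{\omega})$ it is the unique colimit-preserving extension.

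For (1), trace $\Sigma^\infty_+ G/H$ through the colimit description of Lemma \ref{lem:SpGcompactgen}. It is $\mathrm{ind}_H^G \mathbb{S}_H$, i.e.\ the image of $\mathbb{S}_H \in (\Sp^H)^{\omega}$ under the structure map of the colimit at $G/H$. The equivalence of diagrams $\mathrm{ind} \simeq \mathrm{ind}^{\op}$ is given at the object $G/H$ by Spanier--Whitehead duality on $(\Sp^H)^{\omega}$. So $D(\mathrm{ind}_H^G \mathbb{S}_H) \simeq \mathrm{ind}_H^G(D_H \mathbb{S}_H) \simeq \mathrm{ind}_H^G \mathbb{S}_H$, since the Spanier--Whitehead dual of the unit is the unit. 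This identification is canonical up to the choice made in Lemma \ref{lem:Wirthmuller}, which is what ``preferred'' refers to.

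For (2) and (3), I would construct the adjunction directly and let the unit supply $\eta$. For $X, Y \in \Sp^G_{\mathrm{prop}}$, write $X = \colim_i X_i$ and $Y = \colim_j Y_j$ as filtered colimits of compact objects. Since $D$ sends colimits to limits, we have
\[
\mathrm{Map}^G(X, DY) \simeq \lim_{i,j} \mathrm{Map}^G(X_i, DY_j).
\]
For compact objects, the fact that $D$ is an equivalence onto the opposite category gives
\[
\mathrm{Map}^G(X_i, DY_j) \simeq \mathrm{Map}^G(DDY_j, DX_i) \simeq \mathrm{Map}^G(Y_j, DX_i),
\]
using $DD \simeq \mathrm{id}$ on compacts. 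This last point needs checking: the composite of $D$ with $D^{\op}$ on compacts is the identity because $\mathrm{ind} \simeq \mathrm{ind}^{\op} \simeq \mathrm{ind}$ composes to the identity. That in turn holds because Spanier--Whitehead duality is involutive and the Wirthmüller equivalence is compatible with this involution. Naturality in $(X_i, Y_j)$ then yields $\mathrm{Map}^G(X, DY) \simeq \mathrm{Map}^G(Y, DX)$, naturally in both variables. Read as $\mathrm{Map}_{(\Sp^G_{\mathrm{prop}})^{\op}}(DX, Y) \simeq \mathrm{Map}_{\Sp^G_{\mathrm{prop}}}(X, D^{\op}Y)$, this says $D \dashv D^{\op}$, which is (3).

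The unit of this adjunction is a natural map $\eta\colon X \to D^{\op}D X$. For compact $X$ it is adjoint to the identity of $DX$, under the equivalence of mapping spaces built from the involution on compacts. Hence it is the inverse of the equivalence $DDX \simeq X$, proving (2). Alternatively, one can argue that the adjunction restricts on compacts to an adjoint equivalence, so its unit is invertible there.

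The main obstacle I expect is the involutivity just used: showing that $D \circ D^{\op} \simeq \mathrm{id}$ on $(\Sp^G_{\mathrm{prop}})^{\omega}$ coherently, rather than only objectwise. If this is hard to extract from the appendix, I would avoid it. The adjunction in (3) only needs a natural equivalence $\mathrm{Map}(X_i, DY_j) \simeq \mathrm{Map}(Y_j, DX_i)$ on compacts, and this can also be obtained from a symmetric ``pairing'' description: both sides correspond, via the colimit over $\mathrm{Or}^G_{\mathrm{fin}}$ and the mapping-space formula for colimits in $\Pr^L$, to maps $\mathbb{S}_H \to X \otimes Y$ at each level $H$, which are manifestly symmetric in $X$ and $Y$.
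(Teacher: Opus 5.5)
The gap is at the step you flagged yourself. Part (1) matches the paper. The deduction of (2) from (3) is fine, and so is the reduction of (3) to compact objects: both sides of $\mathrm{Map}^G(X,DY)\simeq\mathrm{Map}^G(Y,DX)$ turn colimits in either variable into limits, so they are right Kan extended from compacts. But the compact case carries the entire content, and you do not prove it. On $(\Sp^G_{\mathrm{prop}})^{\omega}$ the functor $D$ is already an equivalence, so its right adjoint there is its inverse. Statement (3) restricted to compacts is therefore \emph{equivalent} to the input you assume: $D^{\op}\circ D\simeq\mathrm{id}$ as functors. That is, the composite of the equivalence of diagrams $D\colon\mathrm{ind}\simeq\mathrm{ind}^{\op}$ of Construction \ref{con:duality} with its opposite must be the identity \emph{including naturality data}.

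Your justification (``Spanier--Whitehead duality is involutive and the Wirthm\"uller equivalence is compatible with this involution'') restates this rather than proving it. Lemma \ref{lem:Wirthmuller} produces $\mathrm{ind}\simeq\mathrm{coind}$ from the span $2$-functor and says nothing about how it interacts with duality. Levelwise involutivity is not enough: an autoequivalence of the diagram $\mathrm{ind}$ whose components are all the identity can still induce a nontrivial autoequivalence of the colimit. For example, for $G=\mathbb{Z}$, twisting the naturality cells along $\mathrm{Aut}(G/e)=G$ by the sign character induces $X\mapsto X\otimes\mathbb{S}^{\mathrm{sgn}}$ on $\mathrm{Fun}(BG,\Sp)^{\omega}$.

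Your fallback does not avoid this. For $\mathrm{Map}(X,DY)$ the levelwise pairing should be $X\otimes Y\to\mathbb{S}_H$; maps $\mathbb{S}_H\to X\otimes Y$ compute $\mathrm{Map}(DX,Y)$. More importantly, such pairings give the symmetry only object by object on generators, after a double coset decomposition. They leave the same gluing coherence over $\mathrm{Or}^G_{\mathrm{fin}}$ untouched.

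The paper organizes the argument differently.
\begin{itemize}
\item It starts from the levelwise Spanier--Whitehead adjunctions $D_H\dashv D_H^{\op}$ and observes that each naturality square of $D\colon\mathrm{ind}\to\mathrm{ind}^{\op}$ is right adjointable.
\item It uses \cite[Proposition 2.1.5]{elmantohaugseng} to assemble these into an adjunction in $\mathrm{Fun}(\mathrm{Or}^G_{\mathrm{fin}},\mathrm{Cat}_\infty^{\rex})$.
\item It applies $\colim$ as a functor of $(\infty,2)$-categories, which preserves adjunctions, and then Kan extends from compact objects.
\item It proves (2) by checking the unit on the generators $\mathrm{ind}_H^G\mathbb{S}$, where it is $\mathrm{ind}_H^G$ of the Spanier--Whitehead unit.
\end{itemize}
This packages the levelwise data formally instead of building $D^{\op}\circ D\simeq\mathrm{id}$ by hand.

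Be aware, though, that the right adjoint obtained this way has the Beck--Chevalley mates as its naturality cells. Identifying it with the opposite functor $D^{\op}$, which is what gives $\mathrm{Map}^G(X,DY)\simeq\mathrm{Map}^G(Y,DX)$, requires those mates to agree with the opposites of the naturality cells of $D$. That is exactly the Wirthm\"uller/Spanier--Whitehead compatibility you worried about, and the paper leaves it implicit when it calls the assembled right adjoint $D^{\op}$. So you located the difficulty correctly.

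A complete argument needs one of two things. The first is a proof of that compatibility. The second is a manifestly symmetric model of $D$, such as the description $D(Y)\simeq{\pi_1}_*\mathrm{map}(\pi_2^*Y,\mathbb{S}[G])$ of Remark \ref{rem: new D}. Under it, $\mathrm{Map}^G(X,DY)\simeq\mathrm{Map}^{G\times G}(\pi_1^*X\otimes\pi_2^*Y,\mathbb{S}[G])$ is visibly symmetric under swapping the two factors. The paper leaves that comparison to future work, so it would also need proof.
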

\begin{proof}
By Construction \ref{con:duality}, the diagram 
\[
\begin{tikzcd}
(\Sp^H)^{\omega} \ar{d}{\mathrm{ind}_H^G}\ar{r}{D} & ((\Sp^H)^{\omega})^{\mathrm{op}} \ar{d}{(\mathrm{ind}_H^G)^{\mathrm{op}}} \\
(\Sp^G_{\mathrm{prop}})^{\omega} \ar{r}{D} & ((\Sp^G_{\mathrm{prop}})^{\omega})^{\mathrm{op}}
\end{tikzcd}
\]
commutes for any finite subgroup $H \leq G$. Claim (1) follows by evaluating both composites on the unit $\mathbb{S}_H \in (\Sp^H)^{\omega}$, relying on the fact that $\mathrm{ind}_H^G \mathbb{S}_H = \Sigma^\infty_+ G/H$. 

We will now prove (3). Consider again the two diagrams $\mathrm{ind}, \mathrm{ind}^{\mathrm{op}}\colon \mathrm{Or}^G_{\mathrm{fin}} \to \mathrm{Cat}_\infty^{\rex}$. We have seen that Spanier--Whitehead duality defines a natural transformation $D\colon \mathrm{ind} \to \mathrm{ind}^{\mathrm{op}}$. Moreover, for every morphism $G/H \to G/K$ in $\mathrm{Or}^{G}_{\mathrm{fin}}$, the left of the following two squares
\[
\begin{tikzcd}
(\Sp^H)^{\omega} \ar{d}{\mathrm{ind}_H^K}\ar{r}{D} &  ((\Sp^H)^{\omega})^{\mathrm{op}} \ar{d}{(\mathrm{ind}_H^K)^{\mathrm{op}}} && (\Sp^H)^{\omega} \ar{d}{\mathrm{ind}_H^K} & ((\Sp^H)^{\omega})^{\mathrm{op}} \ar{d}{(\mathrm{ind}_H^K)^{\mathrm{op}}} \ar{l}[swap]{D^{\mathrm{op}}} \\
(\Sp^K)^{\omega} \ar{r}{D} & ((\Sp^K)^{\omega})^{\mathrm{op}} && (\Sp^K)^{\omega} & ((\Sp^K)^{\omega})^{\mathrm{op}} \ar{l}{D^{\mathrm{op}}}
\end{tikzcd}
\]
is \emph{right adjointable}, in the sense that the associated Beck--Chevalley transformation $\mathrm{ind}_H^K \circ D^{\mathrm{op}} \to D^{\mathrm{op}} \circ (\mathrm{ind}_H^K)^{\mathrm{op}}$ is an equivalence, thus making the right square commute as well. By \cite[Proposition 2.1.5]{elmantohaugseng}, this implies that the functors $D^{\mathrm{op}}$ assemble into a right adjoint $D^{\mathrm{op}}\colon \mathrm{ind}^{\mathrm{op}} \to \mathrm{ind}$ to $D$ in the $(\infty,2)$-category $\mathrm{Fun}(\mathrm{Or}^G_{\mathrm{fin}},\mathrm{Cat}_\infty^{\rex})$. Now applying the functor of $(\infty,2)$-categories
\[
\mathrm{colim}\colon \mathrm{Fun}(\mathrm{Or}^G_{\mathrm{fin}},\mathrm{Cat}_\infty^{\rex}) \to \mathrm{Cat}_\infty^{\rex},
\]
we find an adjunction
\[
\begin{tikzcd}
(\Sp^G_{\mathrm{prop}})^{\omega} \ar[shift left]{r}{D} & ((\Sp^G_{\mathrm{prop}})^{\omega})^{\mathrm{op}}. \ar[shift left]{l}{D^{\mathrm{op}}}
\end{tikzcd}
\]
By Kan extension from compact objects we also find the adjunction of item (3) in the proposition. 

For claim (2), we take the unit map of the adjunction just constructed. To verify that it is an equivalence for compact $X$, we may reduce to checking this for a family of compact generators, for which we may take $\mathrm{ind}_H^G \mathbb{S}$ with $H$ ranging over finite subgroups of $G$. By construction we have a commutative diagram
\[
\begin{tikzcd}
\mathrm{ind}_H^G \mathbb{S} \ar{r}{\eta}\ar{dr}[swap]{\mathrm{ind}_H^G(\eta)} & D^{\mathrm{op}}D(\mathrm{ind}_H^G \mathbb{S}) \\
& \mathrm{ind}_H^G(D^{\mathrm{op}}D(\mathbb{S})). \ar{u}{\simeq}
\end{tikzcd}
\]
The slanted map is evidently an equivalence, completing the proof.
\end{proof}

\begin{rem}
The functor $D$ constructed above is generally \emph{different} from duality with respect to the symmetric monoidal structure of $\Sp^G_{\mathrm{prop}}$. For example, Proposition \ref{prop:dualitySpG}(1) shows that the orbits $\Sigma^\infty_+ G/H$ are `self-dual' with respect to $D$. However, if $G$ is an infinite group then these objects are never dualizable with respect to the symmetric monoidal structure: indeed, in that case the underlying spectrum of $\Sigma^\infty_+ G/H$ is an infinite sum of copies of the sphere spectrum.
\end{rem}

Consider the functor
\[
\mathrm{Or}^G_{\mathrm{fin}} \to \Sp^G_{\mathrm{prop}}\colon G/H \mapsto \Sigma^\infty_+ G/H.
\]
Applying $D$, we obtain another diagram
\[
(\mathrm{Or}^G_{\mathrm{fin}})^{\op} \to \Sp^G_{\mathrm{prop}}\colon G/H \mapsto D(\Sigma^\infty_+ G/H)
\]
of which the values may be identified with $\Sigma^\infty_+ G/H$ again by Proposition \ref{prop:dualitySpG}(1). In particular, for every subgroup inclusion $K \leq H$ we obtain a `wrong-way map'
\[
\mathrm{tr}_K^H\colon \Sigma^\infty_+ G/H \to \Sigma^\infty_+ G/K
\]
that we will refer to as the \emph{transfer}. This map coincides with the transfer map constructed in \cite[Section 2.2]{DHLPS}. 

\begin{lem}
\label{lem:DStransfer}
There is an equivalence
\[
D(\mathbb{S}_G) \xrightarrow{\simeq} \lim_{G/H \in (\mathrm{Or}^G_{\mathrm{fin}})^{\op}} \Sigma^\infty_+ G/H,
\]
where the limit is along the diagram of transfer maps described above.
\end{lem}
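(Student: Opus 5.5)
The plan is to combine the colimit description of the unit from Lemma \ref{lem:SGcolim} with the fact that $D$ sends colimits to limits. By Construction \ref{con:duality}, $D\colon \Sp^G_{\mathrm{prop}} \to (\Sp^G_{\mathrm{prop}})^{\op}$ is colimit-preserving. Equivalently, it takes colimits in $\Sp^G_{\mathrm{prop}}$ to limits in $\Sp^G_{\mathrm{prop}}$. This is also visible from Proposition \ref{prop:dualitySpG}(3): $D$ is a left adjoint (with right adjoint $D^{\op}$), so it preserves all colimits as a functor into the opposite category. We do not need the unit $\mathbb{S}_G$ to be compact for this; only the colimit-preservation of the extended functor is used.

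First, apply $D$ to the equivalence $\colim_{G/H \in \mathrm{Or}^G_{\mathrm{fin}}} \Sigma^\infty_+ G/H \xrightarrow{\simeq} \mathbb{S}_G$ of Lemma \ref{lem:SGcolim}. This gives an equivalence
\[
D(\mathbb{S}_G) \xrightarrow{\simeq} \lim_{G/H \in (\mathrm{Or}^G_{\mathrm{fin}})^{\op}} D(\Sigma^\infty_+ G/H).
\]
Here the diagram is $G/H \mapsto D(\Sigma^\infty_+ G/H)$, with $D$ applied to the structure maps $\Sigma^\infty_+ G/K \to \Sigma^\infty_+ G/H$ induced by projections of orbits.

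Second, use Proposition \ref{prop:dualitySpG}(1) to identify each term $D(\Sigma^\infty_+ G/H)$ with $\Sigma^\infty_+ G/H$. By the definition preceding the lemma, the transfer maps $\mathrm{tr}_K^H$ are exactly the maps obtained from $D$ of the projections under these identifications. So the diagram $(\mathrm{Or}^G_{\mathrm{fin}})^{\op} \to \Sp^G_{\mathrm{prop}}$, $G/H \mapsto D(\Sigma^\infty_+ G/H)$, is by definition the diagram of transfers, and the limit above is the one in the statement.

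The one point needing care is coherence: the identifications of Proposition \ref{prop:dualitySpG}(1) must assemble into an equivalence of diagrams, not just objectwise equivalences with a homotopy-commuting square for each pair of morphisms. I would sidestep this by taking the transfer diagram to be defined as $D \circ \Sigma^\infty_+(-)$, which is how the text introduces it, so no further coherence is needed. If one prefers a more concrete model, one can note that the equivalence $\mathrm{ind} \simeq \mathrm{ind}^{\op}$ of Construction \ref{con:duality} is natural in $\mathrm{Or}^G_{\mathrm{fin}}$. This naturality supplies the coherent identification $D(\mathrm{ind}_H^G \mathbb{S}_H) \simeq \mathrm{ind}_H^G D(\mathbb{S}_H) \simeq \mathrm{ind}_H^G \mathbb{S}_H$ as functors of $G/H$.
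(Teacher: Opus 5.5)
Your proposal is correct and is the paper's argument: apply the colimit-preserving functor $D\colon \Sp^G_{\mathrm{prop}} \to (\Sp^G_{\mathrm{prop}})^{\op}$ to the decomposition of Lemma \ref{lem:SGcolim}, then recognize the resulting diagram $G/H \mapsto D(\Sigma^\infty_+ G/H)$ as the transfer diagram. You are also right that no further coherence is needed, since the paper defines the transfer diagram as $D$ applied to the orbit diagram, with values identified via Proposition \ref{prop:dualitySpG}(1).
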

\begin{proof}
This is immediate from the colimit description of $\mathbb{S}_G = \Sigma^\infty_+ \underline{E}G$ of Lemma \ref{lem:SGcolim} and the fact that $D$ takes colimits to limits.
\end{proof}

Of course, if we have a more efficient description of $\Sigma^\infty_+ \underline{E}G$ (for example if $\underline{E}G$ is finite) then we find a corresponding description of $D(\mathbb{S}_G)$:

\begin{ex} \label{ex: amalgamated} Let $G =K \ast_H L$ be an amalgamated product of finite groups. Using Example \ref{ex: amalgamated1} and the proof of Lemma \ref{lem:DStransfer}, we obtain the following fiber sequence in $\Sp^G_{\mathrm{prop}}$:
\[\xymatrix{D(\Ss_G) \ar[r] & \Sigma^{\infty} G/K_+ \oplus \Sigma^{\infty} G/L_+ \ar[rr]^-{(\tr_H^K, -\tr_H^L)}   & & \Sigma^{\infty} G/H_+ .}\]
\end{ex}




We have already remarked that the functor $D$ is not the same as duality with respect to the symmetric monoidal structure of $\Sp^G_{\mathrm{prop}}$. Lemma \ref{lem:DStransfer} is another witness of this fact: indeed, it shows that the dual of the unit $D(\mathbb{S}_G)$ is generally not equivalent to $\mathbb{S}_G$ itself. In fact the following is true:

\begin{prop}  \label{prop: dualizing module} Let $G$ be a discrete group and $X \in \Sp^G_{\mathrm{prop}}$. Then the underlying spectrum $D(X)^e \in \mathrm{Fun}(BG,\Sp)$ is described by natural equivalences
\[
D(X)^e \simeq \mathrm{map}^G(X, \mathbb{S}[G]) \simeq \mathrm{map}^G(X \otimes \Sigma^{\infty}_+ EG, \mathbb{S}[G]).
\]
In particular, the underlying spectrum $D(\mathbb{S}_G)^e \in \mathrm{Fun}(BG,\Sp)$ is equivalent to $\mathbb{S}[G]^{hG}$, which is the dualizing complex $\mathbf{D}_{BG} \in \mathrm{Fun}(BG,\Sp)$ of Definition \ref{def:dualizingcomplex}. 
\end{prop}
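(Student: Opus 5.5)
We begin with the first equivalence. By Lemma~\ref{lem:underlyingspectrum} we have $D(X)^e \simeq \mathrm{map}^G(\mathbb{S}[G], D(X))$, naturally in $\mathrm{Fun}(BG,\Sp)$, with the $G$-action coming from the right action on $\mathbb{S}[G]$. Proposition~\ref{prop:dualitySpG}(3) supplies the adjunction equivalence $\mathrm{Map}^G(A, DB)\simeq \mathrm{Map}^G(B,DA)$. This is natural in both variables and compatible with suspension, since $D$ is exact and sends colimits to limits, so it upgrades to mapping spectra. Hence
\[
\mathrm{map}^G(\mathbb{S}[G], D(X)) \simeq \mathrm{map}^G(X, D(\mathbb{S}[G])).
\]
By Proposition~\ref{prop:dualitySpG}(1) with $H=e$, we have $D(\mathbb{S}[G]) \simeq \mathbb{S}[G]$. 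The step needing care is that this identification is $G$-equivariant, i.e.\ compatible with the automorphisms $G = \mathrm{Aut}(G/e)$ in $\mathrm{Or}^G_{\mathrm{fin}}$. This holds because the equivalence $D\colon \mathrm{ind} \simeq \mathrm{ind}^{\op}$ of Construction~\ref{con:duality} is a natural transformation of diagrams on $\mathrm{Or}^G_{\mathrm{fin}}$, and in particular on the full subcategory $BG \subset \mathrm{Or}^G_{\mathrm{fin}}$ spanned by $G/e$. On that subcategory the Wirthm\"uller identification is trivial, since $\mathrm{res}^e_e$ is the identity. Concretely, $D(\mathbb{S}[g]) = \mathbb{S}[g^{-1}]$ up to the op, and after variance bookkeeping this gives the same (right) action on $\mathbb{S}[G]$. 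So this step is a matter of tracking variances, and I expect it to be the main (mild) obstacle.

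For the second equivalence, I would show that $\mathrm{map}^G(-,\mathbb{S}[G])$ inverts the map $X\otimes \Sigma^\infty_+EG \to X$. Its cofiber $C$ has trivial underlying spectrum, i.e.\ $C\otimes\Sigma^\infty_+EG \simeq 0$ or equivalently $\mathrm{res}^G_e C \simeq 0$. It then suffices to show that $\mathbb{S}[G]=\mathrm{ind}_e^G\mathbb{S}$ is right orthogonal to such objects. I expect this to follow because $\mathbb{S}[G]$ is also the coinduction: by Lemma~\ref{lem:Wirthmuller}, or directly from $D$ commuting with $\mathrm{ind}$ as in the proof of Proposition~\ref{prop:dualitySpG}, $\mathbb{S}[G]$ is right adjoint-induced from $e$. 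This gives $\mathrm{map}^G(C,\mathbb{S}[G]) \simeq \mathrm{map}(\mathrm{res}^G_eC, \mathbb{S}) = 0$.

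The justification for this coinduction claim is as follows. For compact $C$ we get $\mathrm{map}^G(C, D(\mathrm{ind}_e^G \mathbb{S})) \simeq \mathrm{map}^G(\mathrm{ind}_e^G\mathbb{S}, DC) = (DC)^e$, which is computed from restrictions. For general $C$, we argue via the first equivalence applied to both sides, or simply apply the first formula to $X$ and to $X\otimes \Sigma^\infty_+EG$ and compare.

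For the final claim, take $X=\mathbb{S}_G$. Then $X\otimes\Sigma^\infty_+EG \simeq \Sigma^\infty_+EG$ is the free proper $G$-spectrum $\mathrm{colim}_{BG}\,\mathbb{S}[G]$, a colimit over $BG$ of the orbit $G/e$. Therefore
\[
\mathrm{map}^G(\Sigma^\infty_+EG,\mathbb{S}[G]) \simeq \big(\mathrm{map}^G(\mathbb{S}[G],\mathbb{S}[G])\big)^{hG} \simeq \mathbb{S}[G]^{hG},
\]
using Lemma~\ref{lem:underlyingspectrum} once more. The homotopy fixed points here are taken for one action, and the residual action is the other one. Comparing with Example~\ref{ex:DBG} identifies this with $\mathbf{D}_{BG}$.
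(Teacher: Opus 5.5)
Your first equivalence and your deduction of the final claim (via $\Sigma^\infty_+EG\simeq\colim_{BG}\mathbb{S}[G]$) are fine and match the paper. The equivariance bookkeeping you flag is not the real difficulty, though. The gap is in the second equivalence. You claim that $\mathbb{S}[G]=\mathrm{ind}_e^G\mathbb{S}$ is also coinduced from the trivial subgroup, and this is false whenever $G$ is infinite. By adjunction the underlying spectrum of $\mathrm{coind}_e^G\mathbb{S}$ is $\mathrm{map}(\mathrm{res}^G_e\mathbb{S}[G],\mathbb{S})\simeq\prod_G\mathbb{S}$, whereas that of $\mathbb{S}[G]$ is $\bigoplus_G\mathbb{S}$. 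Lemma~\ref{lem:Wirthmuller} only identifies induction and coinduction along inclusions $H\le K$ of \emph{finite} subgroups, inside the diagram $G/H\mapsto\Sp^H$. It says nothing about the adjunction between $\Sp$ and $\Sp^G_{\mathrm{prop}}$.

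Your compact-case justification fails for the same reason: $D$ is built to commute with induction, not with $\mathrm{res}^G_e$. Indeed, for $C=\Sigma^\infty_+G/H$ one has $(DC)^e\simeq\bigoplus_{G/H}\mathbb{S}$, while $\mathrm{map}(C^e,\mathbb{S})\simeq\prod_{G/H}\mathbb{S}$. Finally, the proposed passage to general $C$ (apply the first formula to $X$ and to $X\otimes\Sigma^\infty_+EG$ and compare) is circular. It merely rewrites the claim as $D(X)^e\simeq D(X\otimes\Sigma^\infty_+EG)^e$.

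The orthogonality statement you want is true, but it needs an input your argument never uses: vanishing of the Tate construction for finite subgroups $H\le G$ acting freely. The paper argues as follows.
\begin{itemize}
\item Both $\mathrm{map}^G(X,\mathbb{S}[G])$ and $\mathrm{map}^G(X\otimes\Sigma^\infty_+EG,\mathbb{S}[G])$ turn colimits in $X$ into limits, so it suffices to take $X=\Sigma^\infty_+G/H$ with $H$ finite.
\item There the map becomes $\mathrm{map}^H(\mathbb{S}_H,\mathrm{res}^G_H\mathbb{S}[G])\to\mathbb{S}[G]^{hH}$.
\item Since $\mathrm{res}^G_H\mathbb{S}[G]\simeq\mathrm{ind}_e^H(\bigoplus_{H\backslash G}\mathbb{S})$ is induced, the Adams isomorphism identifies the source with $\mathbb{S}[G]_{hH}$ and the map with the norm, which is an equivalence.
\end{itemize}

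Your coinduction idea can be rescued at exactly this level. For finite $H$, the Wirthm\"uller isomorphism for $e\le H$ does make $\mathrm{res}^G_H\mathbb{S}[G]$ coinduced. So for any $C$ with $C^e\simeq 0$ one gets $\mathrm{map}^G(\Sigma^\infty_+G/H\otimes C,\mathbb{S}[G])\simeq\mathrm{map}^H(\mathrm{res}^G_HC,\mathrm{res}^G_H\mathbb{S}[G])\simeq\mathrm{map}(C^e,\bigoplus_{H\backslash G}\mathbb{S})\simeq 0$. Apply this to $C=\mathrm{cofib}(\Sigma^\infty_+EG\to\mathbb{S}_G)$: the functor $X\mapsto\mathrm{map}^G(X\otimes C,\mathbb{S}[G])$ vanishes on the generators and turns colimits into limits, so it vanishes identically, which gives the second equivalence. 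What cannot work is a coinduction statement for $G$ itself.
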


\begin{proof} We can write
\[
D(X)^e \simeq \mathrm{map}^G(\mathbb{S}[G], D(X)) \simeq \mathrm{map}^G(X,D(\mathbb{S}[G])) \simeq \mathrm{map}^G(X,\mathbb{S}[G])
\]
using Lemma \ref{lem:underlyingspectrum} for the first equivalence, Proposition \ref{prop:dualitySpG}(3) for the second, and Proposition \ref{prop:dualitySpG}(1) for the third. Next consider the natural transformation
\[\mathrm{map}^G(X,\mathbb{S}[G]) \to \mathrm{map}^G(X \otimes \Sigma^{\infty}_+ EG,\mathbb{S}[G]),\]
induced from the projection $\Sigma^{\infty}_+ EG \to \mathbb{S}_G$. We claim that this map is an equivalence. To see this, it will suffice to prove the claim for $X=\Sigma^\infty_+ G/H$, with $H$ a finite subgroup, since such objects generate the $\infty$-category of proper $G$-spectra under colimits and desuspensions. In this particular case, the map above is equivalent to the map
\[\mathrm{map}^H(\mathbb{S}_H, \mathrm{res}^G_H \mathbb{S}[G]) \to \mathbb{S}[G]^{hH} \]
induced by the projection $\Sigma^{\infty}_+ EH \to \mathbb{S}_H$. Since the restriction $\mathrm{res}^G_H \mathbb{S}[G]$ is an induced $H$-object from the trivial subgroup, the left-hand side may be identified with the homotopy orbits $\mathbb{S}[G]_{hH}$ (this is essentially the Adams isomorphism \cite{LMS, RV16, CLL}) and the map becomes the norm. This is an equivalence in this case, because the Tate construction for finite groups vanishes on induced spectra from the trivial subgroup \cite[Lemma I.3.8]{NS}.
\end{proof}

\begin{rem} \label{rem: comparison of limits} It follows from Proposition \ref{prop: dualizing module} and Lemma \ref{lem:SGcolim} that one has an equivalence:
\[D(\mathbb{S}_G)^e \simeq \mathbb{S}[G]^{hG} \simeq  \lim_{G/H \in (\mathrm{Or}^G_{\mathrm{fin}})^{\op}} \mathbb{S}[G]^{hH}.\]
This yields a map $D(\mathbb{S}_G)^e \to \mathbb{S}[G]^{hH}$ for any finite subgroup $H \leq G$. On the other hand, for any finite subgroup $H \leq G$, the inverse limit of Lemma \ref{lem:DStransfer} gives a morphism $D(\mathbb{S}_G) \to \Sigma^\infty_+ G/H$. These maps are compatible in the sense that the following diagram in $\Fun(BG, \Sp)$ commutes:
\[\xymatrix{D(\mathbb{S}_G)^e \ar[d] \ar[rr]^{\simeq} & & \mathbb{S}[G]^{hG} \ar[d] \\ (\Sigma^{\infty}_+ G/H)^e \ar[rr]^{\mathrm{Nm}_{BH}} & & \mathbb{S}[G]^{hH}. }\]
To check the commutativity of this diagram, we first observe that by naturality of the equivalence in Proposition \ref{prop: dualizing module}, the diagram in $\Fun(BG, \Sp)$
\[\xymatrix{D(\mathbb{S}_G)^e \ar[d]^{D(\res)} \ar[rr]^{\simeq} & & \mathbb{S}[G]^{hG} \ar[d]^{\res} \\ (D(\Sigma^{\infty}_+ G/H))^e \ar[rr]^{\simeq} & & \mathbb{S}[G]^{hH} }\]
commutes. We need to identify the composite 
\[(\Sigma^{\infty}_+ G/H)^e \simeq (D(\Sigma^{\infty}_+ G/H))^e \xrightarrow{\simeq} \mathbb{S}[G]^{hH}  \]
with the norm $\mathrm{Nm}_{BH}$ in $\Fun(BG,\Sp)$, where the first equivalence is the preferred identification from Proposition \ref{prop:dualitySpG}(1) and the second map is the equivalence of Proposition \ref{prop: dualizing module}. Using the proof of Lemma  \ref{lem:mixednormvsAnorm}, it further suffices to show that the adjoint composite in $\Fun(BH, \Sp)$ 
\[\mathbb{S} \xrightarrow{u} i^*(\Sigma^{\infty}_+ G/H)^e \to i^*(\mathbb{S}[G]^{hH}), \]
where $u$ is the unit of the adjunction and $i \colon BH \to BG$ is the inclusion, is equal to the composite 
\[\xymatrix{\mathbb{S} \simeq \mathbb{S}[H]_{hH} \ar[rr]^{\mathrm{Nm}_{BH}}_{\simeq} & & \mathbb{S}[H]^{hH} \ar[r]^-{\incl} & i^*(\mathbb{S}[G]^{hH})}.\]
This now follows from the commutative diagram in $\Fun(BH, \Sp)$:
\[\xymatrix{\mathbb{S} \ar@{=}[d]\ar[r]^-u_-{\simeq} & \mathrm{map}^H(\mathbb{S}[H], \mathbb{S}_H) \ar[d]^{\mathrm{ind}_H^G} \ar[r]^-{D} & \mathrm{map}^H(\mathbb{S}_H, \mathbb{S}[H]) \ar[r]^-{\simeq} \ar[d]^{\mathrm{ind}_H^G} & \mathbb{S}[H]^{hH} \ar@{=}[dd] \\ \mathbb{S} \ar@{=}[d] \ar[r]^-u & i^*\mathrm{map}^G(\mathbb{S}[G], \Sigma^{\infty}_+ G/H) \ar[r]^-{D} & i^*\mathrm{map}^G(\Sigma^{\infty}_+ G/H, \mathbb{S}[G]) \ar[d]^{\simeq} & \\ \mathbb{S} \ar[r]^-u & i^*(\Sigma^{\infty}_+ G/H)^e \ar[u]_\simeq \ar[r] & i^*(\mathbb{S}[G]^{hH}) & \mathbb{S}[H]^{hH}, \ar[l]_{\incl} } \]
where we implicitly used the preferred equivalence of Proposition \ref{prop:dualitySpG}(1) to simplify the notation. The upper square commutes by Construction \ref{con:duality}. The lower middle square is the definition of the map we are trying to identify. The right rectangle commutes by the definition of induction. The composite on the top row is the norm map by the Wirthm\"uller isomorphism and Spanier--Whitehead duality for finite groups. The horizontal maps on the left are appropriate unit maps and the squares on the left commute by definition. 
    
\end{rem}

To conclude this section, we outline a useful generalization of the norm map to a natural map
\[
N_{X,Y}\colon (X \otimes D(Y))_{hG} \to \mathrm{map}(Y,X)^{hG},
\]
for $X, Y \in \Sp_{\mathrm{prop}}^G$. This map will essentially reduce to our previous norm $\mathrm{Nm}_{BG}$ in case $Y = \mathbb{S}$, but the added naturality can for example be used to prove Corollary \ref{cor: stable e underbar duality} below. Since we will not need these results, we only indicate the necessary ingredients and leave a detailed development for future work.

\begin{con}
Suppose $X, Y \in \Sp_{\mathrm{prop}}^G$. First observe that
\[
X \otimes D(Y) = \mathrm{map}^G(\mathbb{S}[G], X) \otimes \mathrm{map}^G(Y, \mathbb{S}[G]),
\]
using Proposition \ref{prop: dualizing module} for the second factor. Consider the composition of maps
\[
\mathrm{map}^G(\mathbb{S}[G], X) \otimes \mathrm{map}^G(Y, \mathbb{S}[G]) \to \mathrm{map}^G(Y,X) \to \mathrm{map}(Y,X)^{hG}.
\]
The first arrow is composition (at the level of mapping spectra) in the $\infty$-category $\Sp_{\mathrm{prop}}^G$ and the second arises from the forgetful functor $\Sp_{\mathrm{prop}}^G \to \mathrm{Fun}(BG,\Sp)$. Thinking of $\mathbb{S}[G]$ as a $G \times G$-object, the domain of this map carries a residual $G$-action and the map is $G$-equivariant if we equip the target with the trivial $G$-action. Hence we find a factorization through the homotopy orbits of the domain, which is the promised map
\[
(X \otimes D(Y))_{hG} \xrightarrow{N_{X,Y}} \mathrm{map}(Y,X)^{hG}.
\]
\end{con}

If $H \leq G$ is a finite subgroup and $Y = \Sigma^\infty_+ G/H$, then $(X \otimes D(Y))_{hG} \simeq X_{hH}$ and $\mathrm{map}(Y,X)^{hG} \simeq X^{hH}$. In fact, with a little bit of diagram chasing as in Remark \ref{rem: comparison of limits} one can show:

\begin{lem}
\label{lem:generalizednorm}
For a finite subgroup $H \leq G$ and $Y = \Sigma^\infty_+ G/H$, the map $N_{X,Y}$ may be identified with the norm
\[
\mathrm{Nm}_{BH}\colon X_{hH} \to X^{hH}.
\]
\end{lem}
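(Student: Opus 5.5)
We unwind both sides for $Y=\Sigma^\infty_+ G/H$ and compare them term by term. The comparison is reduced to the finite group $H$, where it becomes the identification of the Spanier--Whitehead/Wirthm\"uller composite with the Hopkins--Lurie norm. That identification was already carried out in Remark \ref{rem: comparison of limits}.

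The first step is to identify the source. By Proposition \ref{prop:dualitySpG}(1), $D(Y)\simeq \Sigma^\infty_+ G/H$. By Proposition \ref{prop: dualizing module} its underlying object is $\mathrm{map}^G(\Sigma^\infty_+ G/H,\mathbb{S}[G])\simeq \mathbb{S}[G]^{hH}$, and Remark \ref{rem: comparison of limits} identifies the composite $(\Sigma^\infty_+ G/H)^e\simeq \mathbb{S}[G]^{hH}$ with $\mathrm{Nm}_{BH}$ in $\mathrm{Fun}(BG,\Sp)$. Hence $(X\otimes D(Y))_{hG}\simeq (X^e\otimes \mathbb{S}[G/H])_{hG}\simeq X_{hH}$ by the shearing isomorphism. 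The second step is the target: $\mathrm{map}(Y,X)\simeq \prod_{G/H}X^e$ is coinduced from $H$, so $\mathrm{map}(Y,X)^{hG}\simeq X^{hH}$ by Shapiro's lemma.

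The third step is to restrict along $i\colon BH\to BG$. The map $(X\otimes D(Y))_{hG}\to \mathrm{map}(Y,X)^{hG}$ is adjoint to a $G$-equivariant pairing. Precomposing with the unit $\mathbb{S}\to i^*(\Sigma^\infty_+G/H)^e$ at the coset $eH$, as in the proof of Lemma \ref{lem:mixednormvsAnorm}, and using the equivalences of the previous two steps, reduces the claim to showing that the following composite in $\mathrm{Fun}(BH,\Sp)$ agrees with the structure map defining $\mathrm{Nm}_{BH}$ for $i^*X^e$:
\[
X^e\otimes \mathbb{S} \to \mathrm{map}^G(\mathbb{S}[G],X)\otimes \mathrm{map}^G(\Sigma^\infty_+G/H,\mathbb{S}[G]) \to \mathrm{map}^G(\Sigma^\infty_+G/H,X)\simeq X^{hH}.
\]
Here the middle factor receives the element corresponding to $\mathbb{S}\to \mathbb{S}[H]^{hH}\to \mathbb{S}[G]^{hH}$, which is the norm followed by inclusion, by Remark \ref{rem: comparison of limits}. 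Composition with this element is then the map $X^e\to X^{hH}$ obtained from the norm element of $\mathbb{S}[H]^{hH}$, acting through the $H$-equivariant map $\mathbb{S}[H]\otimes X^e\to X^e$. This is precisely the adjoint description of $\mathrm{Nm}_{BH}$ from Construction \ref{con:norm}, since $\mathbf{D}_{BH}=\mathbb{S}$ and $\Delta_!\mathbf{1}=\mathbb{S}[H]$ act on $X$ by the action map.

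The main obstacle is the third step. Composition in $\Sp^G_{\mathrm{prop}}$ has to be matched with the coherent action map $\mathbb{S}[H]\otimes X\to X$ used in Construction \ref{con:norm}, compatibly with the residual $G$-actions. To handle it, I would first use the induction adjunction to move everything into $\Sp^H$. Everything is then natural in $X$, and we have the equivalence $\mathrm{map}^H(\mathbb{S}[H],-)\simeq \mathrm{res}^H_e$. By Yoneda it then suffices to check the case $X=\mathbb{S}[H]$, i.e.\ the universal case. There the statement becomes the identity of the element $\mathrm{Nm}_{BH}(1)\in \mathbb{S}[H]^{hH}$ on both sides.
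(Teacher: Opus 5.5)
Your outline is the diagram chase the paper has in mind; the paper itself only says the lemma follows by ``a little bit of diagram chasing as in Remark \ref{rem: comparison of limits}''. You restrict along $BH\to BG$ via the unit at $eH$, identify the relevant element of $\mathrm{map}^G(\Sigma^\infty_+G/H,\mathbb{S}[G])\simeq\mathbb{S}[G]^{hH}$ with the norm element using that remark, and recognize composition with it as the structure map of Construction \ref{con:norm}. That is the right route.

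One statement in it is false, though harmless. $\mathrm{map}^G(\Sigma^\infty_+G/H,X)\simeq\mathrm{map}^H(\mathbb{S}_H,\mathrm{res}^G_H X)$ is the \emph{genuine} fixed point spectrum $X^H$, not $X^{hH}$. The last arrow of your displayed composite is the forgetful map $X^H\to X^{hH}$ built into $N_{X,Y}$. This map is not an equivalence in general: for $X=\mathbb{S}_G$, on $\pi_0$ it is the completion $A(H)\to A(H)^\wedge_I$. So $N_{X,Y}$ is the factorization $X_{hH}\to X^H\to X^{hH}$ of the norm. Accordingly, ``composition with the norm element'' should be computed after applying the forgetful functor to $\mathrm{Fun}(BG,\Sp)$. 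That functor respects composition and is an equivalence on $\mathrm{map}^G(\mathbb{S}[G],X)$ and on $\mathrm{map}^G(\Sigma^\infty_+G/H,\mathbb{S}[G])$ (Proposition \ref{prop: dualizing module}).

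The genuine gap is in your resolution of the ``main obstacle'': the Yoneda step as you state it checks only the element $\mathrm{Nm}_{BH}(1)\in\mathbb{S}[H]^{hH}$, and that is not enough. The lemma asserts equality of maps $X_{hH}\to X^{hH}$, i.e.\ of $H$-\emph{equivariant} maps $X^e\to X^{hH}$. Equivariant natural transformations out of $\mathrm{res}^H_e=\mathrm{map}^H(\mathbb{S}[H],-)$ are classified, via Yoneda and the residual right $H$-action on $\mathbb{S}[H]$, by $(\mathbb{S}[H]^{hH})^{hH}\simeq\mathbb{S}^{hH}$. Its $\pi_0$ is $\pi^0_s(BH_+)$. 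The map to $\pi_0\mathbb{S}[H]^{hH}\cong\mathbb{Z}$ is restriction to a point, which has nonzero kernel for $H\neq e$. So there are different natural transformations $(-)_{hH}\to(-)^{hH}$ with the same underlying element, and your check only identifies the composites $X^e\to X_{hH}\to X^{hH}$.

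The fix is to compare the two universal elements as maps $\mathbb{S}\to\mathbb{S}[H]^{hH}$ in $\mathrm{Fun}(BH,\Sp)$. On one side this comparison is exactly what Remark \ref{rem: comparison of limits} provides, since its identification takes place in $\mathrm{Fun}(BH,\Sp)$. On the other side the universal element is the norm $\mathbb{S}\simeq\mathbb{S}[H]_{hH}\to\mathbb{S}[H]^{hH}$ of Example \ref{ex:DBG}, which is equivariant for the right action by naturality of the norm. With the Yoneda step done equivariantly in this way, your argument goes through.
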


\begin{rem} \label{rem: norm limit}
In the case $Y = \mathbb{S}_G$, the map $N_{X,Y}$ should reproduce our earlier norm map
\[
\mathrm{Nm}_{BG}\colon (X \otimes D(\mathbb{S}_G))_{hG} \to X^{hG}.
\]
To establish this, one would need a naturality of norm maps with more coherence than what we established in Section \ref{sec: norm and Tate}. Indeed, if one can make the diagrams (shown to commute in the proof of Proposition \ref{prop: Quillen formula}) 
\[
\begin{tikzcd}
(X \otimes D(\mathbb{S}_G))_{hG} \ar{d}{\mathrm{tr}^G_H} \ar{r}{\mathrm{Nm}_{BG}} & X^{hG} \ar{dd}{\mathrm{res}^G_H} \\
(X \otimes \Sigma^\infty_+ G/H)_{hG} \ar{d}{\simeq} & \\
X_{hH} \ar{r}{\mathrm{Nm}_{BH}} & X^{hH}
\end{tikzcd}
\]
functorial in $G/H \in \mathrm{Or}^G_{\mathrm{fin}}$, then taking the limit over $(\mathrm{Or}^G_{\mathrm{fin}})^{\mathrm{op}}$ and observing $X^{hG} \simeq \varprojlim_{G/H} X^{hH}$ would provide a factorization of $\mathrm{Nm}_{BG}$ through a limit of norm maps for finite groups, which one could then combine with Lemma \ref{lem:generalizednorm} to compare $\mathrm{Nm}_{BG}$ with $N_{X,\mathbb{S}_G}$. We will not need such a comparison here and therefore leave it to future work.

\end{rem}

If we work in a setting where norm maps for finite groups are equivalences, we obtain the following:

\begin{prop}
\label{prop:generalizednorm}
If $X$ is a $T(n)$-local proper $G$-spectrum, then $N_{X,Y}$ is a $T(n)$-equivalence for any compact object $Y$ of $\Sp_{\mathrm{prop}}^G$.
\end{prop}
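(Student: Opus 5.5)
The strategy is a thick-subcategory argument in the variable $Y$, reducing to the orbits $\Sigma^\infty_+ G/H$ where Lemma \ref{lem:generalizednorm} identifies $N_{X,Y}$ with a norm for a finite group. Fix a $T(n)$-local proper $G$-spectrum $X$. Let $\mathcal{T} \subseteq (\Sp^G_{\mathrm{prop}})^{\omega}$ be the full subcategory of compact $Y$ for which $N_{X,Y}$ is a $T(n)$-equivalence. By Lemma \ref{lem:SpGcompactgen}, the compact objects form the thick subcategory generated by the orbits $\Sigma^\infty_+ G/H$ with $H \leq G$ finite. It therefore suffices to show two things: that $\mathcal{T}$ is thick, and that it contains these orbits.

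\emph{Thickness.} I first check that both sides of $N_{X,Y}$ are exact functors of $Y$ and that $N_{X,-}$ is a natural transformation between them.
\begin{itemize}
\item The source $Y \mapsto (X \otimes D(Y))_{hG}$ is exact, though contravariant in $Y$. This is because $D$ is exact on compact objects (Construction \ref{con:duality}), and both $X \otimes -$ and $(-)_{hG}$ preserve finite colimits and limits in the stable setting.
\item The target $Y \mapsto \mathrm{map}(Y,X)^{hG}$ is likewise exact and contravariant.
\end{itemize}
Naturality of $N_{X,Y}$ in $Y$ should follow from the construction. Both ingredients are natural in $Y$: the composition pairing $\mathrm{map}^G(\mathbb{S}[G],X) \otimes \mathrm{map}^G(Y,\mathbb{S}[G]) \to \mathrm{map}^G(Y,X)$ and the forgetful map to homotopy fixed points. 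So is the identification $D(Y)^e \simeq \mathrm{map}^G(Y,\mathbb{S}[G])$ from Proposition \ref{prop: dualizing module}. Given this, $\mathcal{T}$ is closed under cofiber sequences and shifts by the five lemma applied to $T(n)$-localization, which is exact. It is closed under retracts because $T(n)$-equivalences are closed under retracts. Hence $\mathcal{T}$ is thick.

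\emph{Generators.} For $Y = \Sigma^\infty_+ G/H$ with $H$ finite, Lemma \ref{lem:generalizednorm} identifies $N_{X,Y}$ with the norm $\mathrm{Nm}_{BH}\colon X_{hH} \to X^{hH}$, where $X$ is regarded via its underlying spectrum with $H$-action. Kuhn's Tate vanishing says the cofiber $X^{tH}$ of the spectrum-level norm is $T(n)$-acyclic. Thus $\mathrm{Nm}_{BH}$ is a $T(n)$-equivalence. This uses only that $\mathrm{Sp}_{T(n)}$ is $1$-semiadditive and that the spectrum-level norm agrees with the $\mathrm{Sp}_{T(n)}$-internal one after $T(n)$-localization; the latter holds since $X$ is $T(n)$-local and $L_{T(n)}$ is symmetric monoidal and preserves colimits. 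Combining the two steps, $\mathcal{T}$ is all compact objects.

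The main obstacle is the naturality in $Y$, and in particular making the identification of Lemma \ref{lem:generalizednorm} compatible with maps in $\mathcal{T}$. Fortunately, the thick-subcategory argument needs naturality only for $N_{X,-}$ itself, not for the identification with $\mathrm{Nm}_{BH}$. That identification is used objectwise, merely to conclude that each generator lies in $\mathcal{T}$. I would therefore verify naturality of $N_{X,Y}$ directly from the construction, noting that composition in a stable $\infty$-category is a natural transformation of bifunctors on mapping spectra, and treat Lemma \ref{lem:generalizednorm} purely pointwise.
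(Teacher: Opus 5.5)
Your proposal is correct and follows essentially the paper's argument: exactness of both sides in $Y$ reduces the claim to the compact generators $\Sigma^\infty_+ G/H$. There Lemma~\ref{lem:generalizednorm} identifies $N_{X,Y}$ with $\mathrm{Nm}_{BH}$, which is a $T(n)$-equivalence by $1$-semiadditivity of $\Sp_{T(n)}$ (Kuhn's Tate vanishing). Your separation of naturality of $N_{X,-}$ (needed for thickness) from the purely objectwise use of Lemma~\ref{lem:generalizednorm} makes explicit what the paper leaves implicit.
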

\begin{proof}
Note that both the domain and codomain of $N_{X,Y}$ are exact functors of $Y$, so it suffices to check this for a family of compact generators, e.g., $\Sigma^\infty_+ G/H$ with $H$ ranging over finite subgroups of $G$. In these cases we may apply Lemma \ref{lem:generalizednorm} to reduce to norm maps for finite groups and we conclude by the fact that $T(n)$-local spectra form a 1-semiadditive $\infty$-category.
\end{proof}

In Theorem \ref{thm:Tatevanishing} we showed that if $G$ admits a finite $\underline{E}G$, then the norm map for $BG$ with coefficients in a 1-semiadditive $\infty$-category is an equivalence. Recall that finiteness of $\underline{E}G$ implies that the unit $\mathbb{S}_G \in \Sp_{\mathrm{prop}}^G$ is a compact object. This second property is often quoted as saying that $G$ has a finite \emph{stable} $\underline{E}G$ (see \cite{BDP} for details). 
The following is a strengthening of the conclusion of Theorem \ref{thm:Tatevanishing}:

\begin{cor} \label{cor: stable e underbar duality}
If $G$ has finite stable $\underline{E}G$ and $X \in \mathrm{Fun}(BG,\Sp_{T(n)})$, then the norm map
\[
\mathrm{Nm}_{BG}\colon (X \otimes \mathbf{D}_{BG})_{hG} \to X^{hG}
\]
is an equivalence in $\Sp_{T(n)}$.
\end{cor}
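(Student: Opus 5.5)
The strategy is to use Proposition \ref{prop:generalizednorm} with $Y = \mathbb{S}_G$, which is compact because $G$ has finite stable $\underline{E}G$. This makes $N_{X,\mathbb{S}_G}\colon (X \otimes D(\mathbb{S}_G))_{hG} \to X^{hG}$ a $T(n)$-equivalence. The remaining work is to compare this map with $\mathrm{Nm}_{BG}$. As Remark \ref{rem: norm limit} explains, that comparison needs coherence we have not established. I would therefore avoid comparing the two maps and instead reprove invertibility of $\mathrm{Nm}_{BG}$ directly, via the criterion of Proposition \ref{prop:f*Clinear}.

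Let $f\colon BG \to *$. By Proposition \ref{prop:f*Clinear} (applied with $\mathcal{C} = \Sp_{T(n)}$), it suffices to show that $f_* = (-)^{hG}$ is $\mathcal{C}$-linear on $\mathrm{Fun}(BG,\Sp_{T(n)})$. That means two things: it preserves colimits, and $M \otimes X^{hG} \to (M \otimes X)^{hG}$ is an equivalence.

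To get at $X^{hG}$, I would regard $X$ as a proper $G$-spectrum. Take the Borel-type object $\underline{X} = \mathrm{map}(\Sigma^\infty_+ EG, X)$, or rather its image under the right adjoint of the forgetful functor $\Sp^G_{\mathrm{prop}} \to \mathrm{Fun}(BG,\Sp)$, localized $T(n)$-locally. Then $\mathrm{map}^G(Y,\underline{X}) \simeq \mathrm{map}(Y^e, X)^{hG}$. For $Y = \mathbb{S}_G$ this gives $X^{hG}$, using that $(\mathbb{S}_G)^e \simeq \mathbb{S}$ because the underlying space of $\underline{E}G$ is contractible. So $X^{hG} \simeq \mathrm{map}^G(\mathbb{S}_G, \underline{X})$.

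Next, write $\mathbb{S}_G$ as a retract of a finite colimit of orbits $\Sigma^\infty_+ G/H$ with $H$ finite. This is possible because it is compact, by Lemma \ref{lem:SpGcompactgen}. Then $X^{hG}$ becomes a retract of a finite limit of the terms $\mathrm{map}^G(\Sigma^\infty_+ G/H, \underline{X}) \simeq X^{hH}$. Each functor $X \mapsto X^{hH}$ with $H$ finite is $T(n)$-locally $\mathcal{C}$-linear, since in $\Sp_{T(n)}$ it agrees with $X_{hH}$ by 1-semiadditivity. Finite limits and retracts of $\mathcal{C}$-linear functors are $\mathcal{C}$-linear because $\mathcal{C}$ is stable. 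This is exactly the argument of Proposition \ref{prop:Cdualityfinitecolim}, but run in $\Sp^G_{\mathrm{prop}}$ rather than in spaces.

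The main obstacle is the naturality needed for this to work. The identifications $\mathrm{map}^G(\Sigma^\infty_+ G/H,\underline{X}) \simeq X^{hH}$ must be functorial in $G/H \in \mathrm{Or}^G_{\mathrm{fin}}$, so that the finite cell structure of $\mathbb{S}_G$ really presents $X^{hG}$ as a finite limit of the functors $(-)^{hH}$, naturally in $X$. This should follow from the adjunction between restriction to $\mathrm{Fun}(BG,\Sp)$ and its right adjoint, which is natural in $Y$, but it is the step I would check most carefully.

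A second subtlety is that the limit is computed in $\Sp$ and then $T(n)$-localized. This is harmless: a finite limit of $T(n)$-local spectra is $T(n)$-local, and $T(n)$-localization is exact. Once $f_*$ is shown to be $\mathcal{C}$-linear, Proposition \ref{prop:f*Clinear} gives that $\mathrm{Nm}_{BG}$ is an equivalence.
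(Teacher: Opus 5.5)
Your argument is correct, but it differs from the paper's, and your reason for abandoning Proposition~\ref{prop:generalizednorm} is mistaken.

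The paper also reduces to Proposition~\ref{prop:f*Clinear}. It uses Proposition~\ref{prop:generalizednorm} with $Y=\mathbb{S}_G$ only to see that $X\mapsto X^{hG}$ preserves colimits: $N_{X,\mathbb{S}_G}$ is a $T(n)$-equivalence onto $X^{hG}$ from the evidently colimit-preserving functor $(X\otimes D(\mathbb{S}_G))_{hG}$. Invertibility of $\mathrm{Nm}_{BG}$ then comes from Proposition~\ref{prop:f*Clinear}. So the comparison of $N_{X,\mathbb{S}_G}$ with $\mathrm{Nm}_{BG}$ discussed in Remark~\ref{rem: norm limit} is never needed, and you could have used Proposition~\ref{prop:generalizednorm} directly.

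Your route instead uses only the target side of the generalized norm. The functor $\Phi(Y,X)=\mathrm{map}(Y^e,X)^{hG}$ is exact in $Y$, equals $X^{hG}$ at $Y=\mathbb{S}_G$, and equals $X^{hH}$ at $Y=\Sigma^\infty_+G/H$. Compactness of $\mathbb{S}_G$ then exhibits $(-)^{hG}$ as a retract of a finite limit of shifts of the functors $(-)^{hH}$; desuspended orbits may be needed, which is harmless because $\Phi$ is exact. This is the stable analogue of the proof of Theorem~\ref{thm:Tatevanishing}. It avoids the duality functor $D$, the map $N_{X,Y}$, and Lemma~\ref{lem:generalizednorm}, whose proof the paper only sketches. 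It also appears to work verbatim for any stable 1-semiadditive $\mathcal{C}$. The paper's route, in exchange, gives more structure: an equivalence $(X\otimes D(Y))_{hG}\simeq\mathrm{map}(Y,X)^{hG}$ for every compact $Y$, which ties $X^{hG}$ to the genuine dual $D(\mathbb{S}_G)$.

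Two smaller points. The naturality in $G/H$ that you single out as the main obstacle is not required. Applying the bifunctor $\Phi$ to a finite diagram in $\Sp^G_{\mathrm{prop}}$ already yields a diagram of functors of $X$, and linearity only has to be checked at each vertex. Likewise, the $T(n)$-localization of the Borel object is superfluous, since $\mathrm{map}(Y^e,X)^{hG}$ is already $T(n)$-local when $X$ is.
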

\begin{proof}
By Proposition \ref{prop:f*Clinear}, it will suffice to show that the functor 
\[
\mathrm{Fun}(BG,\Sp_{T(n)}) \to \Sp_{T(n)}\colon X \mapsto X^{hG}
\]
preserves colimits. This follows immediately by applying Proposition \ref{prop:generalizednorm} to $Y = \mathbb{S}_G$.
\end{proof}

\begin{ex} There are groups with finite stable $\underline{E}G$ which do not admit a finite unstable $\underline{E}G$. This implies that Corollary \ref{cor: stable e underbar duality} applies to more groups than Theorem \ref{thm:Tatevanishing}. Torsion-free examples are provided by the celebrated Bestvina--Brady groups \cite{BestvinaBrady}, which are $FP_{\infty}$ but not finitely presented (see \cite[Remark 5.2]{BDP}). A generalization with interesting torsion can be provided by semidirect products of Bestvina-Brady groups with finite groups as considered in \cite{LearyNucinkis}. Indeed, let $L$ be a finite flag complex with a simplicial admissible action of a finite group $K$. Suppose that $L^{K'}$ is acyclic for any subgroup $K' \leq K$ and assume additionally that $L$ is not simply-connected. Now consider the Bestvina--Brady group $H_L$ which is defined to be the kernel of the group homomorphism $G_L \to \Z$, where $G_L$ is the right-angled  Artin group associated to $L$, and the homomorphism sends generators to $1$. The group $K$ acts on $G_L$ and hence on $H_L$ via the given action on $L$. Consider the semidirect product $G=H_L \rtimes K$. By \cite[Theorem 3]{LearyNucinkis} and \cite{BestvinaBrady} this group only has finitely many conjugacy classes of finite subgroups and for any finite subgroup $F \leq G$, the Weyl group $N(F)/F$ is $FP_{\infty}$. Additionally, it has finite virtual cohomological dimension \cite[Section 6]{LearyNucinkis}. It then follows from \cite[Theorem 1.2 and Theorem 1.3]{BDP} that $G$ admits a finite stable $\underline{E}G$. However, since $L$ is not simply-connected, we know that $H_L$ and hence $G=H_L \rtimes K$ are not finitely presented \cite{BestvinaBrady} and hence $G$ does not admit a finite $\underline{E}G$ by \cite[Theorem 5]{LearyNucinkis} (see also \cite[Theorem 5.1]{Luckfinite}). 

An example of such an acyclic flag complex $L$ is given by the barycentric subdivision of the Floyd--Richardson complex \cite{FR}, which admits an admissible simplicial action of $A_5$. The details can be found for example in \cite[Section 9]{LearyNucinkis} and \cite[Section 6]{BDP}. Though $L^{A_5}=\emptyset$, for any proper subgroup $K \leq A_5$, the fixed subcomplex $L^K$ is acyclic. Hence for any proper subgroup $K \leq A_5$, the group $G=H_L \rtimes K$ is an example with finite stable $\underline{E}G$, but no finite $\underline{E}G$.
\end{ex}

\part{Chromatic Euler characteristics}

\section{Applications to chromatic cardinalities} \label{sec: cardinalities}

Let $G$ be a discrete group. According to \cite[Section 6.6]{LL2} an \emph{additive invariant} for $G$ is a pair $(A, \chi)$, where $A$ is an abelian group and $\chi$ assigns to every finite proper $G$-CW complex $X$ a number $\chi(X) \in A$ in such a way that the following properties hold: 

\begin{itemize}  

\item[(1)] If $X$ and $Y$ are $G$-homotopy equivalent, then $\chi(X)=\chi(Y)$.

\item[(2)] Given a (homotopy) pushout 
\[\xymatrix{X \ar[r] \ar[d] & Y \ar[d]  \\ Z \ar[r] & T}\]
of finite proper $G$-CW complexes, one has 
\[\chi(T) +\chi(X)=\chi(Z)+\chi(Y).\]

\item[(3)] One has $\chi(\emptyset)=0.$

\end{itemize}
By \cite[Lemma 6.85]{LL2}, there is a universal additive invariant which can be constructed as follows: let $A(G)$ denote the colimit
\[\colim_{G/H \in \Or^G_{\mathrm{fin}}} A(H), \]
where $A(H)$ is the Burnside ring for a finite subgroup $H \leq G$ and the colimit is taken along inductions and conjugations. Note that despite the fact that $A(H)$ has a natural ring structure for any finite $H \leq G$, the colimit $A(G)$ is not a ring in general since the inductions are not multiplicative. Equivalently, $A(G)$ is the Grothendieck group of cofinite $G$-sets with finite stabilizers. The group $A(G)$ is free abelian with basis $\{[G/H] \; \vert \; H \leq G\; \text{finite} \}$ \cite[Section 6.6.2]{LL2}.

The universal additive invariant $[-]$ is now defined as follows:

\[[X]=\sum_{G\sigma} (-1)^{d_\sigma} [G/H_{\sigma}] \in A(G),\]
    where the sum runs over all $G$-orbits of cells $\sigma$ of $X$, the number $d_\sigma$ is the dimension of $\sigma$ and $H_{\sigma}$ is the stabilizer of $\sigma$. 

It follows from the universal property that an additive invariant $(A, \chi)$ amounts to a homomorphism $\chi \colon A(G) \to A$. Since $A(G)$ is free abelian with the above specified basis, in order to define $\chi$, it suffices to prescribe the values $\chi(G/H)$ for any finite $H$. 

The following general fact will be useful for identifying the set of connected components of $\mathrm{map}^G(D(\Ss_G), \Ss_G)$
with $A(G)$:

\begin{lem} \label{lem: Kan ext on psh} Let $\mathcal{C}$ be a small $\infty$-category, $\mathcal{D}$ a cocomplete $\infty$-category and $F \colon \mathcal{C} \to \mathcal{D}$ a functor. Then the value of the left Kan extension (along the Yoneda embedding) $\hat{F} \colon \mathrm{PSh}(\mathcal{C})\to \mathcal{D}$ at the terminal presheaf is equivalent to the colimit $\colim_{\mathcal{C}}F$. 

\end{lem}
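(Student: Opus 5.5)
The plan is to write the terminal presheaf as a colimit of representables and then use that $\hat{F}$ preserves colimits and restricts to $F$ on representables.

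Write $y\colon \mathcal{C} \to \mathrm{PSh}(\mathcal{C})$ for the Yoneda embedding and $\ast$ for the terminal presheaf. The argument has three steps.

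\emph{Step 1.} Show that the canonical map $\colim_{c \in \mathcal{C}} y(c) \to \ast$ is an equivalence. This is the fact already used in the proof of Lemma \ref{lem:underbarEGcolim}. To verify it, recall that colimits in $\mathrm{PSh}(\mathcal{C})$ are computed objectwise. Evaluating at $d \in \mathcal{C}$ gives
\[
\colim_{c \in \mathcal{C}} \mathrm{Map}_{\mathcal{C}}(d,c).
\]
This is the colimit over $\mathcal{C}$ of the functor corepresented by $d$. Such a colimit is equivalent to the classifying space of the left fibration $\mathcal{C}_{d/} \to \mathcal{C}$, that is, to $|\mathcal{C}_{d/}|$. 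This space is contractible, since $\mathcal{C}_{d/}$ has an initial object, namely $\mathrm{id}_d$.

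\emph{Step 2.} Invoke the characterization of the left Kan extension along the Yoneda embedding into a cocomplete $\mathcal{D}$ (\cite[Theorem 5.1.5.6]{LurieHTT}). It says that $\hat{F}$ preserves small colimits and that $\hat{F} \circ y \simeq F$.

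\emph{Step 3.} Apply $\hat{F}$ to the equivalence of Step 1:
\[
\hat{F}(\ast) \simeq \hat{F}\bigl(\colim_{\mathcal{C}} y\bigr) \simeq \colim_{\mathcal{C}} \hat{F}\circ y \simeq \colim_{\mathcal{C}} F.
\]

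Alternatively, one can use the pointwise formula directly: $\hat{F}(\ast) = \colim_{(y(c)\to \ast)} F(c)$, indexed by $\mathcal{C} \times_{\mathrm{PSh}(\mathcal{C})} \mathrm{PSh}(\mathcal{C})_{/\ast}$. Since $\ast$ is terminal, this indexing category is equivalent to $\mathcal{C}$ itself, and the same conclusion follows.

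The only step requiring any care is the contractibility in Step 1, which is standard. Otherwise the argument is routine.
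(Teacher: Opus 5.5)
Your proof is correct. Your closing ``alternative'' is exactly the paper's argument. The paper applies the pointwise formula for left Kan extensions and observes that the comma category $\mathcal{C}\times_{\mathrm{PSh}(\mathcal{C})}\mathrm{PSh}(\mathcal{C})_{/\ast}$ is equivalent to $\mathcal{C}$ because $\ast$ is terminal.

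Your main route is genuinely different. Instead of computing $\hat F(\ast)$ directly, you use the universal property of $\mathrm{PSh}(\mathcal{C})$, namely that $\hat F$ preserves colimits and $\hat F\circ y\simeq F$. You combine this with the density statement $\colim_{\mathcal{C}} y\simeq \ast$, which you verify correctly by identifying its value at $d$ with $|\mathcal{C}_{d/}|\simeq\ast$.

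The paper's route is shorter and needs no contractibility check. Yours needs that extra verification, but it shows that the lemma is essentially equivalent to the fact $\colim_{\mathcal{C}} y\simeq\ast$ quoted in the proof of Lemma \ref{lem:underbarEGcolim}. Conversely, the lemma applied to $F=y$ and $\mathcal{D}=\mathrm{PSh}(\mathcal{C})$, where $\hat y\simeq\mathrm{id}$, gives back your Step 1.
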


\begin{proof} This follows from the pointwise formula for Kan extensions and from the observation that the overcategory $Y_{{/\ast}}$ is equivalent to $\mathcal{C}$, where $Y \colon \mathcal{C} \to \mathrm{PSh}(\mathcal{C})$ is the Yoneda embedding and $\ast$ is the terminal presheaf. \end{proof}

\begin{lem} \label{lem: dulaising maps} Let $G$ be a discrete group with finite $\underline{E}G$. Then the set of homotopy classes of maps $[D(\Ss_G), \Ss]^G$ in the homotopy category of $\Sp^G_{\mathrm{prop}}$ is in natural bijection with $A(G)$. 
\end{lem}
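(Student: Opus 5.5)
The plan is to convert maps out of $D(\Ss_G)$ into maps out of the unit itself, and then compute those by decomposing the unit as a colimit of orbits. First, $\Ss_G$ is compact because $G$ has finite $\underline{E}G$. Hence Proposition \ref{prop:dualitySpG}(2) gives $\Ss_G \simeq D^{\op}D(\Ss_G)$, and the adjunction of Proposition \ref{prop:dualitySpG}(3) gives
\[
\mathrm{Map}^G(D(\Ss_G), \Ss_G) \simeq \mathrm{Map}^G(D(\Ss_G), D(D(\Ss_G))) \simeq \mathrm{Map}^G(D(\Ss_G), D(\Ss_G)).
\]
That identification is only an intermediate step. The cleaner route is to work on compact objects: $D$ restricts to an equivalence $(\Sp^G_{\mathrm{prop}})^\omega \simeq ((\Sp^G_{\mathrm{prop}})^\omega)^{\op}$ and $D(\Ss_G)\simeq \Ss_G$ is not needed. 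Instead I use
\[
\mathrm{Map}^G(D(\Ss_G),\Ss_G)\simeq \mathrm{Map}^G(D(\Ss_G), D(D\Ss_G)) \simeq \mathrm{Map}^G(D\Ss_G \text{ paired}) ,
\]
and more usefully
\[
\mathrm{Map}^G(D(\Ss_G), \Ss_G) \simeq \mathrm{Map}^G(D(\Ss_G), D(D(\Ss_G)))\simeq \mathrm{Map}^G(D(\Ss_G),D(\Ss_G))^{\text{via }D} \simeq \mathrm{Map}^G(\Ss_G, D(\Ss_G)),
\]
where the last step applies $D$, which is an equivalence on compacts.

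Next I write $\Ss_G \simeq \colim_{\Or^G_{\mathrm{fin}}} \Sigma^\infty_+ G/H$ using Lemma \ref{lem:SGcolim}. This gives
\[
\mathrm{map}^G(\Ss_G, D\Ss_G)\simeq \lim_{G/H} \mathrm{map}^G(\Sigma^\infty_+G/H, D\Ss_G)\simeq \lim_{G/H} \mathrm{map}^H(\Ss_H, \res^G_H D\Ss_G).
\]
Rather than computing this limit directly, I exploit a symmetric formulation. By Lemma \ref{lem: Kan ext on psh}, applied to the functor $G/H\mapsto \Sigma^\infty_+G/H$ extended along $\mathcal{S}^G_{\mathrm{prop}}=\mathrm{PSh}(\Or^G_{\mathrm{fin}})$, the terminal object maps to $\Ss_G$. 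The functor $[D(-),\Ss]^G$ applied to compact objects is then additive and exact, turning cofiber sequences into long exact sequences.

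The most direct computation is on $\pi_0$. I define a homomorphism $A(G)\to [D\Ss_G,\Ss_G]^G$ sending $[G/H]$ to the composite
\[
D(\Ss_G)\to D(\Sigma^\infty_+G/H)\simeq \Sigma^\infty_+G/H \to \Ss_G,
\]
that is, the restriction of duals along $G/H\to \underline{E}G$ followed by the collapse map. To show this is a bijection, I reduce to the following identification:
\[
[D\Ss_G,\Ss_G]^G \cong \colim_{G/H}\,[\Ss_H,\Ss_H]^H = \colim_{G/H} A(H) = A(G).
\]
The inputs are that $[\Ss_H,\Ss_H]^H\cong A(H)$ by Segal--tom Dieck, and that induction in the colimit corresponds to the maps used to define $A(G)$.

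To get the colimit description, I would write $D(\Ss_G)\simeq \lim_{G/H}\Sigma^\infty_+G/H$ (Lemma \ref{lem:DStransfer}), and use $[\Sigma^\infty_+G/H, Y]^G=[\Ss_H,\res Y]^H$ and its dual $[Y,\Sigma^\infty_+G/H]^G$, which holds by the Wirthmüller isomorphism. The limit is finite up to retract, since $\underline{E}G$ is finite, and it is compatible with $D$, so maps out of it form a colimit of maps out of the terms.

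\textbf{Main obstacle.} The hard step is showing that no $\lim^1$ or higher $\pi_*$ contributions spoil this passage from $\pi_0$ of a finite limit to a colimit of $\pi_0$'s. I would handle it by using the functoriality of the construction in $X\in(\mathcal{S}^G_{\mathrm{prop}})^\omega$. Both $X\mapsto [D\Sigma^\infty_+X,\Ss_G]^G$ and the universal additive invariant group have the same value on orbits, namely $A(H)$ by Segal--tom Dieck. They also have the same pushout behaviour, by exactness of $D$ and because the group $\pi_{-1}$ of orbit mapping spectra vanishes, since $\Ss$ is connective. Comparing them by induction over the finite cell structure of $\underline{E}G$, with the five lemma at each step, then gives the natural bijection with $A(G)$.
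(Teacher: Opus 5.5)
There is a genuine gap at the one step that produces $A(G)=\colim_{\Or^G_{\mathrm{fin}}}A(H)$. The ingredients you list are the right ones: $[\Ss_H,\Ss_H]^H\cong A(H)$, exactness of $D$, connectivity, and cell induction. The problem is that the limit in Lemma \ref{lem:DStransfer} is indexed by the infinite category $(\Or^G_{\mathrm{fin}})^{\op}$, so $\mathrm{map}^G(-,\Ss_G)$ does not just turn it into a colimit. If you replace it by a finite cell diagram and induct over a finite model of $\underline{E}G$, connectivity only gives $[D(\Ss_G),\Ss_G]^G\cong\mathrm{coker}\bigl(\bigoplus_{1\text{-cells}}A(K_j)\to\bigoplus_{0\text{-cells}}A(H_i)\bigr)$. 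You never identify this cokernel with $\colim_{\Or^G_{\mathrm{fin}}}A(H)$, and that identification is exactly where the fixed-point sets of $\underline{E}G$ being nonempty and connected must be used.

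Your five-lemma comparison cannot fill this in. ``The universal additive invariant group'' is not a functor of $X$ taking the value $A(H)$ at $G/H$: the group $A(G)$ does not depend on $X$, and the universal invariant assigns elements $[X]\in A(G)$, not groups. So there is no natural transformation to compare along.

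The paper supplies exactly this missing functor. Apply Lemma \ref{lem: Kan ext on psh} not to $G/H\mapsto\Sigma^\infty_+G/H$, but to the spectrum-valued functor $F(G/H)=\mathrm{map}^G(D(\Sigma^\infty_+G/H),\Ss_G)\simeq\Ss^H$. By Proposition \ref{prop:dualitySpG}(1), $F$ sends orbit maps to transfers $\Ss^H\to\Ss^K$, which are inductions on $\pi_0$.
\begin{itemize}
\item The counit $\hat F\to\mathrm{map}^G(D(\Sigma^\infty_+(-)),\Ss_G)$ is the identity on orbits.
\item $\hat F$ preserves colimits and the target preserves finite colimits, so the counit is an equivalence at the finite complex $\underline{E}G$.
\item Since $\underline{E}G$ is the terminal presheaf, $\hat F(\underline{E}G)\simeq\colim_{\Or^G_{\mathrm{fin}}}\Ss^H$.
\item Each $\Ss^H$ is connective, so $\pi_0$ commutes with this colimit, giving $A(G)$.
\end{itemize}
Working with spectra rather than $\pi_0$ also removes your $\lim^1$ and $\pi_{-1}$ worries, since there is no limit left to control. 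Your cell induction is precisely the proof that the counit is an equivalence on finite complexes, and the comparison functor you were missing is $\pi_0\hat F$.

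Separately, two claims in the proposal are false and should be deleted, not merely set aside.

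First, the chain in your first paragraph would give $\mathrm{Map}^G(D\Ss_G,\Ss_G)\simeq\mathrm{Map}^G(\Ss_G,D\Ss_G)$. Its middle step needs $D\Ss_G\simeq\Ss_G$, and applying $D$ to $\mathrm{Map}^G(D\Ss_G,D\Ss_G)$ yields $\mathrm{Map}^G(\Ss_G,\Ss_G)$, not $\mathrm{Map}^G(\Ss_G,D\Ss_G)$. The conclusion fails already for $G=\Z$. There $D(\Ss_G)\simeq\Sigma^{-1}\Ss$ in $\mathrm{Fun}(B\Z,\Sp)$, and the two mapping spaces have $\pi_0$ equal to $\Z$ and to $\pi_1\Ss\oplus\pi_2\Ss$ respectively.

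Second, if by ``its dual'' you mean $[Y,\Sigma^\infty_+G/H]^G\cong[\res^G_HY,\Ss_H]^H$, this is false for infinite $G$, even for compact $Y$. For $G=\Z$, $H=e$, $Y=\Ss[\Z]$ it compares a direct sum $\bigoplus_{\Z}\Z$ with a product $\prod_{\Z}\Z$. The Wirthm\"uller isomorphism needs finite index. The correct statement goes through $D$ via Proposition \ref{prop:dualitySpG}: $[Y,\Sigma^\infty_+G/H]^G\cong[\Sigma^\infty_+G/H,DY]^G\cong[\Ss_H,\res^G_H DY]^H$.
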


\begin{proof} Let 
\[F \colon \Or_{\mathrm{fin}}^G \to \Sp \]
denote the restriction of
the functor 
\[\mathrm{map}^G(D(\Sigma^{\infty}_+(-)), \Ss_G) \colon \mathcal{S}^G_{\mathrm{prop}} \to \mathrm{Sp}\]
to the orbit category. By Proposition \ref{prop:dualitySpG}(1), the functor $F$ sends the coset $G/H$ to the genuine fixed points $\mathbb{S}^H$, restrictions to transfers, and conjugations by $g$ to conjugations by $g^{-1}$. Let $\hat{F}$ denote the left Kan extension of $F$ along the Yoneda embedding $\Or_{\mathrm{fin}}^G \to \mathcal{S}^G_{\mathrm{prop}}$ (see Proposition \ref{prop:properGspaces}).  The counit
\[\hat{F} \to \mathrm{map}^G(D(\Sigma^{\infty}_+(-)), \Ss_G)\]
is a natural transformation of functors defined on $\mathcal{S}^G_{\mathrm{prop}}$ and restricts to the identity on  $\Or_{\mathrm{fin}}^G$. The functor $\hat{F}$ preserves colimits as a left Kan extension. The functor $\mathrm{map}^G(D(\Sigma^{\infty}_+(-)), \Ss_G)$ preserves finite colimits by construction of $D$ and stability. Hence for any finite proper $G$-CW complex $X$, the map 
\[\hat{F}(X) \to \mathrm{map}^G(D(\Sigma^{\infty}_+(X)), \Ss_G)\]
is an equivalence. Since $G$ admits a finite $\underline{E}G$, we can take $X=\underline{E}G$ and conclude that 
\[\hat{F}(\underline{E}G) \to \mathrm{map}^G(D(\mathbb{S_G}), \Ss_G)\]
is an equivalence. The proper $G$-space $\underline{E}G$ is equivalent to the terminal presheaf (see Lemma \ref{lem: finite underline}) and hence by Lemma \ref{lem: Kan ext on psh}, we obtain an equivalence
\[\colim_{\Or_{\mathrm{fin}}^G} \mathbb{S}^H \simeq \hat{F}(\ast) \simeq \hat{F}(\underline{E}G) \simeq \mathrm{map}^G(D(\mathbb{S_G}), \Ss_G).\] 
Since $\mathbb{S}^H$ is connective for any finite $H \leq G$, the result now follows just by applying $\pi_0$ to the latter equivalence and using the isomorphism $\pi_0(\Ss^H) \cong A(H)$ (see e.g., \cite{tomD}). \end{proof}


This lemma implies that given a finite proper $G$-CW complex $X$, we have a map $[X] \colon D(\Ss_G) \to \Ss_G$ corresponding to the universal element $[X] \in A(G)$. In particular, when $G$ admits a finite $\underline{E}G$, there is a canonical map 
\[e(G)=[\underline{E}G] \colon D(\Ss_G) \to \Ss_G.\]

Now let $E$ be a $K(n)$-local spectrum equipped with the trivial $G$-action. If $G$ is finite, then one can use ambidexterity \cite{GS96, HS96, HL13} to define the $E$-cardinality of $BG$, denoted by $\vert BG \vert_E$, as the following composite in the $K(n)$-local stable homotopy category (see \cite[Section 2]{CSYambheight}):
\[\xymatrix{E \ar[r]^-{\Delta} & E^{hG} \ar[r]^{\mathrm{Nm}_{BG}^{-1}} & E_{hG} \ar[r]^{\nabla} & E.}\]
If $E$ is a $K(n)$-local $\mathbb{E}_1$-algebra, then the map above is one of $E$-modules and one might as well interpret $\vert BG \vert_E$ as an element of $\pi_0 E$. Using Theorem \ref{thm:Tatevanishing}, we generalize this invariant to infinite discrete groups with finite $\underline{E}G$. 

\begin{defn} \label{def: E-theory euler char} Suppose $G$ admits a finite $\underline{E}G$, let $E$ be a $K(n)$-local spectrum with trivial $G$-action, and $X$ a finite proper $G$-CW complex. The \emph{$E$-theory $G$-equivariant Euler characteristic} $\chi^E_{G}(X)$ is defined to be the following composite in the $K(n)$-local stable homotopy category:
\[\xymatrix{E \ar[r]^-{\Delta} & E^{hG} \ar[r]^-{\mathrm{Nm}_{BG}^{-1}} &  (E \otimes D(\Ss_G))_{hG} \ar[rr]^-{(E \otimes [X])_{hG}} & & E_{hG} \ar[r]^{\nabla} & E.}\]
The map $\mathrm{Nm}_{BG}^{-1} \colon E^{hG} \to (E \otimes D(\Ss_G))_{hG}$ is the inverse of the equivalence given by Theorem \ref{thm:Tatevanishing}. In case $E$ is an $\mathbb{E}_1$-algebra, we will think of $\chi^E_{G}(X)$ as an element of $\pi_0 E$. 
The \emph{$E$-theory orbifold Euler characteristic} of $G$ is then defined to be $\chi^E_{G}(\underline{E}G)$ and will be denoted by $\chi^E_{\orb}(G)$. 
\end{defn}

\begin{rem}
In the definition above, we do actually need a finite $G$-CW model for $\underline{E}G$, rather than a finitely dominated one. Our previous results on norm maps only needed $\underline{E}G$ to be compact (i.e., finitely dominated) as a proper $G$-space, but in the remainder of this paper we need the stronger finiteness assumption. As already noted in Remark \ref{rem:finitevsfinitelydom}, this presents no essential loss of generality as far as examples are concerned.
\end{rem}

\begin{prop} \label{prop: Quillen formula} Suppose $G$ admits a finite $\underline{E}G$ and let $E$ be a $K(n)$-local spectrum. Then for any finite proper $G$-CW complex $X$, we have
\[\chi^E_{G}(X)=\sum_{G\sigma} (-1)^{d_\sigma} \vert BH_{\sigma} \vert_E,\]
where the sum runs over all $G$-orbits of cells $\sigma$ of $X$, the number $d_\sigma$ is the dimension of $\sigma$ and $H_{\sigma}$ is the stabilizer of $\sigma$. 
    
\end{prop}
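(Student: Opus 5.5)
The plan is to use additivity to reduce to the case of a single orbit. By Lemma \ref{lem: dulaising maps}, homotopy classes $D(\Ss_G) \to \Ss_G$ are identified with $A(G)$. The element $[X] = \sum_{G\sigma} (-1)^{d_\sigma}[G/H_\sigma]$ is, under this identification, the corresponding sum of maps in the abelian group $[D(\Ss_G),\Ss_G]^G$. The assignment
\[
\phi \mapsto \nabla \circ (E\otimes \phi)_{hG} \circ \mathrm{Nm}_{BG}^{-1} \circ \Delta
\]
is a homomorphism of abelian groups from $[D(\Ss_G),\Ss_G]^G$ to $[E,E]_{K(n)}$, since every functor involved is additive. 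Hence $\chi^E_G(X) = \sum_{G\sigma}(-1)^{d_\sigma}\chi^E_G(G/H_\sigma)$, where $\chi^E_G(G/H)$ denotes the composite built from the map $[G/H]\colon D(\Ss_G)\to \Ss_G$. It therefore suffices to show that $\chi^E_G(G/H) = |BH|_E$ for every finite $H \leq G$.

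The first thing to settle is what the map $[G/H]$ actually is. Tracing through the proof of Lemma \ref{lem: dulaising maps}, the basis element $[G/H]$ comes from the unit $1 \in \pi_0\Ss^H \cong [\Sigma^\infty_+ G/H, \Ss_G]^G$, pushed into the colimit. Concretely, this means $[G/H]$ is the composite
\[
D(\Ss_G) \xrightarrow{D(\mathrm{res})} D(\Sigma^\infty_+ G/H) \simeq \Sigma^\infty_+ G/H \xrightarrow{\mathrm{proj}} \Ss_G.
\]
The first map is the limit projection of Lemma \ref{lem:DStransfer}, and the last map is induced by $G/H \to \underline{E}G$.

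Next I pass to underlying spectra and take homotopy orbits. Here the key input is the commutative square in Remark \ref{rem: comparison of limits}: after tensoring with $E$ it gives
\[
\mathrm{res}^G_H \circ \mathrm{Nm}_{BG} \simeq \mathrm{Nm}_{BH} \circ (\text{the map } (E\otimes D(\Ss_G))_{hG} \to (E\otimes \Sigma^\infty_+G/H)_{hG} \simeq E_{hH}).
\]
This is exactly the diagram promised in Remark \ref{rem: norm limit}. To establish it, I would combine the top square of Diagram (\ref{diag:mixednorm}) for $i\colon BH\to BG$ with Lemma \ref{lem:mixednormvsAnorm}, and identify $\mathbf D_{BG} \to \mathbf D_{(BG,BH)}\simeq \mathbb S[G]^{hH}$ with the vertical maps of Remark \ref{rem: comparison of limits}. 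Using this square, $\mathrm{Nm}_{BG}^{-1}\circ\Delta$ followed by the projection to $E_{hH}$ becomes $\mathrm{Nm}_{BH}^{-1}\circ \mathrm{res}^G_H\circ\Delta_G = \mathrm{Nm}_{BH}^{-1}\circ\Delta_H$. The remaining map $E_{hH}\simeq (E\otimes\Sigma^\infty_+G/H)_{hG} \to E_{hG} \xrightarrow{\nabla} E$ is just $\nabla_H$. The composite is therefore $\nabla_H\circ \mathrm{Nm}_{BH}^{-1}\circ\Delta_H = |BH|_E$.

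I expect the main obstacle to be the compatibility square between $\mathrm{Nm}_{BG}$ and $\mathrm{Nm}_{BH}$. The issue is that the identification of $\mathbf{D}_{BG}\to \mathbf D_{(BG,BH)}$ with the underlying map of the genuine transfer projection has to be checked carefully, and Remark \ref{rem: comparison of limits} is exactly the tool for this. A secondary bookkeeping point is confirming that the identification in Lemma \ref{lem: dulaising maps} sends $[G/H]$ to the composite described above with the correct sign and orientation, and not to some conjugate of it.
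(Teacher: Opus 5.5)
Your proposal is correct and follows essentially the same route as the paper. You reduce to orbits $G/H$ via additivity of $[-]\in A(G)$, factor $[G/H]$ as the transfer $D(\mathbb{S}_G)\to\Sigma^\infty_+G/H$ followed by the projection to $\mathbb{S}_G$, obtain $\mathrm{res}^G_H\circ\mathrm{Nm}_{BG}\simeq\mathrm{Nm}_{BH}\circ\mathrm{tr}_{hG}$ from the mixed norm construction, Lemma \ref{lem:mixednormvsAnorm} and Remark \ref{rem: comparison of limits}, and then collapse the composite to $\nabla_H\circ\mathrm{Nm}_{BH}^{-1}\circ\Delta_H=|BH|_E$, exactly as in the paper's final diagram.
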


\begin{proof} To prove the formula, we need to check the following two claims:
\rm 1) $\chi^E_{G}$ is an additive invariant;
\rm 2) For any finite subgroup $H \leq G$, $\chi_{G}^E(G/H)=\vert BH \vert_E$. The first claim follows immediately from the definition and from the fact that $[X] \in A(G)$ is an additive invariant of $X$. 

To prove the second claim we first observe that the morphism $[G/H]$ factors as
\[\xymatrix{D(\Ss_G) \ar[rr]^-{[G/H]} \ar[dr]_{\tr}  & &  \Ss_G \\ & \Sigma^{\infty}_+ G/H, \ar[ur]_{\res} &   }\]
where $\tr$ is the dual of the restriction $\res \colon \Sigma^{\infty}_+ G/H \to \Ss_G$. The following diagram commutes by construction of the mixed dualizing complex and mixed norm map from Section \ref{sec: norm and Tate}, (the proof of) Lemma \ref{lem:mixednormvsAnorm}, and Remark \ref{rem: comparison of limits}:
\[\xymatrix{(E \otimes D(\mathbb{S}_G))_{hG} \ar[d]^{\tr_{hG}} \ar[r]^\simeq & (E \otimes \mathbb{S}[G]^{hG})_{hG} \ar[d]^{\res_{hG}} \ar[rr]^-{\mathrm{Nm}_{BG}} & & E^{hG} \ar[d]^{\res} \\ (E \otimes \Sigma^{\infty}_+ G/H )_{hG} \ar[r]^{\simeq}  & (E \otimes \mathbb{S}[G]^{hH} )_{hG} \ar[rr]^-{\mathrm{Nm}_{(BG,BH)}} & & E^{hH} \ar@{=}[d] \\ E_{hH} \ar[r]^-{\simeq} \ar[u]^{\simeq} & (E \otimes \mathbb{S}[H]^{hH})_{hH}  \ar[u]^-{\simeq} \ar[rr]^-{\mathrm{Nm}_{BH}} & & E^{hH}.}\]
Now the claim follows from the commutative diagram in the $K(n)$-local stable homotopy category: 
\[\xymatrix{E \ar[r]^{\Delta} \ar[dr]_{\Delta} & E^{hG} \ar[d]^{\res} \ar[r]^-{\mathrm{Nm}_{BG}^{-1}} &  (E \otimes D(\Ss_G))_{hG} \ar[d]_{\tr_{hG}} \ar[rr]^-{(E \otimes [G/H])_{hG}} & & E_{hG} \ar[r]^{\nabla} & E. \\ & E^{hH} \ar[r]^{\mathrm{Nm}_{BH}^{-1}} & E_{hH} \ar[urr]^{\res} \ar[urrr]_{\nabla} }\]
We observe that the triangles on the left and right commute by definition. \end{proof}

The typical examples to which we want to apply Proposition \ref{prop: Quillen formula} are the $K(n)$-local sphere $L_{K(n)}\Ss$ and Morava $E$-theory $E_n$. The following is a consequence of \cite[Theorem 5.8]{BenMosche} (see also \cite[Lemma 4.13]{HKR} and \cite[Section 3.4]{LurieEl}):

\begin{prop} \label{prop: card for finite} Let $G$ be a finite group. Then
\[\vert BG \vert_{E_{n}}=\frac{\vert G_{n,p} \vert}{\vert G \vert}.\]
\end{prop}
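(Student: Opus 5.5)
The plan is to compute the composite $E_n \xrightarrow{\Delta} E_n^{hG} \xrightarrow{\mathrm{Nm}^{-1}} E_{n,hG} \xrightarrow{\nabla} E_n$ after base change to the Hopkins--Kuhn--Ravenel character ring $L$. Since $\pi_0E_n \to L$ is injective (faithfully flat), an identity of elements of $\pi_0 E_n$ can be checked in $L$. The key structural input is that $\Delta^*$ and $\nabla$ are $E_n$-linear maps between free/finite $E_n$-modules. Concretely, $E_n^{hG}=E_n^{BG_+}$ has $E_n^0(BG)$ as a finitely generated free module by HKR. Also $\mathrm{Nm}_{BG}$ is dual to $\mathrm{Nm}^{-1}$ under $K(n)$-local Spanier--Whitehead/ambidextrous duality. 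So $|BG|_{E_n}$ equals the composite
\[
\pi_0E_n \to E_n^0(BG) \xrightarrow{\int_{BG}} \pi_0 E_n,
\]
where $\int_{BG}$ is the ambidextrous integral, i.e. the $K(n)$-local transfer $BG \to *$ applied in cohomology. Thus $|BG|_{E_n} = \int_{BG} 1$.

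Next I compute $\int_{BG}$ in characters. The norm is compatible with restriction and with induction along subgroups. For the trivial subgroup, the transfer $E_n^0(BG)\to E_n^0(B\{e\})$ in the opposite direction composes with restriction to multiplication by $|G|$ from $\pi_0E_n$ to itself. More useful is the statement recalled in the remark on $E_n^{tG}$: after tensoring with $L$, the splitting of $N_G$ evaluates a class function at $[1,\dots,1]$ and divides by $|G|$. Dually, I expect $\int_{BG}$ to become the averaged sum
\[
\phi \mapsto \frac{1}{|G|}\sum_{\alpha \in G_{n,p}} \phi(\alpha),
\]
that is, the HKR formula for transfers along $BG\to *$ (\cite[Theorem D / Lemma 4.13]{HKR}). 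Applying this to $\phi=1$, the constant class function (the character of the unit), gives $|G_{n,p}|/|G|$.

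The main obstacle is identifying the ambidextrous integral $\int_{BG}$ (defined via the inverse of the Hopkins--Lurie norm) with the classical Becker--Gottlieb/stable transfer whose character formula HKR prove. I would first try citing \cite[Theorem 5.8]{BenMosche} directly, since it states exactly this cardinality formula. Failing that, I would argue via the standard fact that for a finite cover the ambidextrous integral is the Becker--Gottlieb transfer. Then I would apply HKR's transfer formula, which computes the transfer from $BG$ to a point on characters as $\frac{1}{|G|}\sum$ over $G_{n,p}$. The rest is bookkeeping of orbits: the sum $\sum_{[\alpha]\in G_{n,p}/G} |G|/|C(\alpha)|$ collapses back to $|G_{n,p}|$.
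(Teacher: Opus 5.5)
Citing \cite[Theorem 5.8]{BenMosche}, which you list as your first resort, is exactly the paper's proof. The paper records the proposition as a consequence of that theorem, with pointers to \cite[Lemma 4.13]{HKR} and \cite[Section 3.4]{LurieEl}, and gives no further argument. Your reduction $|BG|_{E_n}=\int_{BG}(1)$ and your final orbit count are also fine.

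The character-theoretic argument you present as the actual plan, however, has a genuine gap at the only step with real content. That step is the identification of $L\otimes\int_{BG}$ with $\phi\mapsto\frac{1}{|G|}\sum_{\alpha\in G_{n,p}}\phi(\alpha)$, which you state only as an expectation. Neither justification you offer supports it.
\begin{itemize}
\item HKR's induction formula computes transfers along finite covers $BH\to BK$. The map $BG\to *$ has fiber $BG$, so the fact that ambidextrous integrals along finite covers are Becker--Gottlieb transfers does not apply to it. The only finite-cover transfer between $BG$ and a point, $\mathrm{tr}^G_e\colon \pi_0E_n\to E_n^0(BG)$, goes in the opposite direction.
\item The splitting in the Tate remark does not ``dualize'' to $\int_{BG}$. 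It is a retraction of the norm in $\mathrm{Sp}$ before $K(n)$-localization, whose source is rationally just $L$. On the unit it returns $1/|G|$, which is the height-$0$ answer.
\item A minor point: HKR establish freeness of $E_n^*(BG)$ only for good groups. Fortunately you never use it.
\end{itemize}

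To make your route self-contained, start from two facts. First, $\int_{BG}\circ\mathrm{tr}^G_A=\int_{BA}$, since integrals compose and along the finite cover $BA\to BG$ the integral is the transfer. Second, HKR's induction formula gives
\[
1=\sum_{(A)}[N_G(A):A]^{-1}\,\mathrm{tr}^G_A\bigl(1_{\mathrm{Sur}(\mathbb{Z}_p^n,A)}\bigr)\quad\text{in } L\otimes E_n^0(BG),
\]
with the sum over conjugacy classes of abelian $p$-subgroups $A\le G$. Next apply M\"obius inversion over the subgroups of $A$, using $1=\sum_{B\le A}[A:B]^{-1}\mathrm{tr}^A_B(1_{\mathrm{Sur}(\mathbb{Z}_p^n,B)})$ in $L\otimes E_n^0(BA)$. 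This reduces $\int_{BA}(1_{\mathrm{Sur}(\mathbb{Z}_p^n,A)})=|\mathrm{Sur}(\mathbb{Z}_p^n,A)|/|A|$ to the statement $|BB|_{E_n}=|B|^{n-1}$ for abelian $p$-groups $B$. Summing then yields $\int_{BG}(1)=\sum_{(A)}|\mathrm{Sur}(\mathbb{Z}_p^n,A)|/|N_G(A)|=|G_{n,p}|/|G|$.

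The abelian case $|BB|_{E_n}=|B|^{n-1}$ is where the genuinely chromatic content sits, because it depends on the $K(n)$-local norm (it is available e.g.\ from \cite{BMCSY}). Character theory alone cannot produce it. Without that input, or the citation to Ben-Moshe, the crucial step of your argument remains unsupported.
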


It is a consequence of the Frobenius theorem (see e.g., \cite[Lemma 3.2]{P24}) that if $n \geq 1$, then $\vert BG \vert_{E_{n}}$ is $p$-integral, i.e., it is an element of $\Z_{(p)} \subset \Z_p \subset \pi_0(E_n)$. 

\begin{cor} \label{cor: Quillen} Suppose $G$ admits a finite $\underline{E}G$. Then for any finite proper $G$-CW complex $X$, we have 
\[\chi^{E_n}_{G}(X)=\sum_{G\sigma} (-1)^{d_\sigma} \frac{\vert (H_\sigma)_{n,p} \vert}{\vert H_\sigma \vert} ,\]
where the sum runs over all $G$-orbits of cells $\sigma$ of $X$, the number $d_\sigma$ is the dimension of $\sigma$, and $H_{\sigma}$ is the stabilizer of $\sigma$. In particular, $\chi^{E_n}_{G}(X)$ is a $p$-local integer whenever $n \geq 1$. 
\end{cor}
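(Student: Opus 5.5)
The formula follows by combining Proposition \ref{prop: Quillen formula} with Proposition \ref{prop: card for finite}. First, since $E_n$ is $K(n)$-local (and an $\mathbb{E}_1$-algebra, so the relevant composites are $E_n$-module maps), Proposition \ref{prop: Quillen formula} applies to $E = E_n$ with the trivial $G$-action. It expresses $\chi^{E_n}_G(X)$, viewed as an element of $\pi_0 E_n$, as
\[
\sum_{G\sigma} (-1)^{d_\sigma} |BH_\sigma|_{E_n}.
\]
Here the additivity of $\chi^{E_n}_G$ is inherited from the universal additive invariant $[X] \in A(G)$ via Lemma \ref{lem: dulaising maps}. The value on orbits $G/H$ was identified there with the chromatic cardinality of the finite stabilizer.

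Next, each stabilizer $H_\sigma$ is finite because $X$ is a proper $G$-CW complex. Proposition \ref{prop: card for finite} therefore gives
\[
|BH_\sigma|_{E_n} = \frac{|(H_\sigma)_{n,p}|}{|H_\sigma|}.
\]
Substituting this into the sum yields the displayed formula. One should note that both sides are read in $\pi_0 E_n$. The right-hand side a priori lies in $\mathbb{Q}$, so for $n \geq 1$ one needs each term to lie in the image of $\mathbb{Z}_{(p)} \subset \mathbb{Z}_p \subset \pi_0 E_n$. For $n=0$ one simply works in $\pi_0 E_0 = \mathbb{Q}$.

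For the integrality statement, fix $n \geq 1$. I would invoke the Frobenius theorem, in the form recalled above from \cite[Lemma 3.2]{P24}: for a finite group $H$, the number of $n$-tuples of commuting $p$-power-order elements is divisible by the $p$-part of $|H|$. Concretely, $H$ acts on $H_{n,p}$ by conjugation. The orbit of $(g_1,\dots,g_n)$ has size $[H : C\langle g_1,\dots,g_n\rangle]$, so
\[
\frac{|H_{n,p}|}{|H|} = \sum_{\text{orbits}} \frac{1}{|C\langle g_1,\dots,g_n\rangle|}.
\]
Showing $p$-integrality of this sum is exactly the Frobenius-type divisibility, and I would cite it rather than reprove it.

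Granting this, each summand $\pm |(H_\sigma)_{n,p}|/|H_\sigma|$ lies in $\mathbb{Z}_{(p)}$. The sum is finite, since $X$ has finitely many orbits of cells, so $\chi^{E_n}_G(X) \in \mathbb{Z}_{(p)}$.

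The only step I expect to need care is this $p$-integrality for finite groups. The rest is a direct substitution of earlier results.
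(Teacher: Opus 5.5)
Your proposal is correct and follows essentially the same route as the paper, which gives no separate proof: the corollary is obtained by substituting Proposition~\ref{prop: card for finite} into Proposition~\ref{prop: Quillen formula}. The $p$-integrality for $n \geq 1$ then follows, as you argue, from the Frobenius-theorem remark citing \cite[Lemma 3.2]{P24} together with the finiteness of the set of orbits of cells.
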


\begin{rem} \label{remark: n=0 special case} It follows from Corollary \ref{cor: Quillen} and \cite[Proposition IX.7.3]{Brown} that at height $n=0$, the invariant $\chi^{E_n}_{G}(X)$ recovers the classical equivariant orbifold Euler characteristic of Wall and Serre \cite{Wall, Serre1}. For $G$ a virtually torsion-free group and $X$ a finite proper $G$-CW complex, this is classically defined by 
\[ \chi_{G}(X)=\frac{\chi^{\Q}(X/\Gamma)}{[G:\Gamma]},\]
where $\chi^{\Q}$ is the rational cohomology Euler characteristic and $\Gamma \leq G$ is a finite index torsion-free subgroup. More generally, $\chi_{G}(X)$ is an additive invariant valued in $\Q$, which can be computed using Quillen's formula (see \cite[Proposition IX.7.3]{Brown82})
\[\chi_G(X)=\sum_{G\sigma} (-1)^{d_\sigma} \frac{1}{\vert H_\sigma \vert}.\]
In particular, if $G$ admits a finite $G$-CW complex model for $\underline{E}G$, we recover the orbifold Euler characteristic of the group $G$
\[\chi_{G}(\underline{E}G)=\frac{\chi^{\Q}(B\Gamma)}{[G:\Gamma]}=\chi_{\orb}(G)  \in \Q.\]
\end{rem} 

The formula in Corollary \ref{cor: Quillen} is difficult to use in general to compute the invariant $\chi_{\orb}^{E_n}(G)=\chi_G^{E_n}(\underline{E}G)$, since the cell structure of $\underline{E}G$ could be hard to make explicit. The following can be thought of as a generalization of \cite[Theorem 5.8]{BenMosche} and is more useful in practice to compute the $E_n$-theory orbifold Euler characteristic:

\begin{thm} \label{thm: centraliser} Let $G$ be a group with finite $\underline{E}G$. Then for any finite proper $G$-CW complex $X$ and $n \geq 1$, we have
\[\chi_G^{E_n}(X)=\sum_{[g_1,\dots,g_n] \in G_{n,p}/G} \chi_{C\langle g_1, \dots, g_n \rangle}(X^{\langle g_1, \dots, g_n \rangle}).\]
In particular,
\[\chi_{\orb}^{E_n}(G)=\sum_{[g_1,\dots,g_n] \in G_{n,p}/G} \chi_{\orb}(C\langle g_1, \dots, g_n \rangle).\]
\end{thm}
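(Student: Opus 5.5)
Both sides of the identity are additive invariants of finite proper $G$-CW complexes $X$, so I will reduce to orbits $X = G/H$ and then count.

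\textbf{Additivity of the right-hand side.} The left-hand side $\chi_G^{E_n}$ is additive by Proposition \ref{prop: Quillen formula}. For the right-hand side, fix an abelian $p$-subgroup $A = \langle g_1,\dots,g_n\rangle$ and write $C = C\langle g_1,\dots,g_n\rangle$.
\begin{itemize}
\item The functor $X \mapsto X^{A}$ takes finite proper $G$-CW complexes to proper $C$-CW complexes and preserves pushouts along cofibrations and the empty set.
\item A $G$-orbit $G/H$ contributes $(G/H)^A = \{gH : g^{-1}Ag \le H\}$. This set is a finite union of $C$-orbits. Indeed, $gH \mapsto g^{-1}Ag$ identifies $C\backslash (G/H)^A$ with the set of $H$-conjugacy classes of subgroups $A' \le H$ together with an identification $A \cong A'$ induced by conjugation. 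Since $H$ is finite, there are only finitely many of these.
\item The stabilizer of $gH$ in $C$ is $C \cap gHg^{-1}$, which is finite.
\end{itemize}
So $X^A$ is a finite proper $C$-CW complex. Then $\chi_C$ (the rational Quillen-formula invariant of Remark \ref{remark: n=0 special case}) is additive, and the finite sum over $G_{n,p}/G$ is additive as well. Only finitely many classes contribute, because an $n$-tuple with $X^A\ne\emptyset$ is conjugate into some cell stabilizer. The term is also independent of the representative: conjugating the tuple by $k$ transports $(C, X^A)$ along $x \mapsto kx$. Hence both sides define homomorphisms $A(G)\to \Q$, and it suffices to compare them on the basis elements $[G/H]$.

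\textbf{Reduction to a count.} By Proposition \ref{prop: Quillen formula} and Proposition \ref{prop: card for finite}, the left-hand side at $G/H$ is $|H_{n,p}|/|H|$. For the right-hand side, group the pairs (class $[\underline g]\in G_{n,p}/G$, $C$-orbit in $(G/H)^{A}$) together. Such a pair amounts to a $G$-orbit of pairs $(\underline g, xH)$ with $\underline g$ conjugate into $xHx^{-1}$. Translating $x$ to $1$ turns this into an $H$-conjugacy class of tuples $\underline h \in H_{n,p}$. The stabilizer of the $C$-orbit point $xH$ is $C\cap xHx^{-1}$, which under this identification becomes the centralizer $C_H(\underline h)$. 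So the right-hand side equals
\[\sum_{[\underline h]\in H_{n,p}/H} \frac{1}{|C_H(\underline h)|}=\frac{|H_{n,p}|}{|H|}\]
by orbit–stabilizer. This matches the left-hand side, which proves the first formula.

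\textbf{The special case and the main obstacle.} The second formula is the case $X=\underline{E}G$. Here $\underline{E}G^{A}$ is a model for $\underline{E}C$: its fixed points for a finite $K\le C$ are $\underline{E}G^{AK}$, and $AK$ is finite since $A$ is central in $C$. It is also finite by the first step, so $\chi_C(\underline{E}G^A)=\chi_{\orb}(C)$. I expect the main obstacle to be the bookkeeping in the bijection of the second step. Specifically, one must check that it is a genuine bijection between $G$-orbits of pairs and $H$-classes in $H_{n,p}$, and that the stabilizers match exactly, including when the same $G$-class of tuples meets $H$ in several $H$-classes.
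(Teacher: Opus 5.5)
Your proof is correct and follows the paper's argument. Both reduce to orbits $G/H$ by additivity, match $\coprod_{[\underline g]} C\backslash (G/H)^{A}$ with $H_{n,p}/H$ via $xH \mapsto x^{-1}\underline g x$, identify the stabilizer $C\cap xHx^{-1}$ with $C_H(x^{-1}\underline g x)$, and conclude by the class equation. The extra details you supply (finiteness and additivity of the right-hand side, and $\underline{E}G^{A}$ being a finite model for $\underline{E}C$) are points the paper leaves implicit. One wording slip: the condition on a pair $(\underline g, xH)$ should read $A \le xHx^{-1}$ (containment), not that $\underline g$ is merely conjugate into $xHx^{-1}$.
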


\begin{proof} Since both sides of the equation are additive invariants, it suffices to prove the result for $X=G/H$, where $H$ is a finite subgroup. First we observe that for any subgroup $H \leq G$, there is a bijection
\[\coprod_{ ( g_1,\dots,g_n ) \in G_{n,p}^{\mathrm{rep}}} (G/H)^{ \langle g_1,\dots,g_n \rangle}/C\langle g_1,\dots,g_n \rangle \to H_{n,p}/H,\]
sending $[xH] \in (G/H)^{\langle g_1,\dots,g_n \rangle }/C\langle g_1,\dots,g_n \rangle $ to $[x^{-1}g_1x, \dots x^{-1}g_nx]$, where $G_{n,p}^{\mathrm{rep}}$ is a fixed set of representatives. Next, we have (below $C=C \langle g_1,\dots, g_n \rangle $ for short)
\begin{align*}
& \sum_{[g_1,\dots,g_n] \in G_{n,p}/G} \chi_{C\langle g_1,\dots,g_n \rangle }((G/H)^{\langle g_1,\dots,g_n \rangle }) \\ & = \sum_{[g_1,\dots,g_n] \in G_{n,p}/G} \;\;\sum_{[x] \in (G/H)^{ \langle g_1,\dots,g_n \rangle}/C } \frac{1}{\vert C\cap xHx^{-1} \vert} \\ & = \sum_{[h_1,\dots,h_n] \in H_{n,p}/H} \frac{1}{\vert C_H \langle h_1,\dots,h_n \rangle \vert} =  \frac{\vert H_{n,p}\vert} {\vert H \vert}=\chi^{E_n}_G(G/H),\end{align*} 
where we used that $C\cap xHx^{-1}$ is the stabiliser of $[x] \in (G/H)^{ \langle g_1,\dots,g_n \rangle}$ with respect to the action of $C=C \langle g_1,\dots, g_n \rangle $. Here $C_H \langle h_1,\dots,h_n \rangle$ is the centralizer in $H$ and the penultimate identity is the class formula. The last identity is a consequence of Corollary \ref{cor: Quillen}. 
\end{proof}

\begin{rem} \label{rem: congruence} An interesting consequence of Theorem \ref{thm: centraliser} is that for $n \geq 1$, we obtain a congruence
\[\sum_{[g_1,\dots,g_n] \in G_{n,p}/G} \chi_{\orb}(C\langle g_1, \dots, g_n \rangle) \equiv 0 \mod \Z_{(p)}.\]
This is because $\chi_{\orb}^{E_n}(G) \in \Z_{(p)}$. The numbers $\chi_{\orb}(C\langle g_1, \dots, g_n \rangle)$ are in general only rational numbers and this congruence can be used to deduce explicit results in number theory. For an elementary proof of this congruence and applications in number theory see \cite{P24}. 
    
\end{rem}

\begin{rem}

One can examine $\chi^{E}_{G}(X)$ for a general $K(n)$-local $\mathbb{E}_1$-algebra $E$. The following is a consequence of the naturality of the norm and Definition \ref{def: E-theory euler char}. Let $G$ be a group with finite $\underline{E}G$ and $X$ a finite proper $G$-CW complex. Suppose further that $f \colon E \to E'$ is a morphism of $K(n)$-local $\mathbb{E}_1$-algebras.  Then the induced map $f_* \colon \pi_0E \to \pi_0E'$ sends $\chi^E_G(X)$ to $\chi^{E'}_G(X)$. As a consequence we see that in the $K(n)$-local context, $\chi^{L_{K(n)}\mathbb{S}}_G(X)$ plays the role of the universal $G$-equivariant Euler characteristic in the following sense: for any $K(n)$-local $\mathbb{E}_1$-algebra $E$, the element $\chi^E_G(X)$ is the image of $\chi^{L_{K(n)}\mathbb{S}}_G(X)$ under the unit $L_{K(n)}\mathbb{S} \to E$. \end{rem}

We are particularly interested in the case $E=E_n$. Let us temporarily write $\mathbf{1} = L_{K(n)}\mathbb{S}$. The image of the unit map 
\[\pi_0\mathbf{1} \to \pi_0E_n\]
is equal to $\mathbb{Z}_p \subset \pi_0E_n$
and the kernel is equal to the nilradical (see e.g., \cite[Proposition 2.2.6]{CSYambheight}). This implies that $\chi^{\mathbf{1} }_G(X)$ coincides with $\chi^{E_n}_G(X) \in \mathbb{Z}_{(p)} \subset \pi_0\mathbf{1}$ up to nilpotents. (For $G$ finite this is already observed in \cite{Yan, BMCSY}.) In general, when $n \geq 2$, it is difficult to compute $\chi^{\mathbf{1}}_G(X)$ or $\chi^{\mathbf{1} }_{\orb}(G)$ even when $G$ is finite. This is because $\pi_0 \mathbf{1}$ is difficult to describe. At height $n=1$, one can say more. Recall that (see \cite{ravenellocalization})
\[\pi_0L_{K(1)}\mathbb{S} \cong \begin{cases}  \mathbb{Z}_p, & p \;\text{odd}, \\ \Z_2[\varepsilon]/(2\varepsilon, \varepsilon^2), & p=2.  \end{cases}\]
The following is an immediate consequence of the above observations about the unit $\pi_0L_{K(n)}\mathbb{S} \to \pi_0E_n$ \cite[Proposition 2.2.6]{CSYambheight}:
\begin{prop} \label{prop: general K(1)-local card} Let $G$ be a discrete group with finite $\underline{E}G$ and $X$ a finite proper $G$-CW complex.  Then for any odd prime $p$, one has  $\chi^{L_{K(1)}\mathbb{S}}_G(X)=\chi^{E_1}_G(X) \in \mathbb{Z}_{(p)} \subset \Z_p$. For $p=2$, we have
\[\chi^{L_{K(1)}\mathbb{S}}_G(X)=\chi^{E_1}_G(X)+\alpha_G(X)\varepsilon,\]
where $\alpha_G(X) \in \mathbb{Z}/2$ is an additive invariant. 
\end{prop}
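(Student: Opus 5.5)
The plan is to use the naturality remark preceding the statement. The unit $u\colon L_{K(1)}\mathbb{S} \to E_1$ is a map of $K(1)$-local $\mathbb{E}_1$-algebras, so $u_*(\chi^{L_{K(1)}\mathbb{S}}_G(X)) = \chi^{E_1}_G(X)$. By \cite[Proposition 2.2.6]{CSYambheight}, the map $u_*\colon \pi_0 L_{K(1)}\mathbb{S}\to \pi_0 E_1$ has image $\mathbb{Z}_p$ and kernel the nilradical.

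For $p$ odd we have $\pi_0L_{K(1)}\mathbb{S}\cong\mathbb{Z}_p$, which is reduced. So $u_*$ is an isomorphism onto $\mathbb{Z}_p\subset\pi_0E_1$. Hence $\chi^{L_{K(1)}\mathbb{S}}_G(X)$ is identified with $\chi^{E_1}_G(X)$. By Corollary \ref{cor: Quillen}, this element lies in $\mathbb{Z}_{(p)}$.

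For $p=2$ we have $\pi_0 L_{K(1)}\mathbb{S}\cong \mathbb{Z}_2[\varepsilon]/(2\varepsilon,\varepsilon^2)$. The nilradical is $\mathbb{Z}/2\cdot\varepsilon$, and $u_*$ sends $a+b\varepsilon\mapsto a$. Thus there is a unique $\alpha_G(X)\in\mathbb{Z}/2$ with
\[\chi^{L_{K(1)}\mathbb{S}}_G(X)=\chi^{E_1}_G(X)+\alpha_G(X)\varepsilon,\]
where $\chi^{E_1}_G(X)\in\mathbb{Z}_{(2)}\subset\mathbb{Z}_2$ is viewed inside $\pi_0L_{K(1)}\mathbb{S}$ via the canonical splitting $\mathbb{Z}_2\subset \mathbb{Z}_2[\varepsilon]/(2\varepsilon,\varepsilon^2)$.

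Additivity of $\alpha_G$ follows because both $\chi^{L_{K(1)}\mathbb{S}}_G$ and $\chi^{E_1}_G$ are additive invariants. This is the first step in the proof of Proposition \ref{prop: Quillen formula}, which works for any $K(n)$-local $E$. Their difference $\alpha_G(X)\varepsilon$ is therefore additive, and $\varepsilon$ generates a copy of $\mathbb{Z}/2$.

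The only mild obstacle is the splitting in the $p=2$ case. One must check that $u_*$ restricted to $\mathbb{Z}_2\subset\pi_0L_{K(1)}\mathbb{S}$ is the identity onto $\mathbb{Z}_2\subset\pi_0E_1$. This holds because $u_*$ is a ring map, so it is $\mathbb{Z}_2$-linear and unital, and both copies of $\mathbb{Z}_2$ are the $2$-adic closures of the image of $\mathbb{Z}$. With this, the decomposition above is well defined.
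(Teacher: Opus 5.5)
Your proof is correct and takes the same route as the paper, which calls the proposition an immediate consequence of the naturality of $\chi^E_G$ along the unit $L_{K(1)}\mathbb{S}\to E_1$, combined with \cite[Proposition 2.2.6]{CSYambheight} and the computation of $\pi_0L_{K(1)}\mathbb{S}$; you have simply written out the details, including the additivity of $\alpha_G$. One small tightening: a ring map is not a priori $\mathbb{Z}_2$-linear or continuous, but you do not need that, because any unital ring map $\phi\colon\mathbb{Z}_p\to A$ into a $p$-adically separated $\mathbb{Z}_p$-algebra sends $x$ to $x\cdot 1$. To see this, write $x=m+p^ky$ with $m\in\mathbb{Z}$. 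The same argument shows that the splitting $\mathbb{Z}_2\subset\pi_0L_{K(1)}\mathbb{S}$ is canonical.
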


When $X=\underline{E}G$, we denote $\alpha_G(X)$ just by $\alpha(G)$. We do not have a concrete formula for the invariant $\alpha_G(X)$. The main result of \cite{CYcard} (Theorem A) shows that 
\[\vert BC_2 \vert_{L_{K(1)}\Ss}=1+\varepsilon \in \pi_0 L_{K(1)}\Ss\]
and hence $\alpha(C_2)=1$. More generally, again using \cite[Theorem A]{CYcard}, it follows that if $G$ is a finite $2$-group, then 
\[\vert BG \vert_{L_{K(1)}\Ss}=1+(\log_2 {\vert G \vert} )\varepsilon \in \pi_0 L_{K(1)}\Ss.\]
This implies for example that $\alpha(C_4)=0$. Hence both possibilities for the invariant $\alpha$ can occur. Even more generally, for any finite $G$, the paper \cite{Li} gives a formula for $\vert BG \vert_{L_{K(1)}\Ss}$ at the prime $p=2$ in terms of the $2$-Sylow subgroups of $G$ (\cite[Corollary 5.2.9]{Li}):
\[\vert BG \vert_{L_{K(1)}\Ss}=\sum_{\{Q_{i_1},\dots,Q_{i_k}\}} (-1)^{k-1}\frac{\vert Q_{i_1} \cap \dots \cap Q_{i_k}  \vert}{\vert G \vert} (1+(\log_2\vert Q_{i_1} \cap \dots \cap Q_{i_k}  \vert)\varepsilon),\]
where $\{Q_1, \cdots Q_l\}$ is the set of 2-Sylow subgroups of $G$ and the sum runs over all its nonempty subsets. 
As a consequence it follows that for any finite $G$, one has
\[\alpha(G)=\sum_{\begin{aligned}\{Q_{i_1},\dots,Q_{i_k}\} &\\ \log_2\vert Q_{i_1} \cap \dots \cap Q_{i_k}  \vert\; &\text{is odd}\end{aligned}} (-1)^{k-1}\frac{\vert Q_{i_1} \cap \dots \cap Q_{i_k}  \vert }{\vert G \vert}.\]
Using \cite[Remark 4.3.19 and Theorem 5.2.5]{Li}, we see that the right-hand side of this formula in fact lives in $\Z_{(2)}$, thus  giving a non-trivial congruence in general. In particular, the identity makes sense modulo $2$. 
We do not have a general formula for $\alpha(G)$ for a group $G$ with a finite $\underline{E}G$, although we are able to calculate $\alpha(G)$ in some concrete non-trivial cases: in Example \ref{ex: Coxeter} below we compute $\alpha(W(L))$ and hence $\chi^{L_{K(1)}\mathbb{S}}_{\orb}(W(L))$ for any right-angled Coxeter group $W(L)$.

\section{Examples} \label{sec: examples}

In this section we apply our results to certain arithmetic groups and right-angled Coxeter groups. Any arithmetic group admits a finite $G$-CW complex model for $\underline{E}G$ by \cite{Ji2007}.  Also, right-angled Coxeter groups admit a finite $G$-CW complex model for $\underline{E}G$ by \cite{Dav}. Hence the results of the previous section apply. 


We first fix the following notation. Let $H$ be a finite abelian subgroup of $G$. Write
\[\Gen_n(H)=\{ (h_1, \dots, h_n) \in H^{n} \; \vert \; \langle h_1, \dots, h_n\rangle=H\}\]
for the set of generating $n$-tuples of $H$. Let $N(H)$ denote the normalizer of $H$ and $C(H)$ the centralizer of $H$ in $G$. The Weyl group $W(H)=N(H)/C(H)$ is finite and acts freely on $\Gen_n(H)$ by conjugation in each coordinate. 

The following is a simple combinatorial observation (see e.g., the proof of \cite[Lemma 4.1]{P24}):

\begin{lem} \label{lem: tuples bijection} Let $G$ be a group. Then there is a natural bijection 
\[\coprod_{(H)} \Gen_n(H)/W(H) \xrightarrow{\cong} G_{n,p}/G,\]
where the disjoint union runs over conjugacy classes of finite abelian $p$-subgroups. 
\end{lem}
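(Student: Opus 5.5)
The plan is to define the map by sending a generating tuple to its conjugacy class, and then check well-definedness, surjectivity and injectivity directly.

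\emph{Definition of the map.} For each conjugacy class $(H)$ of finite abelian $p$-subgroups, fix a representative $H$. A tuple $(h_1,\dots,h_n) \in \Gen_n(H)$ consists of commuting elements of $p$-power order, so it lies in $G_{n,p}$. We send it to its class in $G_{n,p}/G$.

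\emph{Well-definedness.} If two tuples differ by an element $w\in W(H)$, choose a lift $g \in N(H)$ of $w$. The two tuples are then conjugate by $g$, so they define the same class in $G_{n,p}/G$.

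\emph{Surjectivity.} Take $(g_1,\dots,g_n)\in G_{n,p}$. The subgroup $A=\langle g_1,\dots,g_n\rangle$ is abelian and finitely generated by commuting elements of $p$-power order. Hence it is a finite abelian $p$-group. Choose $x$ with $xAx^{-1}=H$ equal to the chosen representative. Then $(xg_1x^{-1},\dots,xg_nx^{-1})\in \Gen_n(H)$ maps to the class of $(g_1,\dots,g_n)$.

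\emph{Injectivity.} First, tuples in $\Gen_n(H)$ and $\Gen_n(H')$ for non-conjugate $H,H'$ cannot be conjugate in $G$: conjugating a tuple conjugates the subgroup it generates. Second, suppose $(h_i)$ and $(h_i')$ both lie in $\Gen_n(H)$ and $g h_i g^{-1}=h_i'$ for all $i$. Then $gHg^{-1}=\langle h_i'\rangle=H$, so $g\in N(H)$. The tuples therefore differ by the image of $g$ in $W(H)$.

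\emph{Freeness of the $W(H)$-action (as remarked in the paper).} An element of $N(H)$ fixing a generating tuple centralizes all of $H$, so it lies in $C(H)$. Freeness is not needed for the bijection itself.

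\emph{Naturality.} This is clear, since the map is just conjugacy classes of tuples.

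\emph{Main point of care.} The only step needing attention is that $\langle g_1,\dots,g_n\rangle$ is finite. This holds because it is abelian and generated by finitely many torsion elements of $p$-power order, so it is a quotient of $\prod_i \Z/p^{k_i}$.
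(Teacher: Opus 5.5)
Your proposal is correct and follows the argument the paper intends. The paper gives no proof of its own and only cites the proof of Lemma 4.1 in \cite{P24}: sort the tuples in $G_{n,p}$ by the finite abelian $p$-subgroup they generate, and note that two generating tuples of $H$ are $G$-conjugate exactly when some element of $N(H)$ conjugates one into the other.
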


As a consequence we obtain: 

\begin{lem} \label{lem: number of tuples} Suppose $G$ is a group such that any abelian $p$-subgroup of $G$ is either trivial or isomorphic to $\Z/p$. Suppose further that there are only finitely many conjugacy classes of elements of order $p$ in $G$. Then
\[\vert G_{n,p}/G \vert = 1+ \frac{p^n-1}{p-1} (\vert G_{1,p}/G \vert-1).\]
\end{lem}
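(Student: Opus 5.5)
We will apply Lemma \ref{lem: tuples bijection}, which identifies $G_{n,p}/G$ with $\coprod_{(H)} \Gen_n(H)/W(H)$, the union over conjugacy classes of finite abelian $p$-subgroups $H$. By hypothesis each such $H$ is trivial or cyclic of order $p$, so the count splits into the contribution of $H=1$ and of the conjugacy classes of subgroups of order $p$.

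The trivial subgroup has $\Gen_n(1)=\{(1,\dots,1)\}$, contributing exactly $1$. For $H\cong \Z/p$, a tuple $(h_1,\dots,h_n)\in H^n$ generates $H$ iff it is not the zero tuple, so $|\Gen_n(H)|=p^n-1$. Since $W(H)$ acts freely on $\Gen_n(H)$, we get $|\Gen_n(H)/W(H)|=(p^n-1)/|W(H)|$. Hence
\[
|G_{n,p}/G| = 1+\sum_{(H),\,|H|=p}\frac{p^n-1}{|W(H)|}.
\]

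To eliminate the unknown Weyl groups, apply the same formula with $n=1$: $|G_{1,p}/G|-1=\sum_{(H)}\frac{p-1}{|W(H)|}$, the sum over the same classes. Finiteness of this sum is guaranteed because conjugacy classes of order-$p$ subgroups are images of the finitely many conjugacy classes of elements of order $p$. Dividing, $\sum_{(H)} 1/|W(H)| = (|G_{1,p}/G|-1)/(p-1)$, and substituting gives the claimed formula $1+\frac{p^n-1}{p-1}(|G_{1,p}/G|-1)$.

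There is no serious obstacle; the only point needing care is confirming that the free action of $W(H)$ on $\Gen_n(H)$ (stated before Lemma \ref{lem: tuples bijection}) holds uniformly in $n$ with the same group $W(H)=N(H)/C(H)$, which is clear since it acts through $\mathrm{Aut}(H)$ acting coordinatewise on generating tuples, and an automorphism fixing a generating tuple is the identity.
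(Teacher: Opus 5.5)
Your proof is correct and is essentially the paper's argument: apply Lemma \ref{lem: tuples bijection}, use $|\Gen_n(\Z/p)|=p^n-1$ and the free $W(H)$-action to get $1+\sum_{(H)}(p^n-1)/|W(H)|$, then eliminate the Weyl groups by comparing with the case $n=1$. You also spell out the finiteness of the sum over $(H)$, which is where the hypothesis on conjugacy classes of elements of order $p$ is used; the paper leaves this implicit.
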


\begin{proof} Below $(H)$ runs over the conjugacy classes of non-trivial abelian $p$-subgroups (all of which are isomorphic to $\Z/p$ by our assumption). Lemma \ref{lem: tuples bijection} yields the identity

\begin{align*}\vert G_{n,p}/G \vert=1+\sum_{(H)} \frac{\vert H \vert^n-1}{\vert N(H)/C(H) \vert}= 1+\sum_{(H)} \frac{\vert H \vert-1}{\vert N(H)/C(H) \vert} \cdot \frac{\vert H \vert^n-1}{\vert H \vert-1}= \\1+ \frac{p^n-1}{p-1}\sum_{(H)}\frac{\vert H \vert-1}{\vert N(H)/C(H) \vert}= 1+ \frac{p^n-1}{p-1} (\vert G_{1,p}/G \vert-1). \end{align*}
\end{proof}

\begin{rem} \label{rem: main thm normalizer} Lemma \ref{lem: tuples bijection} allows us to reformulate the main formula in Theorem \ref{thm: centraliser} as 
\[\chi_{\orb}^{E_n}(G)=\sum_{(H)} \vert \Gen_n(H) \vert \chi_{\orb}(N(H)),\]
where the sum runs over the conjugacy classes of all finite abelian $p$-subgroups. Note that only $p$-subgroups of rank at most $n$ can contribute to this sum. \end{rem}

\begin{ex} \label{ex: symplectic} Let $p$ be an odd prime and consider the symplectic group $\Sp_{p-1}(\Z)$. It follows from \cite[Section 2]{Ash89} and \cite{LevittNicolas98} that any non-trivial $p$-subgroup of $\Sp_{p-1}(\Z)$ is isomorphic to $\Z/p$. The paper \cite{SjerveYang97} shows that the conjugacy classes of elements of order $p$ in $\Sp_{p-1}(\Z)$ are in bijection with the set of equivalence classes of pairs $(I, a)$, where $I$ is an integral ideal in $\Z[\zeta_p]$ and $I \cdot \overline{I}=(a)$, where $a=\overline{a}$. It follows from \cite[Theorem 3]{SjerveYang97} that the number of such equivalence classes is equal to $2^{\frac{p-1}{2}}h_p^{-}$, where 
\[h_p^{-}=\frac{\vert \Cl(\mathbb{Q}(\zeta_p) \vert }{\vert \Cl(\mathbb{Q}(\zeta_p+\zeta_p^{-1}))\vert}\] 
is the relative class number. By \cite[Section 3.2]{Busch2002} and \cite{Brown74}, we know that for any element $A$ of order $p$, the centraliser of $A$ is finite and
\[C\langle A \rangle \cong \Z/p \times \Z/2.\] 
This gives $\chi_{\orb}(C\langle A \rangle )=\frac{1}{2p}$. Additionally, we also know from \cite{Hard} that
\[\chi_{\orb}(\Sp_{p-1}(\Z))=\zeta(-1) \zeta(-3) \cdots \zeta(2-p),\]
where $\zeta$ is the Riemann zeta function. Now using Lemma \ref{lem: number of tuples} and Theorem \ref{thm: centraliser}, we obtain
\begin{align*} \chi_{\orb}^{E_n}(\Sp_{p-1}(\Z)) & =\chi_{\orb}(\Sp_{p-1}(\Z))+ \frac{p^n-1}{p-1}\cdot 2^{\frac{p-1}{2}}h_p^{-} \cdot \chi_{\orb}(\Z/p \times \Z/2) \\ & =\zeta(-1) \zeta(-3) \cdots \zeta(2-p)+ \frac{p^n-1}{p-1}\cdot 2^{\frac{p-1}{2}}h_p^{-} \cdot \frac{1}{2p}\\ & =\zeta(-1) \zeta(-3) \cdots \zeta(2-p)+\frac{p^n-1}{p(p-1)} \cdot 2^{\frac{p-3}{2}}h_p^{-}.\end{align*}
The identity 
\[\chi_{\orb}^{E_n}(\Sp_{p-1}(\Z))=\zeta(-1) \zeta(-3) \cdots \zeta(2-p)+\frac{p^n-1}{p(p-1)} \cdot 2^{\frac{p-3}{2}}h_p^{-}\]
holds in $\Z_{(p)} \subset \Z_{p} \subset \pi_0E_n$ showing that the right-hand side of this equation is $p$-integral. This is a well-known congruence used in number theory to prove Kummer's regularity \cite{Kumregular, Brown74, Brwon75}. 
\end{ex}

\begin{rem} \label{rem: SL2} Using $\zeta(-1)=-\frac{1}{12}$ and $h_3^{-}=1$, it follows from the above formula that at the prime $p=3$ we have: 
\[\chi_{\orb}^{E_n}(\SL_2(\Z))=\frac{3^n-1}{6}-\frac{1}{12}=\frac{2 \cdot 3^{n-1}-1}{4}.\]
This can also be directly computed using Example \ref{ex: amalgamated1}. Indeed, to demonstrate this, we will do a calculation at the prime $p=2$. Let $G=\SL_2(\Z)$ for short. Then Bass--Serre theory implies that $G \cong C_4 \ast_{C_2} C_6$ \cite{Serre}.  Since $\chi^{E_n}_{G}$ is an additive invariant, the pushout in Example \ref{ex: amalgamated1} combined with Propositions \ref{prop: Quillen formula} and \ref{prop: card for finite} yield:
\begin{align*}\chi_{\orb}^{E_n}(\SL_2(\Z)) = \chi_G^{E_n}(G/C_4)+\chi_G^{E_n}(G/C_6)-\chi_G^{E_n}(G/C_2)=\\ \chi_{\orb}^{E_n}(C_4)+\chi_{\orb}^{E_n}(C_6)-\chi_{\orb}^{E_n}(C_2)=\\ \frac{4^n}{4}+\frac{2^n}{6}-\frac{2^n}{2}=\frac{3 \cdot 4^{n-1}-2^n}{3}.  \end{align*}
We leave it for the reader to check that a similar calculation at $p=3$ recovers the first formula above. 

\end{rem}

\begin{ex} \label{ex: general linear} Let $p \geq 5$ be a prime. It follows from Harder's calculation \cite{Hard} that $\chi_{\orb}(\SL_n(\Z))=0$ whenever $n \geq 3$. We also have $2\chi_{\orb}(\GL_n(\Z))=\chi_{\orb}(\SL_n(\Z))=0.$ In this example we offer a chromatic generalization of this fact when $n=p-1$. 

It follows from \cite[Proposition 1.1]{LevittNicolas98} that the only $p$-power order elements in $\GL_{p-1}(\Z)$ are elements of order $p$.
Any elementary abelian $p$-subgroup in $\GL_{p-1}(\Z)$ has rank at most $1$,
see e.g., \cite[Section 2]{Ash89}. It follows from \cite[Lemma 4]{Ash89} that $C \langle A \rangle$ for any $A$ of order $p$ is isomorphic to the group of units in $\Z[\zeta_p]$, which by the Dirichlet unit theorem is isomorphic to
\[\Z/p \times \Z/2 \times \Z^{\frac{p-3}{2}}.\]
The orbifold Euler characteristic of the latter is equal to $0$. Now Theorem \ref{thm: centraliser} implies that 
\[\chi_{\orb}^{E_n}(\GL_{p-1}(\Z))=0.\]
Analogously, one can check that $\chi_{\orb}^{E_n}(\SL_{p-1}(\Z))=0$. We leave the details to the reader (see also \cite[Proposition 5.26]{LPS2024}). 
    
\end{ex}

\begin{ex} \label{ex: totally real} Let $K$ be a totally real field, e.g., $K=\mathbb{Q}(\sqrt{d})$,
where $d$ is any squarefree positive integer. We let $\Oc_K$ denote the ring of integers of $K$.
It follows from \cite[p.\,453]{Hard} that
\[\chi_{\orb}(\SL_2(\Oc_K))=\zeta_K(-1),\]
where $\zeta_K$ is the Dedekind zeta function of $K$. When $K=\mathbb{Q}$ this specializes to the Riemann zeta function $\zeta$ and one recovers the well-known identity from above
\[\chi_{\orb}(\SL_2(\mathbb{Z}))=-\tfrac{1}{12}=\zeta(-1).\] 
Brown \cite[Lemma p.\,251]{Brown74} shows that any finite subgroup $H \leq \SL_2(\Oc_K)$ is cyclic.
Moreover, if $H$ is non-trivial and different from the center $\{\pm \Id\}$  of $\SL_2(\Oc_K)$, then there exists a unique maximal finite subgroup containing $H$. This maximal subgroup is the centralizer $C(H)$ and, moreover, it equals the normalizer $N(H)$. It follows from \cite[Remark 5.18]{LPS2024} that for $p$ an odd prime, one has
\[ \vert \SL_2(\Oc_K)_{n,p} / \SL_2(\Oc_K)  \vert-1=\sum_{(H)} ({\vert H_{(p)} \vert}^n-1).\]
where the sum runs over the conjugacy classes of maximal finite subgroups. Hence by Theorem \ref{thm: centraliser}, we obtain
\begin{align*}\chi_{\orb}^{E_n} (\SL_2(\Oc_K))=\chi_{\orb} (\SL_2(\Oc_K))+\sum_{(H)} ({\vert H_{(p)} \vert}^n-1)\chi_{\orb} (H)\\ =\zeta_K(-1)+\sum_{(H)} \frac{{(\vert H_{(p)} \vert}^n-1)}{\vert H \vert}.\end{align*}
By \cite[Theorem, p.198]{Hirz} and \cite{Prestel}, when $K=\mathbb{Q}(\sqrt{d})$ and $d$ is a square-free positive integer, there are explicit formulas for computing the numbers and types of conjugacy classes of maximal finite subgroups of $\SL_2(\Oc_K)$ in terms of the class numbers of imaginary quadratic fields. We leave the case $p=2$ to the reader (see e.g., \cite[Remark 5.19]{LPS2024}).
    
\end{ex}

\begin{ex} \label{ex: Coxeter} In this example we will work at the prime $p=2$ and apply the main results to right-angled Coxeter groups. This example allows us to compare various orbifold Euler characteristics and cardinalities defined here and in \cite{Yan, CSYambheight}. We will also examine the difference between the orbifold Euler characteristics with respect to $E_1$ and the $K(1)$-local sphere at the prime $2$. In particular, we will compute the invariant $\alpha$ from Proposition \ref{prop: general K(1)-local card} by relying on the computations of \cite{CYcard}.

Let $L$ be a finite graph with the set of vertices $S$ and set of edges $\mathcal{E}$. The right-angled  Coxeter group associated to $L$ is the group
\[W(L)=\langle s \in S \; \vert \; s^2=1, \;\text{for any}\; s \in S\; \text{and}\; (st)^2=1 \;\text{for any} \; \{s,t\} \in \mathcal{E} \rangle. \]
For a reference for Coxeter groups see \cite{Dav}. 
For any subset $T \subset S$, the subgroup generated by $T$ is denoted by $W_T$. This is again a right-angled  Coxeter group associated to the full subgraph spanned by $T$. The subset $T$ is called \emph{spherical} if the subgroup $W_T$ is finite. This is the case if and only if the elements of $T$ commute, i.e., the full subgraph spanned by $T$ is the complete graph. In this case
\[W_T \cong \prod_{\vert T \vert} C_2.\]

We will use a finite model for $\underline{E}W(L)$, known as the \emph{Davis complex} of $W(L)$ \cite[Chapter 7]{Dav}. We denote it by $\Sigma=\Sigma(W(L))$. We do not recall its construction here but we will now give a description of an equivariant cubical cell structure on it. 

In the case when $L$ has only one vertex, we have $W(L)=C_2$. The interval $I=[-1,1]$ has a $C_2$ action using the sign. More generally, given a finite spherical $T \subset S$, the subgroup $W_T=\prod_{\vert T \vert} C_2$ acts on $I_T=\prod_{i=1}^{\vert T \vert} [-1,1]$. This action is the cartesian power of the sign action on $I=[-1,1]$. The $C_2$-space $I=[-1,1]$ is a $C_2$-CW complex with one free $0$-cell $\{-1,1\}$ and one fixed $0$-cell $\{0\}$. It also has one free $1$-cell. This induces a $W_T$-CW structure on $I_T$.

In general, one has a finite increasing filtration $\bigcup_{m \geq 0}\Sigma^m =\Sigma$, where $\Sigma^{\vert S \vert}=\Sigma$ and for any $m \geq 0$, there is a pushout of $W(L)$-spaces (we denote $W(L)$ sometimes by $W$ for short)
\[\xymatrix{\coprod_{ \text{spherical} \;T \subset S, \; \vert T \vert =m} W \times_{W_T} \partial I_T \ar[r] \ar@{^{(}->}[d] & \Sigma^{m-1} \ar@{^{(}->}[d]  \\ \coprod_{ \text{spherical} \;T \subset S, \; \vert T \vert =m} W \times_{W_T} I_T \ar[r] & \Sigma^{m}, }\]
where $\partial I_T$ is the boundary of the cube $I_T$. Now we use Corollary \ref{cor: Quillen} rather than Theorem \ref{thm: centraliser} to compute $\chi_{\orb}^{E_n}(W(L))$. This is because in this case the cellular structure of $\underline{E}W(L)$ is easier to understand than computing the centralizers. Indeed, by Corollary \ref{cor: Quillen}, we have
\[\chi_{C_2}^{E_n}(I)- \chi_{C_2}^{E_n}(\partial I)=2^{n-1}-1,\]
and using the obvious identity $\chi_{G \times H}^{E_n}(X \times Y)=\chi^{E_n}_G(X)\chi^{E_n}_H(Y)$, we obtain
\[\chi_{W_T}^{E_n}(I_T)- \chi_{W_T}^{E_n}(\partial I_T)=(2^{n-1}-1)^{\vert T \vert}.\]
Now Corollary \ref{cor: Quillen}, an inductive argument using the above pushout, and additivity yield the formula
\[\chi_{\orb}^{E_n}(W(L))=\sum_{l=0}^{\vert S \vert} s(l)(2^{n-1}-1)^l,\]
where $s(l)$ is the number of spherical subsets $T \subset S$ of size $l$. We note that when $n=0$, this formula recovers a classical result 
\[\chi_{\orb}(W(L))=\sum_{l=0}^{\vert S \vert } s(l)\cdot \frac{(-1)^l}{2^l},\]
see for example \cite[(16.8)]{Dav}.

An analogous calculation can be done using the cardinalities with respect to the $K(1)$-local sphere. Taking the relations in $\pi_0L_{K(1)}\mathbb{S} \cong \Z_2[\varepsilon]/(2\varepsilon, \varepsilon^2)$ into account, exactly the same strategy as above allows us to compute
\[\chi_{\orb}^{L_{K(1)}\Ss}(W(L))=\sum_{l=0}^{\vert S \vert} s(l)(1+\varepsilon-1)^l=1+s(1)\varepsilon=1+\vert S \vert \varepsilon .\]
Note that up to nilpotence this agrees with $\chi_{\orb}^{E_1}(W(L))=1$. In particular, we have
\[\alpha(W(L))=\vert S \vert. \] 
Depending on the graph $L$ this number of course can be $0$ or $1$. 
\end{ex}

\begin{rem} \label{rem: compare} Let $G$ be a group with finite $\underline{E}G$, $p$ a prime, and $n \geq 1$. Suppose additionally that $G$ does not contain any elements of finite order other than a power of $p$. Then by Lemma \ref{lem:finiteEGandBG}, $BG$ is a finite colimit of $\pi$-finite $p$-spaces (i.e., a $p$-small space in the sense of \cite{Yan}). Additionally, for any finite subgroup $H \leq G$, we have (see \cite[Theorem B, Lemma 4.13]{HKR})
\[\chi_{\orb}^{E_n}(H)=\vert BH \vert_{E_n}=\frac{\vert H_{n,p}\vert}{\vert H \vert }=\vert H_{n-1,p}/H\vert=\chi^{K(n-1)}(BH),\]
where $\chi^{K(l)}$ is the Morava $K$-theory Euler characteristic. Now it follows from additivity (Corollary \ref{cor: Quillen} and \cite[Proposition 2.9]{LPS2024}) that 
\[\chi_{\orb}^{E_n}(G)=\chi^{K(n-1)}(BG).\]
As observed in \cite[Remark 4.10]{LPS2024}, the latter identity also makes sense when $n=0$. In this case $\chi^{K(-1)}(BG)$ is the classical orbifold Euler characteristic $\chi_{\orb}(G)$ which coincides with Yanovski's generalized homotopy cardinality of the $p$-finite space $BG$ for $p$ an odd prime. Following the introductions of \cite{Yan} and \cite{BMCSY}, we summarize that for a discrete group $G$ with a finite $\underline{E}G$ and only $p$-power torsion, the sequences
\[\chi_{\orb}(G), \chi^{\mathbb{Q}}(BG), \chi^{K(1)}(BG), \dots, \chi^{K(n)}(BG), \dots,\]
and 
\[\chi_{\orb}^{E_0}(G), \chi^{E_1}_{\orb}(G), \chi^{E_2}_{\orb}(G), \dots, \chi^{E_n}_{\orb}(G), \dots\]
coincide. The latter coincides up to nilpotents with 
\[\chi_{\orb}^{L_{K(0)}\mathbb{S}}(G), \chi^{L_{K(1)}\mathbb{S}}_{\orb}(G), \chi^{L_{K(2)}\mathbb{S}}_{\orb}(G), \dots, \chi^{L_{K(n)}\mathbb{S}}_{\orb}(G), \dots.\]
By definition $\chi_{\orb}(G)=\chi_{\orb}^{E_0}(G)=\chi_{\orb}^{L_{K(0)}\mathbb{S}}(G)$. The relationship among the higher invariants is nicely demonstrated in the case $p=2$ and $G=W(L)$ in Example \ref{ex: Coxeter}. By \cite[Proposition 5.3]{LPS2024}, 
\[\chi^{K(n-1)}(BW(L))=\sum_{l=0}^{\vert S \vert} s(l)(2^{n-1}-1)^l,\]
which coincides with the formula above for $\chi_{\orb}^{E_n}(W(L))$. Further, the invariants $\chi^{L_{K(1)}\mathbb{S}}_{\orb}(W(L))$ and $\chi_{\orb}^{E_1}(W(L))$ agree up to nilpotents. The concrete difference in this case is given by
\[\chi^{L_{K(1)}\mathbb{S}}_{\orb}(W(L))-\chi_{\orb}^{E_1}(W(L))=\alpha(W(L))\varepsilon=\vert S \vert \varepsilon.\]
    
\end{rem}

\section{Mapping class groups and a chromatic Harer--Zagier theorem} \label{section: MCG and Harer-Zagier}

Let $\Gamma_g^s$ denote the mapping class group of a closed oriented surface of genus $g$ with $s$ marked points. In particular, $\Gamma_g^0$ is the full mapping class group which will be denoted by $\Gamma_g$. By \cite{Broughton90, Har, Mislin2010, JiWolpert2010}, the group $\Gamma_g^s$ is virtually torsion-free and admits a finite $\underline{E}G$. In this section we will compute the Morava $E$-theory orbifold Euler characteristic of $\Gamma^1_g$ at all heights and primes, as well as that of $\Gamma_{\frac{(p-1)(p-2)}{2}}$ for primes $p \geq 5$. Finally, at height $n=1$ we calculate the Morava $E$-theory orbifold Euler characteristic of $\Gamma_g$ for all $g$ and at any prime. Our formulas extend the results of Harer--Zagier \cite{HZ} about the classical orbifold Euler characteristics of mapping class groups. 

By \cite{HZ}, one has 
\[\chi_{\orb}(\Gamma_g^1)=\zeta(1-2g)=-\frac{B_{2g}}{2g}, \; g \geq 1,\]
and
\[\chi_{\orb}(\Gamma_g)=\frac{\zeta(1-2g)}{2-2g}=\frac{B_{2g}}{2g(2g-2)},\; g \geq 2,\]
where $\zeta$ is the Riemann zeta function and $B_{2g}$ is the $2g$-th Bernoulli number. More generally, one has the following formulas from \cite{HZ}:
\[\chi_{\orb}(\Gamma^s_0) = \begin{cases} 1  \;\;\;\; s \leq 3 \\ (-1)^{s-3}(s-3)! \;\;\;\; s \geq 3, 
\end{cases} \]
\[\chi_{\orb}(\Gamma^s_1) = \begin{cases} -\frac{1}{12}  \;\;\;\; s \leq 1 \\ \frac{(-1)^{s}(s-1)!}{12} \;\;\;\; s \geq 1, 
\end{cases} \]
\[\chi_{\orb}(\Gamma^s_g)=(-1)^{s+1}\frac{(2g+s-3)!}{(2g-2)!}\zeta(1-2g) \;\;\;\;  g \geq 2,\; s \geq 0. \]

We begin with the proof of Theorem \ref{thm:introHZ} from the introduction, which states that 
\[
\chi^{E_n}_{\orb}(\Gamma_g^1) = \chi_{\mathrm{orb}}(\Gamma_g^1) + \sum_{m \geq 1, h \geq 0} \frac{|\mathrm{Sur}(\mathbb{Z}^n, \mathbb{Z}/p^m)|}{p^m} \chi_{\mathrm{orb}}(\mathcal{H}_{h}^{1}(\mathbb{Z}/p^m,g)).
\]
Let us first explain the numbers $\chi_{\mathrm{orb}}(\mathcal{H}_{h}^{1}(\mathbb{Z}/p^m,g))$, denoting the orbifold Euler characteristics of appropriate Hurwitz spaces, and then prove the theorem. Afterwards we will make the calculation of the theorem more explicit by providing a closed form for the right-hand side of the above expression in Theorem \ref{thm height n HZ}.

Let $H \leq \Gamma_g^1$ be a finite cyclic subgroup. Pick a generator $\gamma$ of $H$. Then the Nielsen realization \cite{Nielsen} guarantees that $\gamma$ can be represented by an automorphism $f_{\gamma}$ of $\Sigma_g$ of order $|H|$. In particular, associated to $f_{\gamma}$ we find a branched cover $\Sigma_g \to \Sigma_g/f_{\gamma}$, which we will somewhat abusively denote by
\[
\pi_H\colon \Sigma_g \to \Sigma_g/H.
\]
We will write $\Gamma^1(\pi_H)$ for the group of isotopy classes of automorphisms of this cover. This is the group denoted $\Gamma(F/f)$ by Harer--Zagier in \cite[Section 6]{HZ}. In fact, Lemma 6.3 of loc. cit. proves that $\Gamma^1(\pi_H) \cong N(H)/H$, with $N(H)$ the normalizer of $H$ in $\Gamma_g^1$. We now \emph{define} the following expression:
\[
\chi_{\mathrm{orb}}(\mathcal{H}_{h}^{1}(\mathbb{Z}/p^m,g)) := 
\sum_{\substack{
  (H) \\
  \operatorname{gen}(\Sigma_g/H) = h,\ |H| = p^m
}} \chi_{\mathrm{orb}}(\Gamma^1(\pi_H)).
\]
Here the sum is over conjugacy classes of cyclic subgroups $H \leq \Gamma_g^1$ such that $H$ is of order $p^m$ and the surface $\Sigma_g / H$ has genus $h$. This genus is determined by the Riemann--Hurwitz formula
\[
2g-1=p^m(2h-1+s)-l_1-\dots -l_s,
\]
where the cover has branch points $b_0, \ldots, b_s$ (with $b_0$ being the image of the fixed marked point $x_0$) and $l_i$  is the number of preimages of the branch point $b_i$.

\begin{rem}
\label{rem:Hurwitzspace}
Of course the definition of $\chi_{\mathrm{orb}}(\mathcal{H}_{h}^{1}(\mathbb{Z}/p^m,g))$ is motivated by geometry and can be interpreted as the Euler characteristic of an orbifold (although this is not logically necessary for our calculations). Indeed, when we speak of the Hurwitz space $\mathcal{H}_{h}^{1}(\mathbb{Z}/p^m,g)$, we mean the orbifold parametrizing the data of a branched cover $\Sigma_g \to B$ with $B$ of genus $h$, a marked point $x_0 \in \Sigma_g$, and deck group $\mathbb{Z}/p^m$ fixing $x_0$. This space breaks up as a disjoint union of components; indeed, an equivalence class of the specified data is equivalently a conjugacy class of subgroups $H \leq \Gamma_1^g$ with $H \cong \mathbb{Z}/p^m$. Recall that the moduli space of curves $\mathcal{M}^1_g$ can be thought of as the orbifold quotient of Teichm\"uller space $\mathcal{T}^1_g$ by $\Gamma^1_g$. Similarly, one can think of the component of $\mathcal{H}_{h}^{1}(\mathbb{Z}/p^m,g)$ corresponding to a conjugacy class of subgroups $H$ as the orbifold quotient of $(\mathcal{T}^1_g)^H$ by $N(H)/H \cong \Gamma^1(\pi_H)$. In particular, the orbifold Euler characteristic of such a component agrees with that of the group $\Gamma^1(\pi_H)$.
\end{rem}

\begin{proof}[Proof of Theorem \ref{thm:introHZ}]
According to Remark \ref{rem: main thm normalizer} we have
\[
\chi^{E_n}_{\orb}(\Gamma_g^1) = \sum_{(H)} \vert \Gen_n(H) \vert \chi_{\orb}(N(H)),
\]
where the sum runs over the conjugacy classes of finite abelian $p$-subgroups of $\Gamma_g^1$. Each finite subgroup $H \leq \Gamma_g^1$ is in fact cyclic \cite{Lu2001, HZ}. Moreover, the fact that $\Gamma^1(\pi_H)$ is isomorphic to the quotient $N(H)/H$ implies
\[
\chi_{\mathrm{orb}}(N(H)) = \frac{\chi_{\mathrm{orb}}(\Gamma^1(\pi_H))}{|H|}.
\]
Therefore we find
\begin{eqnarray*}
\chi^{E_n}_{\mathrm{orb}}(\Gamma_g^1) & = & \sum_{m \geq 0} \sum_{\substack{
  (H) \\
  |H| = p^m
}} \vert \Gen_n(H) \vert \chi_{\mathrm{orb}}(N(H)) \\
& = & \sum_{m \geq 0, h \geq 0} \frac{\vert \Gen_n(\mathbb{Z}/p^m) \vert}{p^m} \chi_{\mathrm{orb}}(\mathcal{H}_{h}^{1}(\mathbb{Z}/p^m,g)),
\end{eqnarray*}
which, up to rewriting, is the formula of the theorem.
\end{proof}

We will now work towards formulas for $|\Gen_n(H)|$ and $\chi_{\mathrm{orb}}(\mathcal{H}_{h}^{1}(\mathbb{Z}/p^m,g)$, so that the right-hand side of Theorem \ref{thm:introHZ} can be expanded into completely elementary terms and efficiently calculated. The following is straightforward:

\begin{lem}
\label{lem:GennH}
For $m \geq 1$ and $H \cong \mathbb{Z}/p^m$, we have $|\Gen_n(H)| = p^{nm} - p^{n(m-1)}$.
\end{lem}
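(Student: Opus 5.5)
The plan is to count $\Gen_n(H)$ by identifying it with a set of surjective homomorphisms and then using complementary counting.

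First, an $n$-tuple $(h_1,\dots,h_n) \in H^n$ is the same datum as a homomorphism $\mathbb{Z}^n \to H$, by sending the $i$-th basis vector to $h_i$. Under this identification the tuple lies in $\Gen_n(H)$ exactly when the homomorphism is surjective. Hence $|\Gen_n(H)| = |\mathrm{Sur}(\mathbb{Z}^n, \mathbb{Z}/p^m)|$, which also matches the notation used in Theorem \ref{thm:introHZ}.

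Next, since $H \cong \mathbb{Z}/p^m$ is cyclic of prime power order, its subgroups form a chain, and $pH$ is its unique maximal proper subgroup. So a subgroup $\langle h_1, \dots, h_n\rangle$ is proper if and only if it is contained in $pH$, that is, if and only if every $h_i \in pH$.

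Finally, the non-generating tuples are therefore exactly $(pH)^n$, which has $(p^{m-1})^n$ elements. Subtracting from $|H^n| = p^{nm}$ gives $p^{nm} - p^{n(m-1)}$. The hypothesis $m \geq 1$ is needed so that $pH$ is a proper subgroup and the count is meaningful; for $m = 0$ the set $\Gen_n(H)$ instead consists of the single tuple $(e,\dots,e)$.

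The only step that needs any care is the claim that every proper subgroup lies in $pH$. It follows from the subgroup lattice of a cyclic $p$-group, or equivalently from the fact that $h$ generates $H$ if and only if its image in $H/pH \cong \mathbb{Z}/p$ is nonzero, extended to tuples by Nakayama-style reasoning. I expect no real obstacle.
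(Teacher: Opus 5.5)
Your argument is correct and is essentially the paper's: the paper partitions $H^n$ according to the subgroup an $n$-tuple generates, obtaining $p^{mn}=\sum_{j=0}^{m}|\Gen_n(\mathbb{Z}/p^j)|$, and concludes by induction on $m$, which amounts to subtracting the $m-1$ instance of this identity. That subtraction is precisely your observation that the non-generating tuples are those in $(pH)^n$; you obtain this directly from $pH$ being the unique maximal subgroup, so you avoid the induction.
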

\begin{proof}
Observe that the set of $n$-tuples of elements of $\mathbb{Z}/p^m$ splits as a disjoint union according to which subgroup an $n$-tuple generates, giving
\[
p^{mn} = \sum_{j=0}^m |\Gen_n(\mathbb{Z}/p^j)|.
\]
The claimed formula follows immediately by induction on $m$.  
\end{proof}

Our next proposition takes some more work, but the essential ideas are already contained in \cite{HZ}:

\begin{prop}
\label{prop:chiHurwitz}
For fixed $g$, $h$, and $m \geq 1$, we have that $\chi_{\mathrm{orb}}(\mathcal{H}_{h}^{1}(\mathbb{Z}/p^m,g))$ equals the sum
\[
\sum_{\tiny{\begin{aligned} s \geq 0 \hspace{2.2cm} \\ (m_1,\dots,m_s), \;\; 0 \leq m_i < m \hspace{1cm}  \\ 2g-1=p^m(2h-1+s)-p^{m_1}-\dots - p^{m_s} \end{aligned}}} p^{2mh} \frac{\chi_{\mathrm{orb}}(\Gamma^{s+1}_h)}{s!} N^1(p^m; p^{m_1}, \ldots, p^{m_s}),
\]
where $N^1(k;l_1,\dots,l_s)$ is the cardinality of the set
\[\{ (r_1, \dots r_s) \in (\Z/k)^{\times s} \;\; \vert \;\; 1+r_1 +\dots+r_s = 0, \; \gcd(r_i,k)=l_i\}.\]
\end{prop}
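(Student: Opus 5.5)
We compute the sum defining $\chi_{\mathrm{orb}}(\mathcal{H}_{h}^{1}(\mathbb{Z}/p^m,g))$ by classifying the branched covers $\pi_H$ combinatorially and then computing $\chi_{\mathrm{orb}}(\Gamma^1(\pi_H))$ for each. We follow Harer--Zagier \cite[Section 6]{HZ}.

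\emph{Step 1: parametrize conjugacy classes.} By Nielsen realization, a cyclic subgroup $H \leq \Gamma_g^1$ of order $k=p^m$ fixing $x_0$ gives a branched $\mathbb{Z}/k$-cover $\Sigma_g \to B$, with $B$ of genus $h$. It has branch points $b_0$ (the image of $x_0$, totally ramified) and $b_1,\dots,b_s$. Choosing a generator $\gamma$ of $H$, the cover is encoded by a surjection $\phi\colon \pi_1(B\setminus\{b_0,\dots,b_s\}) \to \mathbb{Z}/k$. This sends the standard generators $a_j,c_j$ to arbitrary elements $\alpha_j,\beta_j$, and the loops around $b_i$ to rotation numbers $r_i$, normalized so that $r_0=1$ (since $\gamma$ acts by a primitive rotation at $x_0$). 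The only relation is $1+r_1+\dots+r_s=0$, with each $r_i\neq 0$. Surjectivity is automatic because $r_0=1$. The point $b_i$ has $l_i=\gcd(r_i,k)=p^{m_i}$ preimages, with $0\le m_i<m$. Riemann--Hurwitz then gives exactly the constraint $2g-1=p^m(2h-1+s)-\sum p^{m_i}$. Conjugacy classes of $H$ (together with a generator) correspond to orbits of this data under the mapping class group $\Gamma_h^{s+1}$ of $B$ with the points $b_i$ marked, where $b_0$ is distinguished and the $b_1,\dots,b_s$ may be permuted.

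\emph{Step 2: orbifold Euler characteristic via an orbit-counting argument.} We use the extension $1\to H\to N(H)\to \Gamma^1(\pi_H)\to 1$. Here $\Gamma^1(\pi_H)$ is a finite-index subgroup of the stabilizer, inside $\Gamma_h^{1,s}$ (points $b_1,\dots,b_s$ unordered), of the monodromy datum, up to the choice of generator. Multiplicativity of $\chi_{\mathrm{orb}}$ in finite index, $\chi_{\mathrm{orb}}(\Gamma')=[\Gamma:\Gamma']\chi_{\mathrm{orb}}(\Gamma)$, turns the sum over orbits into a count of the whole set of data:
\[
\sum_{\text{orbits}} \chi_{\mathrm{orb}}(\mathrm{Stab}) \;=\; |\text{data}|\cdot \chi_{\mathrm{orb}}(\Gamma_h^{1,s}) \;=\; |\text{data}|\cdot \frac{\chi_{\mathrm{orb}}(\Gamma_h^{s+1})}{s!}.
\]
The data count is $k^{2h}$ choices of $(\alpha_j,\beta_j)$ times $N^1(p^m;p^{m_1},\dots,p^{m_s})$ choices of $(r_i)$. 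Summing over $s$ and over tuples $(m_i)$ gives the displayed formula. Ordered tuples $(m_1,\dots,m_s)$ are used because the unordered-points factor $1/s!$ is compensated exactly by the ordered counting in $N^1$.

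\emph{Expected obstacle.} The delicate part is the bookkeeping in Step 2. Fixing $r_0=1$ identifies the data with pairs (subgroup, generator), and the generator freedom must cancel correctly against the quotient by $H$ and the identification $\Gamma^1(\pi_H)\cong N(H)/H$ of \cite[Lemma 6.3]{HZ}. One must also check that the centralizer of $H$ acts through lifts with kernel exactly $H$, and that the stabilizer in the base mapping class group equals $\Gamma^1(\pi_H)$ rather than an extension of it. We would verify these normalizations following the proof of \cite[Theorem 6.4]{HZ}, as a consistency check on the constant $p^{2mh}$ with no extra factor of $p^m$ or $\varphi(p^m)$.
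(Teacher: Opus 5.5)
Your proposal is essentially the paper's proof. The paper likewise takes two facts from \cite[Section 6]{HZ}: the bijection between conjugacy classes of $H$ and $\widehat{\Gamma}^s_h$-orbits of branching data $\alpha$ with $\alpha(\delta_0)=1$, and the identification $\mathrm{stab}(\alpha_H)\cong N(H)/H\cong\Gamma^1(\pi_H)$. It then uses multiplicativity of $\chi_{\mathrm{orb}}$ under finite index, $\chi_{\mathrm{orb}}(\widehat{\Gamma}^s_h)=\chi_{\mathrm{orb}}(\Gamma^{s+1}_h)/s!$, and the count $p^{2mh}N^1(p^m;p^{m_1},\dots,p^{m_s})$, exactly as you do.

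The normalization you flag is harmless, though you state it loosely: elements of $\Gamma_g^1$ fix $x_0$ and preserve orientation, so the rotation at $x_0$ picks out a canonical generator of $H$. Hence $r_0=1$ leaves no generator freedom, $N(H)$ acts trivially on $H$, and the stabilizer is exactly $N(H)/H$, not a finite-index subgroup or extension of it as your Step 2 hedges.
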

\begin{proof}
Fix a Riemann surface $B$ of genus $h$, equipped with a distinguished point $b_0$ and a further subset $\{b_1, \ldots, b_s\}$ that will serve as branch points. Write $B_0$ for the punctured surface $B - \{b_0, \ldots, b_s\}$ and denote by $\widehat{\Gamma}^s_h$ the group of isotopy classes of automorphisms of $B_0$ that fix the point $b_0$ and the set $\{b_1, \ldots, b_s\}$. (In other words, elements of this group are allowed to permute the $b_i$ for $i \neq 0$.) We write $\delta_i \in H_1(B_0)$ for the homology class of a small loop around $b_i$. We now consider the associated set $\Omega_B$ of \emph{branching data}, of which the elements are homomorphisms $\alpha\colon H_1(B_0) \to \mathbb{Z}/p^m$ so that $\alpha(\delta_0) = 1$ and the branched cover of $B$ determined by $\alpha$ has genus $g$. Concretely, this means that if we set $l_i = \mathrm{gcd}(\alpha(\delta_i), p^m)$ to be the size of the preimage of $b_i$, then these numbers must satisfy the Riemann--Hurwitz equation:
\[
2g-1=p^m(2h-1+s)-l_1-\dots - l_s.
\]
The set $\Omega_B$ is evidently acted on by $\widehat{\Gamma}^s_h$. We need the following two key facts, explained in \cite[Section 6]{HZ}:
\begin{itemize}
    \item[(1)] Let $H \leq \Gamma_g^1$ be a cyclic subgroup of order $p^m$ with quotient $\Sigma_g/H$ of genus $h$ and $s+1$ branch points (including the fixed base point). Sending $H$ to the branching data $\alpha_H$ determined by the cover  $\Sigma_g \to \Sigma_g/H$ determines a bijection between the set of conjugacy classes of such $H$ and the set of orbits $\Omega_B/\widehat{\Gamma}^s_h$. 
    \item[(2)] For $H$ as above, the stabilizer of $\alpha_H$ under the action of $\widehat{\Gamma}^s_h$ is isomorphic to $N(H)/H \cong \Gamma^1(\pi_H)$.
\end{itemize}
From these two facts we find
\[
\sum_{\substack{
  (H) \\
  \operatorname{gen}(\Sigma_g/H) = h,\ |H| = p^m
}} \chi_{\mathrm{orb}}(N(H)/H) = \sum_{s \geq 0} \sum_{[\alpha] \in \Omega_B/\widehat{\Gamma}^s_h} \chi_{\mathrm{orb}}(\mathrm{stab}({\alpha})).
\]
The stabilizer $\mathrm{stab}(\alpha)$ has finite index in $\widehat{\Gamma}^s_h$, so the right-hand side can be rewritten as $\sum_{s \geq 0} |\Omega_B| \cdot \chi_{\mathrm{orb}}(\widehat{\Gamma}^s_h)$. The group $\Gamma^{s+1}_h$ has index $s!$ in $\widehat{\Gamma}^s_h$, so $\chi_{\mathrm{orb}}(\widehat{\Gamma}^s_h) = \chi_{\mathrm{orb}}(\Gamma^{s+1}_h)/s!$. Thus, it remains to count the elements of the set $\Omega_B$, for fixed $s$. To do this, note that once we fix a tuple $(m_1, \ldots, m_s)$ with $0 \leq m_i < m$ with associated `reciprocal ramification indices' $l_i := p^{m_i}$, satisfying the Riemann--Hurwitz equation as specified above, it remains to count the homomorphisms $\alpha\colon H_1(B_0) \to \mathbb{Z}/p^m$ such that $\mathrm{gcd}(\alpha(\delta_i), p^m) = l_i$. The group $H_1(B_0)$ is isomorphic to the direct sum of $\mathbb{Z}^{2h}$, which is $H_1(B)$, with the abelian group generated by the $\delta_i$ modulo the relation $\sum_i \delta_i = 0$. This shows that
\[
|\Omega_B| = \sum_{\tiny{\begin{aligned} (m_1,\dots,m_s), \;\; 0 \leq m_i < m \hspace{1cm}  \\ 2g-1=p^m(2h-1+s)-p^{m_1}-\dots - p^{m_s} \end{aligned}}} p^{2mh}N^1(p^m; p^{m_1}, \ldots, p^{m_s}),
\]
completing the proof.
\end{proof}
 
Combining Theorem \ref{thm:introHZ} with Lemma \ref{lem:GennH} and Proposition \ref{prop:chiHurwitz} we arrive at the following:
 
\begin{thm} \label{thm height n HZ} Let $g \geq 1$. Then we have
\begin{align*}&\chi_{\orb}^{E_n}(\Gamma_g^1)=\chi_{\orb}(\Gamma_g^1)+\\ &\sum_{\tiny{\begin{aligned} m \geq 1 ,\;\;h \geq 0,\;\;s \geq 0 \hspace{0.9cm} \\ (m_1,\dots,m_s), \;\; 0 \leq m_i < m \;\; \hspace{0.5cm}  \\2g-1=p^m(2h-1+s)-p^{m_1}-\dots -p^{m_s}  \end{aligned}}} p^{2mh+n(m-1)-m}(p^n-1)\frac{\chi_{\orb}(\Gamma_h^{s+1})}{s!}N^1(p^m;p^{m_1},\dots,p^{m_s}).\end{align*}
\end{thm}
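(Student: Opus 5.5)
The plan is to substitute the two preceding computations into the formula of Theorem \ref{thm:introHZ} and simplify. First I will rewrite the sum from the proof of Theorem \ref{thm:introHZ},
\[
\chi^{E_n}_{\orb}(\Gamma_g^1) = \sum_{m \geq 0,\, h\geq 0} \frac{|\Gen_n(\mathbb{Z}/p^m)|}{p^m}\, \chi_{\orb}(\mathcal{H}^1_h(\mathbb{Z}/p^m,g)),
\]
by isolating the $m=0$ term. For $m = 0$ the only subgroup is $H$ trivial, so $N(H) = \Gamma_g^1$ and $|\Gen_n(e)| = 1$. This term is therefore $\chi_{\orb}(\Gamma_g^1)$, which is the first summand of the claimed formula. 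Equivalently, $|\mathrm{Sur}(\mathbb{Z}^n,\mathbb{Z}/p^m)| = |\Gen_n(\mathbb{Z}/p^m)|$, so the sum in Theorem \ref{thm:introHZ} runs over $m\geq 1$ exactly as stated.

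Next, for $m \geq 1$, I will apply Lemma \ref{lem:GennH} and rewrite
\[
\frac{|\Gen_n(\mathbb{Z}/p^m)|}{p^m} = \frac{p^{nm}-p^{n(m-1)}}{p^m} = p^{n(m-1)-m}(p^n-1).
\]
I then plug in Proposition \ref{prop:chiHurwitz} for $\chi_{\orb}(\mathcal{H}^1_h(\mathbb{Z}/p^m,g))$, which contributes the factor $p^{2mh}\chi_{\orb}(\Gamma^{s+1}_h)N^1(p^m;p^{m_1},\dots,p^{m_s})/s!$, summed over $s$ and admissible tuples $(m_1,\dots,m_s)$ satisfying the Riemann--Hurwitz constraint. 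Multiplying, the powers of $p$ combine to $p^{2mh+n(m-1)-m}(p^n-1)$. Merging the sums over $m\geq1$, $h\geq0$, $s\geq0$ and the tuples into one index set gives the stated expression.

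The only point needing care is that the resulting sum is finite, so that the rearrangement is legitimate. I expect this to be the main (mild) obstacle. The Riemann--Hurwitz equation $2g-1 = p^m(2h-1+s) - \sum p^{m_i}$ with $p^{m_i}\le p^{m-1}$ gives $2g-1 \geq p^m(2h-1+s) - s p^{m-1}$. Since $p^m - p^{m-1} \geq p^{m-1}$, this bounds $h$, $s$ and $m$ in terms of $g$; the case $h=0$, $s=0$ is excluded because it would make the left side negative.

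This finiteness is in any case automatic from the finiteness of the set of conjugacy classes of finite subgroups of $\Gamma_g^1$, since $\underline{E}\Gamma_g^1$ is finite. With that observation, the identity follows by direct substitution.
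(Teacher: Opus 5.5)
This is essentially the paper's argument: there the theorem is obtained by combining Theorem \ref{thm:introHZ} with Lemma \ref{lem:GennH} and Proposition \ref{prop:chiHurwitz}, which is exactly your route of splitting off the $m=0$ term (giving $\chi_{\orb}(\Gamma_g^1)$) and merging the powers of $p$. One small caveat on your finiteness remark: your inequality alone gives no bound on $m$ when $h=0$, $s=1$ (excluded because the right side of the Riemann--Hurwitz equation is then $-p^{m_1}<0$), nor when $p=2$, $h=0$, $s=2$ (there parity forces exactly one $m_i=0$, whence $2g\geq 2^{m-1}$); your fallback via the finitely many conjugacy classes of finite subgroups covers these cases anyway.
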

\begin{rem}
Using the formulas on p. 481 in \cite{HZ}, we know the values of $N^1(p^m;p^{m_1},\dots,p^{m_s})$ explicitly: 
\[N^1(p^m;p^{m_1},\dots,p^{m_s})=\frac{1}{p^m} \prod_{i=1}^s(p^{m-m_i}-p^{m-m_i-1})\Big (1-\Big(\frac{-1}{p-1}\Big)^{\nu_p} \Big ),\]
where $\nu_p$ is the number of indices $i$ with $m_i=0$. In particular, if $m_i \neq 0$ for all $i$, then $N^1(p^m;p^{m_1},\dots,p^{m_s})=0$. 
\end{rem}

One crucial point in the above calculation is that all finite subgroups of $\Gamma_g^1$ are cyclic. This is not the case for the group $\Gamma_g$ when $g \geq 2$, significantly complicating the calculation of $\chi_{\orb}^{E_n}(\Gamma_g)$. We are unable to provide a general formula for this orbifold Euler characteristic, but we do compute it for some specific values of $g$ in Example \ref{ex: height one full mcg}, and for all $g$ in the special case that the height $n$ is $1$. First, we unravel some special cases of Theorem \ref{thm height n HZ}.

\begin{ex} \label{ex: Gamma1} When the genus $g$ is not too far away from the prime $p$, we can make the formula in Theorem \ref{thm height n HZ} explicit. The main constraint we have on the indices is the Riemann--Hurwitz formula $2g-1=p^m(2h-1+s)-p^{m_1}-\dots -p^{m_s}$ which rules out a lot of possibilities. Below the genus $g$ will always be greater than or equal to $2$. The genus 1 case was covered in Example \ref{rem: SL2} since one has the isomorphisms (see e.g., \cite[Section 2.2.4]{FarbMarg})
\[\Gamma_1^1 \cong \Gamma_1 \cong \SL_2(\mathbb{Z}).\]

To understand the torsion in $\Gamma^1_g$ and $\Gamma_g$ it is useful to look at the short exact sequences (see e.g., \cite[Theorem 4.6 and Section 6.5]{FarbMarg})
\[1 \to \pi_1(\Sigma_g) \to \Gamma^1_g \to \Gamma_g \to 1\]
and
\[1 \to I_g \to \Gamma_g \to \Sp_{2g}(\Z) \to 1,\]
where $\pi_1(\Sigma_g)$ is the fundamental group of the surface $\Sigma_g$ and $I_g$ is the Torelli group. Both of these groups are torsion-free (see e.g., \cite[Theorem 6.8]{FarbMarg}). In particular, if $\gamma \in \Gamma^1_g$ has order $l$, then the homomorphism $\Gamma^1_g \to \Sp_{2g}(\Z)$ sends $\gamma$ to an element of order $l$. By a result of Levitt and Nicolas \cite{LevittNicolas98}, if $\gamma$ has order $p^m$ in $\GL_{2g}(\Z)$, then $2g \geq \varphi(p^m)=p^m-p^{m-1}$. 

Recall again that all finite subgroups of $\Gamma^1_g$ are cyclic \cite{Lu2001}. The first interesting special case is $g=\frac{p-1}{2}$ for $p \geq 5$. By the observation above, the group $\Gamma^1_{\frac{p-1}{2}}$ does not contain a subgroup isomorphic to $\Z/p^2$ and $g=\frac{p-1}{2}$ is the minimal genus where a nontrivial element of order $p$ occurs in $\Gamma^1_g$. The only non-trivial terms occurring in the formula of Theorem \ref{thm height n HZ} are $\chi_{\orb}(\Gamma_g^1)$ and the term indexed by $m=1, s=2, h=0, m_1=m_2=0$. (All other terms are excluded by elementary calculations using the Riemann--Hurwitz formula.) The values of quantities involved in the formula are
\[N^1(p;1,1)=\frac{1}{p}(p-1)^2\Big (1-\frac{1}{(p-1)^2}\Big)=\frac{(p-1)^2-1}{p}\]
and 
\[\frac{\chi_{\orb}(\Gamma_0^3)}{2}=\frac{1}{2}.\]
So all in all, we obtain
\[\chi_{\orb}^{E_n}\Big( \Gamma^1_{\frac{p-1}{2}} \Big)=\zeta(2-p)+(p^n-1)\frac{(p-1)^2-1}{2p^2}=\zeta(2-p)+\frac{(p^n-1)(p-2)}{2p}.\]
For $n \geq 1$, this equation holds in $\Z_{(p)} \subset \pi_0(E_n)$, showing that the right-hand side is $p$-integral. It is easy to see that this congruence is a special case of the von Staudt--Clausen theorem \cite{Staudt, Clausen}, which implies that
\[B_{p-1}+\frac{1}{p} \equiv 0 \mod \Z_{(p)}.\]

A more interesting special case is $g=\frac{p(p-1)}{2}$ for $p \geq 3$. In this case $\Gamma_g^1$ has a subgroup isomorphic to $\Z/p^2$ and the formulas at the heights $n=0,1,2$ are essentially different. We will see below that the formulas at heights $n \geq 1$ capture one additional special value of the Riemann $\zeta$-function. Once again using \cite{LevittNicolas98} we know that $\Gamma_{\frac{p(p-1)}{2}}^1$ does not contain a subgroup isomorphic to $\Z/p^3$ and $g=\frac{p(p-1)}{2}$ is the minimal genus where $p^2$-torsion shows up. 

Using the observation that $N^1(p^m;p^{m_1},\dots,p^{m_s})=0$ unless there is an index $i$ with $m_i=0$ and the Riemann--Hurwitz formula, we see that the only non-trivial terms in the formula of Theorem \ref{thm height n HZ} are indexed by

\rm{(i)} $m=0$;

\rm{(ii)} $m=1$, $s=1$, $h=\frac{p-1}{2}$, $m_1=0$;

\rm{(iii)} $m=1$, $s=p+1$, $h=0$, $m_1=\cdots=m_{p+1}=0$;

\rm{(iv)} $m=2$, $s=2$, $h=0$, $m_1=1$, $m_2=0$;

\rm{(v)} $m=2$, $s=2$, $h=0$, $m_1=0$, $m_2=1$. 

Additionally, we need the following values:
\[N^1(p;1)=\frac{1}{p}(p-1)\Big(1+\frac{1}{p-1}\Big)=1,\]
\[N^1(p; \underbrace{1, \dots, 1}_{p+1})=\frac{1}{p}(p-1)^{p+1}\Big(1-\frac{1}{(p-1)^{p+1}}\Big)=\frac{(p-1)^{p+1}-1}{p}\]
\[N^1(p^2;p,1)=N^1(p^2;1,p)=\frac{1}{p^2}(p^2-p)(p-1)\Big(1+\frac{1}{p-1} \Big)=p-1,\]
and
\[\frac{\chi_{\orb}(\Gamma^2_{\frac{p-1}{2}})}{1!}=(2-p)\zeta(2-p),\]
\[\frac{\chi_{\orb}(\Gamma_0^{p+2})}{(p+1)!}=\frac{1}{p(p+1)},\]
\[\frac{\chi_{\orb}(\Gamma_0^{3})}{2!}=\frac{1}{2}.\]
As a consequence we obtain the formula
\begin{align*}&\chi_{\orb}^{E_n}(\Gamma^1_{\frac{p(p-1)}{2}})=\zeta(1+p-p^2)+\\&(p^n-1)\Big(p^{p-2}(2-p)\zeta(2-p) +\frac{(p-1)^{p+1}-1}{p^3(p+1)}+p^{n-2}(p-1)\Big).\end{align*}
The formula shows that if the height $n$ is larger than $0$, then the formula for $\chi_{\orb}^{E_n}(\Gamma^1_{\frac{p(p-1)} {2}})$ involves two special values of the Riemann zeta function, $\zeta(1+p-p^2)$ and $\zeta(2-p)$. We also observe that the term $p^{n-2}(p-1)$ becomes integral when $n \geq 2$. Additionally, it is not hard to check that the corresponding congruences at height $n \geq 1$ can be obtained using the von Staudt--Clausen theorem and the congruence
\[B_{p(p-1)}+\frac{1}{p} \equiv 1 \mod p\Z_{(p)},\]
which is a special case of a theorem of Carlitz \cite[Theorem 3]{Carl}. 

\end{ex}

\begin{rem} More generally, one can check as in \cite[Remark 6.13]{P24} that if $p \geq 3$ and $p-1 \; \vert \; 2g$, then at height $n \geq 1$ the $p$-integrality of the right-hand side of Theorem \ref{thm height n HZ} (see Remark \ref{rem: congruence}) for $\Gamma_g^1$ is equivalent to the following general form of the congruence described above due to Carlitz \cite[Theorem 3]{Carl}:
\[B_{2g} +\frac{1}{p} \equiv 1 \mod p^r\Z_{(p)},\]
where $2g=zp^r(p-1)$, with $r \geq 0$ and $\gcd(z,p)=1$. We do not go into detail here and refer the interested reader to \cite{P24} to reproduce the argument in this case. For $p-1 \nmid  2g$ one recovers the congruence $\zeta(1-2g) \in \Z_{(p)}$, which is a consequence of Kummer's congruences \cite{Kummercong}. We note though that generally even if all subgroups of $G$ are cyclic, the congruences in Remark \ref{rem: congruence} are not formal consequences of a congruence at a fixed height. 
\end{rem}

\begin{ex} \label{ex: height one full mcg} With our current methods we are not able to give an explicit formula for $\chi_{\orb}^{E_n}(\Gamma_g)$ for general $g$ and $n$, since $\Gamma_g$ may contain any finite group as a subgroup. We will however provide some partial results. We only sketch the details below, as they are analogous to the above computations.  

We are able to calculate $\chi_{\orb}^{E_1}(\Gamma_g)$ using Theorem \ref{thm: centraliser} and \cite[Theorem 6.5]{HZ} (see also \cite[Proposition 6.1]{P24}). Indeed, at height one only the cyclic subgroups of $\Gamma_g$ will contribute and the same reasoning as before yields the following formulas:

\begin{align*}&\chi_{\orb}^{E_1}(\Gamma_g)=\chi_{\orb}(\Gamma_g)+\\ &\sum_{\tiny{\begin{aligned} m \geq 1 ,\;\;h \geq 0,\;\;s \geq 0 \hspace{0.75cm} \\ (m_1,\dots,m_s), \;\; 0 \leq m_i < m, \;\; \forall m_i \neq 0 \hspace{0.5cm}  \\2g-2=p^m(2h-2+s)-p^{m_1}-\dots -p^{m_s}  \end{aligned}}} \frac{1}{p^m}\frac{\chi_{\orb}(\Gamma_h^s)}{s!}p^{2mh}(1-p^{-2h})N(p^m;p^{m_1},\dots,p^{m_s})+\\ & \sum_{\tiny{\begin{aligned} m \geq 1 ,\;\;h \geq 0,\;\;s \geq 1 \hspace{0.75cm} \\  (m_1,\dots,m_s), \;\; 0 \leq m_i < m, \;\; \exists m_i = 0 \hspace{0.5cm}  \\2g-2=p^m(2h-2+s)-p^{m_1}-\dots -p^{m_s}  \end{aligned}}} \frac{1}{p^m}\frac{\chi_{\orb}(\Gamma_h^s)}{s!}p^{2mh}N(p^m;p^{m_1},\dots,p^{m_s}).\end{align*}
Here
\[N(p^m;p^{m_1},\dots,p^{m_s})=\frac{1}{p^m} \prod_{i=1}^s(p^{m-m_i}-p^{m-m_i-1})p^{\lambda_p} \Big (1-\frac{(-1)^{\mu_p-1}}{(p-1)^{\mu_p-1}} \Big ),\] 
where $\lambda_p=\min\{m_1, \dots, m_s, m-1\}$, and $\mu_p$ is the cardinality of the set
\[\{ m_i \; \vert \; p^{\lambda_p+1}  \nmid p^{m_i} \}.\]
In particular, we have
\[N(p; \underbrace{1, \dots, 1}_{s})=\frac{1}{p}(p-1)^s \Big (1-\frac{(-1)^{s-1}}{(p-1)^{s-1}} \Big ).\]
When the genus $g$ is not too far away from $p$, we can sometimes also give a formula for all heights. The first interesting special case is $g=\frac{p-1}{2}$ for $p \geq 5$. The group $\Gamma_{\frac{p-1}{2}}$ does not contain a subgroup isomorphic to $\Z/p^2$ or $\Z/p \times \Z/p$ and $g=\frac{p-1}{2}$ is the minimal genus where nontrivial $p$-torsion in $\Gamma_g$ occurs. This follows from the discussion at the beginning of Example \ref{ex: Gamma1} and \cite[Example 4.1]{Broughton}. The classification of order $p$ elements in $\Gamma_{\frac{p-1}{2}}$ and their normalizers and centralizers is given in \cite[Chapter III]{Xia} and \cite[Example 4.1]{Broughton}. Using Theorem \ref{thm: centraliser}
and Lemma \ref{lem: number of tuples} (or rather Remark \ref{rem: main thm normalizer}), for $p \geq 5$, one then obtains
\[\chi_{\orb}^{E_n}(\Gamma_{\frac{p-1}{2}})=\frac{\zeta(2-p)}{3-p}+ \frac{(p^n-1)(p-2)}{6p}.\]
A more interesting formula can be obtained in the case $g=\frac{(p-1)(p-2)}{2}$. By a result of Broughton \cite[Section 4]{Broughton} the group $\Gamma_{\frac{(p-1)(p-2)}{2}}$ has only one conjugacy class of subgroups isomorphic to $\Z/p \times \Z/p$ and does not have any higher rank elementary abelian $p$-subgroups. The genus $g=\frac{(p-1)(p-2)}{2}$ is the minimal genus for which a rank $2$ elementary abelian $p$-subgroup occurs. Additionally, again by the discussion at the beginning of Example \ref{ex: Gamma1}, the group $\Gamma_{\frac{(p-1)(p-2)}{2}}$ does not contain elements of order $p^2$. The paper \cite{Broughton} also identifies the centralizer and normalizer of the rank $2$ elementary abelian $p$-subgroup. In particular, the normalizer has order $6p^2$. 
We also need the formulas
\[\vert \Gen_n(\Z/p)\vert=p^n-1 \;\text{and}\; \vert \Gen_n(\Z/p\times \Z/p)\vert=p^{2n}-(1+p)p^n+p. \]
\noindent Now using Theorem \ref{thm: centraliser} and Remark \ref{rem: main thm normalizer} (and an argument for order $p$ elements given in the proof of Lemma \ref{lem: number of tuples}), we obtain
\begin{align*}&\chi_{\orb}^{E_n}(\Gamma_{\frac{(p-1)(p-2)}{2}})=
\\&\chi_{\orb}(\Gamma_{\frac{(p-1)(p-2)}{2}})+\frac{p^n-1}{p-1}\sum_{[\gamma]}\chi_{\orb}(C\langle \gamma \rangle)+(p^{2n}-(1+p)p^n+p)\cdot \frac{1}{6p^2},\end{align*}
where the sum runs over the conjugacy classes of order $p$ elements. Using \cite[Theorem 5]{HZ} (or equivalently the enumeration at the beginning of Example \ref{ex: height one full mcg}), we finally obtain 
\begin{align*}&\chi_{\orb}^{E_n}(\Gamma_{\frac{(p-1)(p-2)}{2}})=\frac{\zeta(3p-1-p^2)}{3p-p^2}+\\& \frac{1}{p}\cdot\frac{p^n-1
}{p-1}\Big(\frac{p^{p-1}-1}{3-p}\zeta(2-p) +\frac{(p-1)^{p-1}-1}{p^2(p-2)}\Big)+\frac{p^{2n}-(1+p)p^n+p}{6p^2}.\end{align*}
In more detail, the sum $\sum_{[\gamma]}\chi_{\orb}(C\langle \gamma \rangle)$ is equal to the part with $m=1$ of the big sum at the beginning of Example \ref{ex: height one full mcg}. Since $g=\frac{(p-1)(p-2)}{2}$, the Riemann--Hurwitz formula rules out all possibilities but $s=0$, $h=\frac{p-1}{2}$ and $s=p$, $h=0$. This explains the second term in the latter formula. Observe that this term is zero for $n=0$ and the last term is zero for $n=0,1$. The formula at height $n \geq 1$ recovers a congruence 
\[\frac{B_{2g}}{2g}- \frac{B_{2h}}{2h} \equiv \Big ( \frac{1}{2g}-\frac{1}{2h} \Big )\Big(1-{\frac{1}{p}}\Big) \mod p\Z_{(p)},\]
where $g=\frac{(p-1)(p-2)}{2}$ and $h=\frac{p-1}{2}$ (see \cite[Remark 6.12]{P24}). This is a special case of a congruence due to Cohen \cite[Proposition 11.4.4]{Cohen}, which is classically proved using the theory of $p$-adic $L$-functions.

\end{ex}

%






\part*{Appendix}
\addcontentsline{toc}{part}{Appendix}
\appendix

\section{Coherence for norm maps}
\label{appendix}

In this appendix we prove Lemma \ref{lem:Wirthmuller}, which states a certain coherence property for Wirthm\"uller isomorphisms. Consider the diagram
\[
\mathrm{res}\colon (\mathrm{Or}^G_{\mathrm{fin}})^{\mathrm{op}} \to \mathrm{Cat}_\infty\colon G/H \mapsto \Sp^H,
\]
with functoriality given by restrictions. The main results of \cite{CLL6functor} (applied in the case where $I = P = E$ consists of all morphisms in $\mathrm{Or}^G_{\mathrm{fin}}$) imply that this functor can be extended to a `six-functor formalism'. Let us explain precisely what aspect of this we need here. We write $\mathrm{Span}(\mathrm{Or}^G_{\mathrm{fin}})$ for the 2-category of spans in the category $\mathrm{Or}^G_{\mathrm{fin}}$. Its objects agree with those of $\mathrm{Or}^G_{\mathrm{fin}}$, the morphisms are given by spans
\[
G/H_0 \leftarrow G/K \rightarrow G/H_1
\]
and the 2-morphisms are spans of spans:
\[
\begin{tikzcd}
& G/K_0 \ar{dr}\ar{dl} & \\
G/H_0 & G/L \ar{u}\ar{d} & G/H_1. \\
& G/K_1 \ar{ur}\ar{ul} &
\end{tikzcd}
\]
There is an evident inclusion functor
\[
j\colon (\mathrm{Or}^G_{\mathrm{fin}})^{\mathrm{op}} \to \mathrm{Span}(\mathrm{Or}^G_{\mathrm{fin}})
\]
that is the identity on objects and sends a morphism $G/H \to G/K$ in $\mathrm{Or}^G_{\mathrm{fin}}$ to the span
\[
G/K \leftarrow G/H \xrightarrow{=} G/H.
\]



\begin{proof}[Proof of Lemma \ref{lem:Wirthmuller}]
Conjecture 2.24 of \cite{BenMosche}, proved in \cite{CLL6functor} as a consequence of Theorem A in loc. cit., implies that there is an essentially unique functor of $(\infty,2)$-categories
\[
R\colon \mathrm{Span}(\mathrm{Or}^G_{\mathrm{fin}}) \to \mathrm{Cat}_\infty
\]
such that the composition 
\[
(\mathrm{Or}^G_{\mathrm{fin}})^{\mathrm{op}} \xrightarrow{j} \mathrm{Span}(\mathrm{Or}^G_{\mathrm{fin}}) \xrightarrow{R} \mathrm{Cat}_\infty
\]
is the functor $\mathrm{res}$ with which we started this section. Therefore it suffices to observe that the left and right adjoints of the morphisms in $(\mathrm{Or}^G_{\mathrm{fin}})^{\mathrm{op}}$ already exist in the 2-category $\mathrm{Span}(\mathrm{Or}^G_{\mathrm{fin}})$ and that they agree coherently there. (Indeed, any functor of $(\infty,2)$-categories preserves adjunctions.) Note that the functor
\[
\mathrm{Or}^G_{\mathrm{fin}} \to \mathrm{Span}(\mathrm{Or}^G_{\mathrm{fin}})\colon (G/H \to G/K) \mapsto (G/H \xleftarrow{=} G/H \rightarrow G/K)
\]
provides a diagram that serves both as a system of left adjoints and as a system of right adjoints to the morphisms of the diagram $j$, completing the argument.
\end{proof}

\section{Relation to parametrized homotopy theory}
\label{app:parametrized}

In this appendix we compare our duality from Section \ref{sec: genuine dualizing complex} to the duality considered in \cite{HKK} and the Costenoble--Waner duality considered in \cite{Cno23}, \cite{MaySig}, and \cite{CosWan}. 

Let $G$ be a finite group. The $\infty$-category of $G$-spaces \[\mathcal{S}^G := \mathrm{Fun}((\mathrm{Or}^G)^{\mathrm{op}}, \mathcal{S})\]
is a presheaf category and hence an $\infty$-topos. Given a $G$-CW complex $X$, one can consider the \emph{parametrized $\infty$-category of $G$-spectra over $X$} as in \cite{BDGNS, Nardin, Shah} and in \cite{Martini, MW1, MW2, CLL1}. This is an $\infty$-categeory parametrized over the presheaf topos $\mathcal{S}^G$. We will only consider the global sections $\infty$-category $\Sp_X^G$ of this sheaf, which can be also constructed explicitly as follows. The functor
\[\underline{\Sp^G} \colon (\mathrm{Or}^G)^{\mathrm{op}} \to \Cat_{\infty},\]
sending the orbit $G/H$ to the $\infty$-category of genuine $H$-spectra $\Sp^H$ and morphisms in $\mathrm{Or}^G$ to restrictions, can be right Kan extended to a limit-preserving functor (sheaf)
\[\widehat{\underline{\Sp^G}} \colon (\mathcal{S}^G)^{\mathrm{op}} \to \Cat_{\infty}.\]
The value of this functor at $X$ is by definition $\Sp_X^G$. More informally, we can write
\[\Sp_X^G \simeq \lim_{G/H \to X} \Sp^H.\]
In particular, any colimit decomposition of $X$ in terms of representables gives an inverse limit formula for $\Sp_X^G$. Following \cite[Section 4]{Cno23} and \cite[Section 4.1]{HKK}, the restriction
\[X^* \colon \Sp^G \to \Sp_X^G\]
along $X \to \ast$ has a left adjoint $X_!$ and a right adjoint $X_*$. Moreover, analogously to Definition \ref{def:dualizingcomplex} and Definition \ref{def: relative norm}, one can define the (genuine) dualizing complex $D_X \in \Sp_X^G$ and the norm map in $\Sp^G$:
\[\mathrm{Nm}_X \colon X_{!}(F \otimes_X D_X) \to X_*(F).\]
The parametrized twisted ambidexterity (\cite[Theorem 4.8(5)]{Cno23} and \cite[Proposition 4.16]{HKK}) tells us that if $X$ is a finite $G$-CW complex, then this norm map is an equivalence. We will now explain how this duality is related to the results of our paper.

Let $G$ be a discrete group and $\Gamma \leq G$ a finite index torsion-free subgroup. Without loss of generality we can assume that $\Gamma$ is normal. The following is the key result that relates proper $G$-equivariant stable homotopy theory with parametrized $G/\Gamma$-equivariant stable homotopy theory. 

\begin{prop} \label{prop: proper vs parametrized} Let $G$ be a group and $\Gamma \leq G$ a finite index torsion-free normal subgroup. Choose any model for $\underline{E}G$. Then the $\infty$-category $\Sp^G_{\mathrm{prop}}$ is equivalent to the $\infty$-category $\Sp_{(\underline{E}G)/\Gamma}^{G/\Gamma}$. 
\end{prop}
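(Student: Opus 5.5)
The plan is to compare both sides as limits of $\infty$-categories of genuine spectra for finite groups, indexed by equivalent diagrams. By \cite[Theorem 12.11]{LNP} we have
\[
\Sp^G_{\mathrm{prop}} \simeq \lim_{G/H \in (\mathrm{Or}^G_{\mathrm{fin}})^{\op}} \Sp^H .
\]
By definition,
\[
\Sp^{G/\Gamma}_{(\underline{E}G)/\Gamma} \simeq \lim_{(G/\Gamma)/K \to (\underline{E}G)/\Gamma} \Sp^K ,
\]
with the limit taken over the category of elements $(\mathrm{Or}^{G/\Gamma})_{/(\underline{E}G)/\Gamma}$. Here $(\underline{E}G)/\Gamma$ is viewed as a genuine $G/\Gamma$-space via its fixed points for subgroups $K \leq G/\Gamma$. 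So it suffices to produce an equivalence between $\mathrm{Or}^G_{\mathrm{fin}}$ and this category of elements, and to check that the diagrams $G/H \mapsto \Sp^H$ and $((G/\Gamma)/K \to X) \mapsto \Sp^K$ correspond under it.

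First, the key group-theoretic input is this. Since $\Gamma$ is torsion-free, every finite subgroup $H \leq G$ meets $\Gamma$ trivially. Hence the quotient map $q\colon G \to G/\Gamma$ restricts to an isomorphism $H \cong q(H)$, and restriction along $H \cong q(H)$ identifies $\Sp^{q(H)} \simeq \Sp^H$.

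Next, I define a functor $\Phi\colon \mathrm{Or}^G_{\mathrm{fin}} \to \mathrm{PSh}(\mathrm{Or}^{G/\Gamma})_{/(\underline{E}G)/\Gamma}$ by $G/H \mapsto (G/\Gamma)/q(H) \cong (G/H)/\Gamma$. The structure map is the quotient by $\Gamma$ of the map $G/H \to \underline{E}G$, viewing the latter as a point of $(\underline{E}G)^H$, which is contractible. The functor $X \mapsto X/\Gamma$, from proper $G$-spaces that are free on $\Gamma$ to $G/\Gamma$-spaces over $(\underline{E}G)/\Gamma$, should be an equivalence of $\infty$-categories. Indeed, $\underline{E}G$ is $\Gamma$-free because $\Gamma$ is torsion-free, so for a proper $G$-space $X$ the projection $X \times \underline{E}G \to \underline{E}G$ is $\Gamma$-free. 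One can then use the general fact that slices $\mathrm{PSh}(\mathcal{C})_{/Y}$ are presheaves on the category of elements of $Y$. Concretely, I would show that $Y \mapsto Y \times_{(\underline{E}G)/\Gamma} \underline{E}G$ is an inverse on representables. The fixed points of the pullback along $(G/\Gamma)/K \to (\underline{E}G)/\Gamma$ are computed from lifts of $K$ to finite subgroups of $G$. These lifts correspond to components of $(\underline{E}G)^{q^{-1}K}$-type fixed data, and the outcome is a disjoint union of orbits $G/H$ with $q(H)=K$. From this I conclude that $\mathrm{PSh}(\mathrm{Or}^G_{\mathrm{fin}}) \simeq \mathrm{PSh}(\mathrm{Or}^{G/\Gamma})_{/(\underline{E}G)/\Gamma}$, compatibly with representables, in the spirit of Proposition \ref{prop:S^Gaslimit}. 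Both the limit formula for $\Sp^G_{\mathrm{prop}}$ and the one for $\Sp^{G/\Gamma}_{(\underline{E}G)/\Gamma}$ are right Kan extensions along colimit decompositions, as in Lemma \ref{lem:underbarEGcolim}. Therefore this equivalence of presheaf categories, together with the identifications $\Sp^{q(H)} \simeq \Sp^H$, which are natural since they are given by restriction along group isomorphisms, yields the equivalence of the two limits.

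I expect the main obstacle to be the middle step: checking that the orbifold-quotient functor gives an equivalence between the category of elements of $(\underline{E}G)/\Gamma$ (or the corresponding slice topos) and $\mathrm{Or}^G_{\mathrm{fin}}$. In particular, I must show that the $K$-fixed points of $(\underline{E}G)/\Gamma$ decompose as $\coprod_{H} (\underline{E}G)^H / (\Gamma \cap N\text{-stuff})$ over $\Gamma$-conjugacy classes of lifts $H$ of $K$, each piece contractible modulo the correct automorphisms. I would first try to prove this at the point-set level on $G$-CW models, where a $\Gamma$-free $G$-CW complex has $(X/\Gamma)^K = \coprod_{[H]} X^H/C_\Gamma(H)$ with $C_\Gamma(H)$ acting freely. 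Combined with the contractibility of $(\underline{E}G)^H$, this gives the fixed points, from which the equivalence of the indexing diagrams and hence of the limits follows.
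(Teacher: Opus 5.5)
Your proposal is correct, but it takes a longer route than the paper. The paper never identifies $\mathrm{Or}^G_{\mathrm{fin}}$ with the category of elements of $(\underline{E}G)/\Gamma$. Because $\widehat{\underline{\Sp^{G/\Gamma}}}$ turns \emph{every} colimit in $\mathcal{S}^{G/\Gamma}$ into a limit, it suffices to have \emph{some} decomposition of $(\underline{E}G)/\Gamma$ into orbits. The paper gets one by applying the colimit-preserving functor $(-)/\Gamma$ (left Kan extension along $G/H \mapsto G/H\Gamma$) to Lemma \ref{lem:underbarEGcolim}, which gives $(\underline{E}G)/\Gamma \simeq \colim_{G/H \in \mathrm{Or}^G_{\mathrm{fin}}} G/H\Gamma$. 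Hence $\Sp^{G/\Gamma}_{(\underline{E}G)/\Gamma} \simeq \lim_{G/H} \Sp^{H\Gamma/\Gamma} \simeq \lim_{G/H} \Sp^H$, and \cite[Theorem 12.11]{LNP} finishes the proof. Your own remark that both sides are right Kan extensions along colimit decompositions already contains this argument. So the step you single out as the main obstacle is not needed for the statement: computing $((\underline{E}G)/\Gamma)^K \simeq \coprod_{[H]} BC_\Gamma(H)$ and matching the category of elements with $\mathrm{Or}^G_{\mathrm{fin}}$. What your route buys is the stronger unstable equivalence $\mathcal{S}^G_{\mathrm{prop}} \simeq \mathcal{S}^{G/\Gamma}_{/(\underline{E}G)/\Gamma}$, an equivariant form of ``free $\Gamma$-spaces are spaces over $B\Gamma$''. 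That makes the later identifications of $X^*, X_!, X_*$ with inflation and genuine $\Gamma$-orbits/fixed points more transparent. Both arguments rely in the same way on the identifications $\Sp^{q(H)} \simeq \Sp^H$ being coherently natural in $G/H$.

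If you keep your version, note that fixed-point spaces alone do not give the equivalence of categories; you also have to match morphisms. For instance, automorphisms of an object of the category of elements form an extension of $N_G(H)/(H \times C_\Gamma(H))$ by $C_\Gamma(H)$, and this must be identified with $N_G(H)/H$. A cleaner argument avoids the point-set count. The functor $p\colon \mathrm{Or}^G_{\mathrm{fin}} \to \mathrm{Or}^{G/\Gamma}$, $G/H \mapsto G/H\Gamma$, induces on slices the functor $\mathrm{Or}^H \to \mathrm{Or}^{q(H)}$. This is an equivalence because $H \cong q(H)$ and $H \cap L\Gamma = L$ for $L \leq H$. So $p$ is a right fibration, and it classifies $p_!(\ast) \simeq (\underline{E}G)/\Gamma$. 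Two phrases in your sketch also need fixing. First, $(\underline{E}G)^{q^{-1}K}$ is empty, since $q^{-1}K \supseteq \Gamma$ is infinite. Second, pulling back $\underline{E}G \to (\underline{E}G)/\Gamma$ along a single point of $((\underline{E}G)/\Gamma)^K$ gives one orbit $G/H$, not a disjoint union.
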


\begin{proof} By Lemma \ref{lem:underbarEGcolim}, the $G/\Gamma$-space $(\underline{E}G)/\Gamma$ decomposes as a colimit 
\begin{align*}&\colim_{G/H \in \mathrm{Or}^G_{\mathrm{fin}}}  (G/\Gamma)/(H\Gamma/\Gamma)  \simeq \colim_{G/H \in \mathrm{Or}^G_{\mathrm{fin}}} G/H\Gamma \simeq  \\& \colim_{G/H \in \mathrm{Or}^G_{\mathrm{fin}}} (G/H)/\Gamma \simeq (\underline{E}G)/\Gamma.\end{align*}
Now applying the sheaf 
\[\widehat{\underline{\Sp^{G/\Gamma}}} \colon (\mathcal{S}^{G/\Gamma})^{\mathrm{op}} \to \Cat_{\infty}\]
to this colimit decomposition, we obtain a chain of equivalences
\[\Sp_{(\underline{E}G)/\Gamma}^{G/\Gamma} \simeq \lim_{G/H \in \mathrm{Or}^G_{\mathrm{fin}}} \Sp_{H\Gamma/\Gamma} \simeq \lim_{G/H \in \mathrm{Or}^G_{\mathrm{fin}}} \Sp_{H} \simeq \Sp_{\mathrm{prop}}^G.\]
Here the second equivalence comes from the fact that $\Gamma$ is torsion-free and hence $H\Gamma/\Gamma \cong H/H \cap \Gamma=H$ for any finite subgroup $H$. The last equivalence is \cite[Theorem 12.11]{LNP}. 
\end{proof}

For convenience, we now let $X$ denote the $G/\Gamma$-space $(\underline{E}G)/\Gamma$. Then the functors $X^*, X_*, X_!$ can be described in terms of proper equivariant stable homotopy theory by means of Proposition \ref{prop: proper vs parametrized}. Since $\Gamma \leq G$ is torsion-free, for any finite subgroup $H \leq G$, the composite
\[H \xrightarrow{\iota} G \xrightarrow{\pi} G/\Gamma,\]
where $\iota$ is the inclusion and $\pi$ is the projection, is injective. Hence we get a restriction functor
\[(\pi \iota)^*=\res^{G/\Gamma}_H \colon \Sp^{G/\Gamma} \to \Sp^H.\]
These restriction functors are compatible when $G/H$ varies over $\mathrm{Or}^G_{\mathrm{fin}}$ and hence induce a functor
\[\pi^* \colon \Sp^{G/\Gamma} \to \Sp^G_{\mathrm{prop}},\]
called the \emph{inflation}. A diagram chase shows that this functor corresponds to $X^*$ under the equivalence of Proposition \ref{prop: proper vs parametrized}. 
Furthermore, since the $\infty$-categories involved are presentable and $\pi^*=\res^{G/\Gamma}_H$ preserves both limits and colimits, we get left and right adjoints
\[\pi_!, \pi_* \colon \Sp^G_{\mathrm{prop}} \to  \Sp^{G/\Gamma},\]
respectively. One can think of $\pi_!$ as the \emph{genuine $\Gamma$-orbits} and $\pi_*$ as the \emph{genuine $\Gamma$-fixed points}. These correspond to $X_!$ and $X_*$, respectively, under the equivalence of Proposition \ref{prop: proper vs parametrized}.

\begin{rem} \label{rem: inflation} The adjunction
\[\xymatrix{ \Sp^{G/\Gamma}_{\mathrm{prop}} \ar@<0.5ex>[r]^-{\pi^*} & \Sp^{G}_{\mathrm{prop}} \ar@<0.5ex>[l]^-{\pi_*}}\]
exists for any normal subgroup $\Gamma \leq G$. This is because even if the composite $H \xrightarrow{\iota} G \xrightarrow{\pi} G/\Gamma$ is not injective in general, the inflation 
\[(\pi \iota)^*=\res^{G/\Gamma}_H \colon \Sp^{G/\Gamma}_{\mathrm{prop}} \to \Sp^H\]
still exists and preserves colimits. However, this functor does not necessarily preserve limits. Hence in general $\pi_{!} \colon \Sp^{G}_{\mathrm{prop}} \to \Sp^{G/\Gamma}_{\mathrm{prop}}$ might not exist. As we saw above, if $\Gamma$ is torsion-free, then $\pi_{!}$ does exist. 
\end{rem}

\begin{rem} \label{rem: new D} The inflation and genuine fixed points functor from the previous remark allow us to lift Proposition \ref{prop: dualizing module} to genuine equivariant homotopy theory. Indeed, let $\pi_1, \pi_2 \colon G \times G \to G$ denote the two projections. 
Then the duality functor $D \colon \Sp^G_{\mathrm{prop}} \to (\Sp^G_{\mathrm{prop}})^{\mathrm{op}}$ can be described as follows:
\[D(Y) \simeq {\pi_1}_*\mathrm{map}(\pi_2^*Y, \mathbb{S}[G]),\]
where $\mathbb{S}[G] =(\Delta_G)_{!}(\mathbb{S})$ is considered as a proper $G\times G$-spectrum with the usual action, and the mapping object is internal to $\Sp^{G \times G}_{\mathrm{prop}}$. Here $\Delta_G \colon G \to G \times G$ is the diagonal and $(\Delta_G)_{!}$ is left adjoint to the restriction $(\Delta_G)^* \colon \Sp^{G \times G}_{\mathrm{prop}} \to \Sp^{G}_{\mathrm{prop}}$. We do not need this description of the duality functor $D$ in this paper and leave the comparison to future work. The idea of the proof is similar to that of Proposition \ref{prop: dualizing module}, which is the underlying comparison in $\Fun(BG, \Sp)$. 
\end{rem}

Now we are ready to state the comparison result advertised at the beginning of this appendix. 

\begin{prop} \label{prop: app B} Let $G$ be a group with finite $\underline{E}G$ and $\Gamma \leq G$ a finite index torsion-free normal subgroup. Let $X$ denote the $G/\Gamma$-space $(\underline{E}G)/\Gamma$. Then under the equivalence $\Sp^G_{\mathrm{prop}} \simeq \Sp_{X}^{G/\Gamma}$, the duality functor $D$ on $\Sp^G_{\mathrm{prop}}$ corresponds to the Costenoble--Waner duality $D_{CW}$ \cite{CosWan, MaySig, Cno23, HKK} on $\Sp_{X}^{G/\Gamma}$. The functor $D_{CW}$ is given by
\[D_{CW}(F)={\pi_1}_*\mathrm{map_{X\times X}}({\pi_2}^*F, (\Delta_X)_!(\mathbb{S}_X)),\]
where $\pi_1, \pi_2 \colon X \times X \to X$ denote the projections (slightly abusing the notation), $\Delta_X \colon X \to X \times X$ is the diagonal, and $map_{X \times X}$ is the internal mapping object of  $\Sp_{X \times X}^{G/\Gamma}$. In particular, the dualizing complex $D_{X} \in \Sp_{X}^{G/\Gamma}$ of \cite{Cno23, HKK} and the dual of the sphere $D(\mathbb{S}_G) \in \Sp^G_{\mathrm{prop}}$ correspond to each other. \end{prop}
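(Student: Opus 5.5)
Both $D$ (transported along Proposition \ref{prop: proper vs parametrized}) and $D_{CW}$ are contravariant functors $\Sp_X^{G/\Gamma}\to(\Sp_X^{G/\Gamma})^{\op}$ taking colimits to limits. The plan is therefore to identify them on compact generators, compatibly with the maps between generators, and then extend. By Construction \ref{con:duality}, $D$ is the unique colimit-to-limit extension of a duality on compact objects, and the compact objects are generated by the orbits $\Sigma^\infty_+ G/H$ (Lemma \ref{lem:SpGcompactgen}).

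\textbf{Step 1: transport the generators.} Under the equivalence, $\mathrm{ind}_H^G$ corresponds to pushforward $(x_H)_!\colon \Sp^H\simeq \Sp^{G/\Gamma}_{(G/\Gamma)/H}\to \Sp_X^{G/\Gamma}$ along the orbit inclusion $x_H\colon (G/\Gamma)/H \to X$ coming from the colimit decomposition in the proof of Proposition \ref{prop: proper vs parametrized}. This is a diagram chase with left adjoints of the restrictions defining the limit.

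\textbf{Step 2: compute $D_{CW}$ on induced objects.} Using the projection formula and base change for the parametrized six-functor formalism over the $\infty$-topos $\mathcal{S}^{G/\Gamma}$ (as in \cite{Cno23, HKK}), I would show
\[
D_{CW}((x_H)_!A)\simeq (x_H)_*\,\mathbb{D}_{H}(A)\simeq (x_H)_!\,\mathbb{D}_H(A)
\]
for $A\in(\Sp^H)^\omega$, where $\mathbb{D}_H$ is Spanier--Whitehead duality in $\Sp^H$. The first equivalence is the standard fact that Costenoble--Waner duality is compatible with pushforward along maps of spaces, and that over an orbit it reduces to ordinary duality (since $\Delta$ over an orbit is an equivalence). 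The second equivalence is the ambidexterity $x_!\simeq x_*$ for the finite covering of orbits, which is the parametrized Wirthmüller isomorphism. This matches Proposition \ref{prop:dualitySpG}(1) and the square in its proof.

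\textbf{Step 3: coherence.} This is the main obstacle. One must check that the identifications of Step 2 are natural in $G/H\in\mathrm{Or}^G_{\mathrm{fin}}$, i.e.\ that they assemble into an equivalence of diagrams $\mathrm{ind}\Rightarrow\mathrm{ind}^{\op}$ agreeing with the one built from Lemma \ref{lem:Wirthmuller}. My approach would be to use the uniqueness of the span-category extension invoked in Appendix \ref{appendix} (from \cite{CLL6functor}). Both constructions arise from the six-functor formalism restricted to $\mathrm{Span}(\mathrm{Or}^G_{\mathrm{fin}})$, and the relevant adjunction data are essentially unique there. The remaining work is to verify that Costenoble--Waner duality is itself built from the same six-functor data via its formula $\pi_{1*}\mathrm{map}(\pi_2^*-,\Delta_!\mathbb{S})$.

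\textbf{Step 4: extend.} Given Step 3, both functors restricted to compact objects agree, and both send colimits to limits. For $D$ this holds by construction. For $D_{CW}$ it holds because $\pi_2^*$ preserves colimits, $\mathrm{map}(-,\cdot)$ turns colimits into limits, and $\pi_{1*}$ preserves limits. Hence $D\simeq D_{CW}$. Evaluating at the unit, which corresponds to $\mathbb{S}_X=X^*\mathbb{S}$, gives $D(\mathbb{S}_G)\leftrightarrow D_{CW}(\mathbb{S}_X)=\pi_{1*}\Delta_!\mathbb{S}_X=D_X$, the dualizing complex of \cite{Cno23, HKK}.
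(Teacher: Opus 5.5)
Your overall strategy (identify the two dualities on compact generators and extend, since both send colimits to limits) is viable and differs from the paper's. However, Step 2 rests on a false claim, and Step 3, where the real content lies, is only outlined.

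\textbf{Step 2.} The end result of Step 2 is right, but the argument for it is not. As soon as $G$ is infinite, the orbit inclusion $x_H\colon O\to X$ is not a finite covering. On underlying spaces it maps a finite set to $X^e\simeq B\Gamma$, so its fibers are infinite and discrete. Under Proposition \ref{prop: proper vs parametrized}, $(x_H)_!$ is $\mathrm{ind}_H^G$ while $(x_H)_*$ is the right adjoint of $\mathrm{res}^G_H$, and these differ. For example, with $G=\Gamma=\mathbb{Z}$ and $H=e$ one has $X=S^1$, $(x_H)_!\mathbb{S}=\mathbb{S}[\mathbb{Z}]$, but $(x_H)_*\mathbb{S}=\prod_{\mathbb{Z}}\mathbb{S}$.

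Your first equivalence $D_{CW}((x_H)_!A)\simeq (x_H)_*\mathbb{D}_H(A)$ is also wrong. That is the formula for the naive fiberwise dual $\mathrm{map}_X(-,\mathbb{S}_X)$, not for Costenoble--Waner duality. In the example, $D_{CW}(\mathbb{S}[\mathbb{Z}])\simeq\mathbb{S}[\mathbb{Z}]$, as Proposition \ref{prop:dualitySpG}(1) predicts.

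The correct argument runs as follows. Let $p_1,p_2$ be the projections of $X\times O$.
\begin{itemize}
\item Base change gives $\pi_2^*(x_H)_!\simeq (1\times x_H)_!p_2^*$ and $(1\times x_H)^*(\Delta_X)_!\mathbb{S}_X\simeq (x_H,1)_!\mathbb{S}_O$.
\item Adjunction, dualizability of $A$, and the projection formula then give $D_{CW}((x_H)_!A)\simeq (p_1)_*(x_H,1)_!\mathbb{D}_H(A)$.
\item It is $p_1$, the base change of the finite $G/\Gamma$-set $O\to *$, that is a finite covering. The genuine Wirthm\"uller isomorphism $(p_1)_*\simeq(p_1)_!$ therefore yields $D_{CW}((x_H)_!A)\simeq (x_H)_!\mathbb{D}_H(A)$.
\end{itemize}
Similarly, the diagonal of an orbit is not an equivalence, so the reduction to Spanier--Whitehead duality over $O$ needs the same Wirthm\"uller step.

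\textbf{Comparison with the paper and Step 3.} The paper never works generator by generator. It shows that under $\Sp^{G/\Gamma}_X\simeq\Sp^G_{\mathrm{prop}}$ and $\Sp^{G/\Gamma\times G/\Gamma}_{X\times X}\simeq\Sp^{G\times G}_{\mathrm{prop}}$, the composite $(\pi_1)_*(\Delta_X)_!$ corresponds to $(\pi_1)_*(\Delta_G)_!$. The only nontrivial input is $(\Delta_{G/\Gamma})_!\simeq(\Delta_{G/\Gamma})_*$ for the finite-index diagonal.

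Because this is a global comparison of formulas, no coherence problem arises. The cost is that it compares $D_{CW}$ with the description $D(Y)\simeq(\pi_1)_*\mathrm{map}(\pi_2^*Y,\mathbb{S}[G])$ from Remark \ref{rem: new D}, which the paper states but leaves to future work.

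Your approach works directly from Construction \ref{con:duality}, so it avoids that description. In exchange, you must produce the equivalences $D_{CW}\circ(x_H)_!\simeq (x_H)_!^{\op}\circ\mathbb{D}_H$ as an equivalence of cones over $\mathrm{Or}^G_{\mathrm{fin}}$. These must also be compatible with the coherent Wirthm\"uller equivalence of Lemma \ref{lem:Wirthmuller}.

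Since these equivalences are built from base-change, projection-formula and Wirthm\"uller equivalences, you need all of these to be coherent in a six-functor formalism over $\mathcal{S}^{G/\Gamma}$. Appealing to uniqueness of adjunction data in the span $2$-category does not cover this on its own. That argument handles spans of orbits, whereas $D_{CW}$ involves $X\times X$, $(\Delta_X)_!$ and internal mapping objects.
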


\begin{proof} This follows by going through relevant definitions from \cite{Cno23} and \cite{HKK} and chasing through the equivalence of Proposition \ref{prop: proper vs parametrized}. The crucial part is to observe that the diagram 
\[\xymatrix{\Sp_{X}^{G/\Gamma} \ar[d]^{\simeq} \ar[r]^{(\Delta_X)_{!}} & \Sp_{X \times X}^{G/\Gamma} \ar[d]^{(\Delta_{G/\Gamma})_{!}}   \ar[r]^{{\pi_1}_*} & \Sp_{X}^{G/\Gamma} \ar[d]^{\simeq}  \\ \Sp^G_{\mathrm{prop}} \ar[r]^{(\Delta_G)_{!}} & \Sp^{G \times G}_{\mathrm{prop}}  \ar[r]^{{\pi_1}_*} & \Sp^G_{\mathrm{prop}}}\]
commutes. Here $(\Delta_{G/\Gamma})_{!}$ denotes the composite
\[\Sp_{X \times X}^{G/\Gamma} \xrightarrow{(\Delta_{G/\Gamma})_{!}} \Sp_{X \times X}^{G/\Gamma \times G/\Gamma } \simeq \Sp^{G \times G}_{\mathrm{prop}}. \]
The left square commutes by definition. The right square commutes by a version of the parametrized Wirthm\"uller isomorphism, which implies that the left adjoint $(\Delta_{G/\Gamma})_{!}$ is naturally isomorphic to the right adjoint $(\Delta_{G/\Gamma})_{*}$ since $G/\Gamma$ is finite \cite[Theorem 19.5.2]{MaySig}. 
\end{proof}

\begin{rem} Suppose that the conditions of Proposition \ref{prop: app B} are satisfied. The twisted ambidexterity result of \cite{Cno23} and \cite{HKK} applied to $X=\underline{E}G/\Gamma$ implies that the norm gives an equivalence of genuine $G/\Gamma$-spectra
\[\mathrm{Nm}_{X} \colon X_{!} (F \otimes_X D_X) \xrightarrow{\simeq} X_*(F).\]
Equivalently, this implies that we have an equivalence
\[\pi_{!}(Y \otimes D(\mathbb{S}_G)) \xrightarrow{\simeq} \pi_*(Y).\]
By writing this in terms of orbits and fixed points, we get an equivalence
\[(Y \otimes D(\mathbb{S}_G))/\Gamma \xrightarrow{\simeq} Y^{\Gamma}.\]
The latter can be constructed directly using observations similar to those in Remark \ref{rem: norm limit}. More generally, one has an equivalence
\[(Y \otimes D(Z))/\Gamma \xrightarrow{\simeq} \mathrm{map}(Z,Y)^{\Gamma},\]
for any $Z \in (\Sp^G_{\mathrm{prop}})^{\omega}$. This gives a unit map $\mathbb{S}_{G/\Gamma} \to (Z \otimes D(Z))/\Gamma$ in $\Sp^{G/\Gamma}$ for a compact $Z$, witnessing the Costenoble--Waner dualizability.  In particular, the map $\mathbb{S}_{G/\Gamma} \to (D(\mathbb{S}_G))/\Gamma$ recovers a Spivak datum in the sense of \cite{HKK}. In general, we do not expect to have such a unit map for the duality functor $D$. Our duality is not a Costenoble--Waner type duality in general. 
    
\end{rem}

\bibliographystyle{alpha}
\bibliography{bib}

@article{AndrewPatchkoria,
    AUTHOR = {Andrew, N. and Patchkoria, I.},
     TITLE = {On the {F}arrell--{T}ate {$K$}-theory of ${O}ut({F}_n)$},
      YEAR = {2024}, 
Note ={arXiv:2505.21803, to appear in Forum of Mathematics, Sigma},
}

@article {Ash89,
    AUTHOR = {Ash, A.},
     TITLE = {Farrell cohomology of {${\rm GL}(n,{\bf Z})$}},
   JOURNAL = {Israel J. Math.},
  FJOURNAL = {Israel Journal of Mathematics},
    VOLUME = {67},
      YEAR = {1989},
    NUMBER = {3},
     PAGES = {327--336},
      ISSN = {0021-2172},
   MRCLASS = {11E72 (20J05)},
}

@article {BDP,
    AUTHOR = {B\'arcenas, N. and Degrijse, D. and Patchkoria,
              I.},
     TITLE = {Stable finiteness properties of infinite discrete groups},
   JOURNAL = {J. Topol.},
  FJOURNAL = {Journal of Topology},
    VOLUME = {10},
      YEAR = {2017},
    NUMBER = {4},
     PAGES = {1169--1196},
      ISSN = {1753-8416,1753-8424},
   MRCLASS = {55P91 (20J05 55P42)},
  MRNUMBER = {3743073},
MRREVIEWER = {Samik\ Basu},
       DOI = {10.1112/topo.12035},
       URL = {https://doi.org/10.1112/topo.12035},
}

@article{BenMosche,
    AUTHOR = {Ben-Moshe, S.},
     TITLE = {Higher semiadditivity in transchromatic homotopy theory},
      YEAR = {2024}, 
JOURNAL={arXiv:2411.00968},
}

@article {BMCSY,
    AUTHOR = {Ben-Moshe, S. and Carmeli, S. and Schlank, T. M. and
              Yanovski, L.},
     TITLE = {Chromatic cardinalities via redshift},
   JOURNAL = {Int. Math. Res. Not. IMRN},
  FJOURNAL = {International Mathematics Research Notices. IMRN},
      YEAR = {2024},
    NUMBER = {14},
     PAGES = {10918--10924},
      ISSN = {1073-7928,1687-0247},
   MRCLASS = {18F25 (19D99 55P43)},
  MRNUMBER = {4776197},
MRREVIEWER = {Markus\ Szymik},
       DOI = {10.1093/imrn/rnae109},
       URL = {https://doi.org/10.1093/imrn/rnae109},
}

@article {BestvinaBrady,
    AUTHOR = {Bestvina, M. and Brady, N.},
     TITLE = {Morse theory and finiteness properties of groups},
   JOURNAL = {Invent. Math.},
  FJOURNAL = {Inventiones Mathematicae},
    VOLUME = {129},
      YEAR = {1997},
    NUMBER = {3},
     PAGES = {445--470},
      ISSN = {0020-9910,1432-1297},
   MRCLASS = {20F36 (20J05 57M07)},
  MRNUMBER = {1465330},
MRREVIEWER = {John\ Meier},
       DOI = {10.1007/s002220050168},
       URL = {https://doi.org/10.1007/s002220050168},
}

@book {Bieri,
    AUTHOR = {Bieri, R.},
     TITLE = {Homological dimension of discrete groups},
    SERIES = {Queen Mary College Mathematics Notes},
   EDITION = {Second},
 PUBLISHER = {Queen Mary College, Department of Pure Mathematics, London},
      YEAR = {1981},
     PAGES = {iv+198},
   MRCLASS = {20J05 (18G20 57P10)},
}

@article {BieEck,
    AUTHOR = {Bieri, R. and Eckmann, B.},
     TITLE = {Groups with homological duality generalizing {P}oincar\'e{}
              duality},
   JOURNAL = {Invent. Math.},
  FJOURNAL = {Inventiones Mathematicae},
    VOLUME = {20},
      YEAR = {1973},
     PAGES = {103--124},
      ISSN = {0020-9910,1432-1297},
   MRCLASS = {20J05},
  MRNUMBER = {340449},
MRREVIEWER = {L.\ Ribes},
       DOI = {10.1007/BF01404060},
       URL = {https://doi.org/10.1007/BF01404060},
}

@article {BorelSerre,
    AUTHOR = {Borel, A. and Serre, J.-P.},
     TITLE = {Corners and arithmetic groups},
   JOURNAL = {Comment. Math. Helv.},
  FJOURNAL = {Commentarii Mathematici Helvetici},
    VOLUME = {48},
      YEAR = {1973},
     PAGES = {436--491},
      ISSN = {0010-2571,1420-8946},
   MRCLASS = {22E40},
  MRNUMBER = {387495},
MRREVIEWER = {M.\ S.\ Raghunathan},
       DOI = {10.1007/BF02566134},
       URL = {https://doi.org/10.1007/BF02566134},
}

@article {Broughton90,
    AUTHOR = {Broughton, S. A.},
     TITLE = {The equisymmetric stratification of the moduli space and the
              {K}rull dimension of mapping class groups},
   JOURNAL = {Topology Appl.},
  FJOURNAL = {Topology and its Applications},
    VOLUME = {37},
      YEAR = {1990},
    NUMBER = {2},
     PAGES = {101--113},
      ISSN = {0166-8641,1879-3207},
   MRCLASS = {57N05 (16P60 32G15)},
}

@incollection {Broughton,
    AUTHOR = {Broughton, S. A.},
     TITLE = {Normalizers and centralizers of elementary abelian subgroups
              of the mapping class group},
 BOOKTITLE = {Topology '90 ({C}olumbus, {OH}, 1990)},
    SERIES = {Ohio State Univ. Math. Res. Inst. Publ.},
    VOLUME = {1},
     PAGES = {77--89},
 PUBLISHER = {de Gruyter, Berlin},
      YEAR = {1992},
      ISBN = {3-11-012598-6},
   MRCLASS = {57M07 (20F32)},}

@article {Brwon75,
    AUTHOR = {Brown, K. S.},
     TITLE = {Euler characteristics of groups: the {$p$}-fractional part},
   JOURNAL = {Invent. Math.},
  FJOURNAL = {Inventiones Mathematicae},
    VOLUME = {29},
      YEAR = {1975},
    NUMBER = {1},
     PAGES = {1--5},
      ISSN = {0020-9910,1432-1297},
   MRCLASS = {22E40 (12A70 20G10)},}

@book{Brown,
    AUTHOR = {Brown, K. S.},
     TITLE = {Cohomology of groups},
    SERIES = {Graduate Texts in Mathematics},
    VOLUME = {87},
      NOTE = {Corrected reprint of the 1982 original},
 PUBLISHER = {Springer-Verlag, New York},
      YEAR = {1994},
     PAGES = {x+306},
      ISBN = {0-387-90688-6},
   MRCLASS = {20J05 (20-02)},
  MRNUMBER = {1324339},
}

@article {Brown82,
    AUTHOR = {Brown, K. S.},
     TITLE = {Complete {E}uler characteristics and fixed-point theory},
   JOURNAL = {J. Pure Appl. Algebra},
  FJOURNAL = {Journal of Pure and Applied Algebra},
    VOLUME = {24},
      YEAR = {1982},
    NUMBER = {2},
     PAGES = {103--121},
      ISSN = {0022-4049},
   MRCLASS = {20J05 (18G99 55M20)},}

@article {Busch2002,
    AUTHOR = {Busch, C.},
     TITLE = {The {F}arrell cohomology of {${\rm Sp}(p-1,\Bbb Z)$}},
   JOURNAL = {Doc. Math.},
  FJOURNAL = {Documenta Mathematica},
    VOLUME = {7},
      YEAR = {2002},
     PAGES = {239--254},
      ISSN = {1431-0635,1431-0643},
   MRCLASS = {20G10},
  MRNUMBER = {1938122},
MRREVIEWER = {Daniel\ K.\ Nakano},
}

@article {CYcard,
    AUTHOR = {Carmeli, S. and Yuan, A.},
     TITLE = {Higher semiadditive {G}rothendieck-{W}itt theory and the
              {$K(1)$}-local sphere},
   JOURNAL = {Commun. Am. Math. Soc.},
  FJOURNAL = {Communications of the American Mathematical Society},
    VOLUME = {3},
      YEAR = {2023},
     PAGES = {65--111},
      ISSN = {2692-3688},
   MRCLASS = {55P42 (11E70)},
  MRNUMBER = {4564543},
       DOI = {10.1090/cams/17},
       URL = {https://doi.org/10.1090/cams/17},
}

@article {Carl,
    AUTHOR = {Carlitz, L.},
     TITLE = {Some congruences for the {B}ernoulli numbers},
   JOURNAL = {Amer. J. Math.},
  FJOURNAL = {American Journal of Mathematics},
    VOLUME = {75},
      YEAR = {1953},
     PAGES = {163--172},
      ISSN = {0002-9327,1080-6377},
   MRCLASS = {10.0X},}

@unpublished {CCRY,
    AUTHOR = {Carmeli, S. and Cnossen, B. and Ramzi, M. and Yanovski, L.},
     TITLE = {Characters and transfer maps via categorified traces},
YEAR = {2025},
Note = {arXiv:2210.17364v1, to appear in Forum of Mathematics, Sigma},
}

@article {Clausen,
    AUTHOR = {Clausen, T.},
     TITLE = {Theorem},
   JOURNAL = {Astron. Nachr.},
  FJOURNAL = { Astronomische Nachrichten},
    VOLUME = {17},
      YEAR = {1840},
     PAGES = {351--352},
     NUMBER = {22},
      ISSN = {1465-3060},}

@article{Cno23,
    AUTHOR = {Cnossen, B.},
     TITLE = {Twisted ambidexterity in equivariant homotopy theory},
      YEAR = {2023}, 
note={arXiv:2303.00736, to appear in Journal of Topology},
}

@article {CLL,
    AUTHOR = {Cnossen, B. and Lenz, T. and Linskens, S.},
     TITLE = {The {A}dams isomorphism revisited},
   JOURNAL = {Math. Z.},
  FJOURNAL = {Mathematische Zeitschrift},
    VOLUME = {308},
      YEAR = {2024},
    NUMBER = {2},
     PAGES = {Paper No. 33, 32},
      ISSN = {0025-5874,1432-1823},
   MRCLASS = {18N60 (18A25 18N40 19K35 19L20 55P91)},
  MRNUMBER = {4798463},
MRREVIEWER = {Daniel\ Dugger},
       DOI = {10.1007/s00209-024-03582-w},
       URL = {https://doi.org/10.1007/s00209-024-03582-w},
}

@article {CLL1,
    AUTHOR = {Cnossen, B. and Lenz, T. and Linskens, S.},
     TITLE = {Parametrized stability and the universal property of global
              spectra},
   JOURNAL = {J. Topol.},
  FJOURNAL = {Journal of Topology},
    VOLUME = {18},
      YEAR = {2025},
    NUMBER = {4},
     PAGES = {Paper No. e70044, 119},
      ISSN = {1753-8416,1753-8424},
   MRCLASS = {55U35 (55P42 55P91)},
  MRNUMBER = {4990070},
MRREVIEWER = {David\ A.\ Blanc},
       DOI = {10.1112/topo.70044},
       URL = {https://doi.org/10.1112/topo.70044},
}

@book {Cohen,
    AUTHOR = {Cohen, H.},
     TITLE = {Number theory. {V}ol. {II}. {A}nalytic and modern tools},
    SERIES = {Graduate Texts in Mathematics},
    VOLUME = {240},
 PUBLISHER = {Springer, New York},
      YEAR = {2007},
     PAGES = {xxiv+596},
      ISBN = {978-0-387-49893-5},
   MRCLASS = {11-01 (11D61 11F80 11J86 11Mxx)},}

@book {CosWan,
    AUTHOR = {Costenoble, S. R. and Waner, S.},
     TITLE = {Equivariant ordinary homology and cohomology},
    SERIES = {Lecture Notes in Mathematics},
    VOLUME = {2178},
 PUBLISHER = {Springer, Cham},
      YEAR = {2016},
     PAGES = {xiv+294},
      ISBN = {978-3-319-50447-6; 978-3-319-50448-3},
   MRCLASS = {55N91 (55N25 55P42 55P91 55R91 57R91)},
  MRNUMBER = {3585352},
MRREVIEWER = {J.\ P. C. Greenlees},
       DOI = {10.1007/978-3-319-50448-3},
       URL = {https://doi.org/10.1007/978-3-319-50448-3},
}

@book {Dav,
    AUTHOR = {Davis, M. W.},
     TITLE = {The geometry and topology of {C}oxeter groups},
    SERIES = {London Mathematical Society Monographs Series},
    VOLUME = {32},
 PUBLISHER = {Princeton University Press, Princeton, NJ},
      YEAR = {2008},
     PAGES = {xvi+584},
      ISBN = {978-0-691-13138-2; 0-691-13138-4},
   MRCLASS = {20F55 (05B45 05C25 51-02 57M07)},}

@article {DL98,
    AUTHOR = {Davis, J. F. and Lück, W.},
     TITLE = {Spaces over a category and assembly maps in isomorphism
              conjectures in {$K$}- and {$L$}-theory},
   JOURNAL = {$K$-Theory},
  FJOURNAL = {$K$-Theory. An Interdisciplinary Journal for the Development,
              Application, and Influence of $K$-Theory in the Mathematical
              Sciences},
    VOLUME = {15},
      YEAR = {1998},
    NUMBER = {3},
     PAGES = {201--252},
      ISSN = {0920-3036,1573-0514},
   MRCLASS = {55N20 (19D10)},
  MRNUMBER = {1659969},
MRREVIEWER = {A.\ A.\ Ranicki},
       DOI = {10.1023/A:1007784106877},
       URL = {https://doi.org/10.1023/A:1007784106877},
}

@article {DHLPS,
    AUTHOR = {Degrijse, D. and Hausmann, M. and Lück, W. and
              Patchkoria, I. and Schwede, S.},
     TITLE = {Proper equivariant stable homotopy theory},
   JOURNAL = {Mem. Amer. Math. Soc.},
  FJOURNAL = {Memoirs of the American Mathematical Society},
    VOLUME = {288},
      YEAR = {2023},
    NUMBER = {1432},
     PAGES = {vi+142},
      ISSN = {0065-9266,1947-6221},
      ISBN = {978-1-4704-6704-3; 978-1-4704-7574-1},
   MRCLASS = {55P91},
  MRNUMBER = {4627088},
MRREVIEWER = {David\ Barnes},
       DOI = {10.1090/memo/1432},
       URL = {https://doi.org/10.1090/memo/1432},
}

@article {Elm,
    AUTHOR = {Elmendorf, A. D.},
     TITLE = {Systems of fixed point sets},
   JOURNAL = {Trans. Amer. Math. Soc.},
  FJOURNAL = {Transactions of the American Mathematical Society},
    VOLUME = {277},
      YEAR = {1983},
    NUMBER = {1},
     PAGES = {275--284},
      ISSN = {0002-9947,1088-6850},
   MRCLASS = {57S99 (55N25)},
  MRNUMBER = {690052},
MRREVIEWER = {J.\ P.\ May},
       DOI = {10.2307/1999356},
       URL = {https://doi.org/10.2307/1999356},
}

@book {FarbMarg,
    AUTHOR = {Farb, B. and Margalit, D.},
     TITLE = {A primer on mapping class groups},
    SERIES = {Princeton Mathematical Series},
    VOLUME = {49},
 PUBLISHER = {Princeton University Press, Princeton, NJ},
      YEAR = {2012},
     PAGES = {xiv+472},
      ISBN = {978-0-691-14794-9},
   MRCLASS = {57M50 (20F36 20F65 57M07 57N05)},}

@article {Far78,
    AUTHOR = {Farrell, F. T.},
     TITLE = {An extension of {T}ate cohomology to a class of infinite
              groups},
   JOURNAL = {J. Pure Appl. Algebra},
  FJOURNAL = {Journal of Pure and Applied Algebra},
    VOLUME = {10},
      YEAR = {1977/78},
    NUMBER = {2},
     PAGES = {153--161},
      ISSN = {0022-4049,1873-1376},
   MRCLASS = {20J05 (18H10 22E40)},}

@article {FR,
    AUTHOR = {Floyd, E. E. and Richardson, R. W.},
     TITLE = {An action of a finite group on an {$n$}-cell without
              stationary points},
   JOURNAL = {Bull. Amer. Math. Soc.},
  FJOURNAL = {Bulletin of the American Mathematical Society},
    VOLUME = {65},
      YEAR = {1959},
     PAGES = {73--76},
      ISSN = {0002-9904},
   MRCLASS = {55.00},
  MRNUMBER = {100848},
MRREVIEWER = {P.\ A.\ Smith},
       DOI = {10.1090/S0002-9904-1959-10282-2},
       URL = {https://doi.org/10.1090/S0002-9904-1959-10282-2},
}

@article {GM95,
    AUTHOR = {Greenlees, J. P. C. and May, J. P.},
     TITLE = {Generalized {T}ate cohomology},
   JOURNAL = {Mem. Amer. Math. Soc.},
  FJOURNAL = {Memoirs of the American Mathematical Society},
    VOLUME = {113},
      YEAR = {1995},
    NUMBER = {543},
     PAGES = {viii+178},
      ISSN = {0065-9266,1947-6221},
   MRCLASS = {55N15 (19L47 55P42 55Q91 55T25)},
  MRNUMBER = {1230773},
MRREVIEWER = {V.\ P.\ Snaith},
       DOI = {10.1090/memo/0543},
       URL = {https://doi.org/10.1090/memo/0543},
}

@article {GS96,
    AUTHOR = {Greenlees, J. P. C. and Sadofsky, H.},
     TITLE = {The {T}ate spectrum of {$v_n$}-periodic complex oriented
              theories},
   JOURNAL = {Math. Z.},
  FJOURNAL = {Mathematische Zeitschrift},
    VOLUME = {222},
      YEAR = {1996},
    NUMBER = {3},
     PAGES = {391--405},
      ISSN = {0025-5874,1432-1823},
   MRCLASS = {55N22},
  MRNUMBER = {1400199},
MRREVIEWER = {Mark\ Hovey},
       DOI = {10.1007/PL00004264},
       URL = {https://doi.org/10.1007/PL00004264},
}

@article {Hard,
    AUTHOR = {Harder, G.},
     TITLE = {A {G}auss-{B}onnet formula for discrete arithmetically defined
              groups},
   JOURNAL = {Ann. Sci. \'{E}c. Norm. Sup\'{e}r. (4)},
  FJOURNAL = {Annales Scientifiques de l'\'{E}cole Normale Sup\'{e}rieure. Quatri\`eme
              S\'{e}rie},
    VOLUME = {4},
      YEAR = {1971},
     PAGES = {409--455},
      ISSN = {0012-9593},
   MRCLASS = {20H10 (10D25 22E40 53C30 57F15)},}

@article {Har,
    AUTHOR = {Harer, J. L.},
     TITLE = {The virtual cohomological dimension of the mapping class group
              of an orientable surface},
   JOURNAL = {Invent. Math.},
  FJOURNAL = {Inventiones Mathematicae},
    VOLUME = {84},
      YEAR = {1986},
    NUMBER = {1},
     PAGES = {157--176},
      ISSN = {0020-9910,1432-1297},
   MRCLASS = {32G15 (20F38 57N05)},}

@article {HZ,
    AUTHOR = {Harer, J. and Zagier, D.},
     TITLE = {The {E}uler characteristic of the moduli space of curves},
   JOURNAL = {Invent. Math.},
  FJOURNAL = {Inventiones Mathematicae},
    VOLUME = {85},
      YEAR = {1986},
    NUMBER = {3},
     PAGES = {457--485},
      ISSN = {0020-9910},
   MRCLASS = {32G15 (14H15 57R20)},}

@book {Hirz,
    AUTHOR = {Hirzebruch, F.},
     TITLE = {Hilbert modular surfaces},
    SERIES = {S\'{e}rie des Conf\'{e}rences de l'Union Math\'{e}matique Internationale,
              No. 4},
 PUBLISHER = {Universit\'{e} de Gen\`eve, Secr\'{e}tariat de l'Enseignement
              Math\'{e}matique, Geneva},
      YEAR = {1973},
     PAGES = {103},
   MRCLASS = {14J15 (10D20 14J25)},}

@article {HKR,
    AUTHOR = {Hopkins, M. J. and Kuhn, N. J. and Ravenel, D.
              C.},
     TITLE = {Generalized group characters and complex oriented cohomology
              theories},
   JOURNAL = {J. Amer. Math. Soc.},
  FJOURNAL = {Journal of the American Mathematical Society},
    VOLUME = {13},
      YEAR = {2000},
    NUMBER = {3},
     PAGES = {553--594},
      ISSN = {0894-0347,1088-6834},
   MRCLASS = {55N22 (55N20 55N91 55R40)},
  MRNUMBER = {1758754},
MRREVIEWER = {Andrew\ J.\ Baker},
       DOI = {10.1090/S0894-0347-00-00332-5},
       URL = {https://doi.org/10.1090/S0894-0347-00-00332-5},
}

@unpublished {HL13,
    AUTHOR = {Hopkins, M. J. and Lurie, J.},
     TITLE = {Ambidexterity in {$K(n)$}-local stable homotopy theory.},
      YEAR = {2013},
      Note = {https://www.math.ias.edu/~lurie/papers/Ambidexterity.pdf},
}

@article {HS96,
    AUTHOR = {Hovey, M. and Sadofsky, H.},
     TITLE = {Tate cohomology lowers chromatic {B}ousfield classes},
   JOURNAL = {Proc. Amer. Math. Soc.},
  FJOURNAL = {Proceedings of the American Mathematical Society},
    VOLUME = {124},
      YEAR = {1996},
    NUMBER = {11},
     PAGES = {3579--3585},
      ISSN = {0002-9939,1088-6826},
   MRCLASS = {55P60 (55P42 55P91)},
  MRNUMBER = {1343699},
MRREVIEWER = {Richard\ John\ Steiner},
       DOI = {10.1090/S0002-9939-96-03495-8},
       URL = {https://doi.org/10.1090/S0002-9939-96-03495-8},
}

@article {Ji2007,
    AUTHOR = {Ji, L.},
     TITLE = {Integral {N}ovikov conjectures and arithmetic groups
              containing torsion elements},
   JOURNAL = {Comm. Anal. Geom.},
  FJOURNAL = {Communications in Analysis and Geometry},
    VOLUME = {15},
      YEAR = {2007},
    NUMBER = {3},
     PAGES = {509--533},
      ISSN = {1019-8385,1944-9992},
   MRCLASS = {22E40 (57R19)},
  MRNUMBER = {2379803},
MRREVIEWER = {Jean-Louis\ Tu},
       URL = {http://projecteuclid.org/euclid.cag/1201269327},
}

@incollection {JiWolpert2010,
    AUTHOR = {Ji, L. and Wolpert, S. A.},
     TITLE = {A cofinite universal space for proper actions for mapping
              class groups},
 BOOKTITLE = {In the tradition of {A}hlfors-{B}ers. {V}},
    SERIES = {Contemp. Math.},
    VOLUME = {510},
     PAGES = {151--163},
 PUBLISHER = {Amer. Math. Soc., Providence, RI},
      YEAR = {2010},
      ISBN = {978-0-8218-4732-9},
   MRCLASS = {57M07 (20F65)},
}

@article {Klein,
    AUTHOR = {Klein, J. R.},
     TITLE = {The dualizing spectrum of a topological group},
   JOURNAL = {Math. Ann.},
  FJOURNAL = {Mathematische Annalen},
    VOLUME = {319},
      YEAR = {2001},
    NUMBER = {3},
     PAGES = {421--456},
      ISSN = {0025-5831,1432-1807},
   MRCLASS = {55P91 (20J05 55N91 55P42 57P10)},
  MRNUMBER = {1819876},
MRREVIEWER = {Richard\ John\ Steiner},
       DOI = {10.1007/PL00004441},
       URL = {https://doi.org/10.1007/PL00004441},
}

@article {Klein02,
    AUTHOR = {Klein, J. R.},
     TITLE = {Axioms for generalized {F}arrell-{T}ate cohomology},
   JOURNAL = {J. Pure Appl. Algebra},
  FJOURNAL = {Journal of Pure and Applied Algebra},
    VOLUME = {172},
      YEAR = {2002},
    NUMBER = {2-3},
     PAGES = {225--238},
      ISSN = {0022-4049,1873-1376},
   MRCLASS = {55N35 (20J06 55P91)},
  MRNUMBER = {1906876},
MRREVIEWER = {Teimuraz\ Pirashvili},
       DOI = {10.1016/S0022-4049(01)00151-7},
       URL = {https://doi.org/10.1016/S0022-4049(01)00151-7},
}

@unpublished {HKK,
    AUTHOR = {Hilman, K. and Kirstein, D. and Kremer, C.},
     TITLE = {Parametrised {P}oincar\'e duality and equivariant fixed points methods},
      YEAR = {2024},
      Note = {arXiv:2405.17641},
}

@unpublished {BDGNS,
    AUTHOR = {Barwick, C. and Dotto, E. and Glasman, S. and Nardin, D. and Shah, J.},
     TITLE = {Parametrized higher category theory and higher algebra: Expos\'e {I} – Elements of
parametrized higher category theory},
      YEAR = {2016},
      Note = {arXiv:1608.03657},
}

@unpublished {Nardin,
    AUTHOR = {Nardin, D.},
     TITLE = {Parametrized higher category theory and higher algebra: Expos\'e {IV} –
{S}tability with respect to an orbital {$\infty$}-category},
      YEAR = {2016},
      Note = {arXiv:1608.07704},
}

@article{Kumregular,
 Author = {Kummer, E. E.},
 TITLE = {Allgemeiner {Beweis} des {Fermatschen} {Satzes}, da{{\ss}} die {Gleichung} {{\(x^\lambda+y^\lambda=z^\lambda\)}} durch ganze Zahlen unl{\"o}sbar ist, f{\"u}r alle diejenigen Potenz-Exponenten {{\(\lambda\)}}, welche ungerade Primzahlen sind und in den Z{\"a}hlern der ersten {{\(\frac 1{2}(\lambda-3)\)}} {Bernoullischen} {Zahlen} als {Factoren} nicht vorkommen.},
 FJOURNAL = {Journal f{\"u}r die Reine und Angewandte Mathematik},
 JOURNAL= {J. Reine Angew. Math.},
 ISSN = {0075-4102},
 VOLUME = {40},
 PAGES = {131--138},
 YEAR = {1850},}

@article{Kummercong,
 AUTHOR = {Kummer, E. E.},
 TITLE = {{\"U}ber eine allgemeine {Eigenschaft} der rationalen {Entwicklungsco{\"e}fficienten} einer bestimmten {Gattung} analytischer {Functionen}.},
 FJOURNAL = {Journal f{\"u}r die Reine und Angewandte Mathematik},
 JOURNAL = {J. Reine Angew. Math.},
 ISSN = {0075-4102},
 VOLUME = {41},
 PAGES = {368--372},
 YEAR = {1851},}

@article {LearyNucinkis,
    AUTHOR = {Leary, I. J. and Nucinkis, B. E. A.},
     TITLE = {Some groups of type {$VF$}},
   JOURNAL = {Invent. Math.},
  FJOURNAL = {Inventiones Mathematicae},
    VOLUME = {151},
      YEAR = {2003},
    NUMBER = {1},
     PAGES = {135--165},
      ISSN = {0020-9910,1432-1297},
   MRCLASS = {20J05 (20F34)},
  MRNUMBER = {1943744},
MRREVIEWER = {David\ Meier},
       DOI = {10.1007/s00222-002-0254-7},
       URL = {https://doi.org/10.1007/s00222-002-0254-7},
}

@unpublished {Li,
    AUTHOR = {Li, Y.},
     TITLE = {{C}ardinalities in Height 1},
      YEAR = {2025},
      Note = {Master thesis, University of Copenhagen, arXiv:2505.09150},
}

@incollection {LuckSurvey,
    AUTHOR = {Lück, W.},
     TITLE = {Survey on classifying spaces for families of subgroups},
 BOOKTITLE = {Infinite groups: geometric, combinatorial and dynamical
              aspects},
    SERIES = {Progr. Math.},
    VOLUME = {248},
     PAGES = {269--322},
 PUBLISHER = {Birkh\"auser, Basel},
      YEAR = {2005},
      ISBN = {978-3-7643-7446-4; 3-7643-7446-2},
   MRCLASS = {55R35 (19D55 19G24 20F65 57S25)},
  MRNUMBER = {2195456},
MRREVIEWER = {R.\ M.\ Vogt},
       DOI = {10.1007/3-7643-7447-0\_7},
       URL = {https://doi.org/10.1007/3-7643-7447-0_7},
}

@article {LO01,
    AUTHOR = {Lück, W. and Oliver, B.},
     TITLE = {The completion theorem in {$K$}-theory for proper actions of a
              discrete group},
   JOURNAL = {Topology},
  FJOURNAL = {Topology. An International Journal of Mathematics},
    VOLUME = {40},
      YEAR = {2001},
    NUMBER = {3},
     PAGES = {585--616},
      ISSN = {0040-9383},
   MRCLASS = {19L47 (55N15 55N91)},
  MRNUMBER = {1838997},
MRREVIEWER = {Kazuhisa\ Shimakawa},
       DOI = {10.1016/S0040-9383(99)00077-4},
       URL = {https://doi.org/10.1016/S0040-9383(99)00077-4},
}

@article {Luckfinite,
    AUTHOR = {Lück, W.},
     TITLE = {The type of the classifying space for a family of subgroups},
   JOURNAL = {J. Pure Appl. Algebra},
  FJOURNAL = {Journal of Pure and Applied Algebra},
    VOLUME = {149},
      YEAR = {2000},
    NUMBER = {2},
     PAGES = {177--203},
      ISSN = {0022-4049,1873-1376},
   MRCLASS = {55R35 (57S30)},
  MRNUMBER = {1757730},
MRREVIEWER = {Allan\ Edmonds},
       DOI = {10.1016/S0022-4049(98)90173-6},
       URL = {https://doi.org/10.1016/S0022-4049(98)90173-6},
}

@article {Mislin2010,
    AUTHOR = {Mislin, G.},
     TITLE = {Classifying spaces for proper actions of mapping class groups},
   JOURNAL = {M\"{u}nster J. Math.},
  FJOURNAL = {M\"{u}nster Journal of Mathematics},
    VOLUME = {3},
      YEAR = {2010},
     PAGES = {263--272},
      ISSN = {1867-5778,1867-5786},
   MRCLASS = {55R35 (20F38 57M99)},
}

@book {MaySig,
    AUTHOR = {May, J. P. and Sigurdsson, J.},
     TITLE = {Parametrized homotopy theory},
    SERIES = {Mathematical Surveys and Monographs},
    VOLUME = {132},
 PUBLISHER = {American Mathematical Society, Providence, RI},
      YEAR = {2006},
     PAGES = {x+441},
      ISBN = {978-0-8218-3922-5; 0-8218-3922-5},
   MRCLASS = {55P42 (19L99 55N20 55N22 55P91)},
  MRNUMBER = {2271789},
MRREVIEWER = {A.\ A.\ Ranicki},
       DOI = {10.1090/surv/132},
       URL = {https://doi.org/10.1090/surv/132},
}

@article {LevittNicolas98,
    AUTHOR = {Levitt, G. and Nicolas, J-L.},
     TITLE = {On the maximum order of torsion elements in {${\rm GL}(n,{\bf
              Z})$} and {${\rm Aut}(F_n)$}},
   JOURNAL = {J. Algebra},
  FJOURNAL = {Journal of Algebra},
    VOLUME = {208},
      YEAR = {1998},
    NUMBER = {2},
     PAGES = {630--642},
      ISSN = {0021-8693,1090-266X},
   MRCLASS = {20G30 (20F28)},
  MRNUMBER = {1655470},
MRREVIEWER = {Bruno\ P.\ Zimmermann},
       DOI = {10.1006/jabr.1998.7481},
       URL = {https://doi.org/10.1006/jabr.1998.7481},
}

@book {LMS,
    AUTHOR = {Lewis, Jr., L. G. and May, J. P. and Steinberger, M. and
              McClure, J. E.},
     TITLE = {Equivariant stable homotopy theory},
    SERIES = {Lecture Notes in Mathematics},
    VOLUME = {1213},
      NOTE = {With contributions by J. E. McClure},
 PUBLISHER = {Springer-Verlag, Berlin},
      YEAR = {1986},
     PAGES = {x+538},
      ISBN = {3-540-16820-6},
   MRCLASS = {55-02 (55Nxx 55Pxx 57S99)},
  MRNUMBER = {866482},
MRREVIEWER = {T.\ tom Dieck},
       DOI = {10.1007/BFb0075778},
       URL = {https://doi.org/10.1007/BFb0075778},
}

@article {LNP,
    AUTHOR = {Linskens, S. and Nardin, D. and Pol, L.},
     TITLE = {Global homotopy theory via partially lax limits},
   JOURNAL = {Geom. Topol.},
  FJOURNAL = {Geometry \& Topology},
    VOLUME = {29},
      YEAR = {2025},
    NUMBER = {3},
     PAGES = {1345--1440},
      ISSN = {1465-3060,1364-0380},
   MRCLASS = {55N91 (18N70 55P91)},
  MRNUMBER = {4918109},
MRREVIEWER = {Hemant\ Kumar\ Singh},
       DOI = {10.2140/gt.2025.29.1345},
       URL = {https://doi.org/10.2140/gt.2025.29.1345},
}

@unpublished {LPS2024,
    AUTHOR = {Lück, W. and Patchkoria, I. and Schwede, S.},
     TITLE = {{C}haracter theory and {E}uler characteristic for orbispaces and infinite groups},
      YEAR = {2024},
      Note = {arXiv:2410.14510},
}

@article {Lu2001,
    AUTHOR = {Lu, Q.},
     TITLE = {Periodicity of the punctured mapping class group},
   JOURNAL = {J. Pure Appl. Algebra},
  FJOURNAL = {Journal of Pure and Applied Algebra},
    VOLUME = {155},
      YEAR = {2001},
    NUMBER = {2-3},
     PAGES = {211--235},
      ISSN = {0022-4049,1873-1376},
   MRCLASS = {57M60 (20J15 57M20)},
  MRNUMBER = {1801416},
MRREVIEWER = {Denis\ Sjerve},
       DOI = {10.1016/S0022-4049(99)00112-7},
       URL = {https://doi.org/10.1016/S0022-4049(99)00112-7},
}

@book {LurieHTT,
    AUTHOR = {Lurie, J.},
     TITLE = {Higher topos theory},
    SERIES = {Annals of Mathematics Studies},
    VOLUME = {170},
 PUBLISHER = {Princeton University Press, Princeton, NJ},
      YEAR = {2009},
     PAGES = {xviii+925},
      ISBN = {978-0-691-14049-0; 0-691-14049-9},
   MRCLASS = {18-02 (18B25 18E35 18G30 18G55 55U40)},
  MRNUMBER = {2522659},
MRREVIEWER = {Mark\ Hovey},
       DOI = {10.1515/9781400830558},
       URL = {https://doi.org/10.1515/9781400830558},
}

@unpublished {LurieEl,
    AUTHOR = {Lurie, J.},
     TITLE = {Elliptic cohomology III: Tempered cohomology},
      YEAR = {2019},
      Note = {https://www.math.ias.edu/~lurie/papers/Elliptic-III-Tempered.pdf},
}

@book {LL2,
    AUTHOR = {Lück, W.},
     TITLE = {{$L^2$}-invariants: theory and applications to geometry and
              {$K$}-theory},
    SERIES = {Ergebnisse der Mathematik und ihrer Grenzgebiete. 3. Folge. A
              Series of Modern Surveys in Mathematics [Results in
              Mathematics and Related Areas. 3rd Series. A Series of Modern
              Surveys in Mathematics]},
    VOLUME = {44},
 PUBLISHER = {Springer-Verlag, Berlin},
      YEAR = {2002},
     PAGES = {xvi+595},
      ISBN = {3-540-43566-2},
   MRCLASS = {58J22 (19K56 46L80 57Q10 57R20 58J52)},
  MRNUMBER = {1926649},
MRREVIEWER = {Thomas\ Schick},
       DOI = {10.1007/978-3-662-04687-6},
       URL = {https://doi.org/10.1007/978-3-662-04687-6},
}

@unpublished {Martini,
    AUTHOR = {Martini, L.},
     TITLE = {Cocartesian fibrations and straightening internal to an {$\infty$}-topos},
      YEAR = {2022},
      Note = {arXiv:2204.00295},
}

@unpublished {MW1,
    AUTHOR = {Martini, L. and Wolf, S.},
     TITLE = {Presentability and topoi in internal higher category theory},
      YEAR = {2022},
      Note = {arXiv:2209.05103},
}

@article {MW2,
    AUTHOR = {Martini, L. and Wolf, S.},
     TITLE = {Colimits and cocompletions in internal higher category theory},
   JOURNAL = {High. Struct.},
  FJOURNAL = {Higher Structures},
    VOLUME = {8},
      YEAR = {2024},
    NUMBER = {1},
     PAGES = {97--192},
      ISSN = {2209-0606},
   MRCLASS = {18Nxx (18Axx 18B25 18F20 55Pxx)},
  MRNUMBER = {4752519},
MRREVIEWER = {Tobias\ Lenz},
       DOI = {10.21136/HS.2024.03},
       URL = {https://doi.org/10.21136/HS.2024.03},
}

@article {Nielsen,
    AUTHOR = {Nielsen, J.},
     TITLE = {Abbildungsklassen endlicher {O}rdnung},
   JOURNAL = {Acta Math.},
  FJOURNAL = {Acta Mathematica},
    VOLUME = {75},
      YEAR = {1943},
     PAGES = {23--115},
      ISSN = {0001-5962,1871-2509},
   MRCLASS = {56.0X},}

@article {NS,
    AUTHOR = {Nikolaus, T. and Scholze, P.},
     TITLE = {On topological cyclic homology},
   JOURNAL = {Acta Math.},
  FJOURNAL = {Acta Mathematica},
    VOLUME = {221},
      YEAR = {2018},
    NUMBER = {2},
     PAGES = {203--409},
      ISSN = {0001-5962,1871-2509},
   MRCLASS = {55U35 (16E40 18E30 19D99)},
  MRNUMBER = {3904731},
MRREVIEWER = {Geoffrey\ M. L. Powell},
       DOI = {10.4310/ACTA.2018.v221.n2.a1},
       URL = {https://doi.org/10.4310/ACTA.2018.v221.n2.a1},
}

@article {CSYambheight,
    AUTHOR = {Carmeli, S. and Schlank, T. M. and Yanovski, L.},
     TITLE = {Ambidexterity and height},
   JOURNAL = {Adv. Math.},
  FJOURNAL = {Advances in Mathematics},
    VOLUME = {385},
      YEAR = {2021},
     PAGES = {Paper No. 107763, 90},
      ISSN = {0001-8708,1090-2082},
   MRCLASS = {18N60 (55U35)},
  MRNUMBER = {4246977},
       DOI = {10.1016/j.aim.2021.107763},
       URL = {https://doi.org/10.1016/j.aim.2021.107763},
}

@article {CSYambidexterity,
    AUTHOR = {Carmeli, S. and Schlank, T. M. and Yanovski, L.},
     TITLE = {Ambidexterity in chromatic homotopy theory},
   JOURNAL = {Invent. Math.},
  FJOURNAL = {Inventiones Mathematicae},
    VOLUME = {228},
      YEAR = {2022},
    NUMBER = {3},
     PAGES = {1145--1254},
      ISSN = {0020-9910,1432-1297},
   MRCLASS = {55P43 (55P60)},
  MRNUMBER = {4419631},
MRREVIEWER = {Lennart\ Meier},
       DOI = {10.1007/s00222-022-01099-9},
       URL = {https://doi.org/10.1007/s00222-022-01099-9},
}

@unpublished {P24,
    AUTHOR = {Patchkoria, I.},
     TITLE = {Chromatic congruences and Bernoulli numbers},
      YEAR = {2024},
      Note = {arXiv:2406.17705},
}

@article {Prestel,
    AUTHOR = {Prestel, A.},
     TITLE = {Die {F}ixpunkte der symmetrischen {H}ilbertschen {M}odulgruppe
              zu einem reell-quadratischen {Z}ahlk\"{o}rper mit
              {P}rimzahldiskriminante},
   JOURNAL = {Math. Ann.},
  FJOURNAL = {Mathematische Annalen},
    VOLUME = {200},
      YEAR = {1973},
     PAGES = {123--139},
      ISSN = {0025-5831,1432-1807},
   MRCLASS = {10D05},}

@article {RV16,
    AUTHOR = {Reich, H. and Varisco, M.},
     TITLE = {On the {A}dams isomorphism for equivariant orthogonal spectra},
   JOURNAL = {Algebr. Geom. Topol.},
  FJOURNAL = {Algebraic \& Geometric Topology},
    VOLUME = {16},
      YEAR = {2016},
    NUMBER = {3},
     PAGES = {1493--1566},
      ISSN = {1472-2747,1472-2739},}

@book {Serre,
    AUTHOR = {Serre, J.-P.},
     TITLE = {Trees},
    SERIES = {Springer Monographs in Mathematics},
      NOTE = {Translated from the French original by John Stillwell,
              Corrected 2nd printing of the 1980 English translation},
 PUBLISHER = {Springer-Verlag, Berlin},
      YEAR = {2003},
     PAGES = {x+142},
      ISBN = {3-540-44237-5},
   MRCLASS = {20E08 (05C05 20E06 20G25)},
  MRNUMBER = {1954121},
}

@incollection {Serre1,
    AUTHOR = {Serre, J.-P.},
     TITLE = {Cohomologie des groupes discrets},
 BOOKTITLE = {S\'{e}minaire {B}ourbaki, 23\`eme ann\'{e}e (1970/1971), {E}xp. {N}o.
              399},
     PAGES = {337--350. Lecture Notes in Math., Vol. 244},
      YEAR = {1971},
   MRCLASS = {22E40},}

@article {Shah,
    AUTHOR = {Shah, J.},
     TITLE = {Parametrized higher category theory},
   JOURNAL = {Algebr. Geom. Topol.},
  FJOURNAL = {Algebraic \& Geometric Topology},
    VOLUME = {23},
      YEAR = {2023},
    NUMBER = {2},
     PAGES = {509--644},
      ISSN = {1472-2747,1472-2739},
   MRCLASS = {55U35 (55U10 55U40)},
  MRNUMBER = {4587313},
MRREVIEWER = {Philippe\ Gaucher},
       DOI = {10.2140/agt.2023.23.509},
       URL = {https://doi.org/10.2140/agt.2023.23.509},
}

@article {SjerveYang97,
    AUTHOR = {Sjerve, D. and Yang, Q.},
     TITLE = {Conjugacy classes of {$p$}-torsion in {${\rm Sp}_{p-1}(\bold
              Z)$}},
   JOURNAL = {J. Algebra},
  FJOURNAL = {Journal of Algebra},
    VOLUME = {195},
      YEAR = {1997},
    NUMBER = {2},
     PAGES = {580--603},
      ISSN = {0021-8693,1090-266X},
   MRCLASS = {20G30 (11R18 30F10)},
  MRNUMBER = {1469641},
MRREVIEWER = {Naohisa\ Shimomura},
       DOI = {10.1006/jabr.1997.7084},
       URL = {https://doi.org/10.1006/jabr.1997.7084},
}

@article{Staudt,
 AUTHOR = {Staudt, K. G. C.},
 TITLE = {Beweis eines {Lehrsatzes}, die {Bernoullischen} {Zahlen} betreffend.},
 FJOURNAL = {Journal f{\"u}r die Reine und Angewandte Mathematik},
 JOURNAL = {J. Reine Angew. Math.},
 ISSN = {0075-4102},
 VOLUME = {21},
 PAGES = {372--374},
 YEAR = {1840},}

@article {Brown74,
    AUTHOR = {Brown, K. S.},
     TITLE = {Euler characteristics of discrete groups and {$G$}-spaces},
   JOURNAL = {Invent. Math.},
  FJOURNAL = {Inventiones Mathematicae},
    VOLUME = {27},
      YEAR = {1974},
     PAGES = {229--264},
      ISSN = {0020-9910,1432-1297},
   MRCLASS = {22E40 (12A70 20G10 57D20)},
  MRNUMBER = {385007},
MRREVIEWER = {H.\ Bass},
       DOI = {10.1007/BF01390176},
       URL = {https://doi.org/10.1007/BF01390176},
}

@article {Stap,
    AUTHOR = {Stapleton, N.},
     TITLE = {Transchromatic generalized character maps},
   JOURNAL = {Algebr. Geom. Topol.},
  FJOURNAL = {Algebraic \& Geometric Topology},
    VOLUME = {13},
      YEAR = {2013},
    NUMBER = {1},
     PAGES = {171--203},
      ISSN = {1472-2747},
   MRCLASS = {55N20 (55N91)},
}

@book {tomD,
    AUTHOR = {tom Dieck, T.},
     TITLE = {Transformation groups},
    SERIES = {De Gruyter Studies in Mathematics},
    VOLUME = {8},
 PUBLISHER = {Walter de Gruyter \& Co., Berlin},
      YEAR = {1987},
     PAGES = {x+312},
      ISBN = {3-11-009745-1},
   MRCLASS = {57Sxx (57-02)},
  MRNUMBER = {889050},
MRREVIEWER = {Shmuel\ Weinberger},
       DOI = {10.1515/9783110858372.312},
       URL = {https://doi.org/10.1515/9783110858372.312},
}

@article {Wall,
    AUTHOR = {Wall, C. T. C.},
     TITLE = {Rational {E}uler characteristics},
   JOURNAL = {Proc. Cambridge Philos. Soc.},
  FJOURNAL = {Proceedings of the Cambridge Philosophical Society},
    VOLUME = {57},
      YEAR = {1961},
     PAGES = {182--184},
      ISSN = {0008-1981},
   MRCLASS = {18.20},}

@article {Wir,
    AUTHOR = {Wirthm\"uller, K.},
     TITLE = {Equivariant homology and duality},
   JOURNAL = {Manuscripta Math.},
  FJOURNAL = {Manuscripta Mathematica},
    VOLUME = {11},
      YEAR = {1974},
     PAGES = {373--390},
      ISSN = {0025-2611,1432-1785},
   MRCLASS = {55B25 (55C05)},
  MRNUMBER = {343260},
MRREVIEWER = {W.\ D.\ Neumann},
       DOI = {10.1007/BF01170239},
       URL = {https://doi.org/10.1007/BF01170239},
}

@incollection {Xia,
    AUTHOR = {Xia, Y.},
     TITLE = {The {$p$}-torsion of the {F}arrell-{T}ate cohomology of the
              mapping class group {$\Gamma_{(p-1)/2}$}},
 BOOKTITLE = {Topology '90 ({C}olumbus, {OH}, 1990)},
    SERIES = {Ohio State Univ. Math. Res. Inst. Publ.},
    VOLUME = {1},
     PAGES = {391--398},
 PUBLISHER = {de Gruyter, Berlin},
      YEAR = {1992},
      ISBN = {3-11-012598-6},
   MRCLASS = {57M07 (20J06 57N05)},}

@article {Yan,
    AUTHOR = {Yanovski, L.},
     TITLE = {Homotopy cardinality via extrapolation of {M}orava-{E}uler
              characteristics},
   JOURNAL = {Selecta Math. (N.S.)},
  FJOURNAL = {Selecta Mathematica. New Series},
    VOLUME = {29},
      YEAR = {2023},
    NUMBER = {5},
     PAGES = {Paper No. 81, 36},
      ISSN = {1022-1824,1420-9020},
   MRCLASS = {55P42},
  MRNUMBER = {4660289},
MRREVIEWER = {Birgit\ Richter},
       DOI = {10.1007/s00029-023-00886-3},
       URL = {https://doi.org/10.1007/s00029-023-00886-3},
}

@article{CLL6functor,
  title={Universality of span 2-categories and the construction of 6-functor formalisms},
  author={Cnossen, B. and Lenz, T. and Linskens, S.},
  note={arXiv:2505.19192},
  year={2025}
}

@article{kuhntate,
  title={Tate cohomology and periodic localization of polynomial functors},
  author={Kuhn, N. J.},
  journal={Inventiones mathematicae},
  volume={157},
  number={2},
  pages={345--370},
  year={2004},
  publisher={Springer}
}

@article{elmantohaugseng,
  title={On distributivity in higher algebra {I}: the universal property of bispans},
  author={Elmanto, E. and Haugseng, R.},
  journal={Compositio Mathematica},
  volume={159},
  number={11},
  pages={2326--2415},
  year={2023},
  publisher={London Mathematical Society}
}

@article{ravenellocalization,
  title={Localization with respect to certain periodic homology theories},
  author={Ravenel, D. C.},
  journal={American Journal of Mathematics},
  volume={106},
  number={2},
  pages={351--414},
  year={1984},
  publisher={JSTOR}
}
\end{document}